\renewcommand{\vec}[1]{\mbox{\boldmath \small $#1$}}
\allowdisplaybreaks \numberwithin{equation}{section}
\theoremstyle{plain}
\newtheorem{theorem}{Theorem}[section]
\newtheorem{lemma}[theorem]{Lemma}
\newtheorem{remark}{Remark}[section]
\theoremstyle{definition}
\newtheorem{example}{Example}[section]
\journal{Elsevier}
\begin{document}

\begin{frontmatter}

\title{A general class of linear unconditionally energy stable schemes for the gradient flows, II}

\author{Zengqiang Tan}
\ead{tzengqiang@163.com}
\address{Center for Applied Physics and Technology, HEDPS and LMAM,
				School of Mathematical Sciences, Peking University, Beijing 100871, P.R. China}
\author{Huazhong Tang\corref{mycorrespondingauthor}}
\cortext[cor1]{Corresponding author. Fax:~+86-10-62751801.}\ead{hztang@pku.edu.cn}
			\address{Nanchang Hangkong University, Jiangxi Province, Nanchang 330000, P.R. China;  Center for Applied Physics and Technology, HEDPS and LMAM,
				School of Mathematical Sciences, Peking University, Beijing 100871, P.R. China}

\begin{abstract}
This paper continues to study linear and unconditionally modified-energy stable (abbreviated as SAV-GL) schemes for the gradient flows.
The schemes are built on the   SAV  technique and the general linear time discretizations (GLTD) as well as the extrapolation for the nonlinear term. Different from \cite{TanZQ22},  the GLTDs with three parameters  discussed here  are not necessarily algebraically stable. Some algebraic identities are derived by  using the method of undetermined coefficients
and further used to establish the  modified-energy  inequalities for the unconditional  modified-energy stability of the semi-discrete-in-time SAV-GL schemes.
It is worth emphasizing that those algebraic identities or energy inequalities are not necessarily unique  for some  choices of  three parameters in the GLTDs.
%Meanwhile, it is shown that the  generalized SAV (G-SAV) schemes   also preserve the SAV positivity.
%The theoretical results are extended to the fully discrete schemes, where the Fourier pseudo-spectral method is employed in space when the periodic boundary conditions are specified.
Numerical experiments on the Allen-Cahn, the Cahn-Hilliard and the phase field crystal models with the periodic boundary conditions are conducted to validate the unconditional modified-energy stability of the SAV-GL schemes, where  the Fourier pseudo-spectral method is employed in space with the   zero-padding to
eliminate the aliasing error  and  the  time {stepsizes}  for  ensuring the  original-energy decay are   estimated by using the stability regions of our SAV-GL schemes for the test equation.
% but indicate that some time stepsize constraints are required to preserve the original-energy stability.
%For that issue,
%one studies the stability regions of the semi-implicit time discretizations and gives certain time stepsize conditions to preserve the stability for the test equation.
The resulting time stepsize constraints  for the SAV-GL  schemes are almost consistent with the numerical results on the above gradient flow models.
\end{abstract}

\begin{keyword}
\texttt{} Gradient flows\sep   SAV approach\sep  energy stability\sep Fourier pseudo-spectral method.
\end{keyword}

\end{frontmatter}

\section{Introduction}

Many  practical problems   could be modeled by the gradient flows,
e.g., the interface dynamics \cite{Anderson98,Yue04}, the liquid crystallization \cite{Larson90,Leslie79}, the thin films \cite{Karma98,Wang03}, the polymers \cite{Fraaije93,Fraaije03}, and the the tumor growth \cite{Oden10,Wise08}.
For a given free energy $\mathcal{E}(u)$,  the gradient flow model can be given by
\begin{align}  \label{1.1}
\frac{\partial u}{\partial t} = \mathcal{G} \frac{\delta \mathcal{E}}{\delta u},~~~~ (\vec{x},t) \in \Omega \times (0,T],
\end{align}
supplemented with suitable initial and boundary conditions,
where $\Omega \subset \mathbb{R}^d, d =1,2,3$, $u=u(\vec{x},t)\in\mathbb R$, the operator $\mathcal{G}$ is negative, $\delta \mathcal{E}/\delta u$  denotes the variational derivative of the free energy functional $\mathcal{E}(u)$ with respect to the variable $u$, known as the chemical potential. Obviously, \eqref{1.1} implies that the free energy  is monotonically
non-increasing, that is,
\begin{align}    \label{1.2}
\frac{d \mathcal{E}}{d t}  = \left(\frac{\delta \mathcal{E}}{\delta u},  \frac{\partial u}{\partial t} \right) =   \left( \frac{\delta \mathcal{E}}{\delta u},  \mathcal{G}\frac{\delta \mathcal{E}}{\delta u} \right) \le 0,
\end{align}
and  the triple $\left\{u, \mathcal{G},  \mathcal{E}\right\}$ determines the gradient flow  uniquely,
where $(\cdot,\cdot)$ is the $L^2$ inner product defined by $(\phi,\psi) = \int_{\Omega}\phi\psi d\vec{x}$ for any  $\phi, \psi \in L^2(\Omega)$.
It is worth noting that \eqref{1.2} holds only for the boundary conditions such as  periodic or homogeneous Neumann boundary conditions
which can make the boundary integrals resulted from the integration by parts vanish.
% in the calculation of variational derivatives.

In the last few decades, {many high-order accurate and unconditionally energy stable schemes}
have been developed for various nonlinear gradient flow models.
%Most of the gradient flow models are  nonlinear so that to obtain their analytical solutions is very hard. Therefore,  in order to study the dynamics of the gradient flows,  a large body of research has been devoted to developing efficient and energy stable numerical schemes in the last few decades.
Those include, but are not limited to, the convex splitting method \cite{Elliott93,Eyre98,ShenJ12}, the stabilization method \cite{Tang06,ShenJ10b,WangL18}, the Lagrange multiplier method \cite{Badia11,Guillen13}, the exponential time differencing method \cite{WangX16,Du19}, and more recently, the invariant energy quadratization method \cite{YangX16,YangX17a,YangX17b,YangX20}, the scalar auxiliary variable (SAV) method \cite{ShenJ18a,ShenJ18b,ShenJ19} and its extensions, such as the exponential SAV  \cite{LiuZ20,LiuZ21}, the generalized SAV (G-SAV) \cite{HuangF20,HuangF21} and the SAV with relaxation  \cite{JiangM22}, etc.
Among those, the SAV approach and its variants become a particular powerful tool to construct modified-energy stable numerical schemes  and has been successfully applied to many existing gradient flow models, see e.g. \cite{Akrivis19,Cheng18,Cheng19,Gong20,HouD19, YangZ19,YangJ21,JiangM22, ZhangY22}.
Its main idea is to reformulate the gradient flow model into an equivalent form with the help of some SAVs, and then to develop  efficient numerical schemes by approximating the reformulated system instead of the original gradient flow model. Based on those SAV approaches, it is convenient to construct second- or higher-order unconditionally modified-energy stable schemes, and the derived schemes are   easy to be implemented
and only need to solve several linear equations at each time step if the nonlinear term is explicitly
approximated by the extrapolation etc.

Recently, in \cite{TanZQ22}, the authors studied  a general class  of linear unconditionally  modified-energy stable schemes for the gradient flows.
Those schemes (abbreviated as SAV-GL) are built on  the (original) SAV approach and the general linear time discretizations (GLTD) as well as  the linearization based on
the extrapolation for the nonlinear term. The proof of their
unconditional   modified-energy stability uses the {algebraical} stability of the GLTDs.
This paper continues to study the SAV-GL schemes for the gradient flows,
and will mainly addresses  three  issues:
\texttt{1)} How  the modified-energy inequality of  the SAV-GL
is derived if  the GLTDs are not necessarily algebraically stable?
\texttt{2)} Whether the   modified-energy inequality is unique?
\texttt{3)} How  a suitable time stepsize is chosen  to ensure the  original-energy
decay because  the unconditional  modified-energy stability does not imply  the unconditional  original-energy stability generally?
%The original-energy derived by an unconditionally modified-energy stable scheme with a large time stepsize may be not monotonically decreasing. Hence, another issue is whether it is possible to give the time stepsize constraints to preserve the original-energy stability.
%This paper combines the time discretizations with three parameters and the SAV approach as well as the G-SAV approach to derive unconditionally modified-energy stable schemes  and considers the two issues mentioned above.
The main contributions are as follows: Different from \cite{TanZQ22},  the GLTDs with three parameters  discussed here  are not necessarily algebraically stable. Some algebraic identities are first
derived by  using the method of undetermined coefficients
and are then used to establish the  modified-energy  inequalities for the unconditional  modified-energy stability of the semi-discrete-in-time SAV-GL schemes. Those algebraic identities or energy inequalities are not necessarily unique  for some  choices of  three parameters in the GLTDs.
%Meanwhile, it is shown that the  generalized SAV (G-SAV) schemes   also preserve the SAV positivity.
%The theoretical results are extended to the fully discrete schemes, where the Fourier pseudo-spectral method is employed in space when the periodic boundary conditions are specified.
In order to validate the  energy stabilities of the SAV-GL schemes,
numerical experiments on the Allen-Cahn, the Cahn-Hilliard and the phase field crystal models with the periodic boundary conditions are conducted,  the Fourier pseudo-spectral method is employed in space
 with the   zero-padding to
eliminate the aliasing error,
 and the restrictions on the  time stepsize  for  preserving the  original-energy stability are  estimated by studying the stability regions of our SAV-GL schemes for the test equation.
%The resulting restrictions on the  time stepsize   for the SAV and G-SAV schemes are almost consistent with the numerical results.

The rest of this paper is organized as follows. Section \ref{sec:2} presents our new linear   unconditionally modified-energy stable schemes (still abbreviated as SAV-GL) for the gradient flows,  built on the  GLTDs with three parameters and the SAV  approach. Here the GLTDs are not necessarily algebraically stable.
Some algebraic identities are derived for the modified-energy inequality of  the SAV-GL, and they may not be necessarily unique  for some  choices of  three parameters in the GLTDs.
%Section \ref{sec:5} discusses the de-aliasing technique for
%the  Fourier pseudo-spectral spatial discretization.
Section \ref{sec:6} conducts some numerical experiments
to validate the theoretical analysis of the SAV-GL schemes
in comparison to another SAV-GL schemes built on the generalized SAV \cite{HuangF20,HuangF21},
where  the Allen-Cahn, Cahn-Hilliard and phase field crystal models
with the periodic boundary conditions are considered,
the Fourier pseudo-spectral method is employed in space with the  de-aliasing by zero-padding, and the    time stepsizes  for  ensuring the  original-energy decay are also estimated by  the stability regions of our SAV-GL schemes for the test equation.
Some concluding remarks are given in Section \ref{sec:7}.

\section{SAV-GL schemes for the gradient flows}  \label{sec:2}

This section studies the general linear time discretizations (GLTDs)  with three parameters,  which are not necessarily algebraically stable, and develops the semi-discrete-in-time linear  SAV schemes
(still abbreviated as SAV-GL) for the gradient flow model \eqref{1.1}
with the help of  the original SAV  approach \cite{ShenJ18a,ShenJ18b,ShenJ19}.
% and the G-SAV \cite{HuangF20,HuangF21}.
Their unconditional  modified-energy stability will be derived
with some algebraic identities, established by  using the method of undetermined coefficients.

Assume that  the free energy $\mathcal{E}(u)$   contains some quadratic terms
such as
\begin{align}    \label{3.1.1}
\mathcal{E}(u) = \frac{1}{2}(\mathcal{L}u, u) + \mathcal{E}_1(u),
\end{align}
where $\mathcal{L}$ is a linear, positive and self-adjoint operator,     and $\mathcal{E}_1(u)$
denotes other nonlinear parts. Following the  SAV approach \cite{ShenJ18a,ShenJ18b,ShenJ19},
introduce the SAV $z(t) := \sqrt{\mathcal{E}_1(u)+C_0}$ with $C_0$ being a positive constant
so that $z$ is real-valued,  and then to rewrite the gradient flow model \eqref{1.1}  as
\begin{align}\label{3.1.2}
\begin{aligned}
\frac{\partial u}{\partial t} &= \mathcal{G} \mu,~~ \mu = \mathcal{L} u + z W(u),    \\%[2 \jot]
\frac{d z}{dt} &= \frac{1}{2} \left( W(u), \frac{\partial u}{\partial t}\right),~~~
\ W(u):= \frac1{z(t)}
\frac{\delta \mathcal{E}_1}{\delta u},
\end{aligned}
\end{align}
supplemented with suitable initial and boundary conditions.
Based on  \eqref{3.1.2}, one can construct the SAV schemes  for the  gradient flow model \eqref{1.1}.
It is easy to check that the reformulated system \eqref{3.1.2} satisfies the  energy dissipation law
\begin{align*}
\frac{d\mathcal{F}}{dt}(u) = \left(\mathcal{L} u, \frac{\partial u}{\partial t}\right) + 2z \frac{dz}{dt} = \left(\mathcal{L}u + z W(u), \frac{\partial u}{\partial t}\right) = \left( \mathcal{G}\mu, \mu \right)  \le 0,
\end{align*}
where the reformulated free energy $\mathcal{F}(u)=\frac{1}{2}(\mathcal{L}u, u) + z^2 - C_0$  is the same as the original  $\mathcal{E}(u)$.

Let $\tau$ be a given time stepsize, $t_n = n\tau$ for $n\ge 0$ and  $\chi^{n}$ denote an approximation to the generic variable $\chi$ at $t_n$.  Approximate  the variables $\chi$  and $\frac{\partial \chi}{\partial t}$ at $t_{n+\kappa} \!=\! t_n + \kappa\tau$ as follows
\begin{align}
\label{3.1.3}
& \frac{\partial \chi}{\partial t}\Big|^{n+\kappa} \!\approx\! \frac{1}{\tau({1\!-\!\alpha_0})}\!{ \left[    \chi^{n+1} \!-\!  (1\!+\!\alpha_0)   \chi^n \!+\!  {\alpha_0}   \chi^{n-1} \right] }\!,   \\
\label{3.1.4}
& \chi^{n+\kappa} \!=\! \frac{1}{1\!-\!\alpha_0}
\left(\beta_2\chi^{n+1} \!+ \!  {\beta_1}  \chi^n  \!+\!  {\beta_0}  \chi^{n-1} \right),   \\
& \bar{\chi}^{n+\kappa}  = \left(1\!+\!\kappa\right) \chi^n - \kappa  \chi^{n-1},
\label{3.1.5}
\end{align}
where $\alpha_0\neq 1, \beta_2\neq 0$ and $\beta_0$ {are} three free parameters, $\kappa =  \frac{\beta_2-\beta_0}{1-\alpha_0}$, $\beta_1 = 1-\alpha_0-\beta_0-\beta_2$, and $\chi^{n+\kappa}$ (resp.  $\bar{\chi}^{n+\kappa}$) denotes an implicit (resp. explicit) approximation to $\chi(t_{n+\kappa})$,   so that \eqref{3.1.3}-\eqref{3.1.5}  can provide  at least first-order accurate time discretizations.

\begin{lemma}\label{lemma2.1}
The fully implicit time discretizations based on \eqref{3.1.3}-\eqref{3.1.4}
are $A-$stable
(but are not necessarily algebraically stable)
if
\begin{align} \label{8.10B}
-1\le \alpha_0<1,~~~\beta_2>0,~~~|\beta_0| \le \beta_2,~~~1-\alpha_0-2\beta_0-2\beta_2 \le 0.
\end{align}
Specially, (i) when $\alpha_0 = \beta_0 = 0$, the time discretizations based on \eqref{3.1.3}-\eqref{3.1.4} are one-step and $A-$stable for any $\beta_2\ge \frac{1}{2}$;
 (ii) when $\beta_2 = \frac{1+\alpha_0}{2} + \beta_0$  and $|\alpha_0|+|\beta_0|\neq 0$
(i.e. $\alpha_0$ and $\beta_0$ are not zero simultaneously), the time discretizations based on \eqref{3.1.3}-\eqref{3.1.4} are two-step and second-order accurate, which are $A-$stable for any $-1\le \alpha_0<1$ and $2\beta_0+\alpha_0\ge 0$; and
(iii)  when  $\beta_2 \neq \frac{1+\alpha_0}{2} + \beta_0$ and $|\alpha_0|+ |\beta_0|\neq 0$,
   the time discretizations based on \eqref{3.1.3}-\eqref{3.1.4} are two-step and first-order accurate, which are $A-$stable under \eqref{8.10B}.
 \end{lemma}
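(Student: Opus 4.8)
The plan is to pass to the scalar test equation $\chi'=\lambda\chi$ with $\operatorname{Re}\lambda\le 0$, read off the characteristic polynomial of the recursion \eqref{3.1.3}--\eqref{3.1.4}, and use the root--locus characterisation of $A$-stability. Writing $h=\lambda\tau$ and seeking $\chi^n=\zeta^n$ gives $p(\zeta;h):=\rho(\zeta)-h\,\sigma(\zeta)=0$ with
\[
\rho(\zeta)=\zeta^2-(1+\alpha_0)\zeta+\alpha_0=(\zeta-1)(\zeta-\alpha_0),\qquad
\sigma(\zeta)=\beta_2\zeta^2+\beta_1\zeta+\beta_0 ,
\]
so $A$-stability asserts that for every $h$ with $\operatorname{Re}h\le 0$ all roots of $p(\cdot;h)$ lie in $\overline D=\{|\zeta|\le 1\}$, those on $\partial D$ being simple. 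I would first record the facts valid under \eqref{8.10B} irrespective of the case split: the roots of $\rho$ are $1,\alpha_0$, hence in $\overline D$ and simple exactly when $-1\le\alpha_0<1$ (zero--stability); $\sigma(1)=\beta_0+\beta_1+\beta_2=1-\alpha_0\ne 0$ and $\rho'(1)=1-\alpha_0=\sigma(1)$ (first--order consistency), so $h=0\in\partial S$ with $\zeta=1$ a simple root; and $1-h\beta_2\ne0$ whenever $\operatorname{Re}h\le 0$ (because $1/\beta_2>0$), so $p(\cdot;h)$ keeps degree two throughout the closed left half--plane.

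The core is the boundary--locus argument. The boundary $\partial S$ of the absolute--stability region lies in $\Gamma=\{\rho(e^{\mathrm i\theta})/\sigma(e^{\mathrm i\theta}):\theta\in[0,2\pi)\}$; since $\mathbb C^-=\{\operatorname{Re}h<0\}$ is open and connected, the number of roots of $p(\cdot;h)$ in $\overline D$ is locally constant, hence constant, on $\mathbb C^-$ as soon as $\Gamma\cap\mathbb C^-=\varnothing$, and letting $h\to-\infty$ along the negative real axis identifies that number with the number of zeros of $\sigma$ in $\overline D$. So $\mathbb C^-\subseteq S$ follows once one checks (i) $\operatorname{Re}\!\bigl(\rho(e^{\mathrm i\theta})\,\overline{\sigma(e^{\mathrm i\theta})}\bigr)\ge 0$ for all $\theta$ and (ii) both zeros of $\sigma$ lie in $\overline D$. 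For (i), expanding with $\zeta\bar\zeta=1$, $\zeta+\bar\zeta=2\cos\theta$ and using $\rho(1)=0$ yields, with $c=\cos\theta$,
\[
\operatorname{Re}\!\bigl(\rho(e^{\mathrm i\theta})\,\overline{\sigma(e^{\mathrm i\theta})}\bigr)=(c-1)\bigl[A(c+1)+B\bigr],\qquad
A=2(\beta_0+\alpha_0\beta_2),\quad B=(1+\alpha_0)(\beta_1-\beta_0-\beta_2),
\]
so on $[-1,1]$, where $c-1\le 0$, nonnegativity is equivalent to the \emph{affine} function $c\mapsto A(c+1)+B$ being $\le 0$ on $[-1,1]$, i.e. to $B\le 0$ together with $2A+B\le 0$; the fourth inequality of \eqref{8.10B} is exactly $B\le 0$, while $\beta_2>0$, $|\beta_0|\le\beta_2$ place the zeros of $\sigma$ in $\overline D$ (Schur--Cohn) and, with $-1\le\alpha_0<1$, also give $2A+B\le 0$. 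A short implicit--function computation with $p,\partial_\zeta p$ then shows that any root reaching $\partial D$ for $h$ on the imaginary axis is simple, completing $A$-stability; an equivalent route replaces the locus test by the Routh--Hurwitz test after the substitution $\zeta=(1+w)/(1-w)$.

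The three special cases follow by substituting into $(c-1)[A(c+1)+B]$. For $\alpha_0=\beta_0=0$ one has $A=0$, $B=1-2\beta_2$, so the expression is $(1-2\beta_2)(c-1)\ge 0$ on $[-1,1]$ iff $\beta_2\ge\tfrac12$; moreover $\rho(\zeta)=\zeta(\zeta-1)$ and $\sigma(\zeta)=\zeta(\beta_2\zeta+1-\beta_2)$ share the factor $\zeta$, so $p$ reduces to a linear, one--step recursion --- part (i). For $\beta_2=\tfrac{1+\alpha_0}{2}+\beta_0$ one computes $2A+B=(1-\alpha_0)\bigl[(1+\alpha_0)-2(\beta_2-\beta_0)\bigr]=0$ and $A=(1+\alpha_0)(2\beta_0+\alpha_0)$, so the expression is $A(c-1)^2\ge 0$ iff $2\beta_0+\alpha_0\ge 0$; matching the Taylor expansions of \eqref{3.1.3}--\eqref{3.1.4} about $t_{n+\kappa}$ shows this relation is precisely the second--order condition, and that the scheme is genuinely two--step when $|\alpha_0|+|\beta_0|\ne 0$ --- part (ii). Finally, when $\beta_2\ne\tfrac{1+\alpha_0}{2}+\beta_0$ and $|\alpha_0|+|\beta_0|\ne 0$ the order--two condition fails, so the scheme is two--step and first--order, and $A$-stability is the general argument with \eqref{8.10B} feeding into $B\le 0$, $2A+B\le 0$ and the root location of $\sigma$ --- part (iii).

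I expect the main obstacle to be not the algebra of $(c-1)[A(c+1)+B]$ but the rigorous bookkeeping at the edges of the admissible set and the degenerate configurations: $\alpha_0=-1$ (both roots of $\rho$ on $\partial D$), $|\beta_0|=\beta_2$ (a zero of $\sigma$ on $\partial D$, where $\rho$ and $\sigma$ can share a factor and the ``$h\to-\infty$''/connectedness reasoning needs a perturbation--cancellation argument), and the cases of equality in \eqref{8.10B} where $\Gamma$ meets the imaginary axis. These are exactly where the clean ``root count constant on $\mathbb C^-$'' step must be patched, and pinning down the three--part statement is this patching plus the routine substitutions above.
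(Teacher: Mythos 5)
Your route is genuinely different from the paper's: the paper never touches the boundary locus, but instead writes the characteristic quadratic $\mathbb P(x)=\rho(x)-\bar\xi\sigma(x)$ with $\bar\xi$-dependent coefficients and verifies the two modulus inequalities $|c_0|\le|c_2|$, $|c_1|\le|c_0+c_2|$ of \cite{Calvo88} directly for every $\bar\xi=a+\mathrm{i}b$ with $a\le0$. The trouble with your version is a concrete false claim at the crux. Your factorization $\operatorname{Re}\bigl(\rho(e^{\mathrm{i}\theta})\overline{\sigma(e^{\mathrm{i}\theta})}\bigr)=(c-1)\bigl[A(c+1)+B\bigr]$ is correct, and so is the reduction of the locus condition to $B\le 0$ together with $2A+B\le 0$; but \eqref{8.10B} does \emph{not} imply $2A+B\le 0$. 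Indeed
\[
2A+B=4(\beta_0+\alpha_0\beta_2)+(1+\alpha_0)(1-\alpha_0-2\beta_0-2\beta_2)=(1-\alpha_0)\bigl(1+\alpha_0+2\beta_0-2\beta_2\bigr),
\]
so $2A+B\le 0$ is equivalent to the \emph{additional} constraint $\beta_2\ge\frac{1+\alpha_0}{2}+\beta_0$. The choice $(\alpha_0,\beta_0,\beta_2)=(1/2,0,2/3)$ satisfies every inequality in \eqref{8.10B} yet gives $2A+B=\frac1{12}>0$; there the locus behaves like $h(\theta)=\rho(e^{\mathrm{i}\theta})/\sigma(e^{\mathrm{i}\theta})=\mathrm{i}\theta-\tfrac16\theta^2+O(\theta^3)$ and dips into the open left half--plane, so the root count is not constant on $\mathbb C^-$ and your connectedness/``$h\to-\infty$'' identification collapses. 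This destroys the blanket claim and the half of case (iii) with $\beta_2<\frac{1+\alpha_0}{2}+\beta_0$; cases (i) and (ii) survive because there $A=0$, respectively $2A+B=0$.

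Moreover, this is not a defect you can patch by sharpening the method, because the crossing of the locus into $\{\operatorname{Re}h<0\}$ is transversal at a simple root: perturbing $h$ off the locus to first order produces a root of modulus strictly greater than one at points still inside the open left half--plane (for $(1/2,0,2/3)$ one checks numerically that at $\bar\xi\approx-0.0002+0.5735\,\mathrm{i}$ a root has modulus $\approx 1.0005$). So your computation, pushed through honestly, is in tension with the lemma itself on the subregion $\frac{1-\alpha_0}{2}-\beta_0\le\beta_2<\frac{1+\alpha_0}{2}+\beta_0$. The discrepancy with the paper traces to the fact that the criterion of \cite{Calvo88} used there characterizes root location for quadratics with \emph{real} coefficients, while $\mathbb P$ has complex coefficients once $b\ne 0$ (e.g.\ $(x-\mathrm{i})^2$ has both roots on the unit circle yet $|c_1|=2>0=|c_0+c_2|$). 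The gap you must close is therefore not the edge--case bookkeeping you flag in your last paragraph, but the missing inequality $2A+B\le 0$: either add $\beta_2\ge\frac{1+\alpha_0}{2}+\beta_0$ to the hypotheses, or restrict the argument to cases (i)--(ii).
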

The proof of Lemma \ref{lemma2.1}  is given in \ref{Appx1}.

Assume that $(u^{n-1}, z^n)$ and $(u^n,z^n)$ are given. Applying  \eqref{3.1.3}-\eqref{3.1.5} to the reformulated system \eqref{3.1.2} yields the following
 semi-discrete-in-time SAV-GL scheme
\begin{align}
\begin{aligned}   \label{3.1.6}
& \frac{1}{1\!-\!\alpha_0} u^{n+1} \!-\! \frac{1\!+\!\alpha_0}{1\!-\!\alpha_0}  u^{n} \!+\! \frac{\alpha_0}{1\!-\!\alpha_0} u^{n-1} \!=\! \tau \mathcal{G} \mu^{n+\kappa},~~
\mu^{n+\kappa} \!=\! \mathcal{L} u^{n+\kappa}  \!+ \!z^{n+\kappa} W(\bar{u}^{n+\kappa}), \\[1 \jot]
& \frac{1}{1\!-\!\alpha_0} z^{n+1} \!-\! \frac{1\!+\!\alpha_0}{1\!-\!\alpha_0}  z^{n} \!+ \! \frac{\alpha_0}{1\!-\!\alpha_0}  z^{n-1} \!=\! \frac{1}{2} \!\left(\! W(\bar{u}^{n+\kappa}),  \frac{1}{1\!-\!\alpha_0} u^{n+1} \!- \! \frac{1\!+\!\alpha_0}{1\!-\!\alpha_0}  u^{n} \!+ \! \frac{\alpha_0}{1\!-\!\alpha_0} u^{n-1} \!\right)\!,
\end{aligned}
\end{align}
where $u^{n+\kappa}, z^{n+\kappa}$, and $\bar{u}^{n+\kappa}$ are given by \eqref{3.1.4} and \eqref{3.1.5}, respectively. In order to derive its
unconditional modified-energy stability,
several algebraic identities  are  established as follows.

\begin{lemma}   \label{lem3.1}
(i) When $\alpha_0 = \beta_0 = 0$, the   identity
\begin{align}  \label{3.1.7}
\left(\chi^{n+1} - \chi^n \right) \left(\beta_2\chi^{n+1} + (1\!-\!\beta_2) \chi^n \right) =  \frac{1}{2} \left[ \left(\chi^{n+1}\right)^2 -\left(\chi^{n}\right)^2\right] + \left( \beta_2\!-\!\frac{1}{2}\right)\left( \chi^{n+1} - \chi^n \right)^2,
\end{align}
holds for any $\beta_2 \ge \frac{1}{2}$.

(ii) When $\beta_2 = \frac{1+\alpha_0}{2}+\beta_0$    and $|\alpha_0|+|\beta_0|\neq 0$,  then  the   identity
\begin{align}   \label{3.1.8}
& \left(\frac{1}{1\!-\!\alpha_0}\chi^{n+1} - \frac{1\!+\!\alpha_0}{1\!-\!\alpha_0} \chi^n + \frac{\alpha_0}{1\!-\!\alpha_0}\chi^{n-1}\right) \left(\frac{\beta_2}{1\!-\!\alpha_0}\chi^{n+1} +\frac{\beta_1}{1\!-\!\alpha_0} \chi^n + \frac{\beta_0}{1\!-\!\alpha_0}\chi^{n-1}\right)    \nonumber \\
= \;& \frac{2\!+\!\alpha_0\!-\!\alpha_0^2 \!+  \!2\beta_0(1\!-\!\alpha_0) }{4(1\!-\!\alpha_0)^2} \!\left[ \left(\chi^{n+1}\right)^2 \!-\!\left(\chi^{n}\right)^2\right] \!\! +\! \frac{\alpha_0\!+\!\alpha_0^2 \!+  \!2\beta_0(1\!-\!\alpha_0) }{4(1\!-\!\alpha_0)^2} \left[ \left(\chi^{n}\right)^2 \!-\!\left(\chi^{n-1}\right)^2\right] \nonumber \\
& +\! \frac{(\alpha_0\!-\!1)(2\beta_0\!+\!\alpha_0\!-\!1) \!-\!(\alpha_0\!+\!1)} {2(1\!-\!\alpha_0)^2} \!\left[ \chi^{n+1}\chi^{n} \!-\!\chi^{n}\chi^{n-1} \right]\!\!   \nonumber \\
& + \! \frac{(1\!+\!\alpha_0)(2\beta_0\!+\!\alpha_0)}{4(1\!-\!\alpha_0)^2} \!\left( \chi^{n+1} \!-\! 2 \chi^n \!+ \!\chi^{n-1}\right)^2,
\end{align}
holds for any $-1\le \alpha_0<1$ and $2\beta_0+\alpha_0\ge 0$.

(iii) When $\beta_2 \neq  \frac{1+\alpha_0}{2}+\beta_0$    and $|\alpha_0|+|\beta_0|\neq 0$, then one has
\begin{align}   \label{3.1.8*}
& \left(\frac{1}{1\!-\!\alpha_0}\chi^{n+1} - \frac{1\!+\!\alpha_0}{1\!-\!\alpha_0} \chi^n + \frac{\alpha_0}{1\!-\!\alpha_0}\chi^{n-1}\right) \left(\frac{\beta_2}{1\!-\!\alpha_0}\chi^{n+1} +\frac{\beta_1}{1\!-\!\alpha_0} \chi^n + \frac{\beta_0}{1\!-\!\alpha_0}\chi^{n-1}\right)    \nonumber \\
= \;& \!\!\left[\frac{1\!-\!\alpha_0^2 \!+  \!2\beta_2\!-\!2\alpha_0\beta_0}{4(1\!-\!\alpha_0)^2} \!-\! \tilde{c}c\right]\!\left[ \left(\chi^{n+1}\right)^2 \!-\!\left(\chi^{n}\right)^2\right] \!\! +\!\! \left[\frac{2\beta_2\!+\!\alpha_0^2\!-\!1}{4(1\!-\!\alpha_0)^2} \!- \!\tilde{c}c\right] \!\left[ \left(\chi^{n}\right)^2 \!-\!\left(\chi^{n-1}\right)^2\right] \nonumber \\
& +\! \!\left[\frac{1}{2}\!+\!\frac{\alpha_0\beta_0\!-\!\beta_2}{(1\!-\!\alpha_0)^2} \!+\!2\tilde{c}c\!\right] \!\left[ \chi^{n+1}\chi^{n} \!-\!\chi^{n}\chi^{n-1} \right]\!\!   + \!\!\left[\!\left(\!c\!-\!\frac{\tilde{c}}{2}\!\right) \!\chi^{n+1} \!+\!\tilde{c} \chi^n \!-\! \left(\!c\!+\!\frac{\tilde{c}}{2}\!\right) \!\chi^{n-1}\right]^2\!,
\end{align}
under the conditions   \eqref{8.10B} and $2\beta_2-2\beta_0-\alpha_0-1>0$,
where
\[ c = \sqrt{\frac{2\beta_2\!-\!2\beta_0\!-\!\alpha_0\!-\!1}{8(1-\alpha_0)}},~~~\tilde{c} = - \frac{\sqrt{2(1\!+\!\alpha_0)(2\beta_0\!+\!2\beta_2\!+\!\alpha_0\!-\!1)}} {2(1\!-\!\alpha_0)}.    \]
\end{lemma}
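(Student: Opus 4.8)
\emph{Proof sketch.} The plan is to prove the identity \eqref{3.1.8*} by the method of undetermined coefficients, viewing both sides as quadratic forms in the triple $\chi^{n+1},\chi^{n},\chi^{n-1}$ and matching the coefficients of the six monomials $(\chi^{n+1})^2,(\chi^{n})^2,(\chi^{n-1})^2,\chi^{n+1}\chi^{n},\chi^{n+1}\chi^{n-1},\chi^{n}\chi^{n-1}$. The first step is to pass to the increments $\delta_1:=\chi^{n+1}-\chi^{n}$ and $\delta_2:=\chi^{n}-\chi^{n-1}$. Using $\beta_1=1-\alpha_0-\beta_0-\beta_2$ one checks that the first factor on the left of \eqref{3.1.8*} equals $\tfrac{1}{1-\alpha_0}(\delta_1-\alpha_0\delta_2)$ and the second factor equals $\chi^{n}+\tfrac{1}{1-\alpha_0}(\beta_2\delta_1-\beta_0\delta_2)$. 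In particular the first factor depends on the increments alone, so the left-hand side carries no $(\chi^{n})^2$ term in the variables $(\chi^{n},\delta_1,\delta_2)$ and vanishes whenever $\chi^{n+1}=\chi^{n}=\chi^{n-1}$; this is precisely the algebraic reason why a ``telescoping plus square'' decomposition of the type displayed in \eqref{3.1.8*} can exist.

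Next I would eliminate $\chi^{n}$ in favour of the telescoping quantities $S_1:=(\chi^{n+1})^2-(\chi^{n})^2$, $S_2:=(\chi^{n})^2-(\chi^{n-1})^2$ by means of the elementary identities $2\chi^{n}\delta_1=S_1-\delta_1^2$ and $2\chi^{n}\delta_2=S_2+\delta_2^2$. This rewrites the left-hand side as $a_1S_1+a_2S_2+Q(\delta_1,\delta_2)$, with $a_1,a_2$ and the coefficients of the binary quadratic form $Q$ explicit rational functions of $\alpha_0,\beta_0,\beta_2$. Then I would introduce the free parameter $E_3$ attached to the remaining telescoping term: since $\chi^{n+1}\chi^{n}-\chi^{n}\chi^{n-1}=\tfrac12(S_1+S_2-\delta_1^2+\delta_2^2)$, absorbing $E_3(\chi^{n+1}\chi^{n}-\chi^{n}\chi^{n-1})$ changes the $S_1,S_2$ coefficients to $E_1:=a_1-\tfrac{E_3}{2}$, $E_2:=a_2-\tfrac{E_3}{2}$ and replaces $Q$ by the one-parameter family $Q_{E_3}:=Q+\tfrac{E_3}{2}(\delta_1^2-\delta_2^2)$, whose $\delta_1^2$- and $\delta_2^2$-coefficients are affine in $E_3$ while its $\delta_1\delta_2$-coefficient does not involve $E_3$. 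Requiring $Q_{E_3}$ to be a perfect square, necessarily of the form $\big((c-\tfrac{\tilde c}{2})\delta_1+(c+\tfrac{\tilde c}{2})\delta_2\big)^2=\big((c-\tfrac{\tilde c}{2})\chi^{n+1}+\tilde c\,\chi^{n}-(c+\tfrac{\tilde c}{2})\chi^{n-1}\big)^2$, amounts to a vanishing-discriminant condition that is quadratic in $E_3$; this is exactly the source of the non-uniqueness announced in the introduction. Picking the relevant root yields the stated $E_1,E_2,E_3$, and then $(c-\tilde c/2)^2$, $(c+\tilde c/2)^2$ simplify to the displayed closed forms of $c$ and $\tilde c$; one only has to keep the signs of these two square roots consistent throughout the matching.

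It remains to locate the hypotheses. A short computation gives $4c^2=\tfrac{2\beta_2-2\beta_0-\alpha_0-1}{2(1-\alpha_0)}$ and $\tilde c^2=\tfrac{(1+\alpha_0)(2\beta_0+2\beta_2+\alpha_0-1)}{2(1-\alpha_0)^2}$; for the last term in \eqref{3.1.8*} to be an honest (nonnegative) square — which is what the modified-energy argument ultimately needs — the numbers $c,\tilde c$ must be real, and the conditions $-1\le\alpha_0<1$, $1-\alpha_0-2\beta_0-2\beta_2\le 0$ together with $2\beta_2-2\beta_0-\alpha_0-1>0$ are precisely what force $c^2\ge0$ and $\tilde c^2\ge0$. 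The excluded value $\beta_2=\tfrac{1+\alpha_0}{2}+\beta_0$ is exactly the case $c=0$, for which the square degenerates to a multiple of the second difference $(\chi^{n+1}-2\chi^{n}+\chi^{n-1})^2$ and the decomposition collapses to that of part (ii); the requirement $|\alpha_0|+|\beta_0|\neq0$ separates the situation from part (i), where the scheme is one-step and the square reduces to a multiple of $(\chi^{n+1}-\chi^{n})^2$. Parts (i) and (ii) are proved in the same way with fewer free parameters.

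I expect the only genuine obstacle to be bookkeeping: computing the coefficients of $Q$, resolving the quadratic condition on $E_3$, reducing $(c\mp\tilde c/2)^2$ to the compact expressions for $c,\tilde c$, and checking that the resulting square is positive rather than negative semidefinite all involve fairly heavy rational algebra with delicate sign tracking. Since the statement already exhibits the answer, the most economical route for the write-up is to reverse the derivation: substitute the given $c,\tilde c,E_1,E_2,E_3$ into the right-hand side of \eqref{3.1.8*}, expand, and verify termwise that its six monomial coefficients agree with those of the left-hand product — a finite, mechanical check that bypasses the search for the decomposition entirely, it being enough to match five of the six coefficients since both sides vanish on the diagonal $\chi^{n+1}=\chi^{n}=\chi^{n-1}$.
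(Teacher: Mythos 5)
Your proposal is correct and follows essentially the same route as the paper: both posit the ``telescoping plus perfect square'' ansatz and determine its coefficients by matching, with the parameter constraint arising as the reality/discriminant condition on the square and the non-uniqueness arising from the two admissible roots. Your passage to the increments $\delta_1,\delta_2$ is just a streamlined repackaging of the paper's computation --- the fact that the square must be a form in the increments alone is exactly the paper's relation $c_1+c_2+c_3=0$, obtained there by summing the six matching equations --- and your reality conditions on $c,\tilde c$ coincide with the paper's constraint \eqref{8.2.9}, which it likewise shows is equivalent to $2\beta_2-2\beta_0-\alpha_0-1\ge 0$.
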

The proof of this lemma   is given in \ref{Appx2} by using the method of undetermined coefficients.
%\begin{proof} The identities \eqref{3.1.7}, \eqref{3.1.8} and \eqref{3.1.8*}
%are respectively proved by using the method of undetermined coefficients,
%see \ref{Appx2}. % It is worth emphasizing that some different  identities
%%from \eqref{3.1.7} and \eqref{3.1.8*} are also presented in \ref{Appx2}.
%\end{proof}
%
Using those identities in Lemma \ref{lem3.1} can give the following results
on  the semi-discrete-in-time SAV-GL scheme \eqref{3.1.6}.

\begin{theorem}  \label{Thm3.1}
(i) When $\alpha_0 = \beta_0 = 0$, the semi-discrete scheme \eqref{3.1.6} is unconditionally modified-energy stable for any $\beta_2 \ge \frac{1}{2}$  in the sense that
\begin{align}   \label{3.1.9}
\frac{1}{2}\left( \mathcal{L} u^{n+1}, u^{n+1} \right) + \left( z^{n+1} \right)^2 \le \frac{1}{2}\left( \mathcal{L} u^{n}, u^{n} \right) + \left( z^{n} \right)^2.
\end{align}

(ii) When $\beta_2 = \frac{1+\alpha_0}{2} + \beta_0$ and $|\alpha_0|+|\beta_0|\neq 0$,  the semi-discrete scheme \eqref{3.1.6} is unconditionally modified-energy stable for any $-1\le \alpha_0<1$ and $2\beta_0+\alpha_0\ge 0$ in the sense that
\begin{align}   \label{3.1.10}
E\left(u^{n+1},u^{n},z^{n+1},z^{n} \right) \le E\left(u^{n},u^{n-1},z^{n},z^{n-1} \right),
\end{align}
where
\begin{align*}
E\left(u^{n+1},u^{n},z^{n+1},z^{n} \right) : = \;& \frac{(\alpha_0\!-\!1)(2\beta_0\!+\!\alpha_0\!-\!1)\!-\!(\alpha_0\!+\!1)} {(1\!-\!\alpha_0)^2}\! \left[ \frac{1}{2}\left( \mathcal{L}u^{n+1}, u^{n}\right) + z^{n+1}z^{n}\right]  \\
&\hspace{-2cm} +  \frac{2\!+\!\alpha_0\!-\!\alpha_0^2\!+\!2\beta_0(1\!-\!\alpha_0)} {2(1\!-\!\alpha_0)^2}\! \left[ \frac{1}{2}\left( \mathcal{L}u^{n+1}, u^{n+1}\right) + \left( z^{n+1}\right)^2 \right]    \\
&\hspace{-2cm} + \frac{\alpha_0\!+\!\alpha_0^2\!+\!2\beta_0(1\!-\!\alpha_0)} {2(1\!-\!\alpha_0)^2}\! \left[ \frac{1}{2}\left( \mathcal{L}u^{n}, u^{n}\right) + \left( z^{n}\right)^2 \right].
\end{align*}

(iii) When $\beta_2 \neq \frac{1+\alpha_0}{2}+\beta_0$    and $|\alpha_0|+|\beta_0|\neq 0$,   the semi-discrete scheme \eqref{3.1.6} is unconditionally modified-energy stable under the conditions \eqref{8.10B} and $ 2\beta_2-2\beta_0-\alpha_0-1>0$ in the sense that
\begin{align}   \label{3.1.10*}
\bar{E}\left(u^{n+1},u^{n},z^{n+1},z^{n} \right) \le \bar{E}\left(u^{n},u^{n-1},z^{n},z^{n-1} \right),
\end{align}
where
\begin{align*}
&\bar{E}\left(u^{n+1},u^{n},z^{n+1},z^{n} \right) : = \! \left[\frac{1\!-\!\alpha_0^2 \!+  \!2\beta_2\!-\!2\alpha_0\beta_0}{4(1\!-\!\alpha_0)^2} \!-\! \tilde{c}c\right]\! \left[ \frac{1}{2}\left( \mathcal{L}u^{n+1}, u^{n+1}\right) + \left(z^{n+1}\right)^2\right]  \\
+ & \!\! \left[\frac{2\beta_2\!+\!\alpha_0^2\!-\!1}{4(1\!-\!\alpha_0)^2} \!- \!\tilde{c}c\right]\!\! \left[ \frac{1}{2}\!\left( \mathcal{L}u^{n}, u^{n}\right)\! +\! \left( z^{n}\right)^2 \right] \!\! + \! \!\left[\frac{1}{2}\!+\!\frac{\alpha_0\beta_0\!-\!\beta_2}{(1\!-\!\alpha_0)^2} \!+\!2\tilde{c}c\!\right] \!\! \left[ \frac{1}{2}\!\left( \mathcal{L}u^{n+1}, u^{n}\right) \!+ \!z^{n+1}z^{n} \right]\!.
\end{align*}
\end{theorem}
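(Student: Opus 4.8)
My plan to prove Theorem~\ref{Thm3.1} is to run the standard SAV energy estimate for \eqref{3.1.6} and then convert it into the three claimed inequalities by means of the algebraic identities in Lemma~\ref{lem3.1}. Throughout, write $D\chi := \tfrac{1}{1-\alpha_0}\chi^{n+1} - \tfrac{1+\alpha_0}{1-\alpha_0}\chi^{n} + \tfrac{\alpha_0}{1-\alpha_0}\chi^{n-1}$ for the left-hand difference quotient numerator appearing in \eqref{3.1.6}, which reduces to $\chi^{n+1}-\chi^{n}$ when $\alpha_0=\beta_0=0$. First I would take the $L^2$ inner product of the first equation of \eqref{3.1.6} with $\mu^{n+\kappa}$ and use that $\mathcal{G}$ is negative, obtaining $(Du,\mu^{n+\kappa}) = \tau(\mathcal{G}\mu^{n+\kappa},\mu^{n+\kappa}) \le 0$. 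Expanding $\mu^{n+\kappa} = \mathcal{L}u^{n+\kappa} + z^{n+\kappa}W(\bar{u}^{n+\kappa})$ and inserting the second equation of \eqref{3.1.6} in the form $(W(\bar{u}^{n+\kappa}),Du) = 2Dz$, the nonlinear contribution $z^{n+\kappa}(Du,W(\bar{u}^{n+\kappa}))$ collapses to $2z^{n+\kappa}Dz$, so that
\begin{equation*}
(Du,\mathcal{L}u^{n+\kappa}) + 2 z^{n+\kappa} Dz \le 0 .
\end{equation*}
All three cases of the theorem will follow from this single inequality.

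Next I would observe that both terms on the left are quadratic in the three time levels: $(Du,\mathcal{L}u^{n+\kappa}) = B(Du, u^{n+\kappa})$ with $B(\cdot,\cdot) := (\mathcal{L}\cdot,\cdot)$ a symmetric, positive semi-definite bilinear form, while $2z^{n+\kappa}Dz$ is its scalar counterpart. Since \eqref{3.1.7}, \eqref{3.1.8} and \eqref{3.1.8*} are polynomial identities in the pairwise products $\chi^{a}\chi^{b}$, each remains valid after replacing every product $\chi^{a}\chi^{b}$ by $B(u^{a},u^{b})$ and every square $(\sum_k a_k\chi^{k})^2$ by $B(\sum_k a_k u^{k},\sum_k a_k u^{k})$. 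Applying the relevant identity of Lemma~\ref{lem3.1} to $(Du,\mathcal{L}u^{n+\kappa})$ in this ``$B$-lifted'' form, and twice the scalar identity to $2z^{n+\kappa}Dz$, and adding, the left-hand side of the displayed inequality splits as a telescoping part plus one residual square term. The factor $2$ in front of $2z^{n+\kappa}Dz$ is exactly what makes the coefficient of $\tfrac12 B(u^{n+1},u^{n+1})$ agree with that of $(z^{n+1})^2$ (and likewise for the cross term $\tfrac12 B(u^{n+1},u^{n})$ versus $z^{n+1}z^{n}$ and for the $n$- and $(n-1)$-level terms), so the telescoping part collapses to $\tfrac12(\mathcal{L}u^{n+1},u^{n+1})+(z^{n+1})^2$ minus its $n$-level value in case (i), to $E(u^{n+1},u^{n},z^{n+1},z^{n}) - E(u^{n},u^{n-1},z^{n},z^{n-1})$ in case (ii), and to $\bar E(u^{n+1},u^{n},z^{n+1},z^{n}) - \bar E(u^{n},u^{n-1},z^{n},z^{n-1})$ in case (iii).

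It then remains to check that the residual square term is non-negative under the stated hypotheses. In case (i) it equals $(\beta_2-\tfrac12)\big[B(u^{n+1}-u^{n},u^{n+1}-u^{n}) + 2(z^{n+1}-z^{n})^2\big]\ge 0$, since $\beta_2\ge\tfrac12$ and $B$ is positive semi-definite. In case (ii) it equals $\tfrac{(1+\alpha_0)(2\beta_0+\alpha_0)}{4(1-\alpha_0)^2}\big[B(u^{n+1}-2u^{n}+u^{n-1},u^{n+1}-2u^{n}+u^{n-1}) + 2(z^{n+1}-2z^{n}+z^{n-1})^2\big]$, whose prefactor is $\ge 0$ because $1+\alpha_0\ge 0$ and $2\beta_0+\alpha_0\ge 0$. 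In case (iii) the residual term is already of the form $B(v,v) + 2 w^2$ with $v = (c-\tfrac{\tilde c}{2})u^{n+1} + \tilde c u^{n} - (c+\tfrac{\tilde c}{2})u^{n-1}$ and $w = (c-\tfrac{\tilde c}{2})z^{n+1} + \tilde c z^{n} - (c+\tfrac{\tilde c}{2})z^{n-1}$, provided $c$ and $\tilde c$ are real: $c$ is real by the extra hypothesis $2\beta_2-2\beta_0-\alpha_0-1>0$, and $\tilde c$ is real because $1+\alpha_0\ge 0$ and, by the last inequality in \eqref{8.10B}, $2\beta_0+2\beta_2+\alpha_0-1 \ge (1-\alpha_0)+\alpha_0-1 = 0$. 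In every case the displayed inequality thus reads (telescoping part) $+$ (non-negative) $\le 0$, which yields \eqref{3.1.9}, \eqref{3.1.10} and \eqref{3.1.10*}, respectively.

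I expect the main obstacle to be not conceptual but the coefficient bookkeeping in the second step: verifying that, after the $B$-lift and after carrying the factor $2$ (and the $\tfrac12$ in $\tfrac12(\mathcal{L}\cdot,\cdot)$), the telescoping part is precisely the difference of the $E$'s (resp. $\bar E$'s) as defined in the theorem, and that the leftover is exactly the square term written in Lemma~\ref{lem3.1}. The sign and reality checks for that square term are where the structural constraints \eqref{8.10B} and $2\beta_2-2\beta_0-\alpha_0-1>0$ are actually used.
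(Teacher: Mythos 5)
Your proposal is correct and follows essentially the same route as the paper's proof: take the inner product of the $u$-equation with $\mu^{n+\kappa}$, use the $z$-equation to collapse the nonlinear term into $2z^{n+\kappa}Dz$, apply the identities of Lemma~\ref{lem3.1} (lifted to the bilinear form $(\mathcal{L}\cdot,\cdot)$) to telescope into the stated energies plus a residual square, and check the residual's sign via the parameter constraints. The paper writes out only case (ii) in detail (your coefficient bookkeeping there matches its \eqref{3.1.13}--\eqref{3.1.16} exactly), while you additionally record the reality check for $c$ and $\tilde c$ in case (iii), which the paper defers to its appendix.
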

\begin{proof}
The proofs of  three inequalities \eqref{3.1.9}-\eqref{3.1.10*} are similar so that only   the inequality \eqref{3.1.10} is proved here to avoid repetition.
 It is worth emphasizing that some different  identities
from \eqref{3.1.7} and \eqref{3.1.8*} are also presented in \ref{Appx2},
so that different unconditionally modified-energy inequalities from \eqref{3.1.9}
and \eqref{3.1.10*} can  be established for some choices of  three parameters $\alpha_0$,
$\beta_0$, $\beta_2$ in the GLTDs, e.g.
  $\{\alpha_0 =\beta_0 =0$, $\beta_2> \frac{1}{2}\}$ and
  $\{\beta_2 \neq \frac{1+\alpha_0}{2}+\beta_0$,   $|\alpha_0|+|\beta_0|\neq 0\}$.
%To avoid repetition, it's not described in this article

Taking the $L^2$ inner product of the first and second equations in \eqref{3.1.6} with $\mu^{n+\kappa}$ and $\frac{1}{1-\alpha_0} u^{n+1} - \frac{1+\alpha_0}{1-\alpha_0}  u^{n} + \frac{\alpha_0}{1-\alpha_0} u^{n-1} $, respectively, yields
\begin{align}   \label{3.1.11}
\left( \frac{1}{1\!-\!\alpha_0} u^{n+1} - \frac{1\!+\!\alpha_0}{1\!-\!\alpha_0}  u^{n} + \frac{\alpha_0}{1\!-\!\alpha_0} u^{n-1}, \mu^{n+\kappa}\right) = \tau \left(\mathcal{G}\mu^{n+\kappa}, \mu^{n+\kappa}\right),
\end{align}
and
\begin{align}   \label{3.1.12}
& \left( \frac{1}{1\!-\!\alpha_0} u^{n+1} - \frac{1\!+\!\alpha_0}{1\!-\!\alpha_0}  u^{n} + \frac{\alpha_0}{1\!-\!\alpha_0} u^{n-1}, \mu^{n+\kappa}\right) = \left(\mathcal{L}u^{n+\kappa}, \frac{1}{1\!-\!\alpha_0} u^{n+1} - \frac{1\!+\!\alpha_0}{1\!-\!\alpha_0}  u^{n} + \frac{\alpha_0}{1\!-\!\alpha_0} u^{n-1} \right) \nonumber \\
& \hspace{3cm} +  z^{n+\kappa}  \left(  W(\bar{u}^{n+\kappa}), \frac{1}{1\!-\!\alpha_0} u^{n+1} - \frac{1\!+\!\alpha_0}{1\!-\!\alpha_0}  u^{n} + \frac{\alpha_0}{1\!-\!\alpha_0} u^{n-1}\right).
\end{align}
According to the identity \eqref{3.1.8}, one can deduce
\begin{align}    \label{3.1.13}
& \left(\mathcal{L} u^{n+\kappa}, \frac{1}{1\!-\!\alpha_0} u^{n+1} - \frac{1\!+\!\alpha_0}{1\!-\!\alpha_0}  u^{n} + \frac{\alpha_0}{1\!-\!\alpha_0} u^{n-1} \right)  \nonumber \\
=\;& \frac{2\!+\!\alpha_0\!-\!\alpha_0^2 \!+  \!2\beta_0(1\!-\!\alpha_0) }{4(1\!-\!\alpha_0)^2}\bigg[\! \left( \mathcal{L} u^{n+1}, u^{n+1} \right) - \left( \mathcal{L} u^{n}, u^{n} \right)\!\bigg]  +  \frac{\alpha_0\!+\!\alpha_0^2 \!+  \!2\beta_0(1\!-\!\alpha_0) }{4(1\!-\!\alpha_0)^2} \bigg[\! \left( \mathcal{L} u^{n}, u^{n} \right) \nonumber \\
& - \left( \mathcal{L} u^{n-1}, u^{n-1} \right) \!\bigg]
 + \frac{(\alpha_0\!-\!1)(2\beta_0\!+\!\alpha_0\!-\!1) \!-\!(\alpha_0\!+\!1)} {2(1\!-\!\alpha_0)^2} \bigg[\! \left( \mathcal{L} u^{n+1}, u^{n} \right) - \left( \mathcal{L} u^{n}, u^{n-1} \right) \!\bigg]   \nonumber \\
& +\frac{(1\!+\!\alpha_0)(2\beta_0\!+\!\alpha_0)}{4(1\!-\!\alpha_0)^2} \bigg(\mathcal{L}\left[ u^{n+1}-2u^n + u^{n-1}\right], u^{n+1}-2u^n + u^{n-1} \bigg),
\end{align}
and
\begin{align}    \label{3.1.14}
& z^{n+\kappa}\left( \frac{1}{1\!-\!\alpha_0} z^{n+1} - \frac{1\!+\!\alpha_0}{1\!-\!\alpha_0}  z^{n} + \frac{\alpha_0}{1\!-\!\alpha_0} z^{n-1} \right)    \nonumber \\
=\; & \frac{2\!+\!\alpha_0\!-\!\alpha_0^2 \!+  \!2\beta_0(1\!-\!\alpha_0) }{4(1\!-\!\alpha_0)^2}\bigg[ \!\left( z^{n+1} \right)^2 -\left( z^{n} \right)^2 \!\bigg]     +  \frac{\alpha_0\!+\!\alpha_0^2 \!+  \!2\beta_0(1\!-\!\alpha_0) }{4(1\!-\!\alpha_0)^2} \bigg[\! \left( z^{n} \right)^2 - \left( z^{n-1} \right)^2\!\bigg]    \nonumber \\
& + \frac{(\alpha_0\!-\!1)(2\beta_0\!+\!\alpha_0\!-\!1) \!-\!(\alpha_0\!+\!1)} {2(1\!-\!\alpha_0)^2} \bigg[ z^{n+1} z^{n} - z^{n} z^{n-1}  \bigg]
\nonumber \\
& + \frac{(1\!+\!\alpha_0)(2\beta_0\!+\!\alpha_0)}{4(1\!-\!\alpha_0)^2} \left( z^{n+1} - 2z^n +z^{n-1} \right)^2.
\end{align}
Multiplying the third equation in \eqref{3.1.6} with $z^{n+\kappa}$ and using \eqref{3.1.14} give
\begin{align}   \label{3.1.15}
& \frac{1}{2}z^{n+\kappa}\left(  W(\bar{u}^{n+\kappa}), \frac{1}{1\!-\!\alpha_0} u^{n+1} - \frac{1\!+\!\alpha_0}{1\!-\!\alpha_0}  u^{n} + \frac{\alpha_0}{1\!-\!\alpha_0} u^{n-1} \right)   \nonumber \\
=\; & \frac{2\!+\!\alpha_0\!-\!\alpha_0^2 \!+  \!2\beta_0(1\!-\!\alpha_0) }{4(1\!-\!\alpha_0)^2}\bigg[ \!\left( z^{n+1} \right)^2 -\left( z^{n} \right)^2 \!\bigg]   +  \frac{\alpha_0\!+\!\alpha_0^2 \!+  \!2\beta_0(1\!-\!\alpha_0) }{4(1\!-\!\alpha_0)^2} \bigg[\! \left( z^{n} \right)^2 - \left( z^{n-1} \right)^2\!\bigg]    \nonumber \\
& + \frac{(\alpha_0\!-\!1)(2\beta_0\!+\!\alpha_0\!-\!1) \!-\!(\alpha_0\!+\!1)} {2(1\!-\!\alpha_0)^2} \bigg[ z^{n+1} z^{n} - z^{n} z^{n-1}  \bigg]
\nonumber \\
& + \frac{(1\!+\!\alpha_0)(2\beta_0\!+\!\alpha_0)}{4(1\!-\!\alpha_0)^2} \left( z^{n+1} - 2z^n +z^{n-1} \right)^2.
\end{align}
Substituting \eqref{3.1.15} and \eqref{3.1.13} into \eqref{3.1.12} and using \eqref{3.1.11} lead to
\begin{align}   \label{3.1.16}
& E\left(u^{n+1},u^{n},z^{n+1},z^{n} \right) - E\left(u^{n},u^{n-1},z^{n},z^{n-1} \right)   \nonumber \\
=\;&  \tau \bigg(\mathcal{G}\mu^{n+\kappa}, \mu^{n+\kappa}\bigg) - \frac{(1\!+\!\alpha_0)(2\beta_0\!+\!\alpha_0)}{2(1\!-\!\alpha_0)^2} \left( z^{n+1} - 2z^n +z^{n-1} \right)^2     \nonumber \\
& -  \frac{(1\!+\!\alpha_0)(2\beta_0\!+\!\alpha_0)}{4(1\!-\!\alpha_0)^2} \bigg(\mathcal{L}\left[ u^{n+1}-2u^n + u^{n-1}\right], u^{n+1}-2u^n + u^{n-1} \bigg).
\end{align}
Since the operator $\mathcal{L}$ is positive, $\mathcal{G}$ is negative, and
the parameters $\alpha_0$ and $\beta_0$ satisfy $-1\le \alpha_0 < 1$ and $2\beta_0+\alpha_0\ge0$,
 one can conclude from \eqref{3.1.16} that the inequality \eqref{3.1.10} holds. Hence, the proof is completed.
\end{proof}

\begin{remark}
If taking $\alpha_0 = \frac{1}{3}, \beta_0 = 0$ and $\beta_2 = \frac{2}{3}$, then \eqref{3.1.6} becomes the SAV-BDF2 scheme in \cite{ShenJ19}, and  \eqref{3.1.8} reduces to the identity used in \cite{ShenJ19} to derive the modified-energy stability of the SAV-BDF2 scheme.
If taking $\alpha_0 \!=\! \frac{2\theta-1}{2\theta+1}, \beta_0 \!=\! - \frac{(2\theta-1)(\theta-1)}{2\theta+1}$ and $\beta_2 \!=\! - \frac{2\theta^2-5\theta+1}{2\theta+1}$ with $\frac{1}{2}\le \theta \le \frac{3}{2}$, then \eqref{3.1.6} reduces to the  scheme in \cite{YangZ19}  for the Cahn-Hilliard equation, where the modified-energy stability is derived by using  a special case of \eqref{3.1.8}.
\end{remark}

\begin{remark}
The semi-discrete scheme \eqref{3.1.6} can be written into the form of the SAV-GL scheme in \cite{TanZQ22}. In fact, one can first compute the stage value $\left(U_{n,1}, Z_{n,1} \right)$ from
\begin{align*}
\begin{cases}
U_{n,1} \!=\! \tau\beta_2 \dot{U}_{n,1} \!+\! \frac{\beta_1+\beta_2(1+\alpha_0)}{1-\alpha_0} u^n \!+ \! \frac{\beta_0-\alpha_0\beta_2}{1-\alpha_0} u^{n-1},~~ \dot{U}_{n,1} \!= \! \mathcal{G}\mu_{n,1},~\mu_{n,1} \!=\! \mathcal{L} U_{n,1}  \!+ \!Z_{n,1} W(\bar{u}^{n+\kappa}) \\
Z_{n,1} = \tau \beta_2 \dot{Z}_{n,1} + \frac{\beta_1+\beta_2(1+\alpha_0)}{1-\alpha_0} z^n + \frac{\beta_0-\alpha_0\beta_2}{1-\alpha_0} z^{n-1},~~~ \dot{Z}_{n,1} = \frac{1}{2}\left( W(\bar{u}^{n+\kappa}),  \dot{U}_{n,1}\right),
\end{cases}
\end{align*}
and then derive the numerical solution $\left(u^{n+1}, z^{n+1} \right)$ by
\begin{align*}
u^{n+1} = \tau \dot{U}_{n,1} + (1+\alpha_0) u^n -\alpha_0 u^{n-1},~~~
z^{n+1} = \tau \dot{Z}_{n,1} + (1+\alpha_0) z^n -\alpha_0 z^{n-1}.
\end{align*}
Thus, when the previous GLTDs with three parameters are algebraically stable, the modified-energy stability of the SAV-GL scheme  \eqref{3.1.6} can also be {proved} by using the theoretical framework in \cite{TanZQ22}.
It should be emphasized that the established modified-energy
inequalities in Theorem \ref{Thm3.1} are more general and applicable for some GLTDs without the algebraical stability. For example,  in the
case (ii), i.e. $\beta_2 = \frac{1+\alpha_0}{2} + \beta_0$ and $|\alpha_0|+|\beta_0| \neq 0$,
the GLTDs with $1\le \alpha_0 <1$ and $2\beta_0+\alpha_0 = 0$ are not algebraically stable (see \ref{Appx1}),  but
the modified-energy stability of the corresponding SAV-GL schemes can be gotten by using the identity \eqref{3.1.8}. Moreover, we can also find that those energy inequalities
may  not be unique for some $\alpha_0$,
$\beta_0$, and $\beta_2$.
\end{remark}

\begin{remark}  \label{remk3.6}
 Theorem \ref{Thm3.1} tells us that \eqref{3.1.6} is unconditionally modified-energy stable with choosing appropriate parameters. However,
the original-energy $\mathcal{E}(u^{n})$ of
the gradient flow \eqref{1.1}  may be monotonically decreasing
conditionally.
It will be confirmed by combining numerical experiments in Section \ref{sec:6}
for the Allen-Cahn, the Cahn-Hilliard and the phase field crystal models
with  studying the stability regions of the semi-implicit time discretizations based on \eqref{3.1.3}-\eqref{3.1.5} studied in
 \ref{Appx3}.
%and gives certain time stepsize conditions to preserve the stability for the test equation. According to the results in \ref{Appx3}, some time stepsize constraints can be derived  for the SAV schemes, which are almost consistent with the numerical experiments .
\end{remark}

% \begin{remark}
% Instead of  the square root function in the original SAV approach,  the exponential function \cite{LiuZ20,LiuZ21}, the monotone polynomials and the tanh function \cite{Cheng20} could be used  to   extend  the original SAV approach. One can combine those extended SAV approaches with the time-discretizations \eqref{3.1.3}-\eqref{3.1.5} and use Lemma \ref{lem3.1} to obtain  corresponding modified-energy stabilities. Also, similar to \cite{JiangM22}, one can apply the relaxation technique to derive the corresponding SAV schemes with relaxation (R-SAV), which may improve the accuracy and consistency of the introduced SAV for some gradient flows.
% \end{remark}

\begin{remark}\label{rem:2.4}
There exist some variants of the original SAV approach. For example,  instead of  the square root function,  the exponential function \cite{LiuZ20,LiuZ21}, the monotone polynomials, and the tanh function \cite{Cheng20} could be used  to extend  the original SAV approach. One can combine those extended SAV approaches with the time-discretizations \eqref{3.1.3}-\eqref{3.1.5} and use Lemma \ref{lem3.1} to obtain  corresponding modified-energy stabilities. Moreover, applying the relaxation technique to \eqref{3.1.6} can derive  the SAV schemes with relaxation (R-SAV)
\cite{JiangM22}, which may improve the accuracy and consistency of the introduced SAV.
Besides, there still exists an interesting extension  of the original SAV  approach, namely the generalized SAV (G-SAV) approach  \cite{HuangF20,HuangF21}. The proof of its modified-energy
stability may do not require Lemma \ref{lem3.1}.
%Let us
% combine the G-SAV with the time discretizations \eqref{3.1.3}-\eqref{3.1.5} to derive another linear and modified-energy stable schemes.
%
If defining a shifted free energy by
$ \widetilde{\mathcal{E}}(u) = \mathcal{E}(u) + \tilde{C}_0$ and introducing a new SAV $R(t):=\widetilde{\mathcal{E}}(u)$,
where $\tilde{C}_0$ is a chosen non-negative constant such that $\widetilde{\mathcal{E}}(u)$ is always positive,
then  the  gradient flow model \eqref{1.1} can be reformulated
as follows
\begin{align}
\begin{aligned}  \label{3.2.1}
& \frac{\partial u}{\partial t} = \mathcal{G} \mu,~~~ \mu = \mathcal{L} u +  V(u),   ~~~V(u):=
\frac{\delta \mathcal{E}_1}{\delta u},   \\[1 \jot]
&  \frac{d R}{dt} = \eta \left(\mu, \mathcal{G} \mu \right),
\end{aligned}
\end{align}
where $\eta(t) = \frac{R(t)}{\widetilde{\mathcal{E}}(u)} \equiv 1$ at the continuous level.
%Hence, in order to obtain an approximate solution of \eqref{1.1},
After applying the time discretizations \eqref{3.1.3}-\eqref{3.1.5} to  \eqref{3.2.1}, one has the following semi-discrete-in-time G-SAV-GL scheme
\begin{align}
\label{3.2.3}
& \frac{1}{1\!-\!\alpha_0}u^{n+1}-\frac{1\!+\!\alpha_0}{1 \!-\!\alpha_0}u^n + \frac{\alpha_0}{1\!-\!\alpha_0} u^{n-1} = \tau\mathcal{G} \hat{\mu}^{n+\kappa},  \\
\label{3.2.4}
& \qquad \hat{\mu}^{n+\kappa} = \mathcal{L} \left[ \frac{\beta_2}{1\!-\!\alpha_0} u^{n+1}+ \frac{\beta_1}{1\!-\!\alpha_0}u^{n} + \frac{\beta_0}{1\!-\!\alpha_0}u^{n-1}\right] +  V(\bar{u}^{n+\kappa}),  \\
%\label{3.2.5}
%& \eta^{n+1}  = \frac{R^{n+1}}{\widetilde{\mathcal{E}}(u^{n+1})},\\
 \label{3.2.6}
&\frac{R^{n+1}-R^n}{\tau} = \eta^{n+1} \left(\mu^{n+1},  \mathcal{G} \mu^{n+1} \right),
\end{align}
where  $ \eta^{n+1} = \frac{R^{n+1}}{\widetilde{\mathcal{E}}(u^{n+1})}$, $\mu^{n+1} = \mathcal{L} u^{n+1} +  V(u^{n+1})$.
They imply
\begin{align*}  % \label{3.2.8}
\left( \!\frac{1}{1\!-\!\alpha_0} \!-\! \frac{\tau\beta_2}{1\!-\!\alpha_0}\mathcal{G}\mathcal{L}\!\right) \! u^{n+1} \!=\! \frac{1\!+\!\alpha_0}{1 \!-\!\alpha_0}u^n \!- \! \frac{\alpha_0}{1\!-\!\alpha_0} u^{n-1} \!+\! \tau \mathcal{G}\mathcal{L}  \!\left[ \!\frac{\beta_1}{1\!-\!\alpha_0}u^{n} \!+ \! \frac{\beta_0}{1\!-\!\alpha_0}u^{n-1}\!\right]\!\! + \!\tau \mathcal{G} V(\bar{u}^{n+\kappa}),
\end{align*}
which is a equation of $u^{n+1}\approx u(\cdot,t_{n+1})$. Moreover, %for the semi-discrete scheme \eqref{3.2.3}-\eqref{3.2.6}, one can deduce the following result.
%\begin{theorem}  \label{Thm3.2}
for given $R^n> 0$,  $R^{n+1}$ and $\eta^{n+1}$ are positive  and   \eqref{3.2.3}-\eqref{3.2.6} is unconditionally modified-energy stable in the sense that
\begin{align*}  %\label{3.2.10}
R^{n+1} - R^n = \tau \eta^{n+1} \left(\mu^{n+1}, \mathcal{G} \mu^{n+1} \right) \le 0.
\end{align*}
%\end{theorem}
%\begin{proof}
%It is clear that  $\left(\mu^{n+1}, \mathcal{G} \mu^{n+1} \right) \le 0$ and $\widetilde{\mathcal{E}}(u^{n+1}) >0$.  Substituting $\eta^{n+1}{\widetilde{\mathcal{E}}(u^{n+1})} = R^{n+1}$ into \eqref{3.2.6} gives
%\begin{align}   \label{3.2.9}
%\eta^{n+1} = \frac{R^n}{\widetilde{\mathcal{E}}(u^{n+1}) -\tau \left(\mu^{n+1}, \mathcal{G} \mu^{n+1} \right)}.
%\end{align}
%Hence, for given $R^n> 0$, one can deduce from \eqref{3.2.9} that $\eta^{n+1} > 0$,
%so that  $R^{n+1}=\eta^{n+1}{\widetilde{\mathcal{E}}(u^{n+1})}  > 0$. Finally, the inequality \eqref{3.2.10}  is derived from $\eta^{n+1} > 0$ and \eqref{3.2.6}.
%\end{proof}
Note that   the G-SAV scheme \eqref{3.2.3}-\eqref{3.2.6} is different from that in \cite{HuangF20,HuangF21}, the main difference between them is
that no special control factor, e.g. $\xi^{n+1} = 1-(1-\eta^{n+1})^3$, is introduced in \eqref{3.2.3}-\eqref{3.2.6}
so that the numerical solution $u^{n+1}$ is totally derived by  corresponding semi-implicit scheme.
When the time stepsize is large,
$\xi^{n+1}$ may become a bad approximation to one  so that numerical solutions  are not accurate. Hence, such difference allows us to choose a larger time stepsize  when applying \eqref{3.2.3}-\eqref{3.2.6} to the gradient flow  \eqref{1.1}. The scheme  \eqref{3.2.3}-\eqref{3.2.6}
will be compared to the SAV-GL scheme \eqref{3.1.6} in our numerical experiments, see Section \ref{sec:6}.
\end{remark}

\section{Numerical experiments}   \label{sec:6}

This section applies the  SAV-GL scheme \eqref{3.1.6}
in comparison to the G-SAV-GL scheme \eqref{3.2.3}-\eqref{3.2.6}
to the Allen-Cahn, the Cahn-Hilliard and the phase field crystal models with the periodic boundary conditions
in order to demonstrate their modified-energy stability and check
their original-energy stability.  For such purpose,
the Fourier pseudo-spectral spatial discretization \cite{TanZQ22} is still employed
for \eqref{3.1.6} and \eqref{3.2.3}-\eqref{3.2.6}.
The theoretical results in Section \ref{sec:2} could be straightforwardly extended to such fully discrete schemes.
The readers are referred to \cite{TanZQ22} about the fully discrete SAV-GL methods  and   \cite{Gottlieb77,ShenJ11,Gottlieb12a,ChenWB12,JuL18} for more detailed descriptions of the spectral methods.
Our fully discrete  SAV-GL schemes are implemented in MATLAB  and call  both  {\tt fft} and  {\tt ifft} functions directly for the discrete Fourier and  inverse  Fourier transforms so that their implementation is very simple and efficient. It should be noted that the FFT of   the nonlinear term may always produce the aliasing errors, see e.g. \cite{CHQZ1988,Tadmor87}.
The effect
of the aliasing error and the   de-aliasing by zero-padding  provided in \ref{sec:5} on our numerical results are investigated. %to eliminate
For simplicity, the subsequent numerical results
will be given only for several special values of three parameters
{$(\alpha_0, \beta_0, \beta_2)$}  in \eqref{3.1.3}-\eqref{3.1.5},
see Table \ref{tab6.1},
 and  corresponding fully-discrete SAV-GL and G-SAV-GL schemes
will also be abbreviated as in Table \ref{tab6.1}.

\begin{table}[!htbp]
\begin{center}
\caption{Choices of {$(\alpha_0, \beta_0, \beta_2)$}
and abbreviations of corresponding fully discrete schemes.}
\setlength{\tabcolsep}{1mm}{\begin{tabular*}{\textwidth}{@ {\extracolsep{\fill}}cccccccc}
\hline
{} & SAV-GL schemes & G-SAV-GL schemes    \\
\hline
$(\alpha_0,\beta_0,\beta_2) = (0,0,1)$               &{\tt SAV-M(1)}      &{\tt G-SAV-M(1)}          \\
$(\alpha_0,\beta_0,\beta_2) = (-1/3,5/12,3/4)$       &{\tt SAV-M(2)}    &{\tt G-SAV-M(2)}             \\
$(\alpha_0,\beta_0,\beta_2) = (1/3,0,2/3)$           &{\tt SAV-M(3)}    &{\tt G-SAV-M(3)}        \\
$(\alpha_0,\beta_0,\beta_2) = (1/3,-1/6,1/2)$        &{\tt SAV-M(4)}     &{\tt G-SAV-M(4)}         \\
\hline
\end{tabular*}} \label{tab6.1}
\end{center}
\end{table}

\subsection{Allen-Cahn model}  \label{sec:6.1}

The Allen-Cahn model
\begin{align}   \label{6.1.1}
\frac{\partial u}{\partial t} = \epsilon^2 \Delta u - u^3 + u,
~~~\vec{x}\in \Omega,~t>0,
\end{align}
was introduced to describe the motion of anti-phase interfaces in crystalline solids \cite{Allen79} and can be derived from the $L^2$ gradient flow of the following free energy
\begin{align}   \label{6.1.3}
\mathcal{E}(u) = \int_{\Omega} \frac{\epsilon^2}{2} |\nabla u|^2 + \frac{1}{4}(u^2-1)^2 dx,
\end{align}
where $0<\epsilon< 1$ denotes the diffuse interface thickness.

In order to apply {\tt SAV-M(1)}$\sim${\tt SAV-M(4)} and {\tt G-SAV-M(1)}$\sim${\tt G-SAV-M(4)} for \eqref{6.1.1}, one takes
\[  \mathcal{L} = - \epsilon^2 \Delta, ~~~\mathcal{G} = -1,~~~ \mathcal{E}_1(u) = \int_{\Omega} \frac{1}{4}\left(u^2 - 1 \right)^2 dx. \]

\begin{example} \label{Exp6.1.0}
This example is used to check the  effectiveness  of the de-aliasing by zero-padding for the Allen-Cahn equation \eqref{6.1.1} with $\epsilon = 0.1$ and   $u(x,y,0) = 0.05\sin(x)\sin(y)$.
The domain $\Omega = (0,2\pi)\times (0,2\pi)$ is  partitioned with  $N = 128$ or $256$,
and {\tt SAV-M(3)} is used.

Figure \ref{fig6.1} presents the contour lines and cut lines of two numerical solutions at $t = 200$ computed by {\tt SAV-M(3)} with or without de-aliasing by zero-padding.
It is obvious that they are different when $N = 128$,
but  are quite similar when $N = 256$.
Figure \ref{fig6.2} further shows the
snapshots of the numerical solutions with $N = 256$
at $t = 80$, $84$, and  $88$ computed by {\tt SAV-M(3)} with or without the de-aliasing.
It can be seen that those numerical solutions have some slight differences,
which do not effect the motion  of anti-phase interfaces essentially.
%more specifically,  {\tt SAV-M(3)} with the de-aliasing  has  a bit of a delay in the evolution.
Those results are also consistent with those shown in Figure \ref{fig6.3}, which gives the cut lines of numerical solutions at $t = 80, 84, 88$, and  $92$.
\end{example}

\begin{figure}
\centering
{\includegraphics[width=7.5cm]{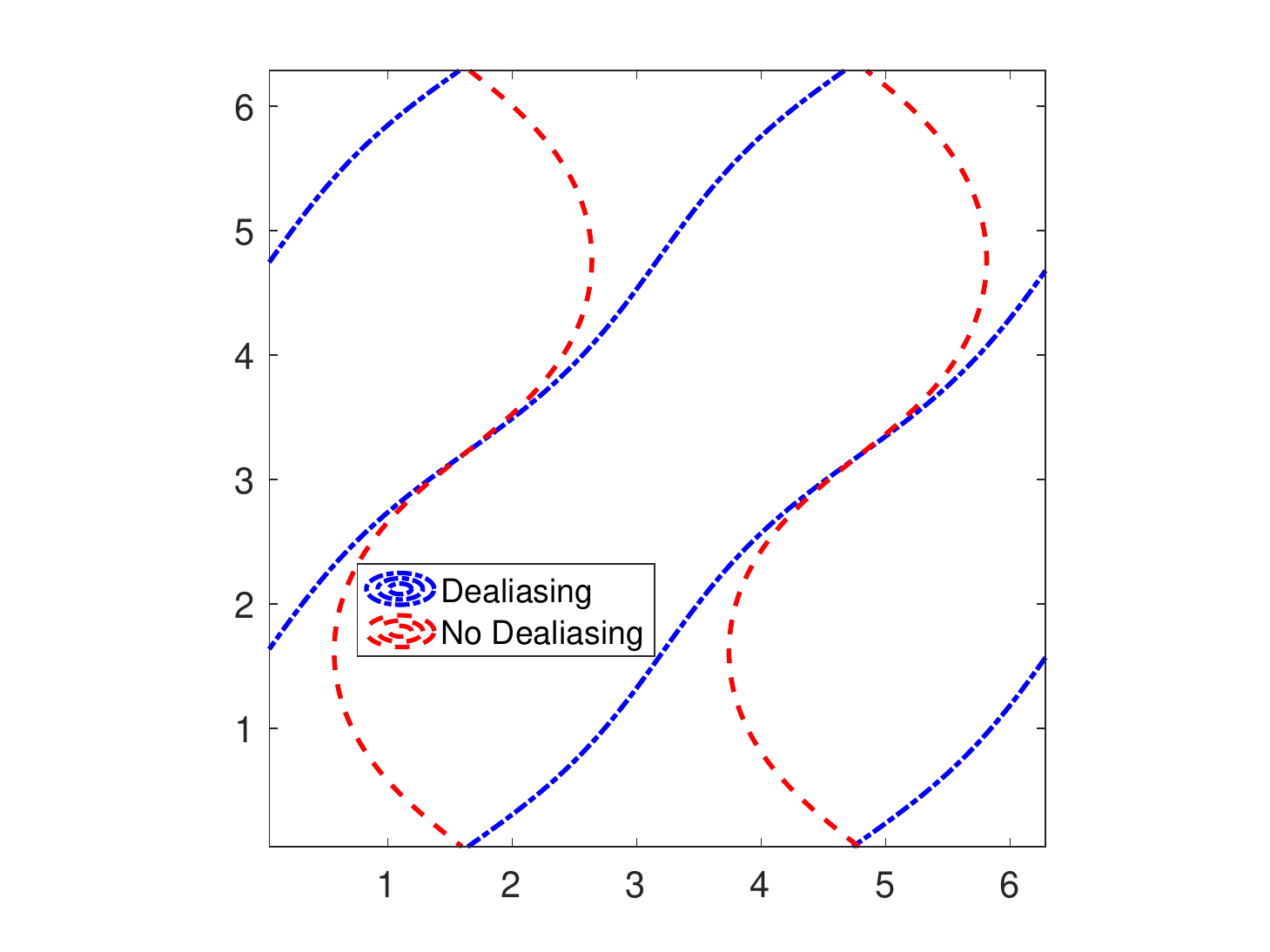}}
{\includegraphics[width=7.5cm]{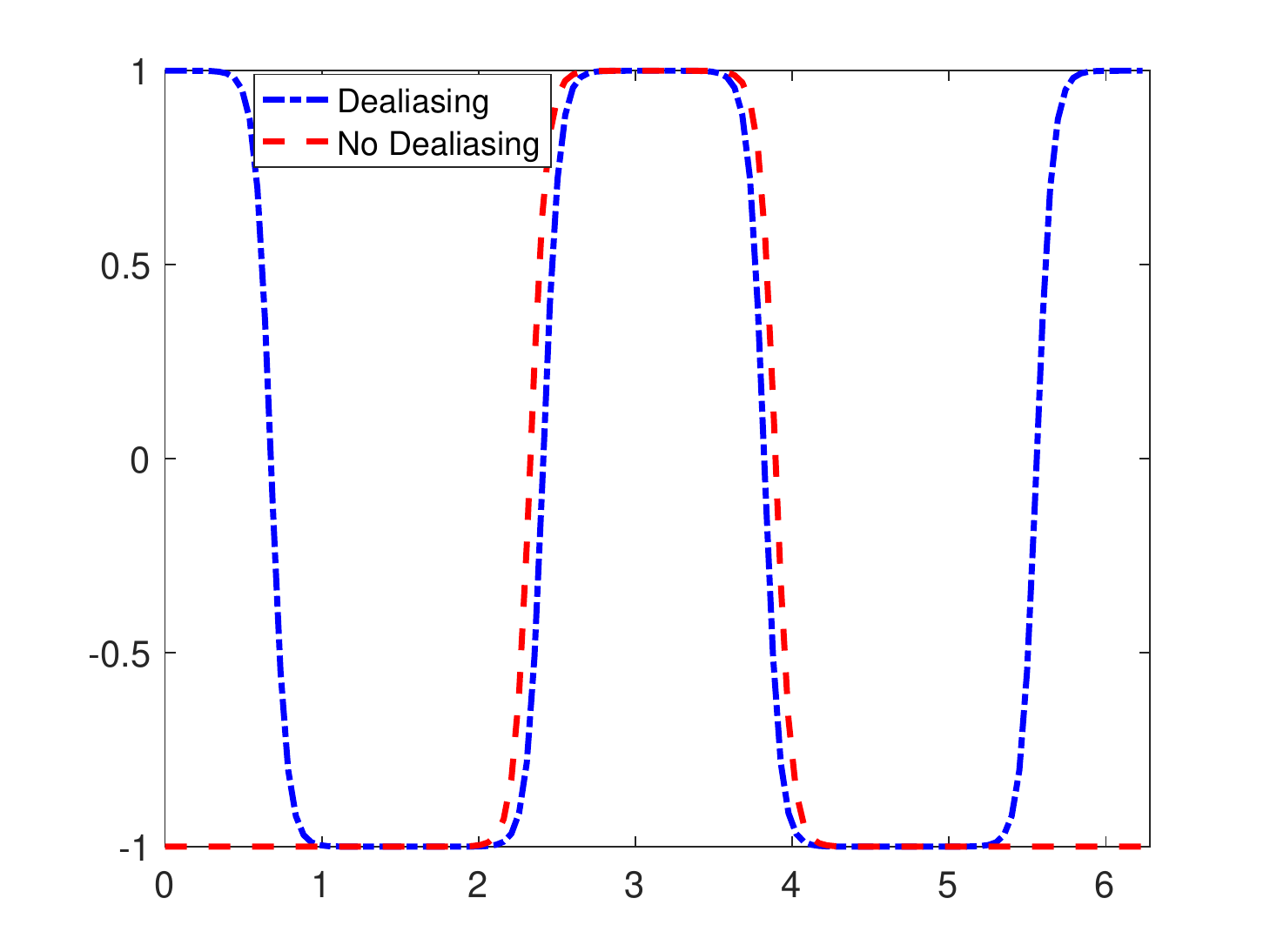}}
%  \centerline{ (a) $N=128$}

{\includegraphics[width=7.5cm]{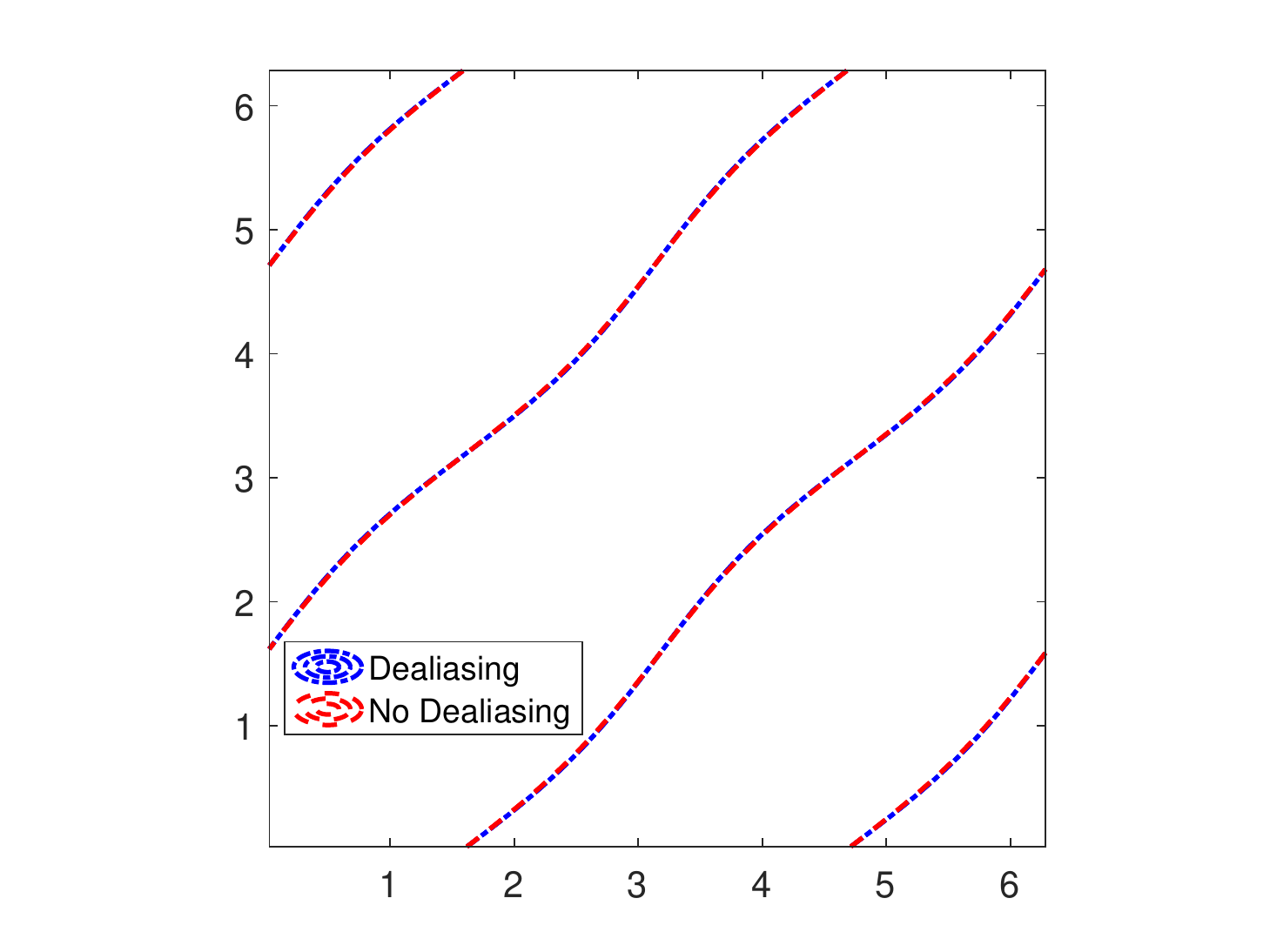}}
{\includegraphics[width=7.5cm]{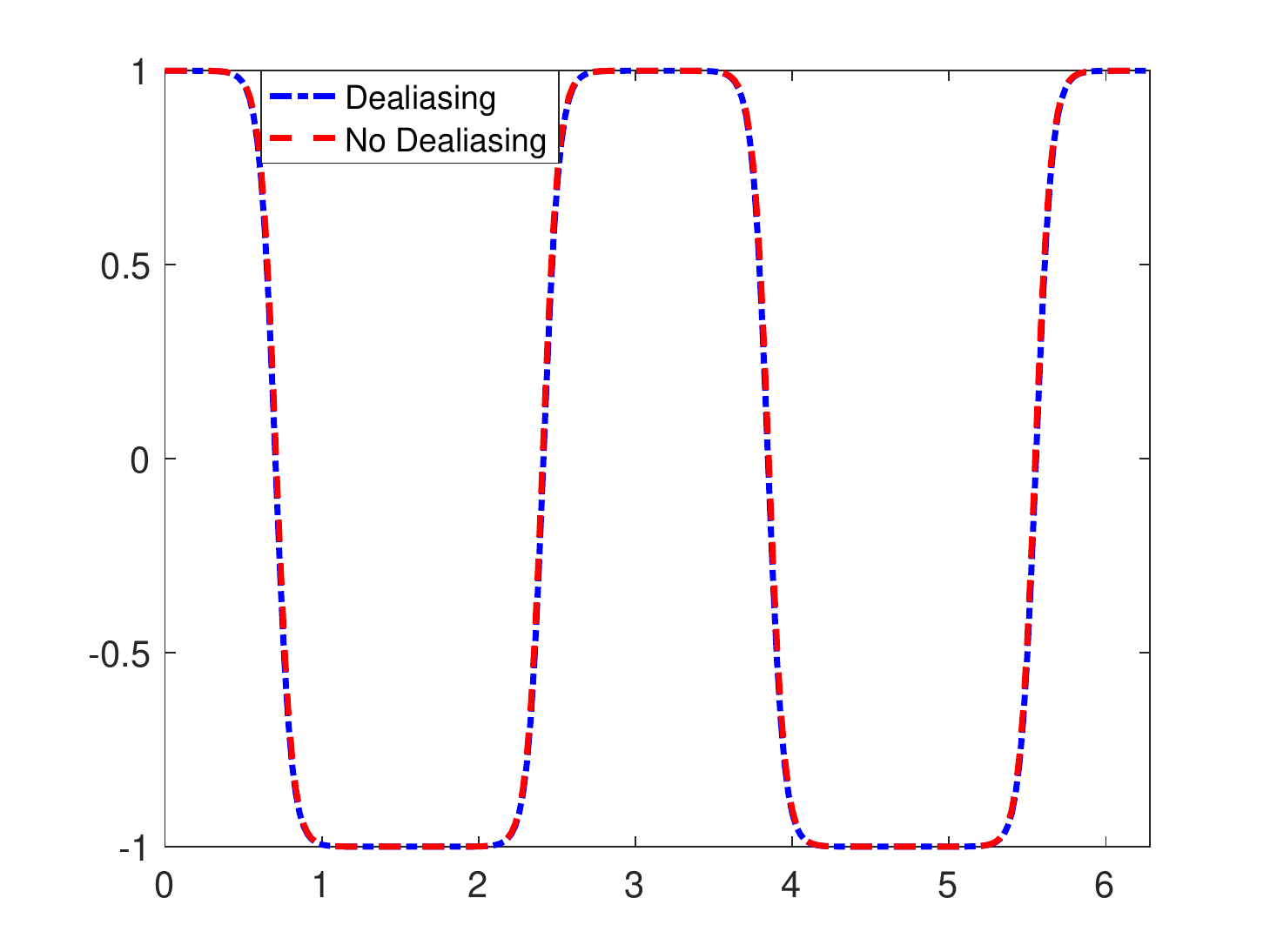}}
 % \centerline{ (b) $N=256$}
\caption{Example \ref{Exp6.1.0}.
Left: contour lines of  $u$ with the value of $-0.1$;
right: cut lines of the numerical solutions along $y=2\pi-x$ with $x\in[0,2\pi]$.  Top: $N=128$; bottom: $N=256$.}
\label{fig6.1}
\end{figure}

\begin{figure}
\includegraphics[width=16cm,height=9cm]{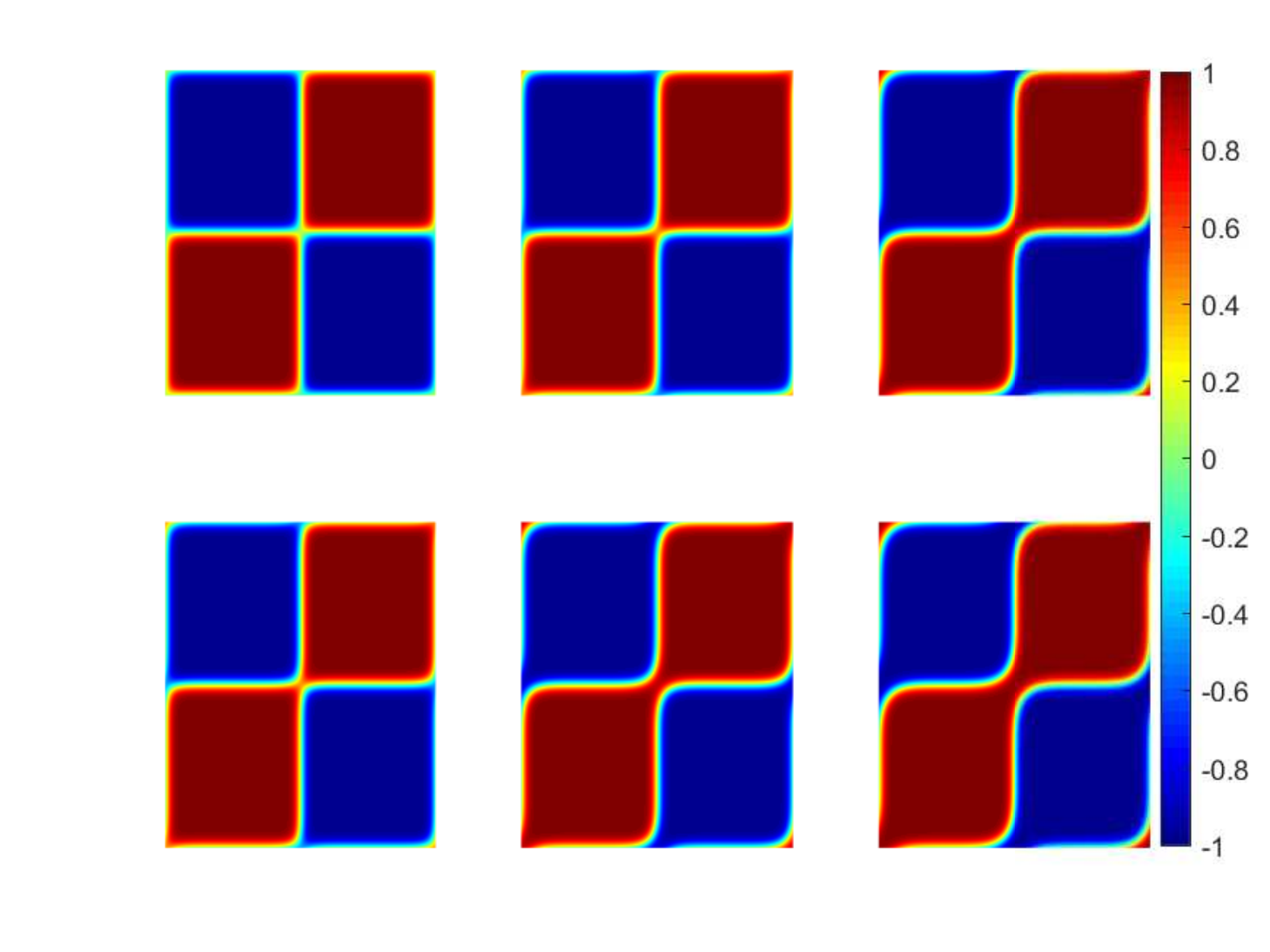}
\caption{Example \ref{Exp6.1.0}.   Snapshots of the numerical solutions at $t = 80$, $84$, and  $88$ derived by {\tt SAV-M(3)} with  (Top) and without (Bottom) the de-aliasing.}
\label{fig6.2}
\end{figure}

\begin{figure}
\centering
{\includegraphics[width=7.5cm]{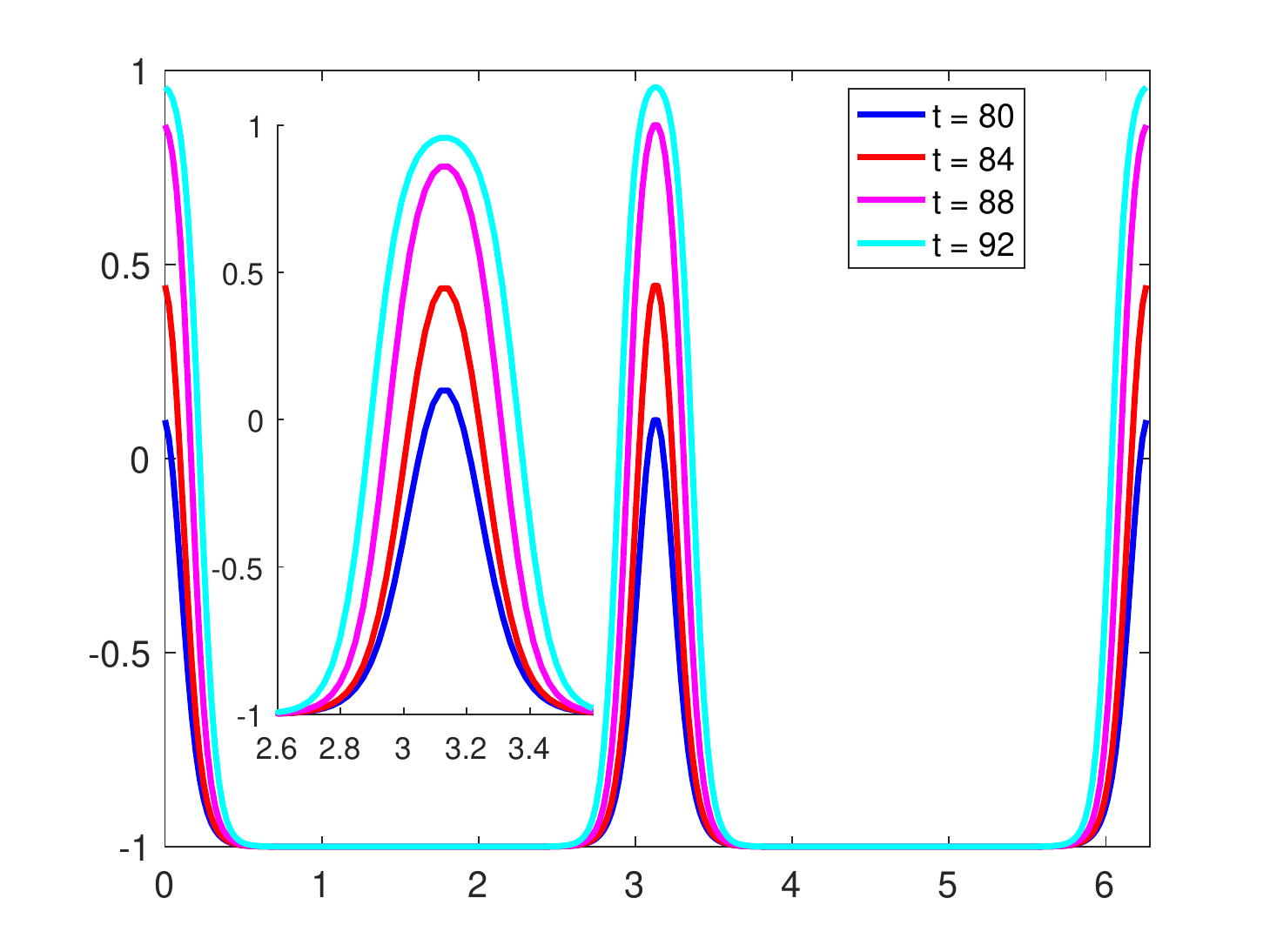}}
{\includegraphics[width=7.5cm]{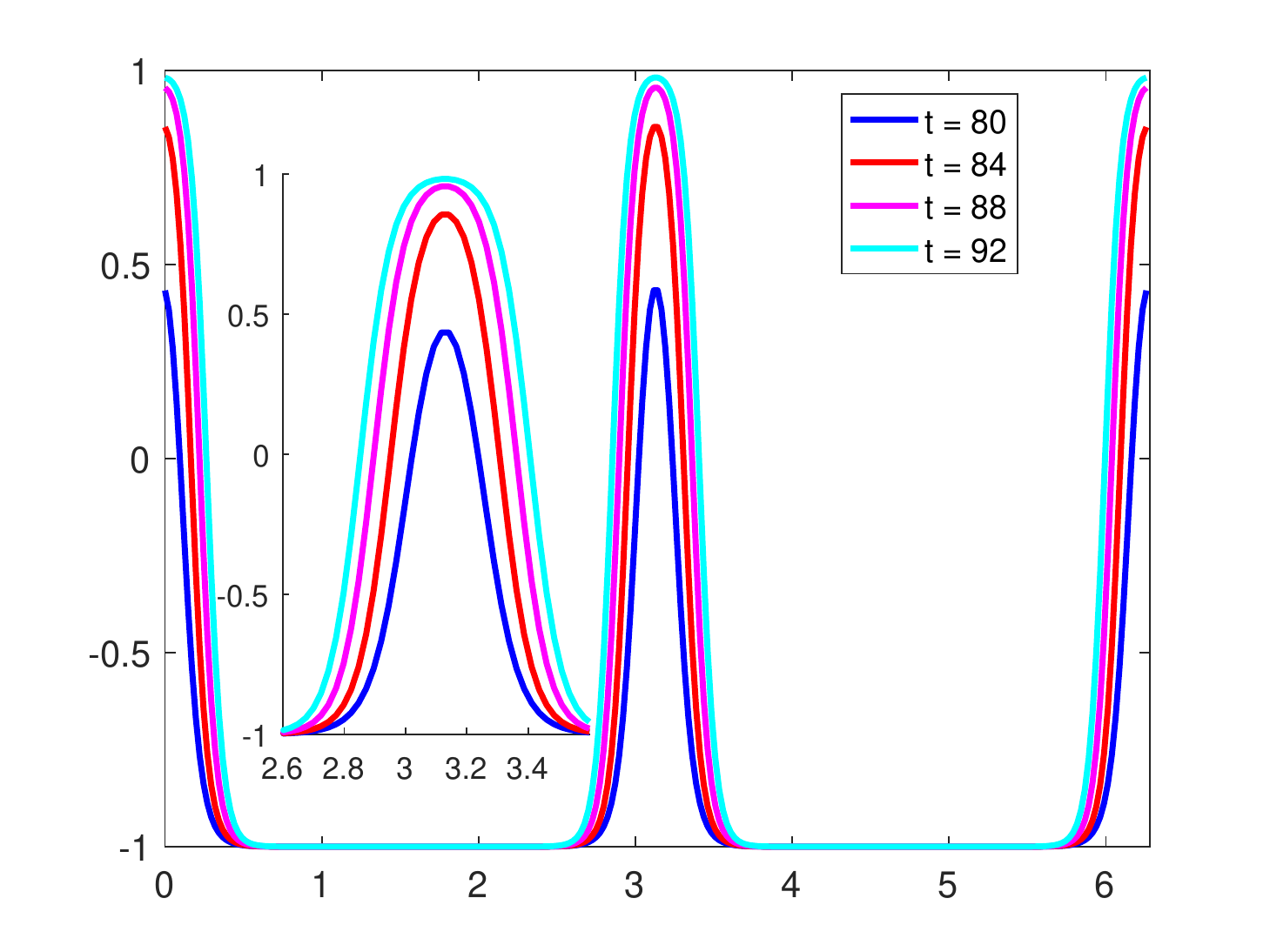}}
\caption{Example \ref{Exp6.1.0}.
Cut lines of the numerical solutions along $y=2\pi-x$, $x\in[0,2\pi]$, derived by {\tt SAV-M(3)}
with  (Left) and  without (Right) the de-aliasing.}
\label{fig6.3}
\end{figure}

\begin{example}   \label{Exp6.1.1}
This example is used to discuss the modified- and original-energy stabilities of {\tt SAV-M(1)}$\sim${\tt SAV-M(4)} and {\tt G-SAV-M(1)}$\sim${\tt G-SAV-M(4)} for the Allen-Cahn model \eqref{6.1.1}. For this purpose,  the domain $\Omega = (0,2\pi)\times (0,2\pi)$ is uniformly partitioned with  $N = 128$, the parameter $\epsilon=0.1$, and the initial value is chosen as  $u(x,y,0) = 0.1\times \mbox{\tt rand}(x,y) - 0.05$, where $\mbox{\tt rand}(\cdot,\cdot)$ generates a random number between $0$ and $1$.

Figure \ref{fig5.1} presents the discrete total modified-energy curves of {\tt SAV-M(1)}$\sim${\tt SAV-M(4)} and {\tt G-SAV-M(1)}$\sim${\tt G-SAV-M(4)} defined respectively in Theorem \ref{Thm3.1} and Remark \ref{rem:2.4}. One can see that all those modified-energy curves are  monotonically decreasing and consistent with the theoretical results.
Figure \ref{fig5.2} provides the discrete total original-energy curves of {\tt SAV-M(1)}$\sim${\tt SAV-M(4)} and {\tt G-SAV-M(1)}$\sim${\tt G-SAV-M(4)}, and Figure \ref{fig5.3} presents the numerical solution at $t = 200$ derived by {\tt G-SAV-M(4)} with $\tau = 2$ and $1$.
Those results show that the numerical solution shown in Figure \ref{fig5.3} with $\tau = 1$ is quite similar to that in \cite{TanZQ22}, but when $\tau = 2$, the solution is inaccurate or non-physical  and the original energy is not monotonically decreasing as shown in Figure \ref{fig5.2}. It indicates that  some time stepsize constraints are necessary to ensure the original-energy decay.    Remark \ref{remk6.1} will discuss  the time stepsize constraints of {\tt SAV-M(1)}$\sim${\tt SAV-M(4)} and {\tt G-SAV-M(1)}$\sim${\tt G-SAV-M(4)} for  the Allen-Cahn model \eqref{6.1.1} by using the stability regions of our SAV-GL schemes for the test equation.
%Moreover, Table \ref{tab:6.2} shows the CPU times (in seconds) of  {\tt SAV-M(1)}$\sim${\tt SAV-M(3)} and {\tt G-SAV-M(1)}$\sim${\tt G-SAV-M(3)} with $\tau = 0.1$ for the Allen-Cahn model \eqref{6.1.1}, and it can be observed that {\tt G-SAV-M(1)}$\sim${\tt G-SAV-M(3)} have some advantages in computational efficiency.  $tau=???$
\end{example}

\begin{figure}
\centering
{\includegraphics[width=7.5cm]{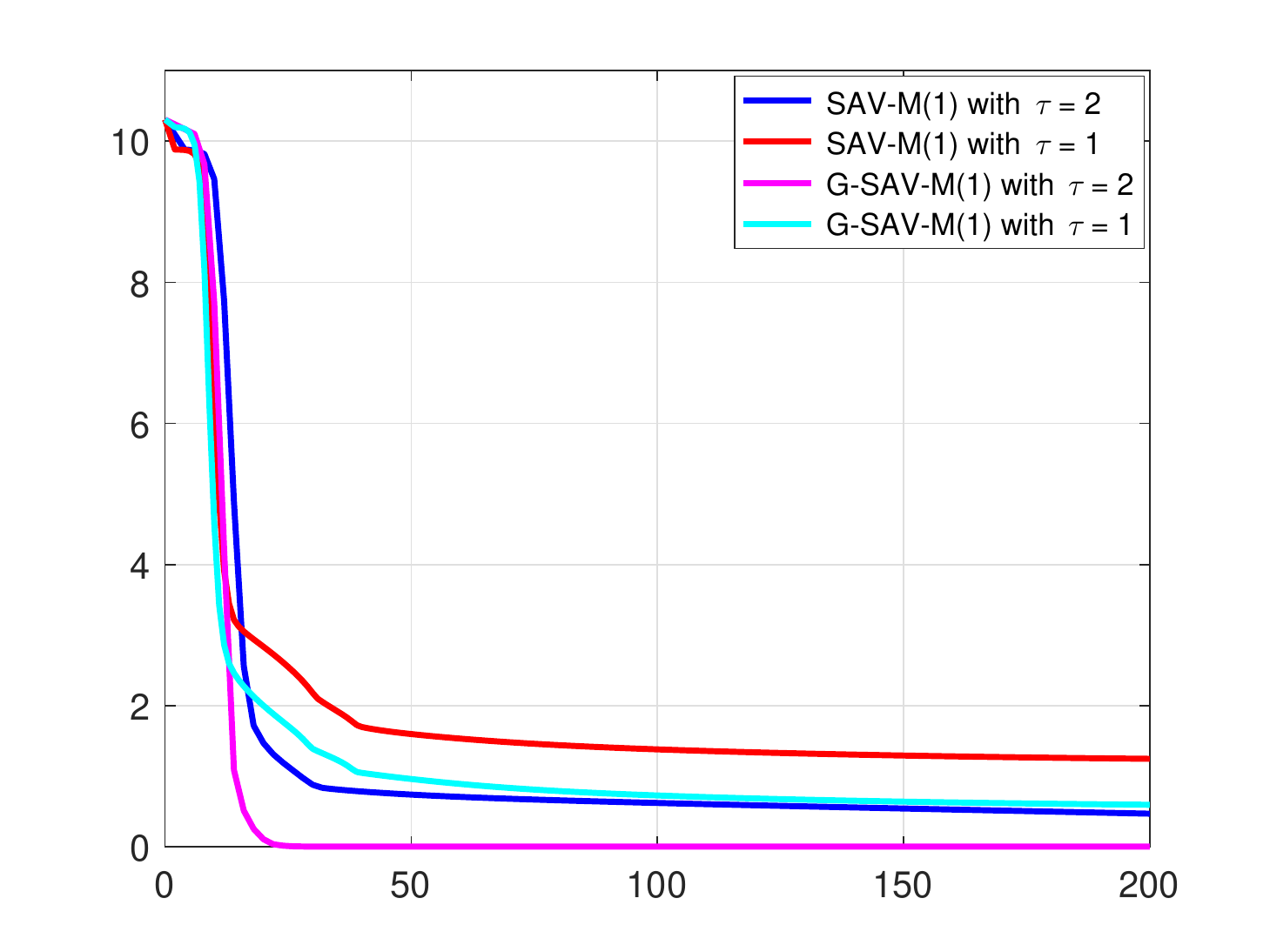} }
{\includegraphics[width=7.5cm]{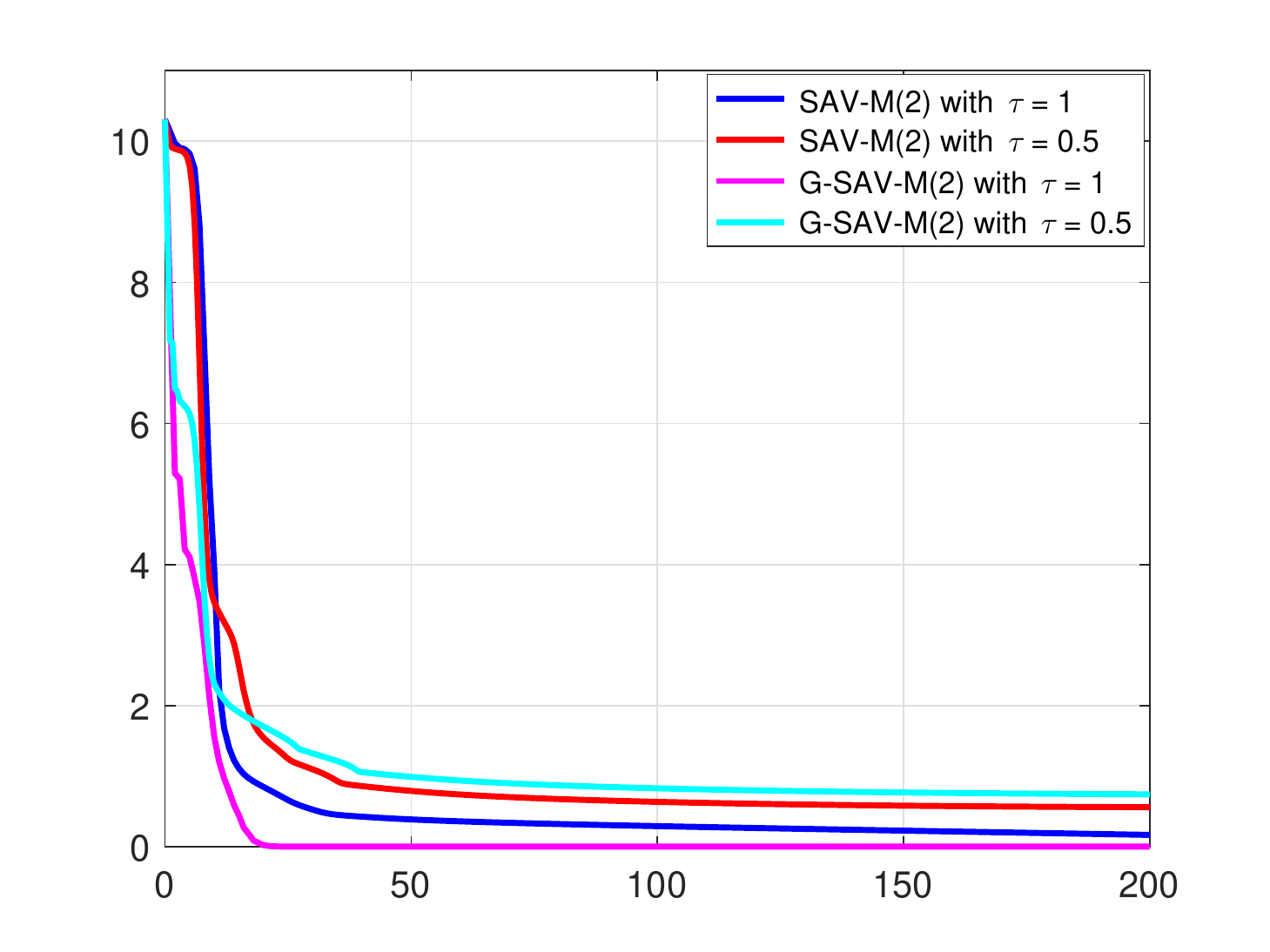} }

{\includegraphics[width=7.5cm]{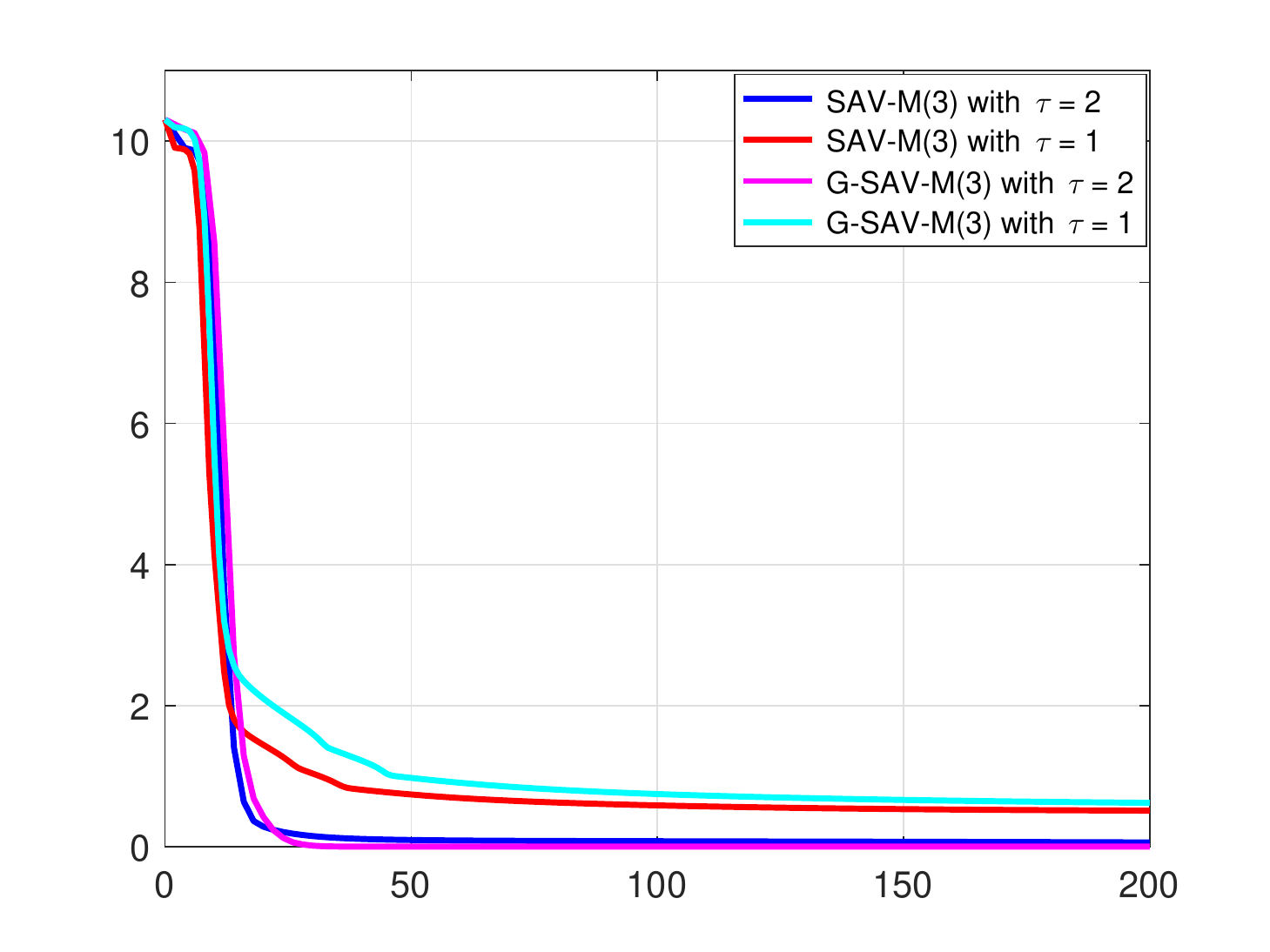} }
{\includegraphics[width=7.5cm]{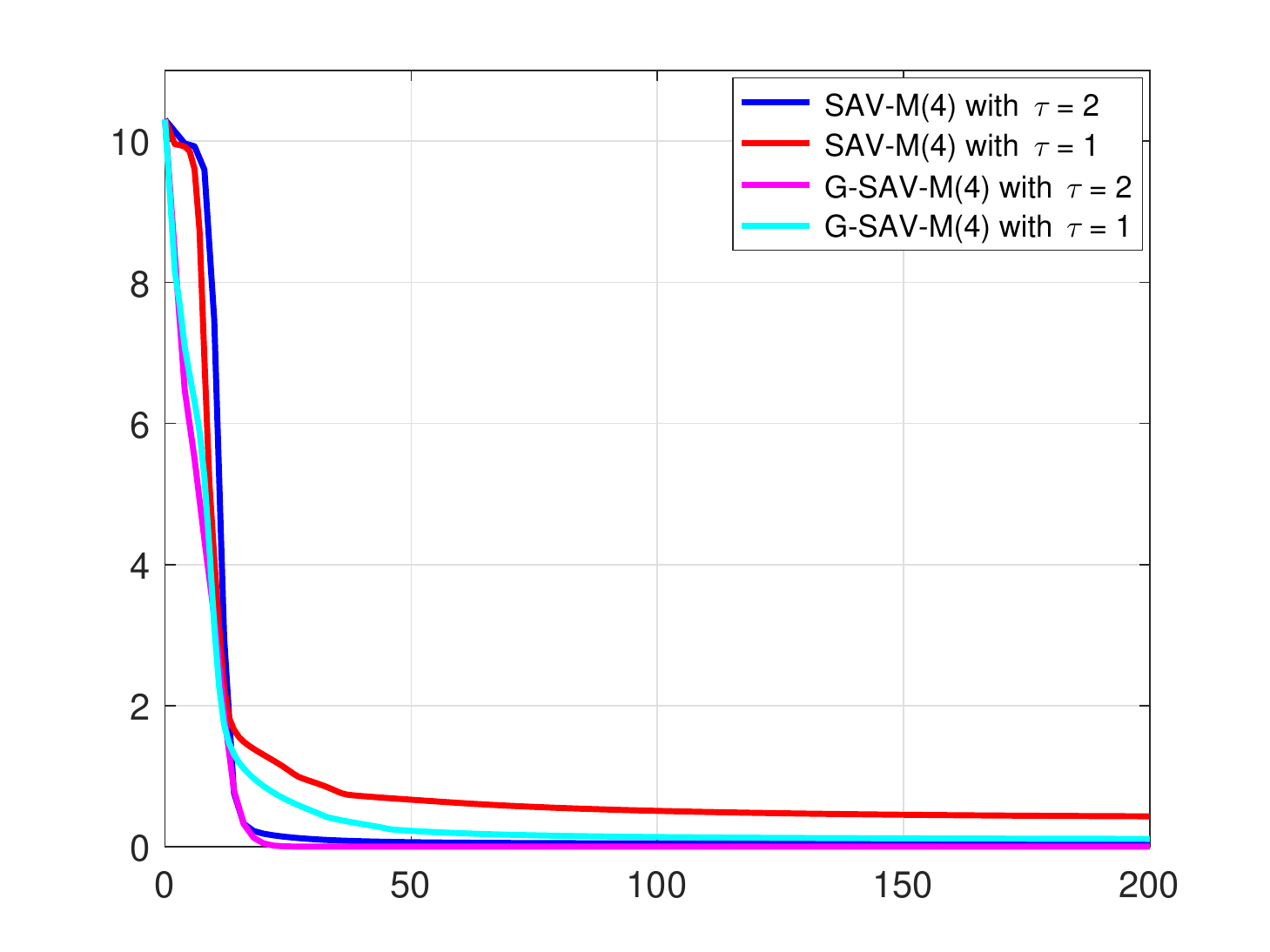} }
\caption{Example \ref{Exp6.1.1}. The time evolution of the total modified-energies of {\tt SAV-M(1)}$\sim${\tt SAV-M(4)} and {\tt G-SAV-M(1)}$\sim${\tt G-SAV-M(4)} for the Allen-Cahn model \eqref{6.1.1}.}
\label{fig5.1}
\end{figure}

\begin{figure}
\centering
{\includegraphics[width=7.5cm]{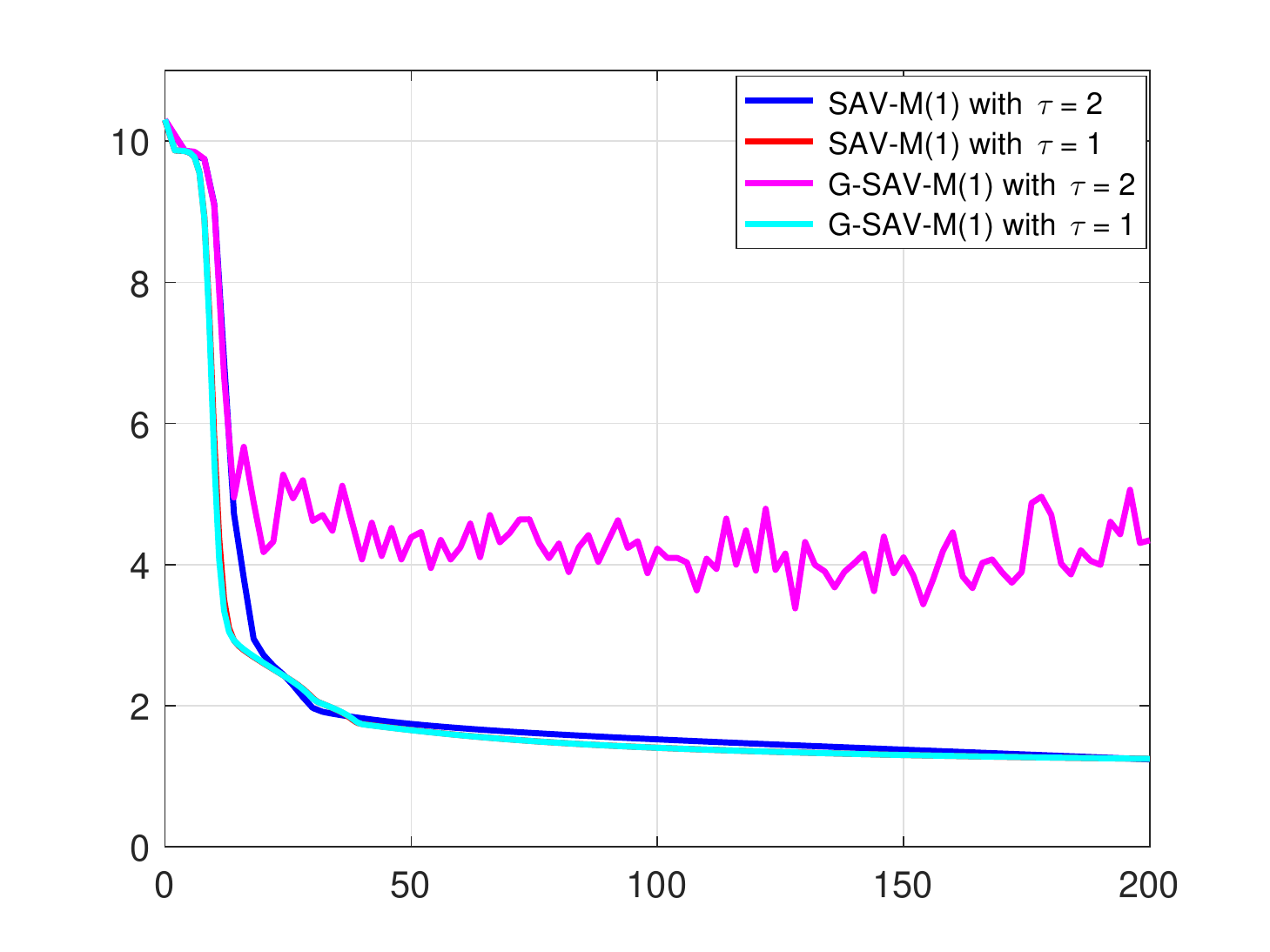} }
{\includegraphics[width=7.5cm]{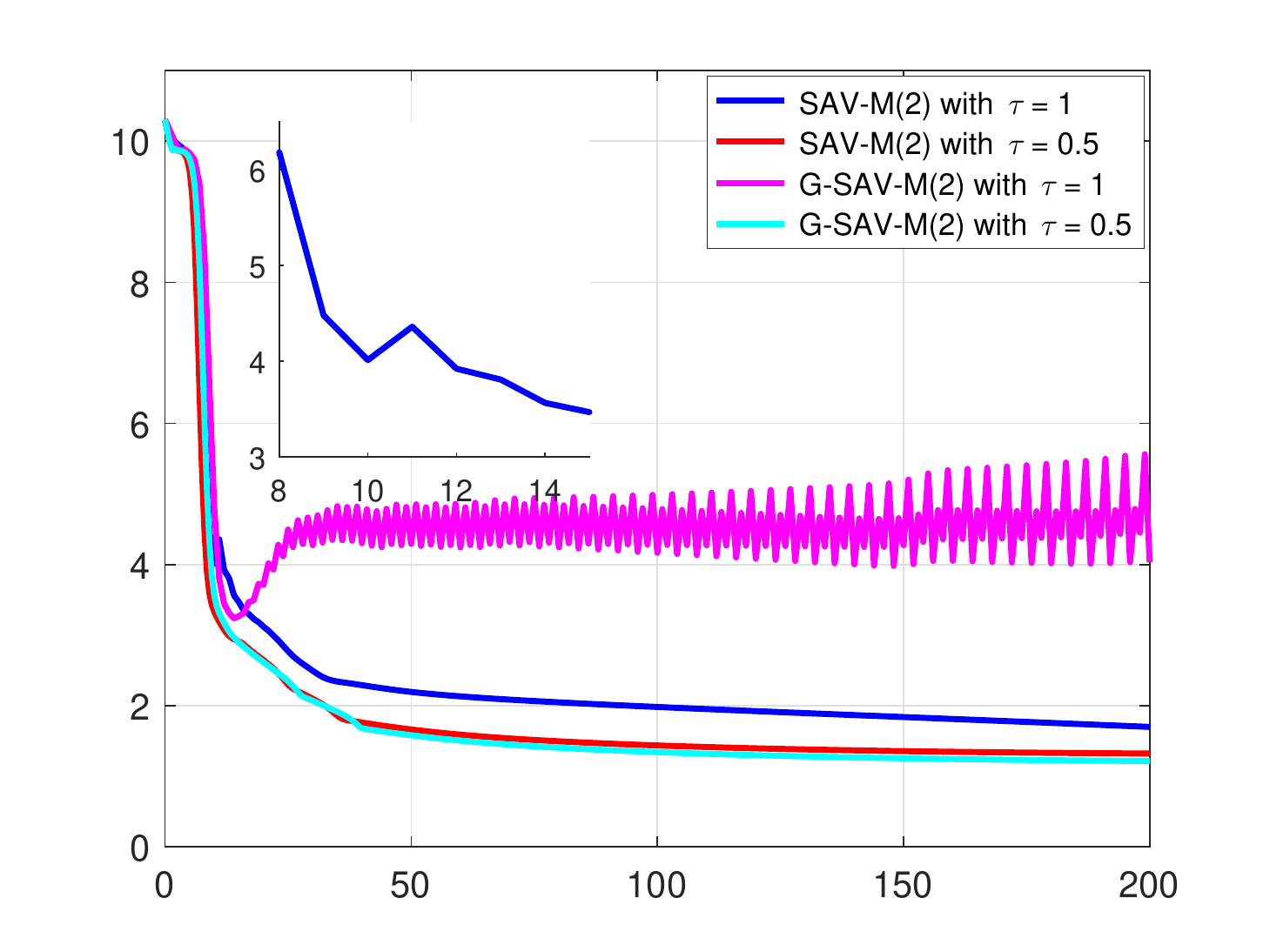} }

{\includegraphics[width=7.5cm]{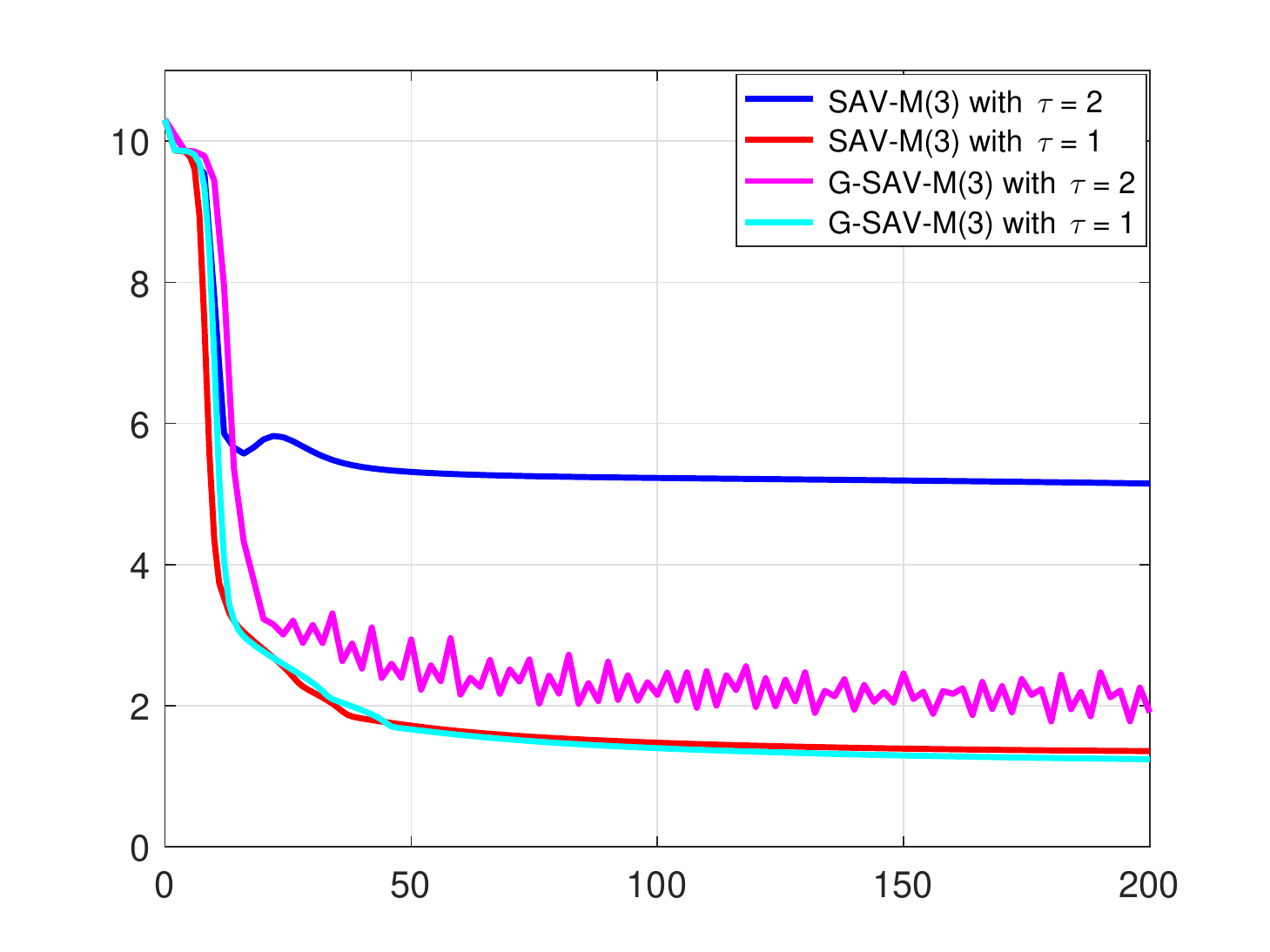} }
{\includegraphics[width=7.5cm]{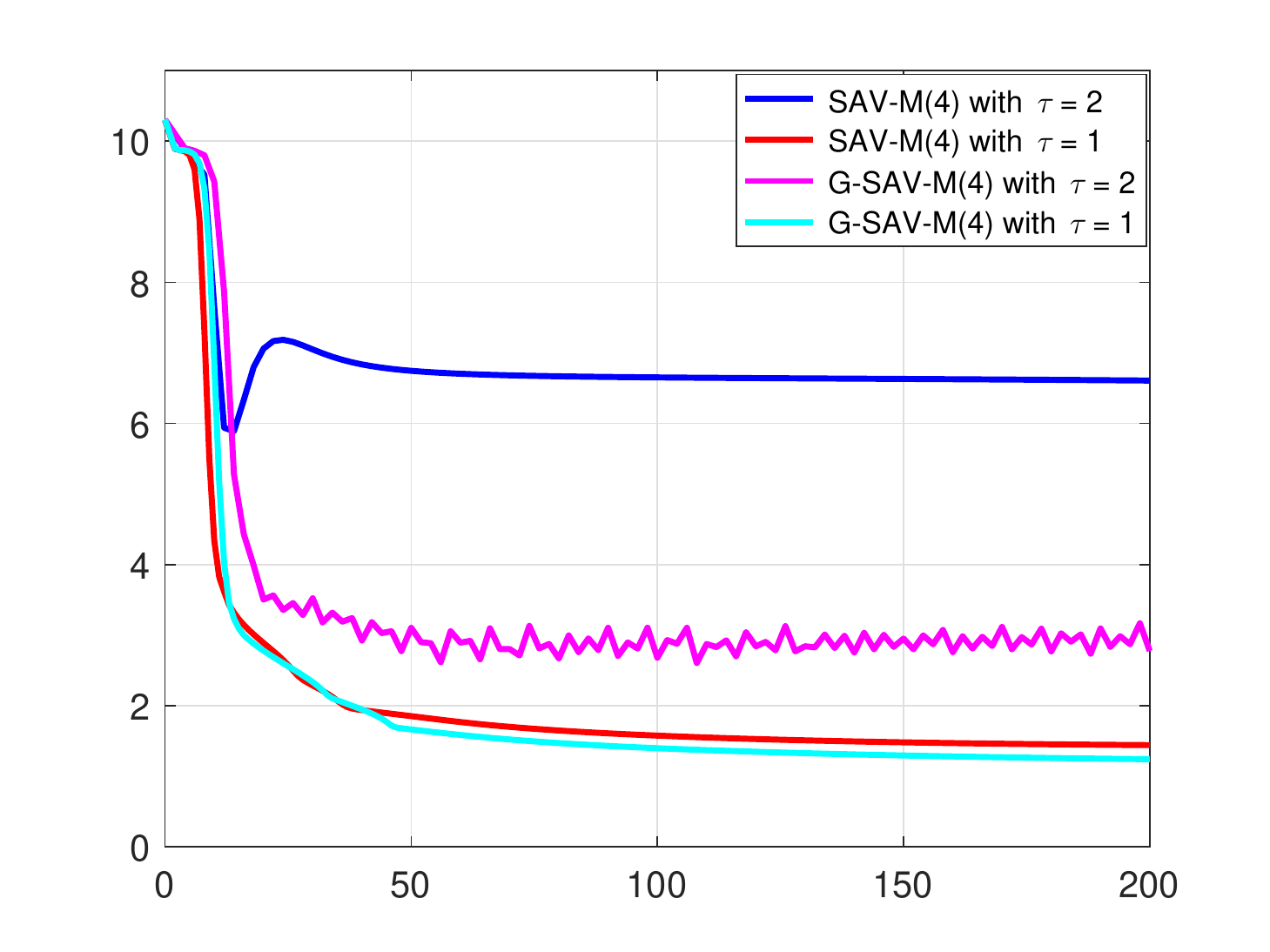} }
\caption{Same as Figure \ref{fig5.1}, except for the discrete total original-energies.
%Example \ref{Exp6.1.1}. The time evolution of the discrete total original-energies of {\tt SAV-M(1)}$\sim${\tt SAV-M(4)} and {\tt G-SAV-M(1)}$\sim${\tt G-SAV-M(4)} for the Allen-Cahn model \eqref{6.1.1}.
}
\label{fig5.2}
\end{figure}

\begin{figure}
\begin{minipage}{0.48\linewidth}
  \centerline{\includegraphics[width=7cm,height=5cm]{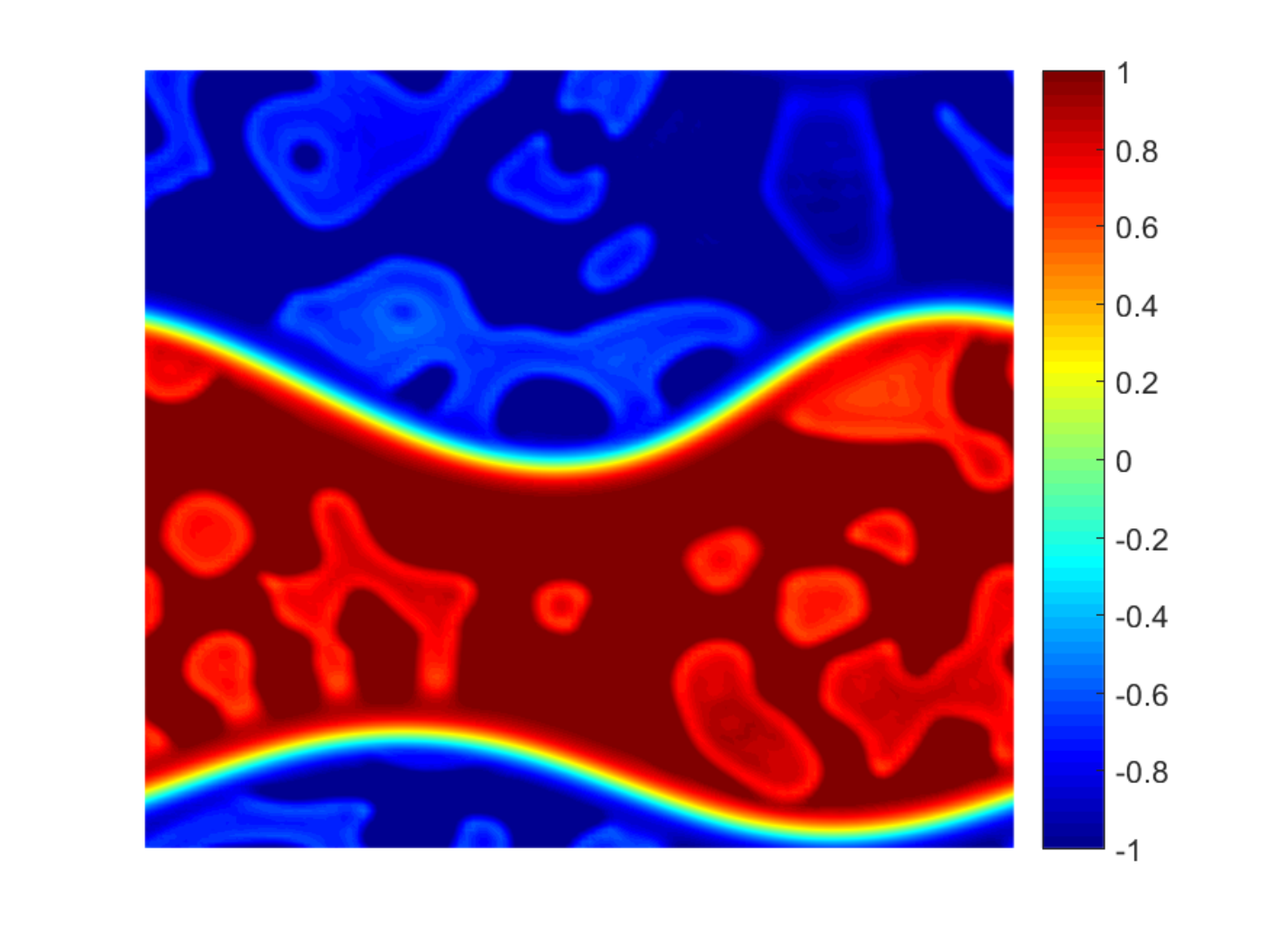}}
%  \centerline{ (a) $\tau = 2$}
\end{minipage}
\hfill
\begin{minipage}{0.48\linewidth}
  \centerline{\includegraphics[width=7cm,height=5cm]{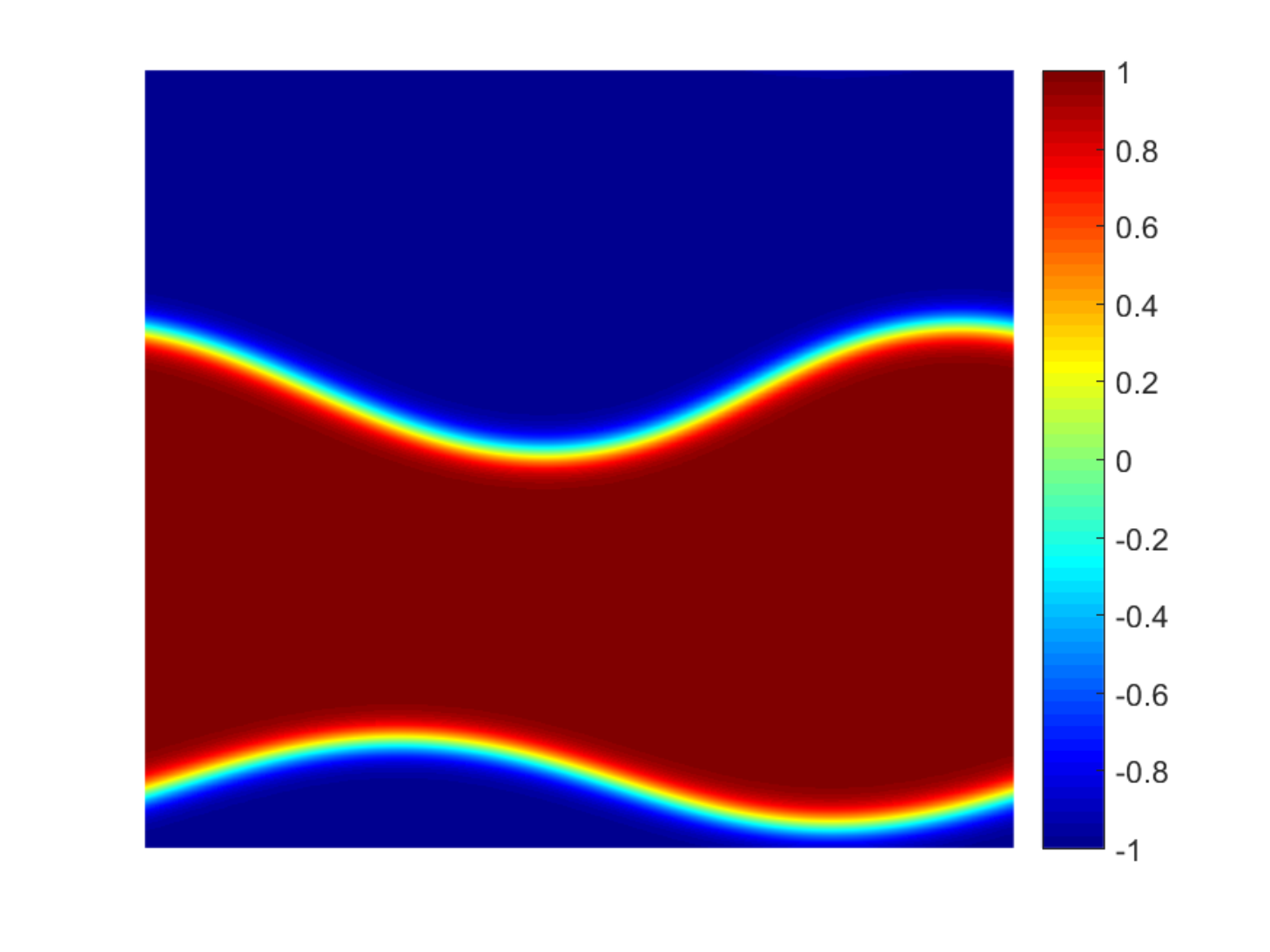}}
 % \centerline{ (b) $\tau = 1$}
\end{minipage}
\caption{Example \ref{Exp6.1.1}. Numerical solutions at $t=200$ computed by {\tt G-SAV-M(4)} with $\tau = 2$ (Left) and $1$ (Right), respectively.}
\label{fig5.3}
\end{figure}

\begin{remark}  \label{remk6.1}
Applying the Fourier pseudo-spectral method to the Allen-Cahn model \eqref{6.1.1} yields the ODE system %in Fourier space
\begin{align}   \label{6.1.4}
\frac{d \hat{u}_{k,l}}{d t } = - \frac{4\pi^2\epsilon^2}{L^2}\left(k^2 + l^2 \right) \hat{u}_{k,l} + \hat{u}_{k,l} - \hat{w}_{k,l},~~~~ (k,l) \in \widehat{\mathbb{S}}_N,
\end{align}
where {$\widehat{\mathbb{S}}_N = \left\{ (k,l)\in\mathbb{Z}^2 | -\frac{N}{2}+1\le k,l \le \frac{N}{2}\right\}$,} $\{\hat{w}_{k,l}\}$ are the discrete Fourier coefficients of the cubic term $u^3$ and given by
\begin{align}   \label{6.1.5}
\widehat{w}_{kl} = \frac{1}{N^4}\sum_{(m,n),(p,q)\in\widehat{\mathbb{S}}_N}  \widehat{u}_{mn}\widehat{u}_{pq} \widehat{u}_{k-m-p,l-n-q}.
\end{align}
The system \eqref{6.1.4} may be viewed as  the test equation \eqref{8.3.1} with $\xi = - \epsilon^2\left(k^2 + l^2 \right)$ and %$\zeta $ being computed by
\begin{align*}
\zeta = 1 - \frac{3}{N^4} \sum_{(m,n)\in\widehat{\mathbb{S}}_N} \hat{u}_{m,n} \hat{u}_{-m,-n} = 1 - \frac{3}{N^4} \sum_{(m,n)\in\widehat{\mathbb{S}}_N} |\hat{u}_{m,n}|^2 = 1 - \frac{3}{N^2} \sum_{(i,j)\in\mathbb{S}_N} |u_{i,j}|^2,
\end{align*}
where $\mathbb{S}_N = \left\{ (i,j)\in\mathbb{Z}^2 | 1\le i,j \le N\right\}$, $\hat{u}_{-m,-n} = \bar{\hat{u}}_{m,n}$ with $\bar{\hat{u}}_{m,n}$ being the complex conjugate of $\hat{u}_{-m,-n}$ and  Parseval's theorem have been used.
For Example \ref{Exp6.1.1}, Figure \ref{fig5.4} plots the curve $\psi^n = \frac{3}{N^2} \sum\limits_{(i,j)\in\mathbb{S}_N} |u_{i,j}^n|^2$ derived by {\tt SAV-M(1)}$\sim${\tt SAV-M(4)} and {\tt G-SAV-M(1)}$\sim${\tt G-SAV-M(4)}. It shows that $\psi^n \lesssim 2.7$ so that $\zeta\gtrsim -1.7$.
Thus, one can take $\zeta \approx -1.7$ and then {estimate} the time stepsize  according to  \ref{Appx3}.
Specifically,  when the parameters $(\alpha_0,\beta_0,\beta_2) = (0,0,1)$,  $$\tau  < \min\left\{\frac{2}{(2\beta_2-1)\xi-(2\beta_2+1)\zeta}: \zeta < \frac{2\beta_2-1}{2\beta_2+1}\xi \right\} = \left\{ \frac{2}{\max(\xi-3\zeta)}: \zeta < \frac{1}{3}\xi\right\},$$ which implies $\tau \lesssim 0.3922$ since $\max\{\xi-3\zeta: \zeta < \frac{1}{3}\xi \} = -3\zeta$;
when  $(\alpha_0,\beta_0,\beta_2) = (-1/3,3/12,3/4)$,
 $$\tau < \min\left\{\frac{1+\alpha_0}{(2\beta_0+\alpha_0)\xi-\zeta}: \zeta < (2\beta_0+\alpha_0)\xi \right\}  = \left\{ \frac{4}{3\max(\xi-2\zeta)}: \zeta < \frac{1}{2}\xi\right\},$$
 which  gives $\tau  \lesssim 0.3922$ by using $\max\{\xi-2\zeta: \zeta < \frac{1}{2}\xi \} = -2\zeta$;
when  $(\alpha_0,\beta_0,\beta_2) = (1/3,0,2/3)$,
$$\tau < \min\left\{\frac{1+\alpha_0}{(2\beta_0+\alpha_0)\xi-\zeta}: \zeta < (2\beta_0+\alpha_0)\xi \right\} = \left\{ \frac{4}{\max(\xi-3\zeta)}: \zeta < \frac{1}{3}\xi\right\}, $$
so that $\tau < -\frac{4}{3\zeta} \lesssim 0.7843$;
and when $(\alpha_0,\beta_0,\beta_2) = (1/3,-1/6,1/2)$,
$$\tau < - \frac{1+\alpha_0}{\zeta} \lesssim 0.7843.$$
Note that the above  time {stepsize} constraints for {\tt SAV-M(1)}$\sim${\tt SAV-M(4)} and {\tt G-SAV-M(1)}$\sim${\tt G-SAV-M(4)} are sufficient and slightly more severer than
 them used in the numerical experiments on ensuring the original-energy decay of  Example \ref{Exp6.1.1}; and  although the time discretization with $(\alpha_0,\beta_0,\beta_2) = (1/3,-1/6,1/2)$ is not algebraically stable,  the time stepsizes for  {\tt SAV-M(4)} and {\tt G-SAV-M(4)} are comparable and both two schemes can provide good numerical results of \eqref{6.1.1}.  It is worth noting that for the Allen-Cahn model \eqref{6.1.1}, one can use the maximum principle to give the estimation $\zeta \approx -2$, and then use  \ref{Appx3} to get certain time stepsize conditions, which are also sufficient and have no big difference from the above estimations.
\end{remark}
\begin{figure}
\begin{minipage}{0.48\linewidth}
  \centerline{\includegraphics[width=7cm,height=5cm]{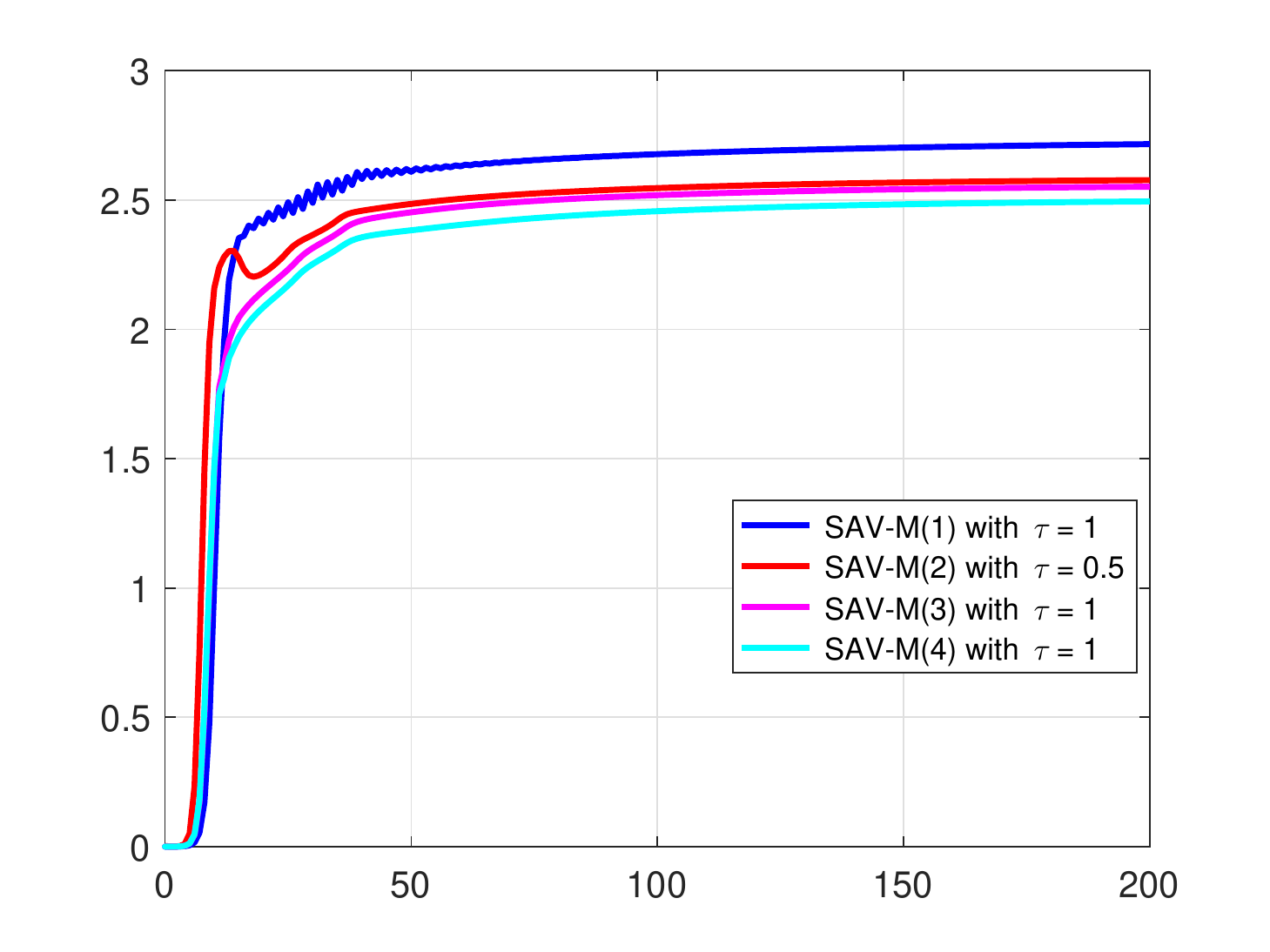}}
\end{minipage}
\hfill
\begin{minipage}{0.48\linewidth}
  \centerline{\includegraphics[width=7cm,height=5cm]{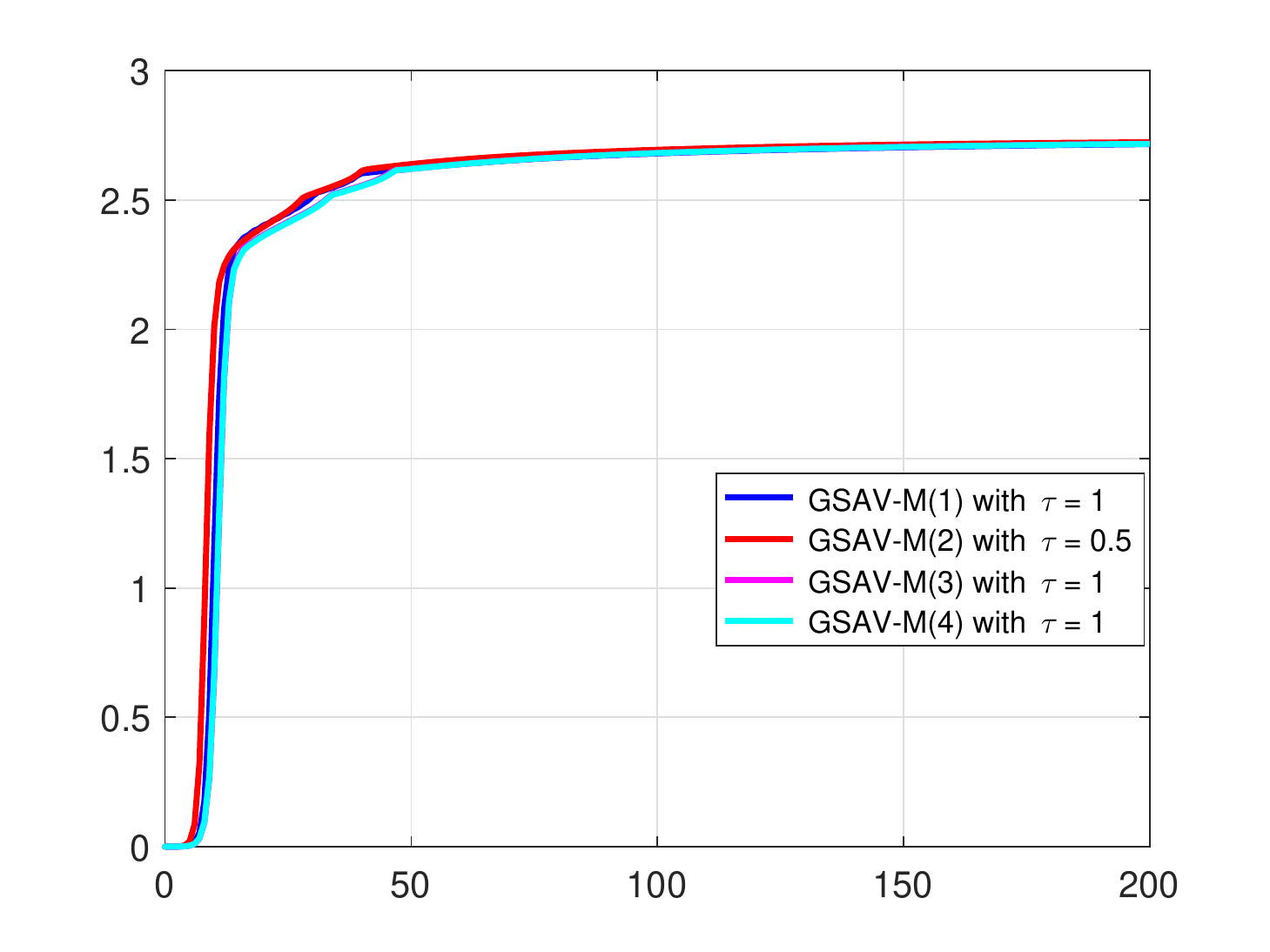}}
\end{minipage}
\caption{Example \ref{Exp6.1.1}.  $\psi^n$ derived by {\tt SAV-M(1)}$\sim${\tt SAV-M(4)} (Left) and {\tt G-SAV-M(1)}$\sim${\tt G-SAV-M(4)} (Right).}
\label{fig5.4}
\end{figure}

\begin{remark}
For the SAV-GL scheme \eqref{3.1.6}, the term $\bar{\psi}^n = \frac{z^{n+\kappa}}{\sqrt{\mathcal{E}_1(\bar{u}^{n+\kappa})+C_0}}$ should  be precisely considered in discussing the time stepsize constraints, theoretically. 
However, unfortunately, it is difficult to   estimate exactly  $\bar{\psi}^n$, even if it is equal to one at the continuous level.
Figure \ref{fig5.1.5} plots $\bar{\psi}^n$ derived by {\tt SAV-M(1)}$\sim${\tt SAV-M(4)} with $\tau = 1$ and $0.1$, from which one can observe $\bar{\psi}^n\lesssim 1$.
This is the reason why we  take $\bar{\psi}^n \approx 1$  for convenience and derive the time stepsize constraints for {\tt SAV-M(1)}$\sim${\tt SAV-M(4)} in Remark \ref{remk6.1}.

% In Remark \ref{remk6.1}, the time stepsize constraints for {\tt SAV-M(1)}$\sim${\tt SAV-M(4)}  are derived by ignoring the term $\bar{\psi}^n = \frac{z^{n+\kappa}}{\sqrt{\mathcal{E}_1(\bar{u}^{n+\kappa})+C_0}}$, which is equal to one at the continuous level but  difficult to estimate in discrete case.  Figure \ref{fig5.1.5} gives the curves of $\bar{\psi}^n$ derived by {\tt SAV-M(1)}$\sim${\tt SAV-M(4)}, from which one can observe $\bar{\psi}^n\lesssim 1$.
\end{remark}

\begin{figure}[!h]
\begin{minipage}{0.48\linewidth}
  \centerline{\includegraphics[width=7cm,height=5cm]{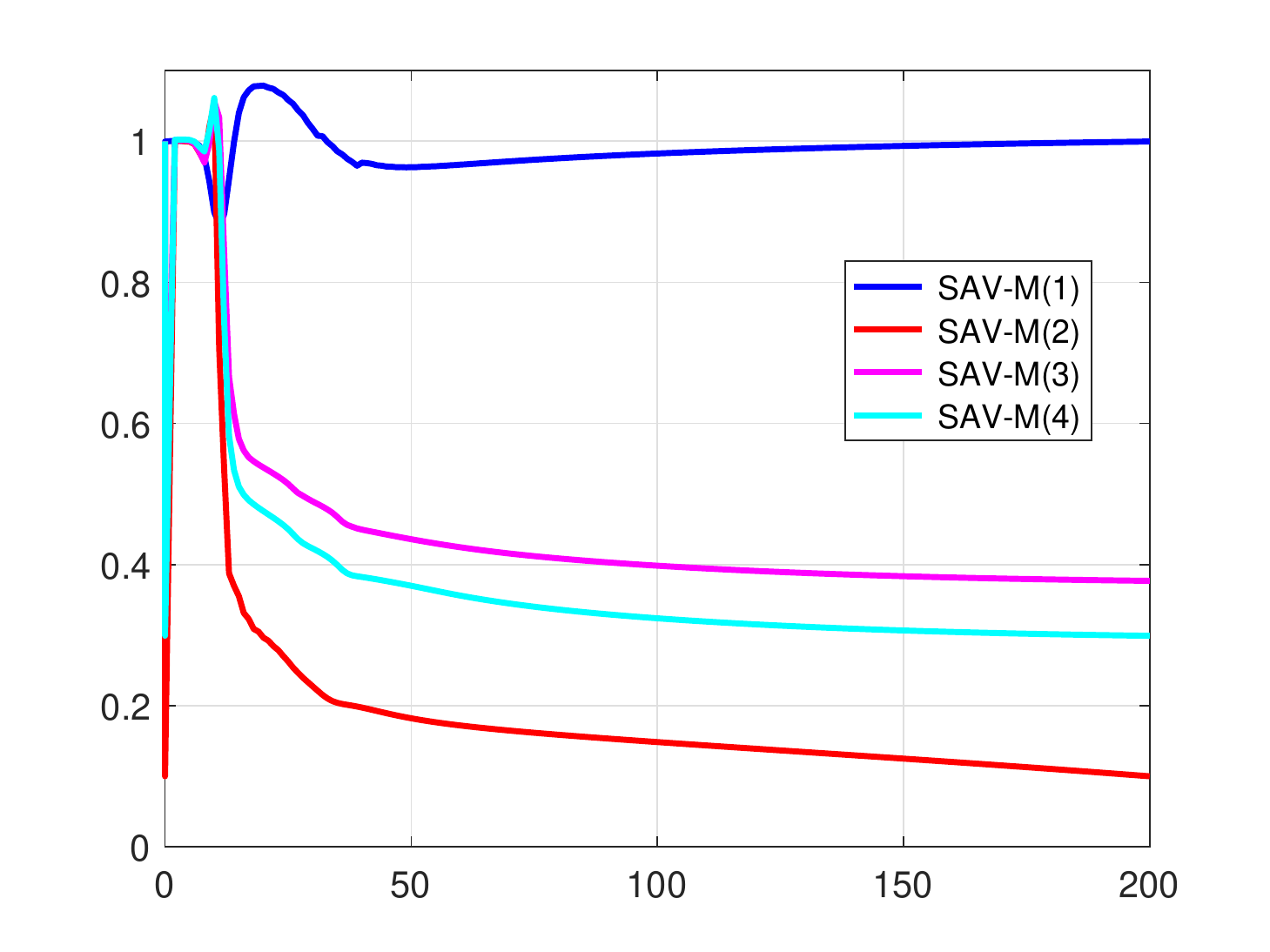}}
%    \centerline{ (a) $\tau = 1$}
\end{minipage}
\hfill
\begin{minipage}{0.48\linewidth}
  \centerline{\includegraphics[width=7cm,height=5cm]{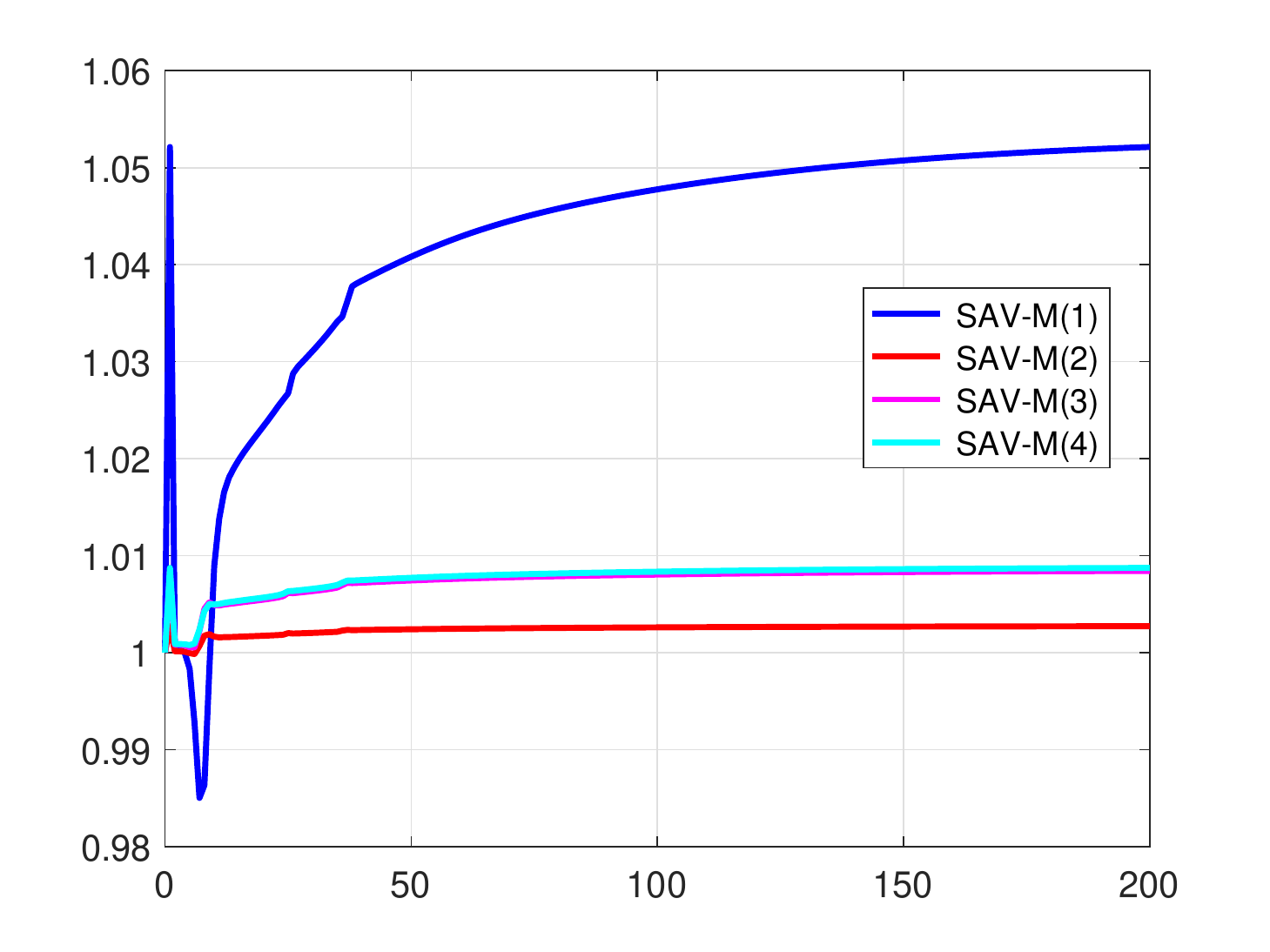}}
  %  \centerline{ (b) $\tau = 0.1$}
\end{minipage}
\caption{Example \ref{Exp6.1.1}.   $\bar{\psi}^n$   derived by {\tt SAV-M(1)}$\sim${\tt SAV-M(4)} with $\tau = 1$ (Left) and $0.1$ (Right), respectively.}
\label{fig5.1.5}
\end{figure}

\subsection{Cahn-Hilliard model}

The Cahn-Hilliard  model
\begin{align}   \label{6.2.1}
\frac{\partial u}{\partial t} = \Delta \left( -\epsilon^2\Delta u + u^3 - u \right),~~~\vec{x}\in \Omega,~t>0,
\end{align}
is derived from the $H^{-1}$ gradient flow of the free energy \eqref{6.1.3}, and  describes the complicated phase separation and coarsening phenomena \cite{Cahn58}.

In order to apply {\tt SAV-M(1)}$\sim${\tt SAV-M(4)} and {\tt G-SAV-M(1)}$\sim${\tt G-SAV-M(4)} to the Cahn-Hilliard model \eqref{6.2.1}, the operators $\mathcal{L}$, $\mathcal{G}$ and  the energy $\mathcal{E}_1(u)$ are taken as
\[ \mathcal{G} = \Delta,~~~ \mathcal{L} = - \epsilon^2 \Delta, ~~~ \mathcal{E}_1(u) = \int_{\Omega} \frac{1}{4}\left(u^2 - 1 \right)^2 dx. \]

\begin{example} \label{Exp6.2.0}
This example is used to check the  effectiveness  of the de-aliasing  by zero-padding for  \eqref{6.2.1}. We take $\epsilon = 0.1$,
and the initial data  $u(x,y,0) = 0.05\sin(x)\sin(y)$.
The domain $\Omega = (0,2\pi)\times (0,2\pi)$ is partitioned with  $N = 128$ or $256$, and {\tt SAV-M(3)} is used.

Figure \ref{fig6.4} gives the contour lines and cut lines of the numerical solutions at $t = 200$ derived by {\tt SAV-M(3)} with or without de-aliasing.  Visible difference between the numerical solutions with $N = 128$ can be observed, but  the  difference
 is indistinguishable when $N = 256$.
For $N = 256$,  Figure \ref{fig6.5} presents the
snapshots of the numerical solutions at $t = 7.5$, $8$, and  $8.5$, while Figure \ref{fig6.6} shows the cut lines of numerical solutions at $t = 7.5, 8, 8.5$, and  $9$.
 It is shown that there are some slight differences between those numerical solutions.
\end{example}

\begin{figure}
\centering
{\includegraphics[width=7.5cm]{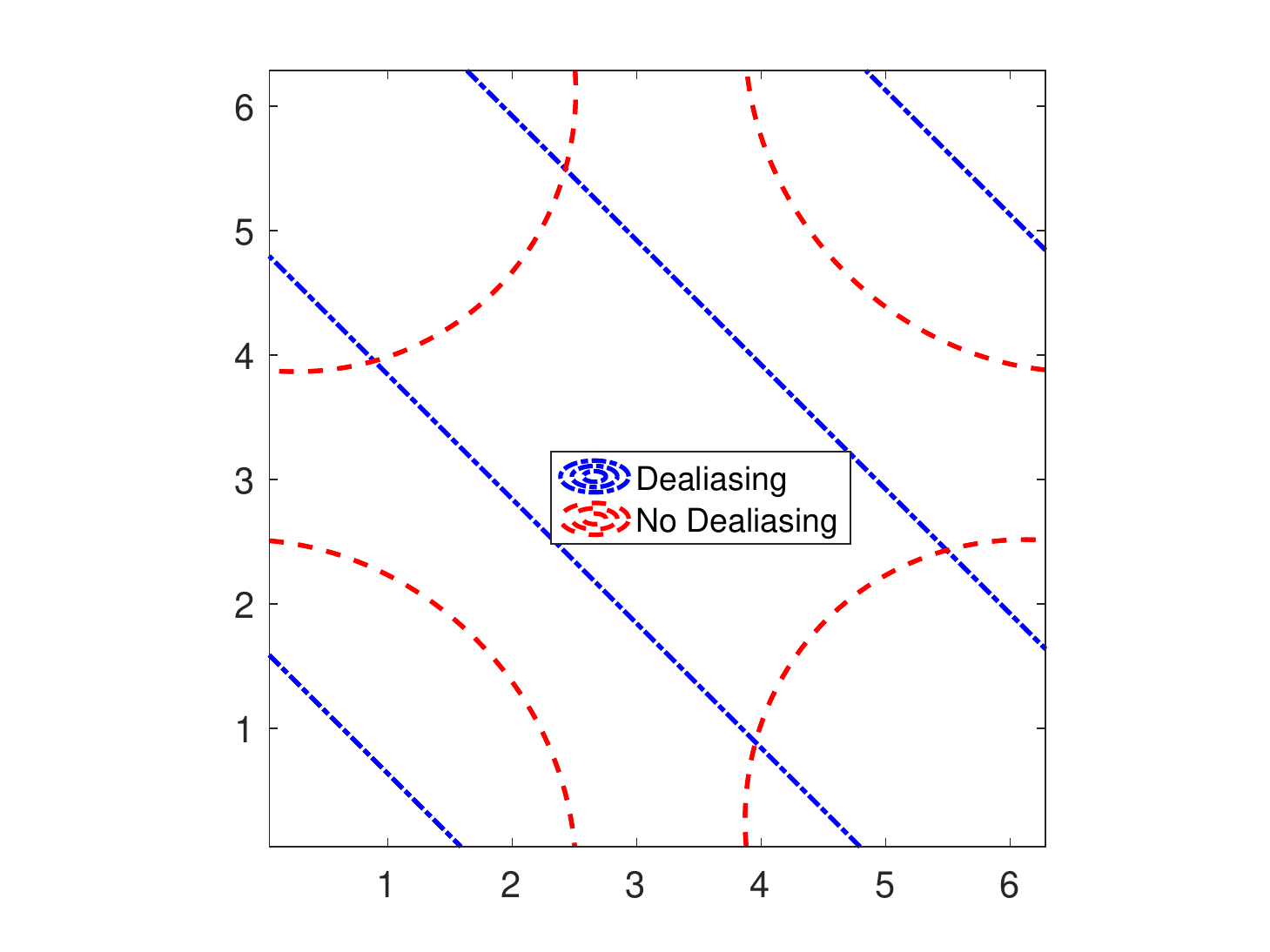}}
{\includegraphics[width=7.5cm]{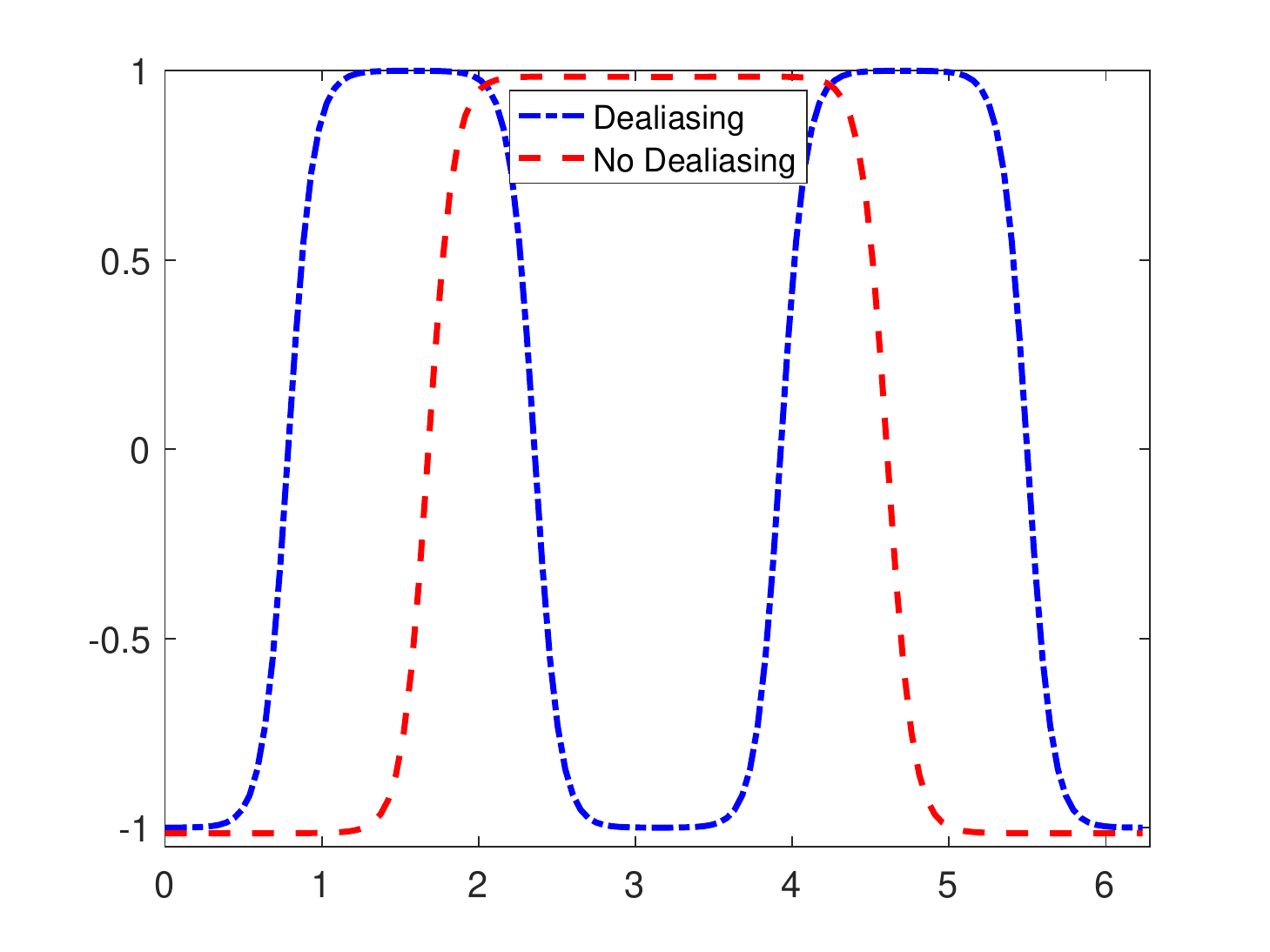}}
 % \centerline{ (a) $N=128$}

{\includegraphics[width=7.5cm]{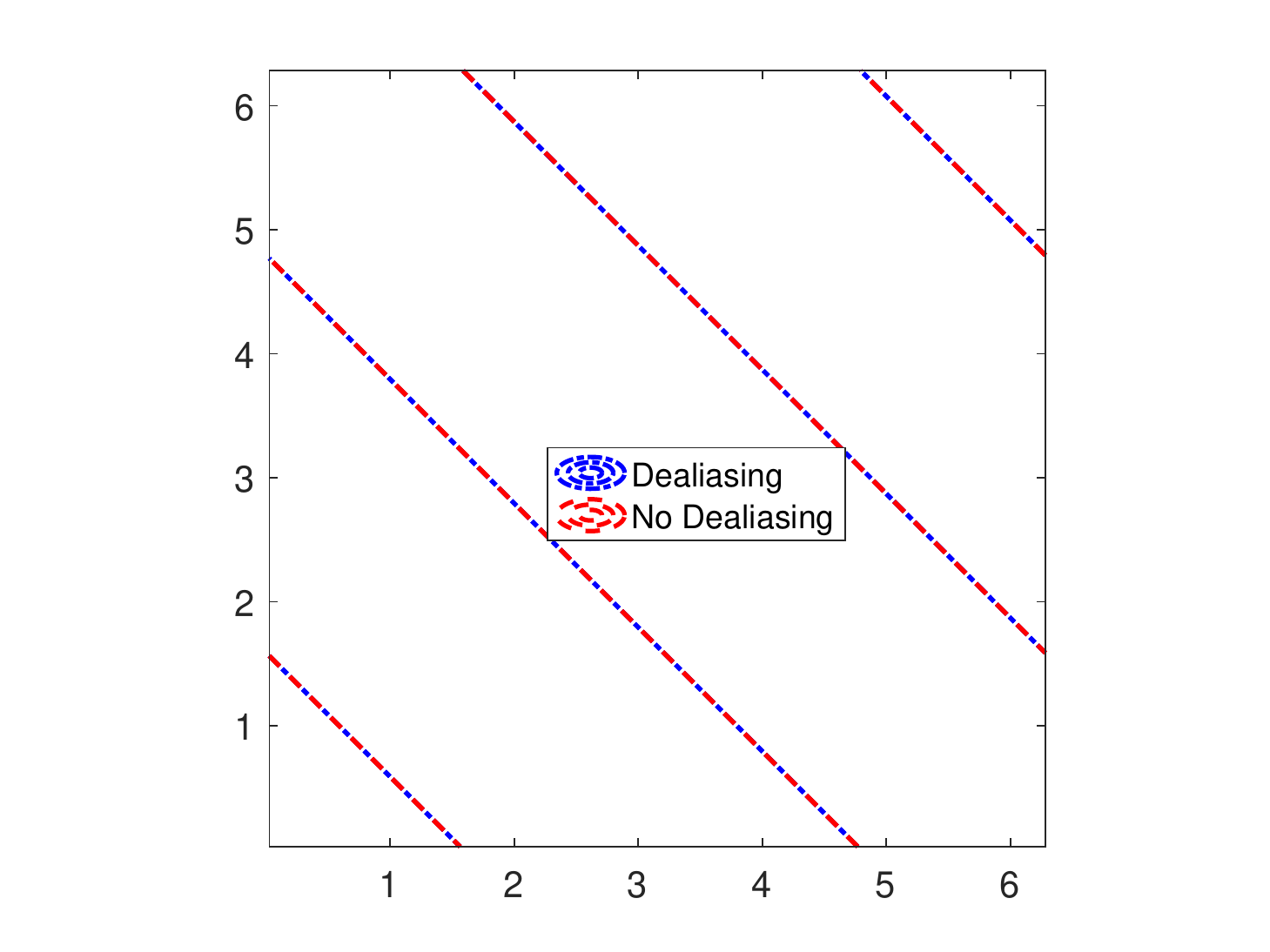}}
{\includegraphics[width=7.5cm]{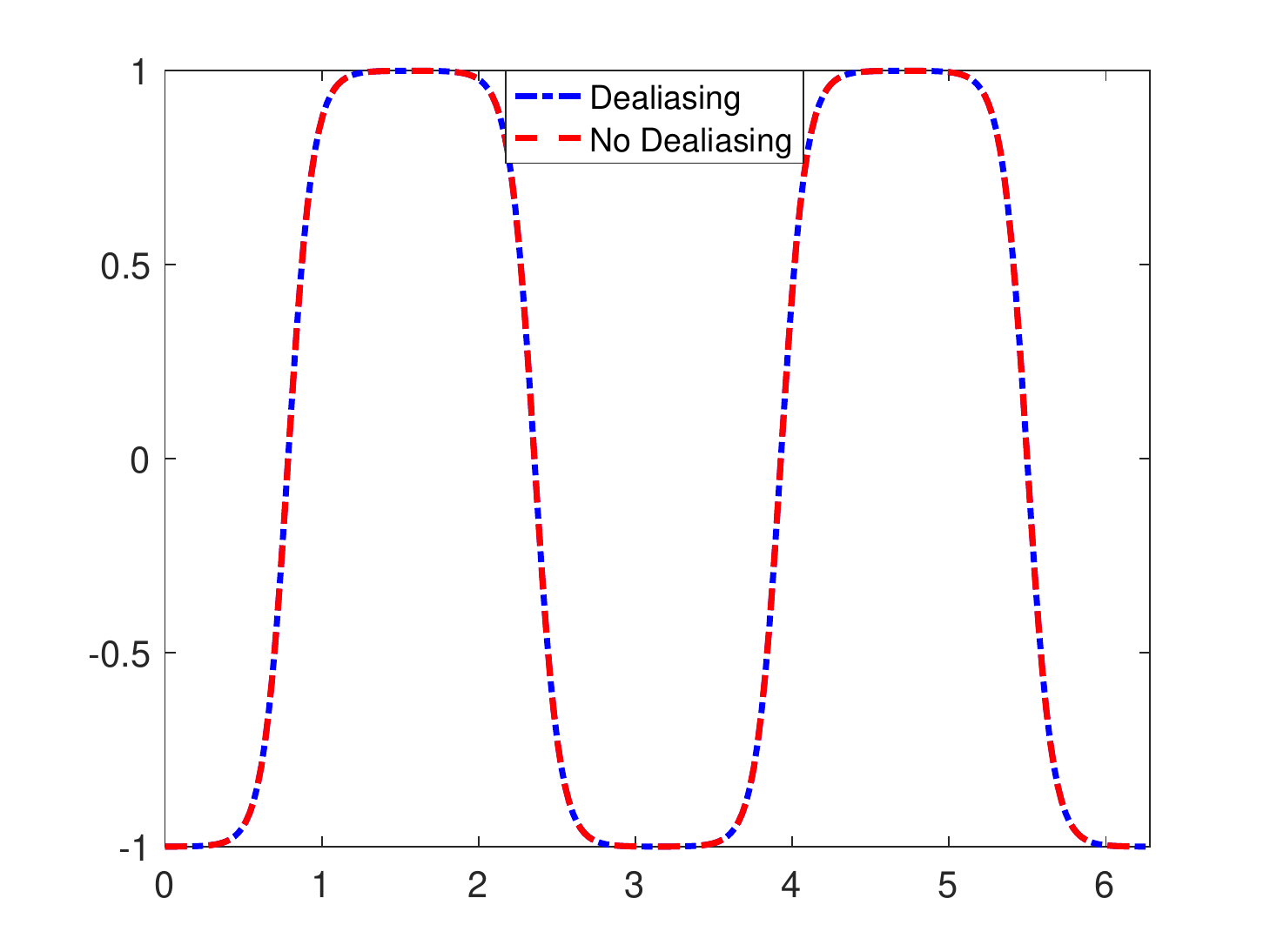}}
%  \centerline{ (b) $N=256$}
\caption{Example \ref{Exp6.2.0}.
Left: contour lines of  $u$ with the value of $-0.1$;
right: cut lines of the numerical solutions along $y=x$, $x\in[0,2\pi]$.  Top: $N=128$; bottom: $N=256$.}
\label{fig6.4}
\end{figure}

\begin{figure}
\includegraphics[width=16cm,height=9cm]{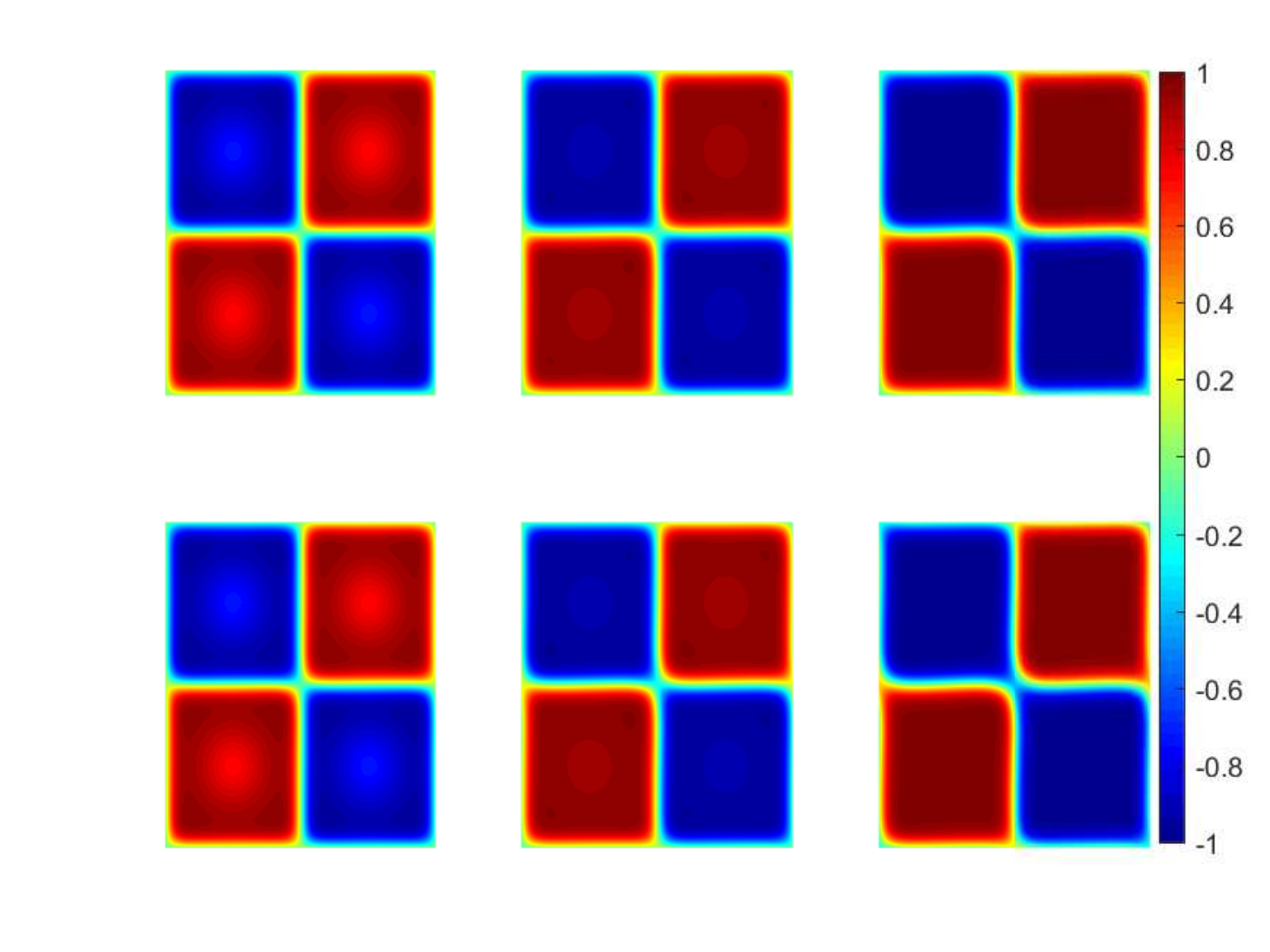}
\caption{Example \ref{Exp6.2.0}.   Snapshots of the numerical solutions at $t = 7.5, 8$, and  $8.5$ derived by {\tt SAV-M(3)} with (Top) and without the de-aliasing (Bottom).}
\label{fig6.5}
\end{figure}

\begin{figure}
\centering
{\includegraphics[width=7.5cm]{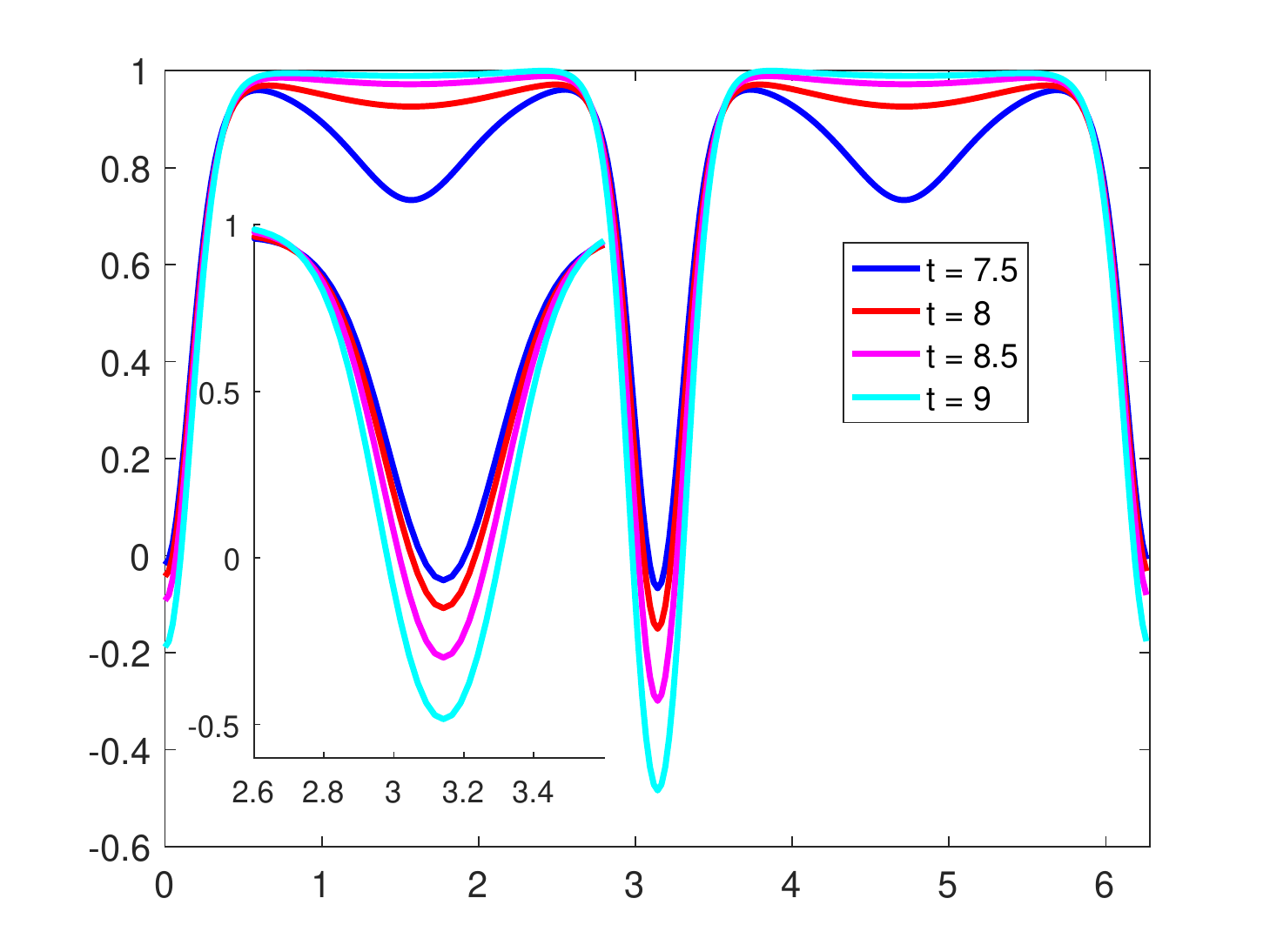}}
{\includegraphics[width=7.5cm]{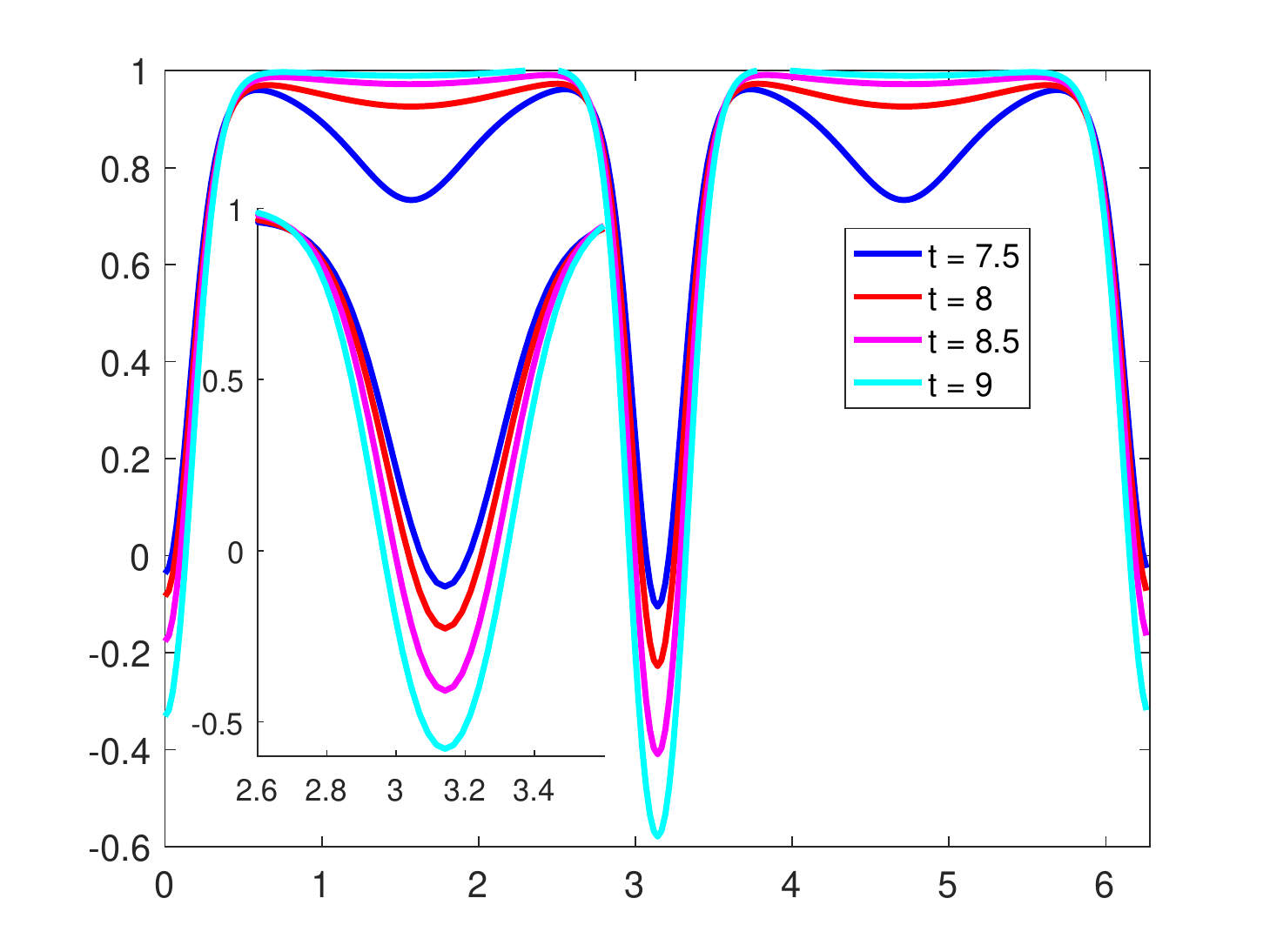}}
\caption{Example \ref{Exp6.2.0}.
Cut lines of the numerical solutions along $y=2\pi$, derived by {\tt SAV-M(3)} with (Left) and  without the de-aliasing  (Right).}
\label{fig6.6}
\end{figure}

\begin{example}   \label{Exp6.2.1}
This example is used to validate the modified-energy {stability}  and to check the original-energy stability of {\tt SAV-M(1)}$\sim${\tt SAV-M(4)} and {\tt G-SAV-M(1)}$\sim${\tt G-SAV-M(4)} for \eqref{6.2.1}. The domain $\Omega = (0,2\pi)\times (0,2\pi)$ is uniformly partitioned with  $N = 128$, the parameter $\epsilon$ is taken as $0.1$, and the initial value is chosen as  $u(x,y,0) = 0.1\times \mbox{\tt rand}(x,y) - 0.05$.

Figure \ref{fig5.5} presents the discrete total modified-energy curves of {\tt SAV-M(1)}$\sim${\tt SAV-M(4)} and {\tt G-SAV-M(1)}$\sim${\tt G-SAV-M(4)}. All those curves are  monotonically decreasing, and consistent with the theoretical results.
Figure \ref{fig5.6} plots the discrete total original-energy curves of {\tt SAV-M(1)}$\sim${\tt SAV-M(4)} and {\tt G-SAV-M(1)}$\sim${\tt G-SAV-M(4)}. The result shows that those schemes can ensure the original-energy decay only if a  suitable time stepsize is taken.
Figure \ref{fig5.7} presents the numerical solutions at $t = 200$ derived by {\tt G-SAV-M(2)} with $\tau = 0.02$ and $0.01$.
It is shown that   with a large time stepsize, the solution is inaccurate and the original-energy is not monotonically decreasing as shown in Figure \ref{fig5.6}. Remark \ref{remk6.3} will provide
  a detailed discuss on the time stepsize constraints of {\tt SAV-M(1)}$\sim${\tt SAV-M(4)} and {\tt G-SAV-M(1)}$\sim${\tt G-SAV-M(4)}  for the Cahn-Hilliard model \eqref{6.2.1}.
%Finally, it follows from Table \ref{tab:6.2} that {\tt G-SAV-M(1)}$\sim${\tt G-SAV-M(3)} possess the higher computational efficiency than {\tt SAV-M(1)}$\sim${\tt SAV-M(3)} for the Cahn-Hilliard model \eqref{6.2.1}.
\end{example}

\begin{figure}
\centering
{\includegraphics[width=7.5cm]{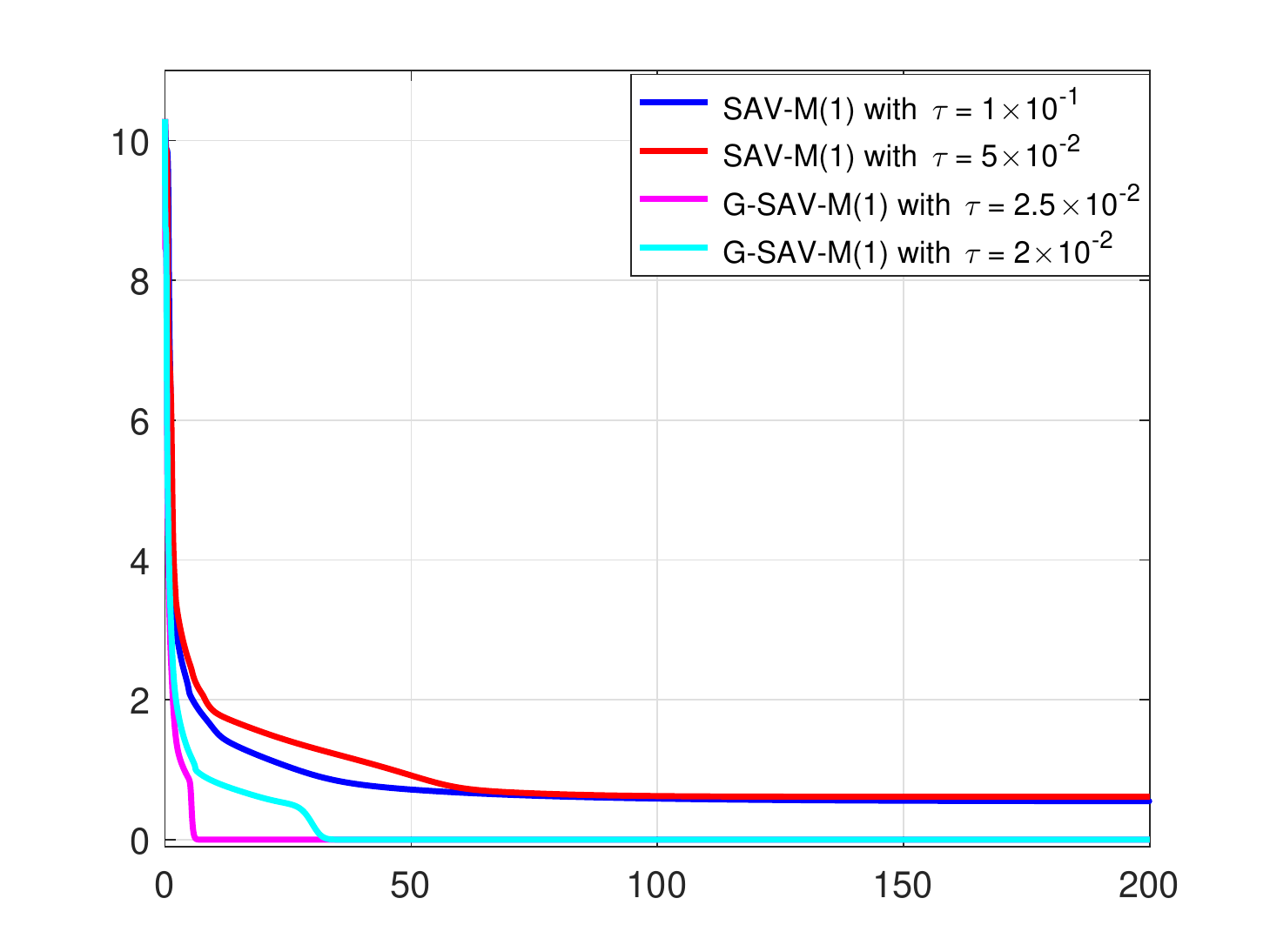} }
{\includegraphics[width=7.5cm]{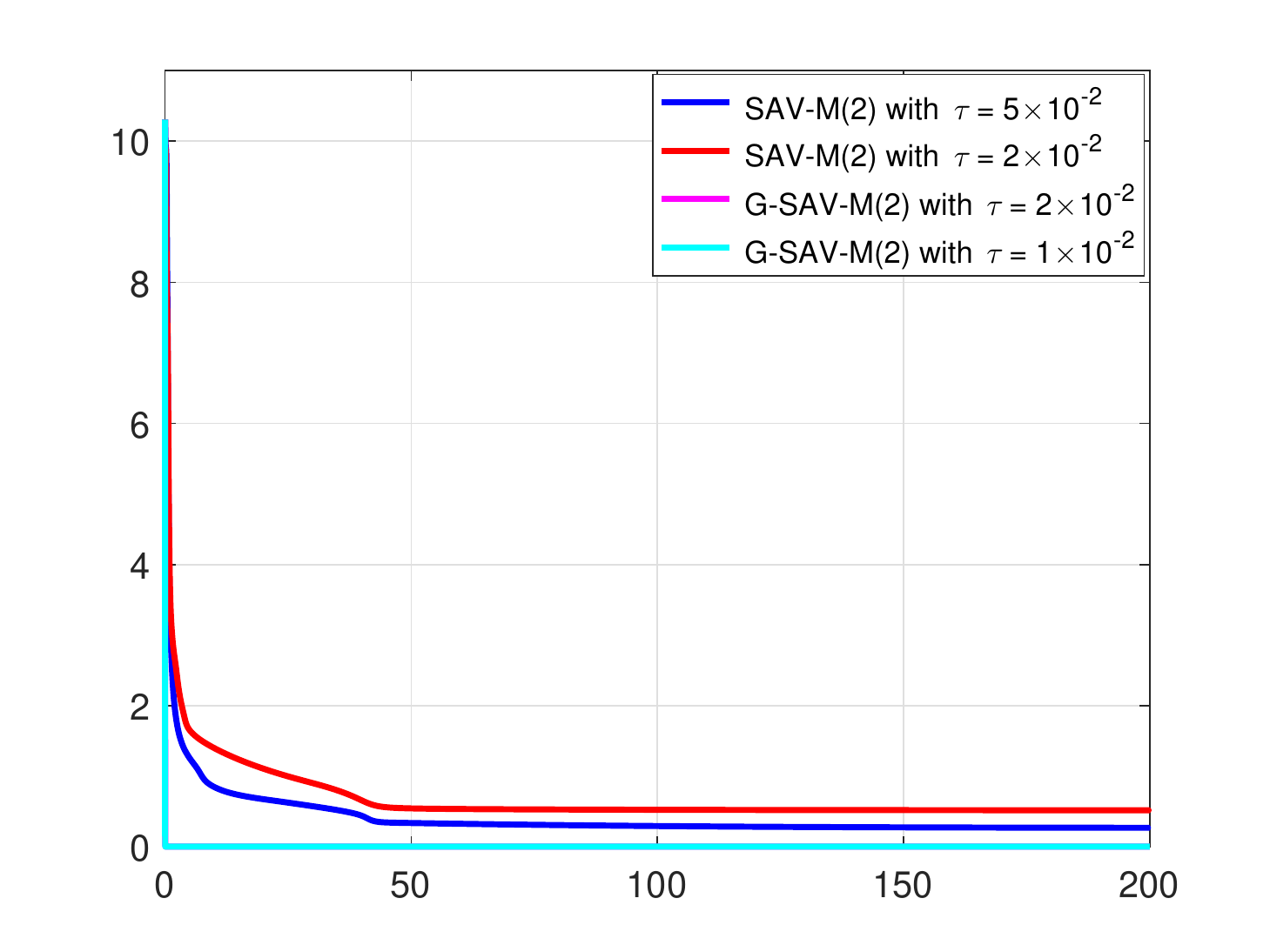} }

{\includegraphics[width=7.5cm]{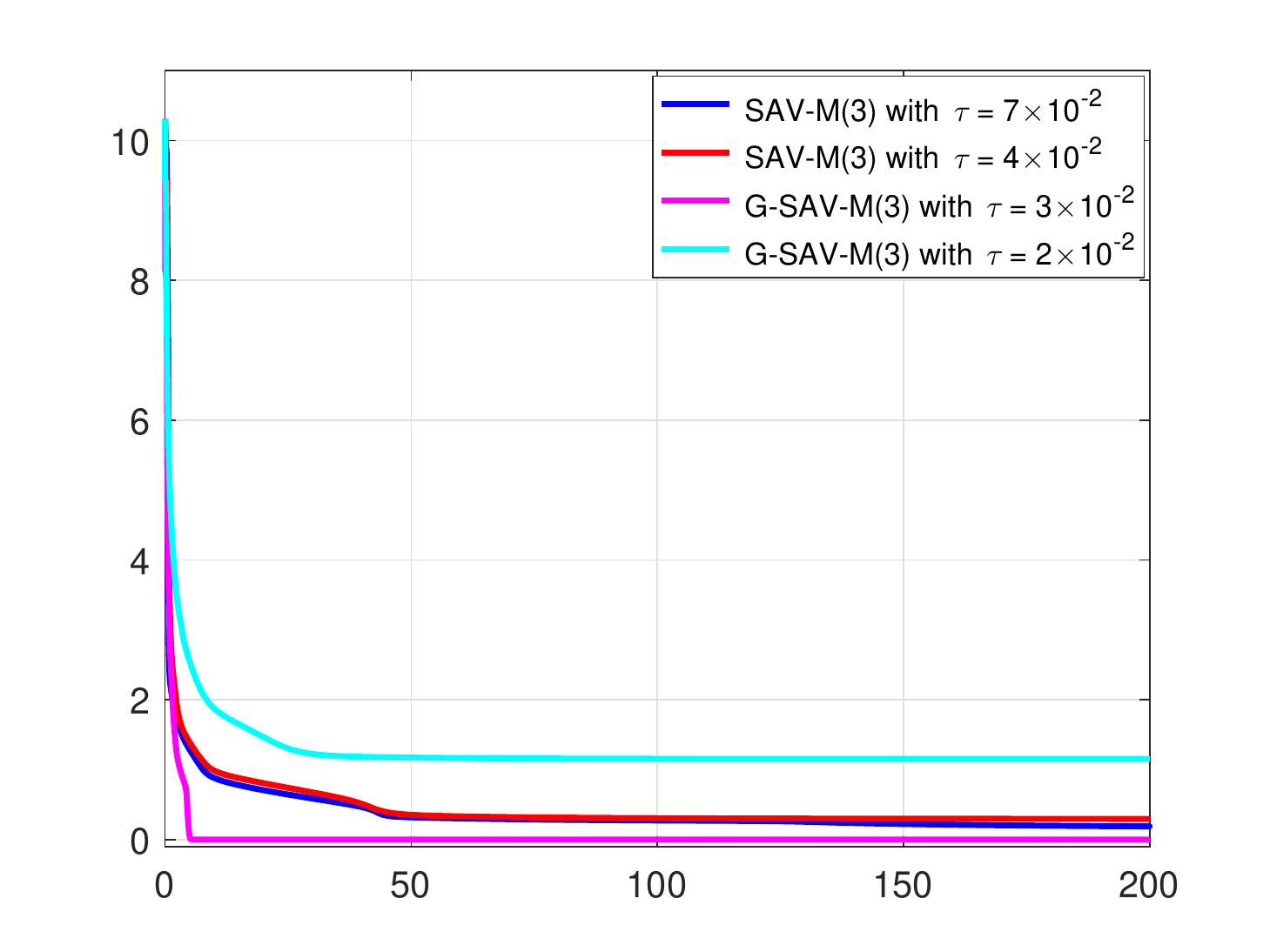} }
{\includegraphics[width=7.5cm]{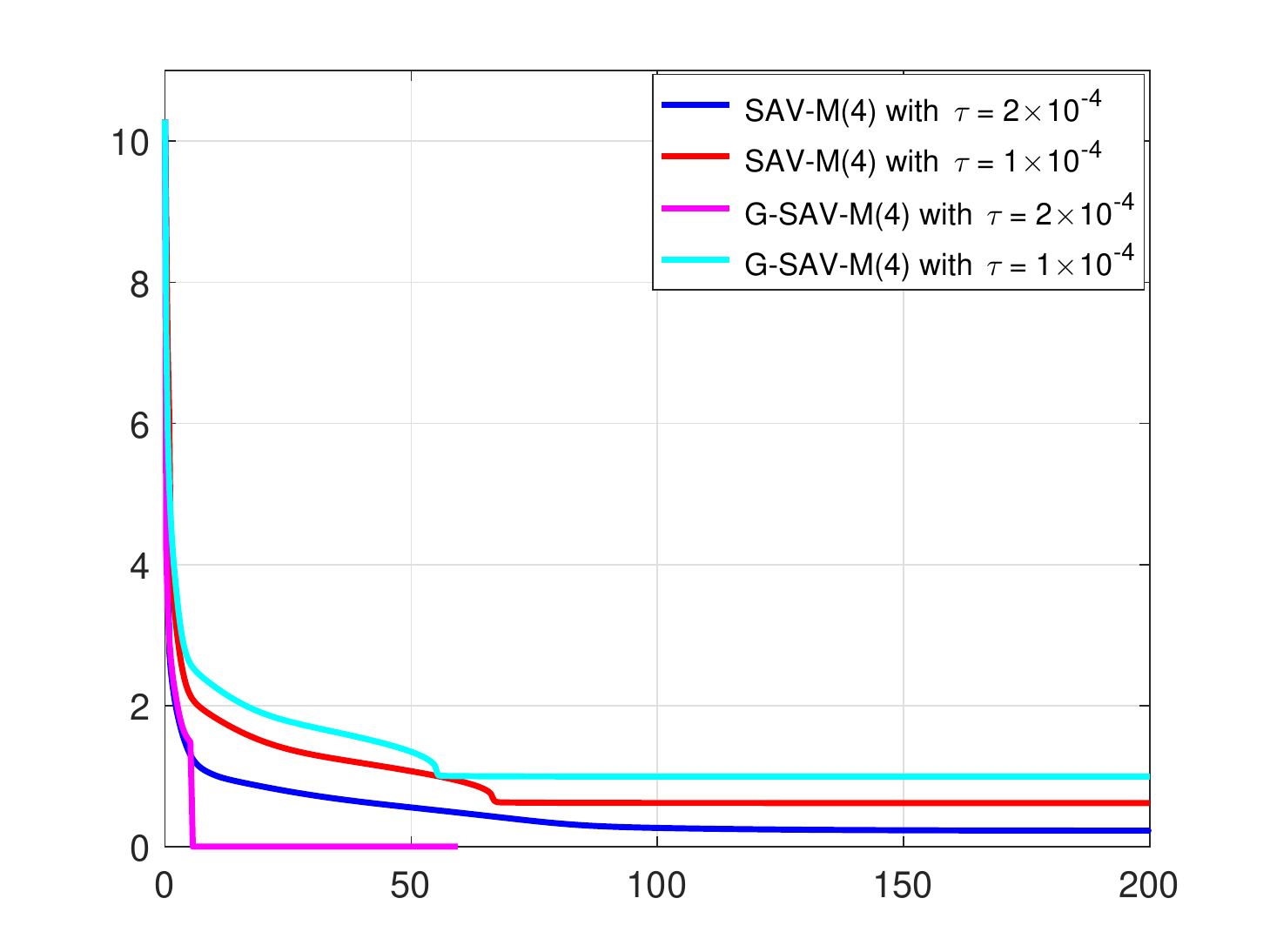} }
\caption{Example \ref{Exp6.2.1}. The time evolution of the  discrete total modified-energies of {\tt SAV-M(1)}$\sim${\tt SAV-M(4)} and {\tt G-SAV-M(1)}$\sim${\tt G-SAV-M(4)} for the Cahn-Hilliard model \eqref{6.2.1}.}
\label{fig5.5}
\end{figure}

\begin{figure}
\centering
{\includegraphics[width=7.5cm]{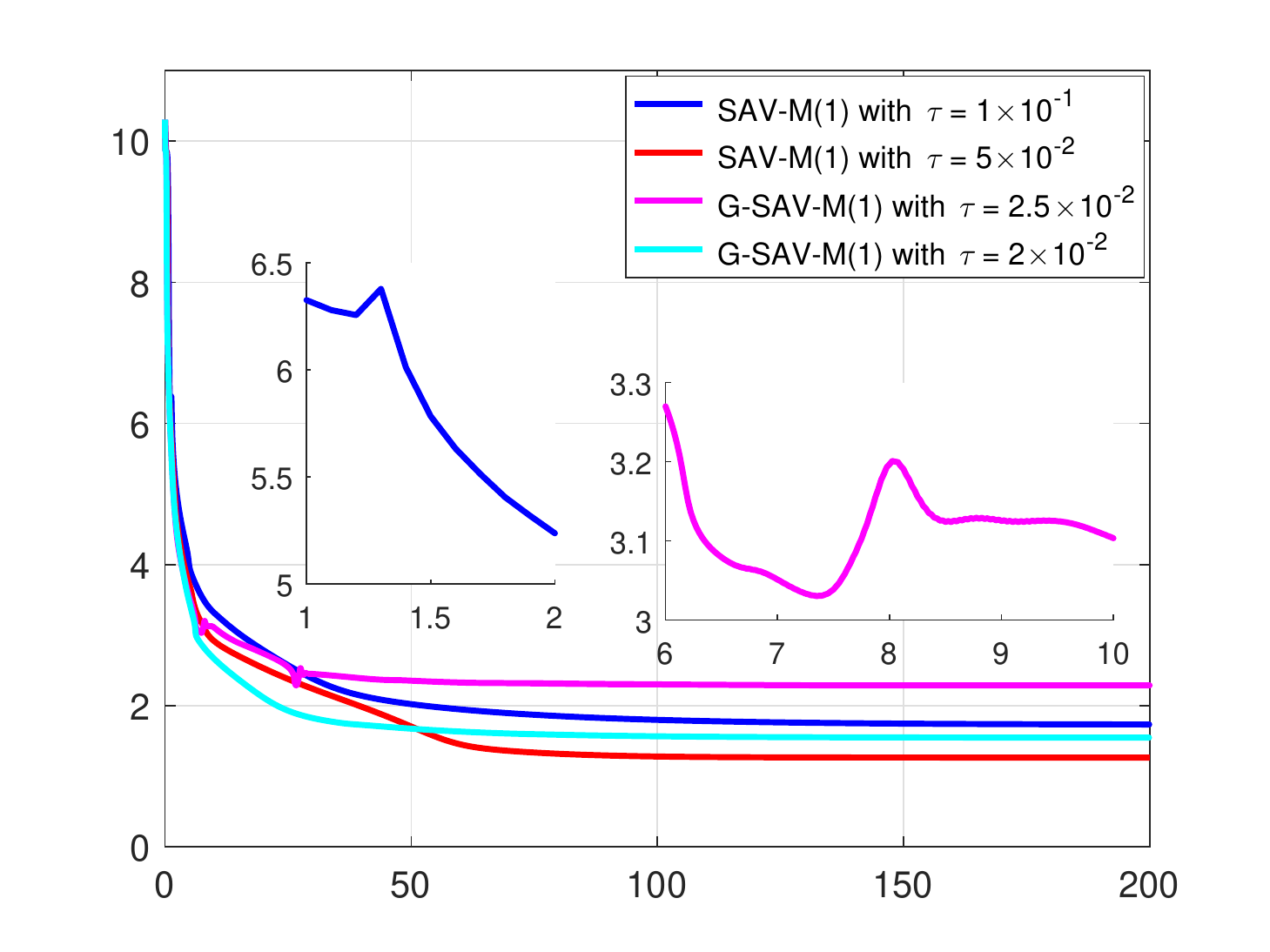} }
{\includegraphics[width=7.5cm]{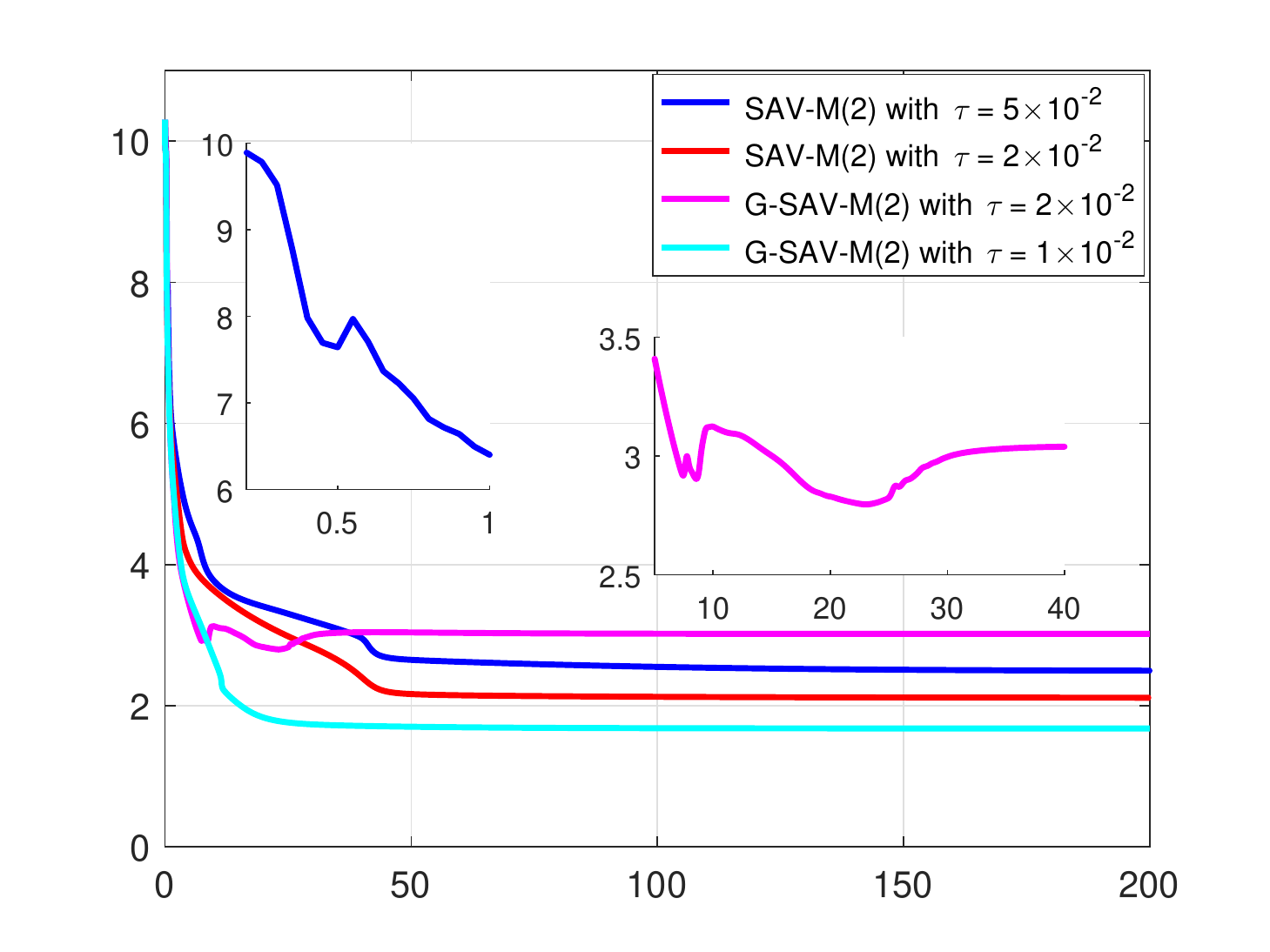} }

{\includegraphics[width=7.5cm]{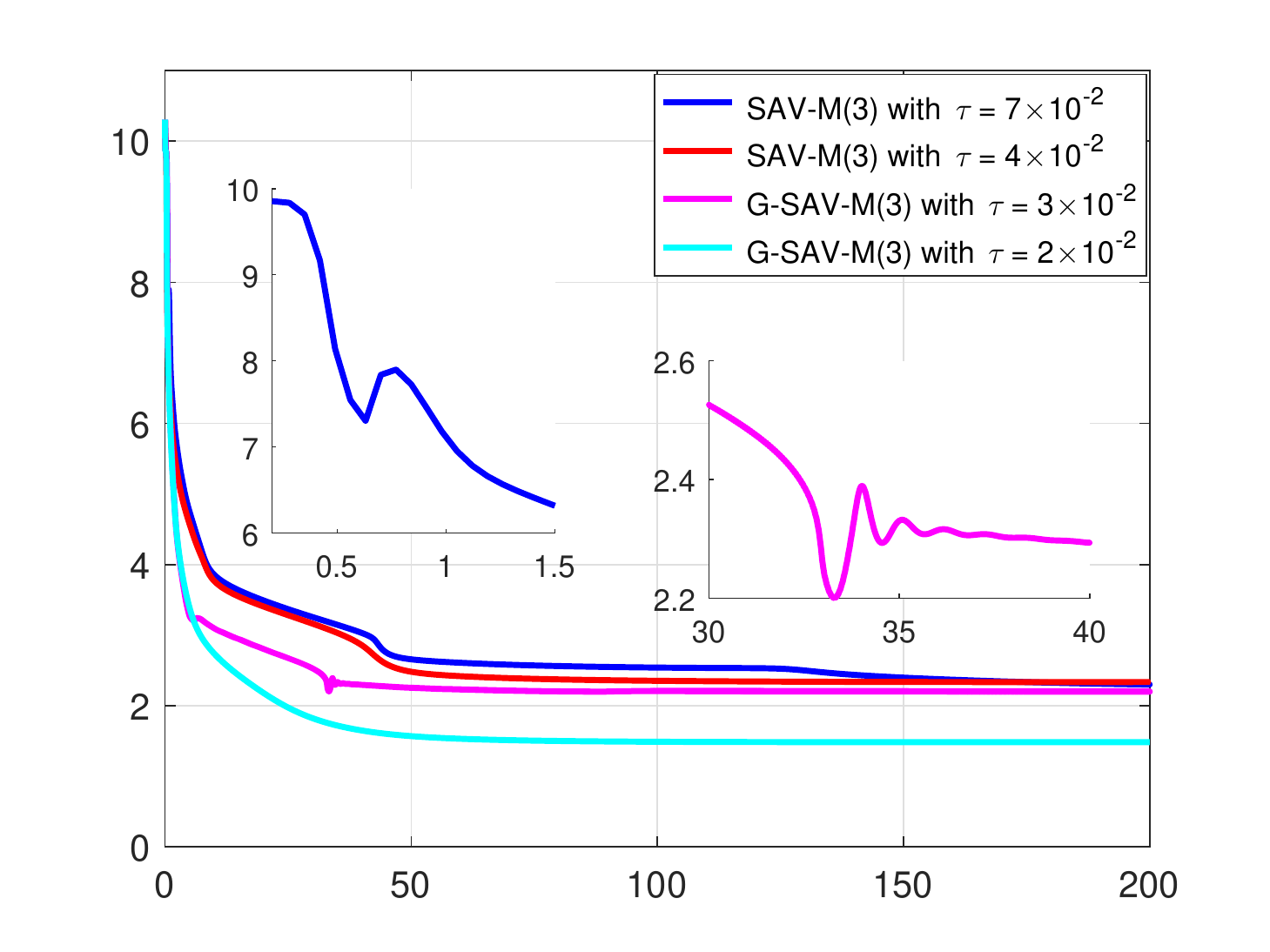} }
{\includegraphics[width=7.5cm]{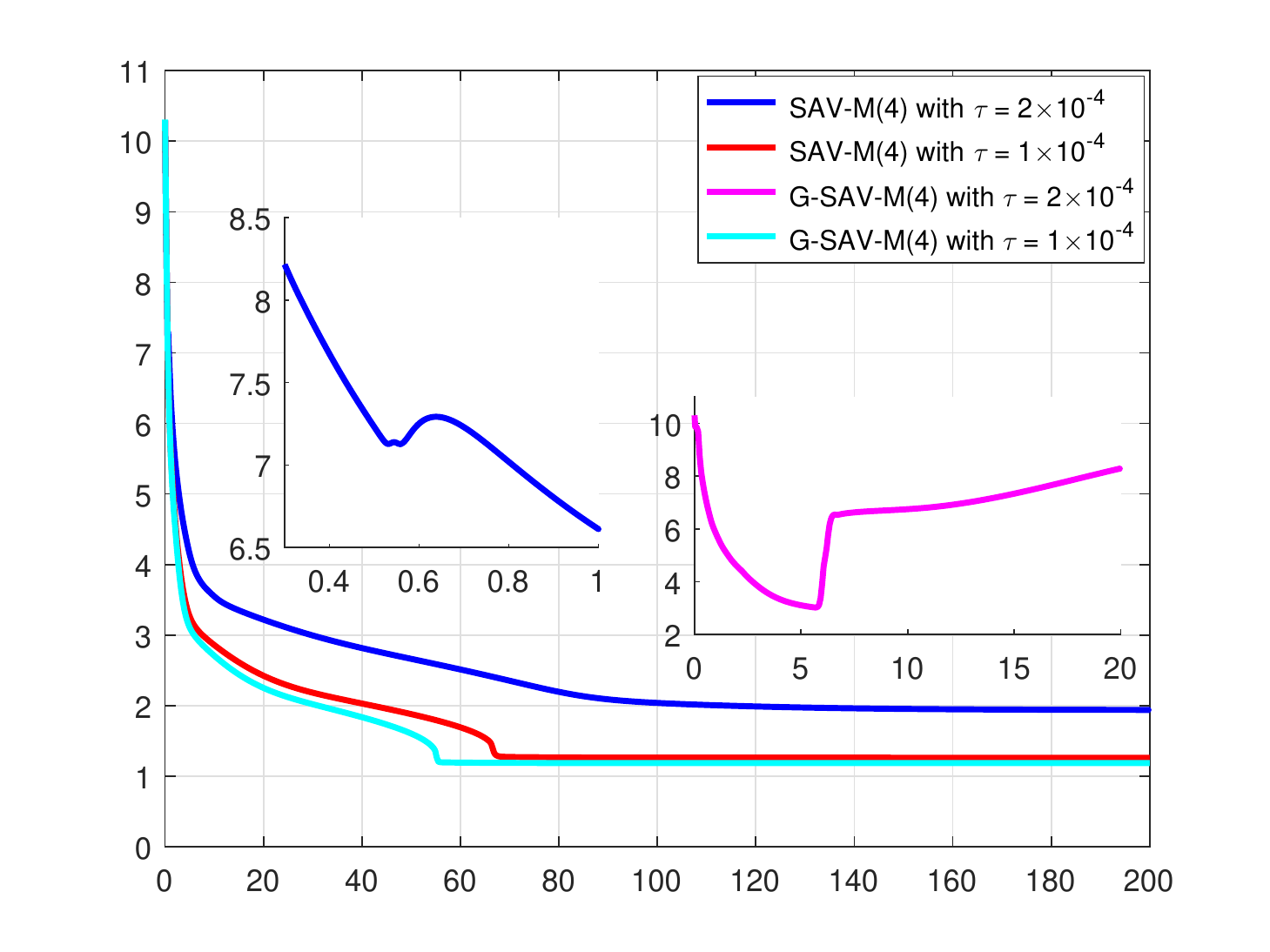} }
\caption{Same as Figure \ref{fig5.5}, except for the  original-energy.}
\label{fig5.6}
\end{figure}

\begin{figure}
\begin{minipage}{0.48\linewidth}
  \centerline{\includegraphics[width=7cm,height=5cm]{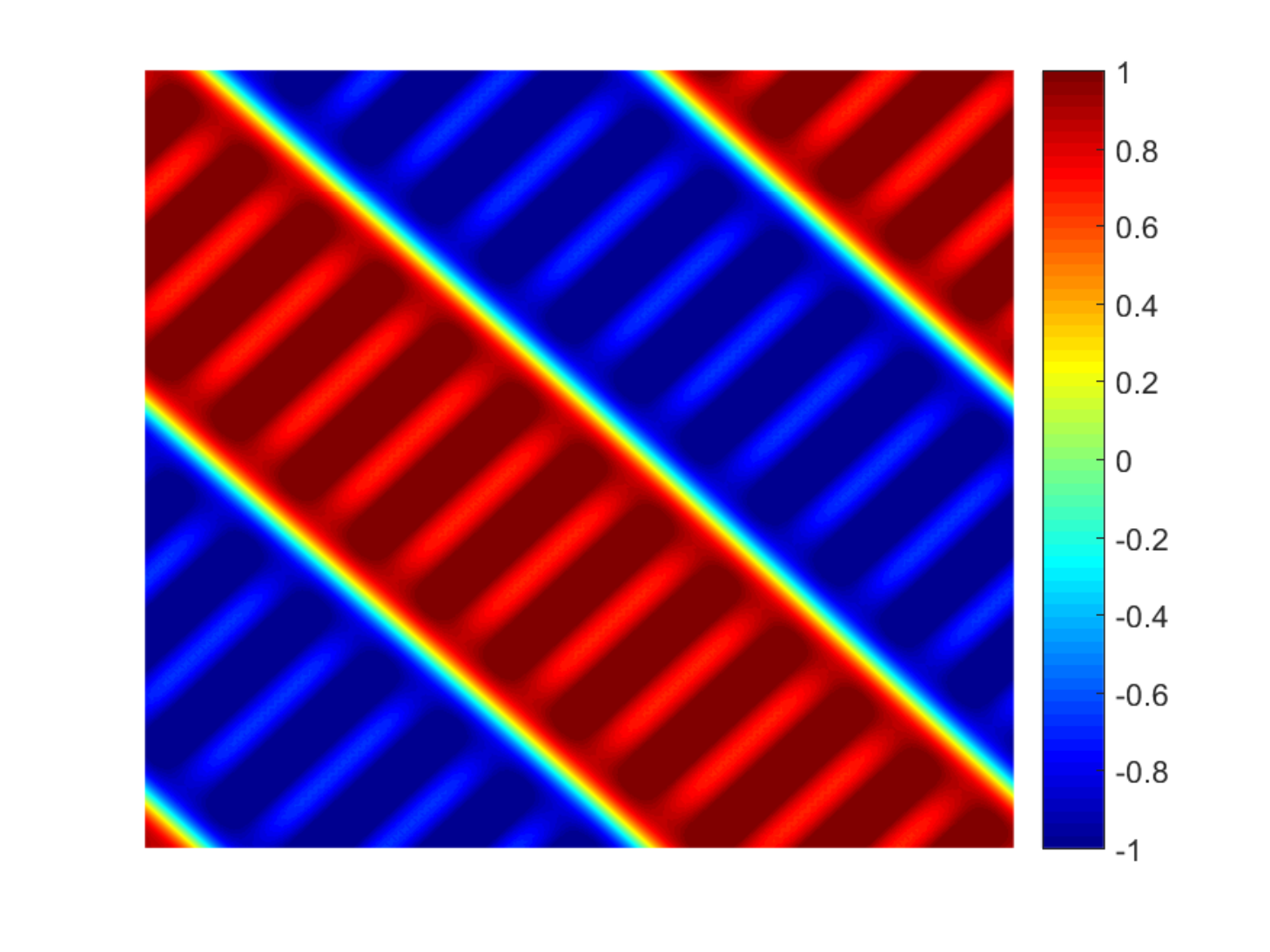}}
 % \centerline{ (a) $\tau = 0.02$}
\end{minipage}
\hfill
\begin{minipage}{0.48\linewidth}
  \centerline{\includegraphics[width=7cm,height=5cm]{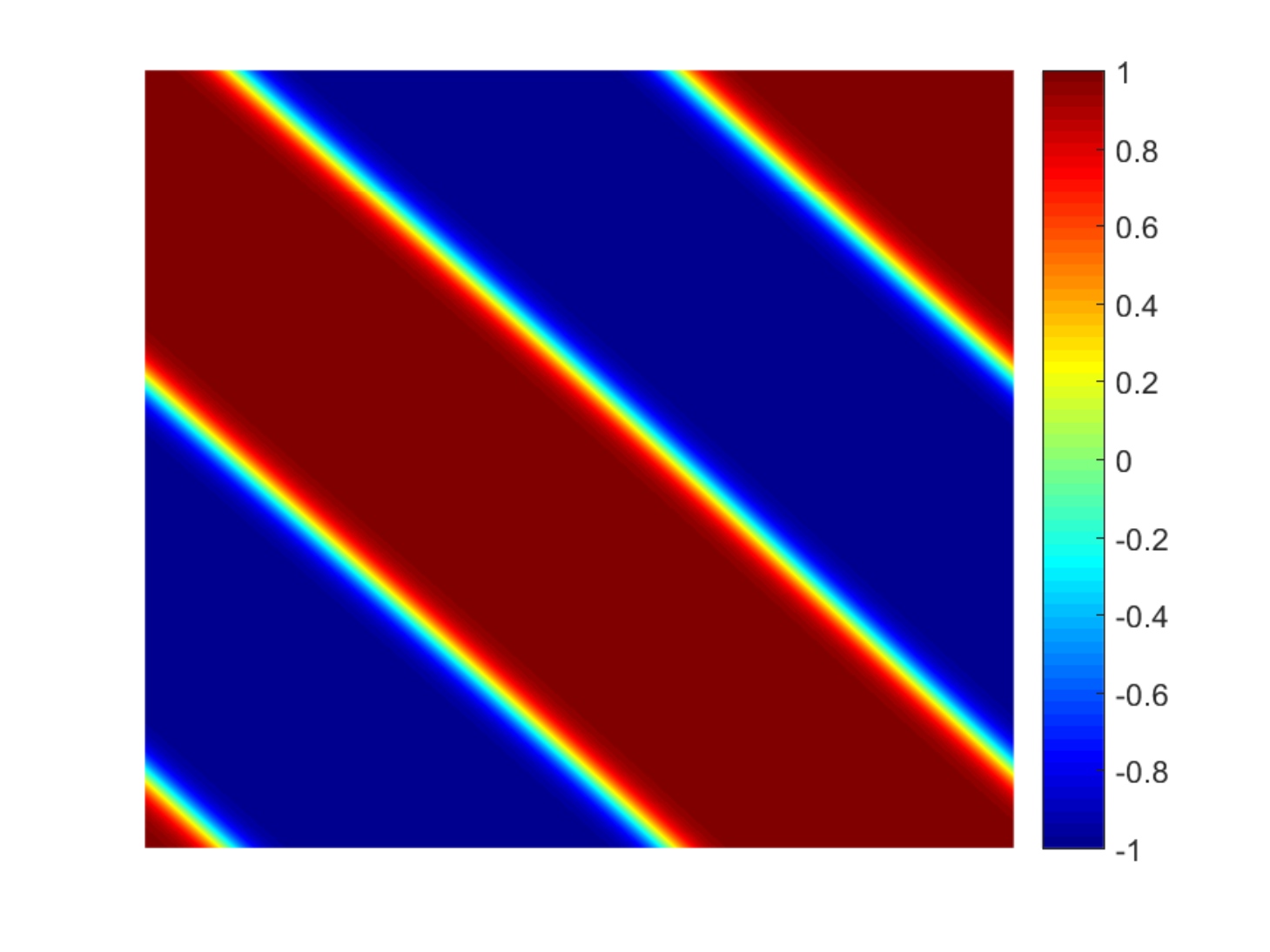}}
 % \centerline{ (b) $\tau = 0.01$}
\end{minipage}
\caption{Example \ref{Exp6.2.1}. Numerical solutions at $t=200$ derived by {\tt G-SAV-M(2)} with $\tau = 0.02$ (Left) and $0.01$ (Right), respectively.}
\label{fig5.7}
\end{figure}

\begin{remark}   \label{remk6.3}
Applying the Fourier pseudo-spectral method to the Cahn-Hilliard model \eqref{6.2.1} yields the  ODE system
\begin{align}   \label{6.2.2}
\frac{d \hat{u}_{k,l}}{d t } = - \epsilon^2\left(k^2 + l^2 \right)^2 \hat{u}_{k,l} - (k^2+l^2)\left[ \hat{w}_{k,l} - \hat{u}_{k,l}\right],~~~~ (k,l) \in \widehat{\mathbb{S}}_N,
\end{align}
where $\{\hat{w}_{k,l}\}$ are the discrete Fourier coefficients of the cubic term $u^3$ and given by \eqref{6.1.5}.
Similarly, % to  Remark \ref{remk6.1},
\eqref{6.2.2} can also be viewed as the test equation \eqref{8.3.1} with
\[ \xi = - \epsilon^2\left(k^2 + l^2 \right)^2,~~~ \zeta = -(k^2 + l^2)\left[ \frac{3}{N^2} \sum\limits_{(i,j)\in\mathbb{S}_N} |u_{i,j}|^2 - 1\right],~~ (k,l) \in \widehat{\mathbb{S}}_N.  \]
For Example \ref{Exp6.2.1}, the curves  of $\psi^n = \frac{3}{N^2} \sum\limits_{(i,j)\in\mathbb{S}_N} |u_{i,j}|^2$ plotted in Figure \ref{fig5.2.4} show that $\psi^n \lesssim 2.6$.
Thus, one can take $\zeta \approx -1.6(k^2 + l^2), (k,l) \in \widehat{\mathbb{S}}_N$  and then  use \ref{Appx3} to estimate the time stepsizes for {\tt SAV-M(1)}$\sim${\tt SAV-M(4)} and {\tt G-SAV-M(1)}$\sim${\tt G-SAV-M(4)}.
Specifically,  when the parameters $(\alpha_0,\beta_0,\beta_2) = (0,0,1)$, the time stepsize satisfies
$$\tau < \left\{ \frac{2}{\max(\xi-3\zeta)}: \zeta < \frac{1}{3}\xi\right\},$$
a simple calculation gives $\max\left\{\xi\!-\!3\zeta \right\} \!= \! \max\left\{- 0.01\left(k^2 \!+ \!l^2 \right)^2 \!+ \!4.8\left(k^2 \!+ \!l^2 \right), (k,l) \in \widehat{\mathbb{S}}_N\right\} \!= \! 575.99~(k\!=\!4, l \!= \!15)$ so that $\tau \lesssim 3.47\times10^{-3}$;
when  $(\alpha_0,\beta_0,\beta_2) = (-1/3,5/12,3/4)$,
$$ \tau < \left\{ \frac{4}{3\max(\xi-2\zeta)}: \zeta < \frac{1}{2}\xi\right\},$$
which is combined with the result $\max\left\{\xi\!-\!2\zeta \right\} \!=\! \max\!\left\{-0.01\left(k^2 \!+ \!l^2 \right)^2 \!+ \!3.2\left(k^2 \!+ \!l^2 \right), (k,l) \in \widehat{\mathbb{S}}_N\right\}\! = 256~(k\!=\!4, l \!= \!12)$  to yield $\tau \lesssim 5.2\times10^{-3}$;
when $(\alpha_0,\beta_0,\beta_2) = (1/3,0,2/3)$,
  $$ \tau < \left\{ \frac{4}{\max(\xi-3\zeta)}: \zeta < \frac{1}{3}\xi\right\},$$
   which gives $\tau\lesssim 7.0\times10^{-3}$;
and  when $(\alpha_0,\beta_0,\beta_2) = (1/3,-1/6,1/2)$,
$$\tau < -\frac{4}{3\min(\zeta)}, $$
 which gives $\tau\lesssim 1.02\times 10^{-4}$.
Note that %compared with the numerical results shown in Figure \ref{fig5.6},
those time stepsize estimates  for {\tt SAV-M(1)}$\sim${\tt SAV-M(4)} and {\tt G-SAV-M(1)}$\sim${\tt G-SAV-M(4)} are sufficient, and  when $(\alpha_0,\beta_0,\beta_2) = (1/3,-1/6,1/2)$, the GLTD is not algebraically stable and the time stepsize
 is constrained much severely for the Cahn-Hilliard model \eqref{6.2.1}.
\end{remark}

\begin{figure}
\begin{minipage}{0.48\linewidth}
  \centerline{\includegraphics[width=7cm,height=5cm]{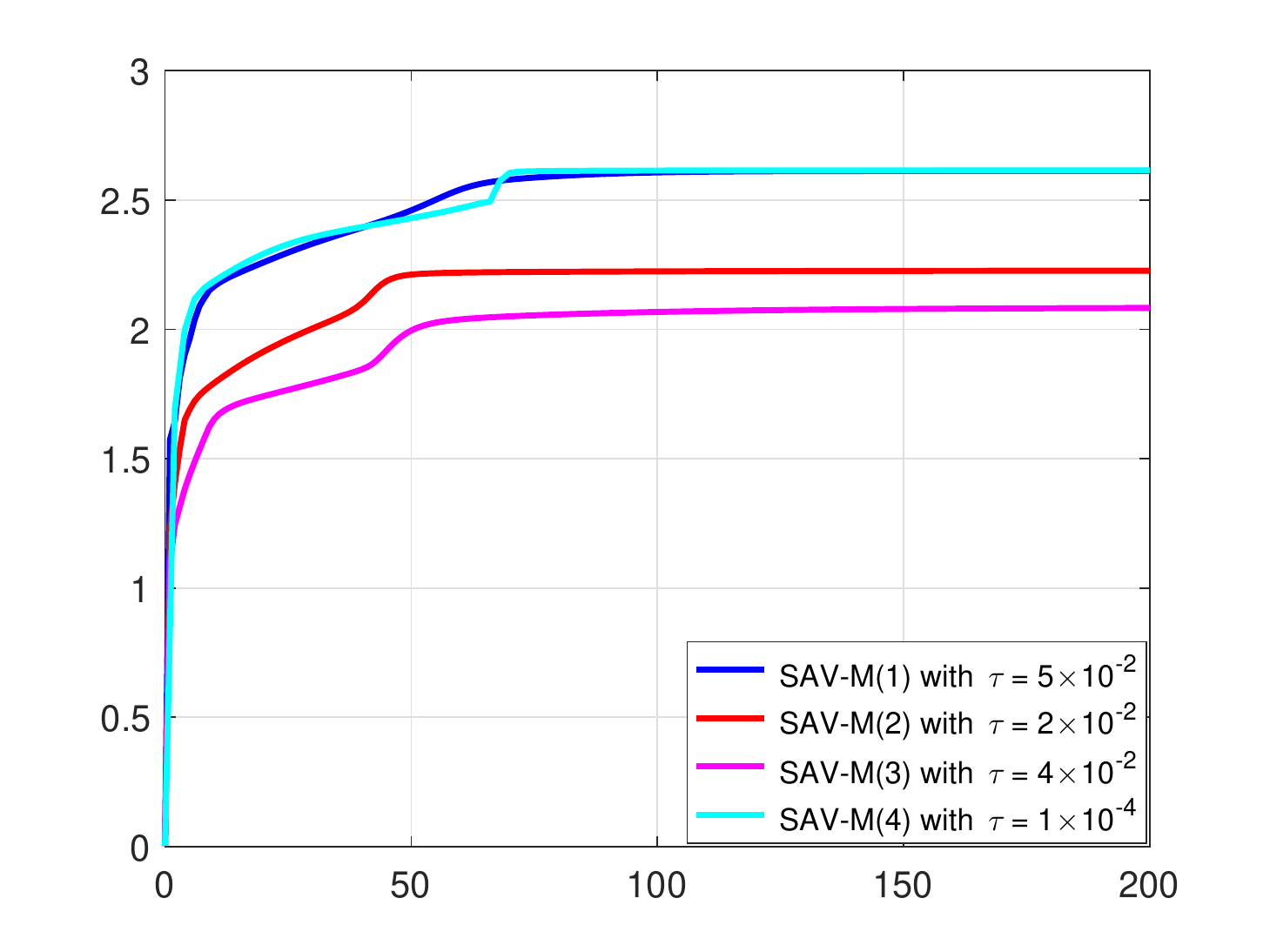}}
\end{minipage}
\hfill
\begin{minipage}{0.48\linewidth}
  \centerline{\includegraphics[width=7cm,height=5cm]{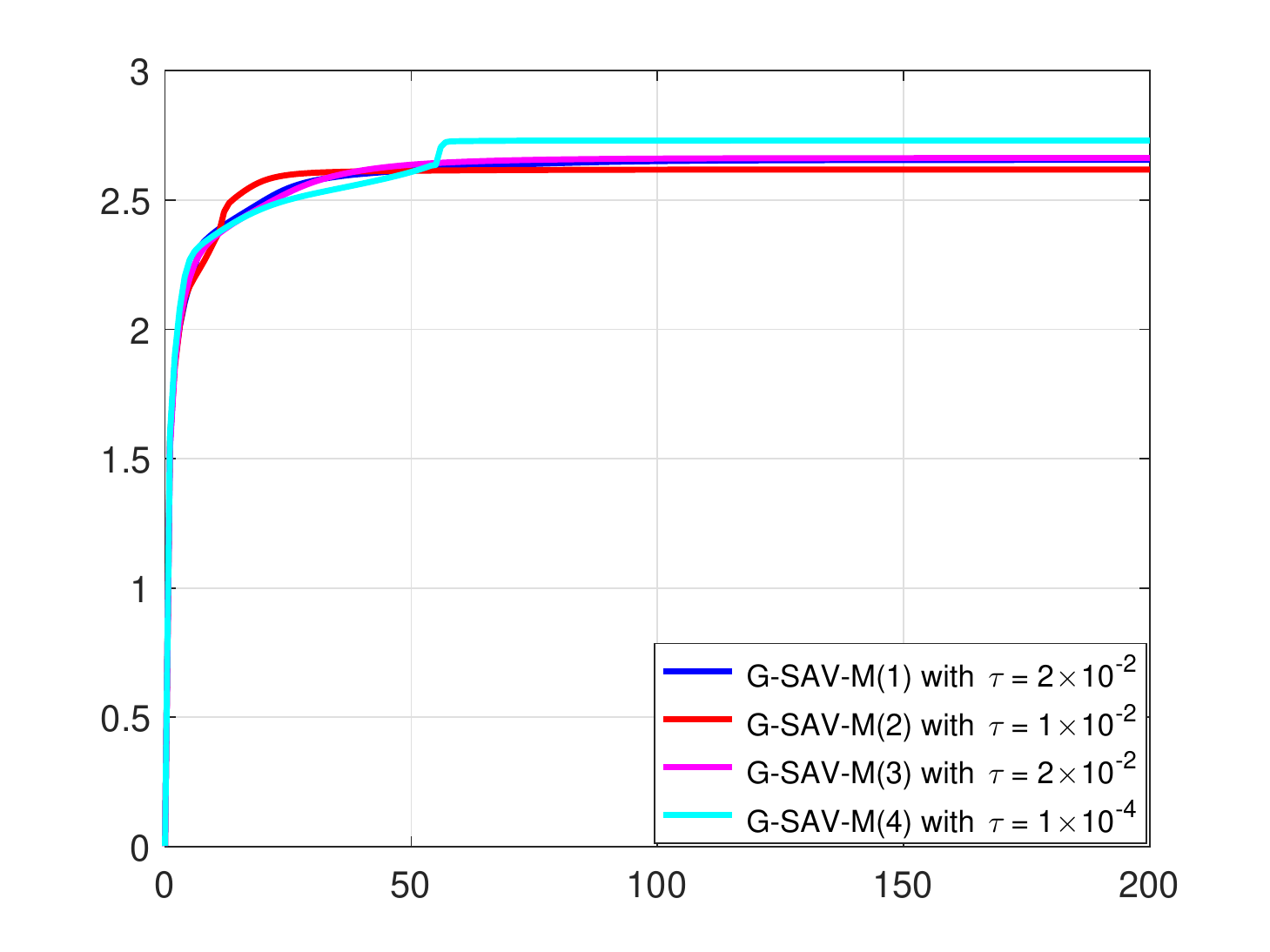}}
\end{minipage}
\caption{Example \ref{Exp6.2.1}.  $\psi^n$ derived by {\tt SAV-M(1)}$\sim${\tt SAV-M(4)} (Left) and {\tt G-SAV-M(1)}$\sim${\tt G-SAV-M(4)}.}
\label{fig5.2.4}
\end{figure}

\subsection{Phase field crystal model}

The phase field crystal  model
\begin{align}   \label{6.3.1}
\frac{\partial u}{\partial t} = \Delta\mu,~~~ \mu = u^3 + (1- \epsilon) u + 2\Delta u + \Delta^2 u, ~~~\vec{x}\in \Omega,~t>0,
\end{align}
can be derived from the $H^{-1}$ gradient flow of the   free energy
\begin{align*}
\mathcal{E}(u) = \int_{\Omega} \left[ \frac{1}{4}u^4  + \frac{1-\epsilon}{2} u^2 - |\nabla u|^2 + \frac{1}{2}(\Delta u)^2\right] dx.
\end{align*}
Such model may be used to describe many crystal phenomena such as edge dislocations \cite{Berry06}, fcc ordering \cite{WuK10}, epitaxial growth and zone refinement \cite{Elder02}, and is a sixth-order nonlinear partial differential equation.

In order to apply {\tt SAV-M(1)}$\sim${\tt SAV-M(4)} and {\tt G-SAV-M(1)}$\sim${\tt G-SAV-M(4)} to \eqref{6.3.1} successfully,  the operators $\mathcal{L}$, $\mathcal{G}$ and the energy $\mathcal{E}_1(u)$  are chosen as
\[  \mathcal{L} =  \Delta^2, ~~ \mathcal{G} = \Delta,~~ \mathcal{E}_1(u) = \int_{\Omega} \left[ \frac{1}{4}u^4 + \frac{1\!-\!\epsilon}{2} u^2 - |\nabla u|^2 \right] dx.  \]

\begin{example} \label{Exp6.3.0}
This example  applies {\tt SAV-M(3)} with or without  the de-aliasing by zero-padding to the phase field crystal model \eqref{6.3.1}.
The parameter $\epsilon$ is taken as $0.25$,
the domain $\Omega = (0,100)\times (0,100)$ is partitioned with  $N = 200$ or $400$, and the initial data is chosen as $u(x,y,0) = 0.5\sin\left(\frac{\pi x}{50}\right)\sin\left(\frac{\pi y}{50}\right)$.

Figure \ref{fig6.7} shows the contour lines and cut lines of the numerical solutions at $t = 1000$ derived by {\tt SAV-M(3)} with or without de-aliasing. Some visible differences between those numerical solutions with $N = 200$ can be observed, but the differences  are indistinguishable for $N = 400$.
Figure \ref{fig6.8} gives the snapshots of the numerical solutions at $t=1000$ computed by {\tt SAV-M(3)} with the de-aliasing.  Figure \ref{fig6.9} presents the cut lines of numerical solutions at $t = 545$ and  $670$. The results show that the numerical solutions by {\tt SAV-M(3)} with or without the de-aliasing may have some differences at intermediate times.
\end{example}

\begin{figure}
\centering
{\includegraphics[width=7.5cm]{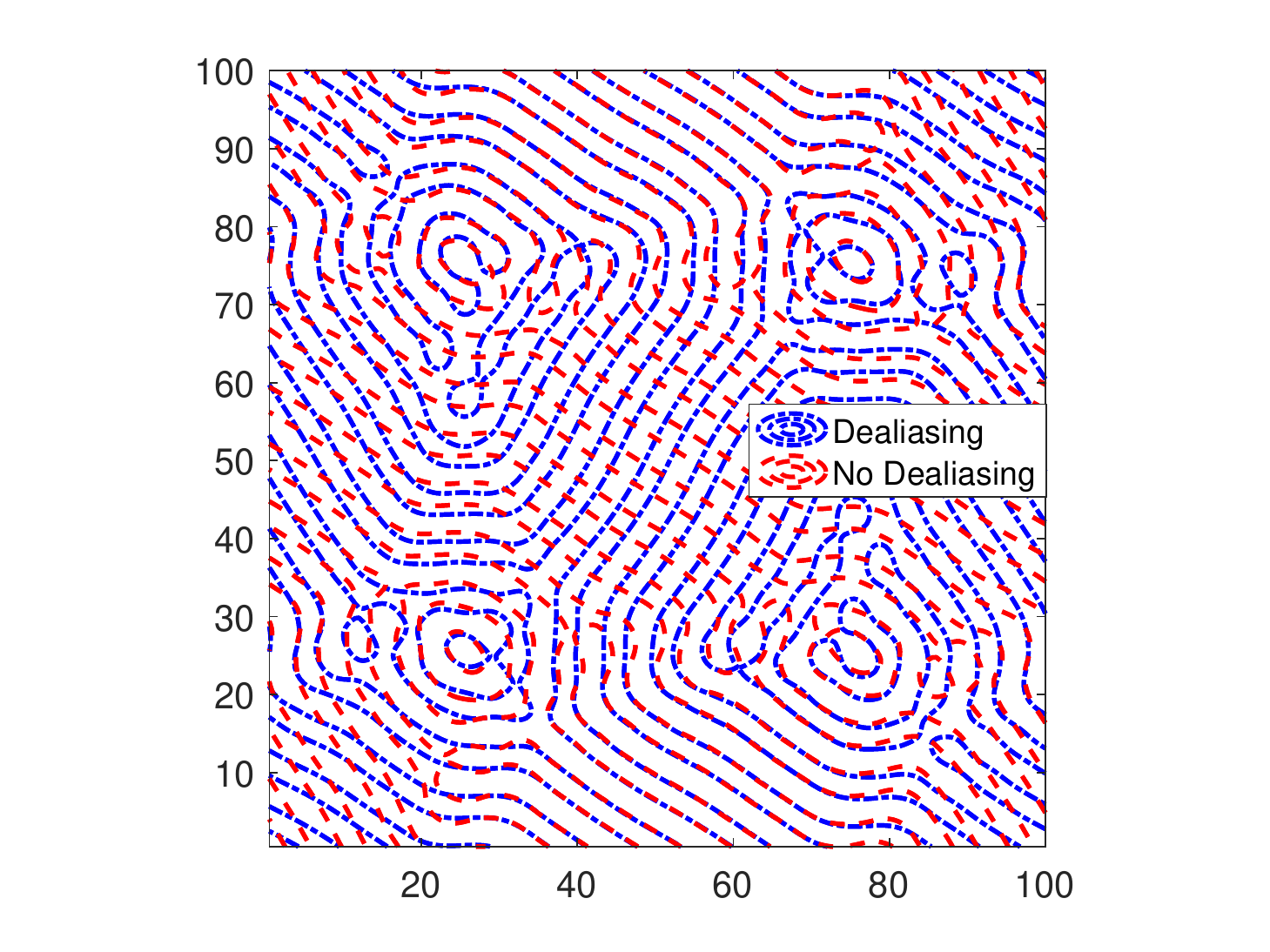}}
{\includegraphics[width=7.5cm]{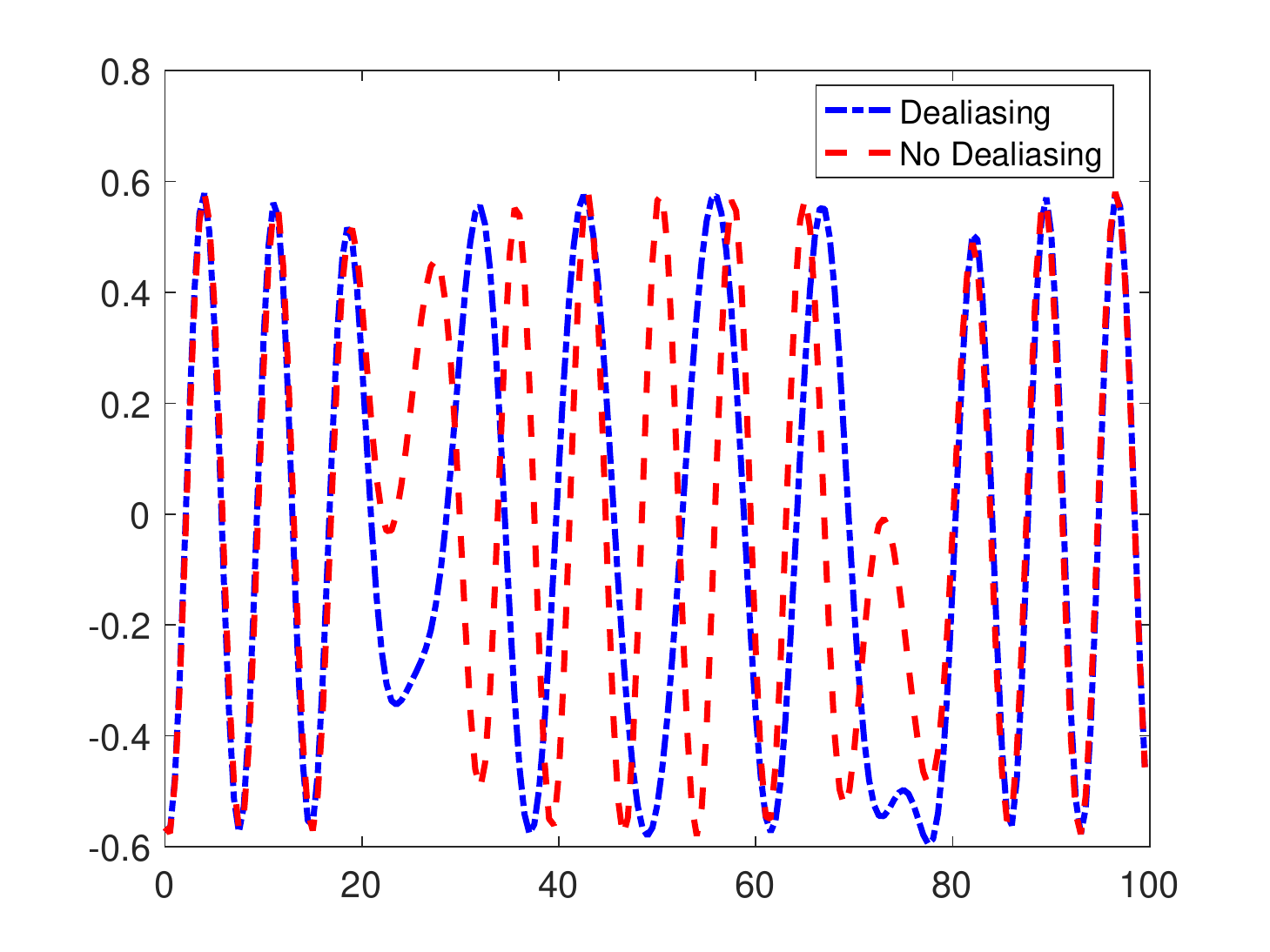}}
%  \centerline{ (a) $N=200$}

{\includegraphics[width=7.5cm]{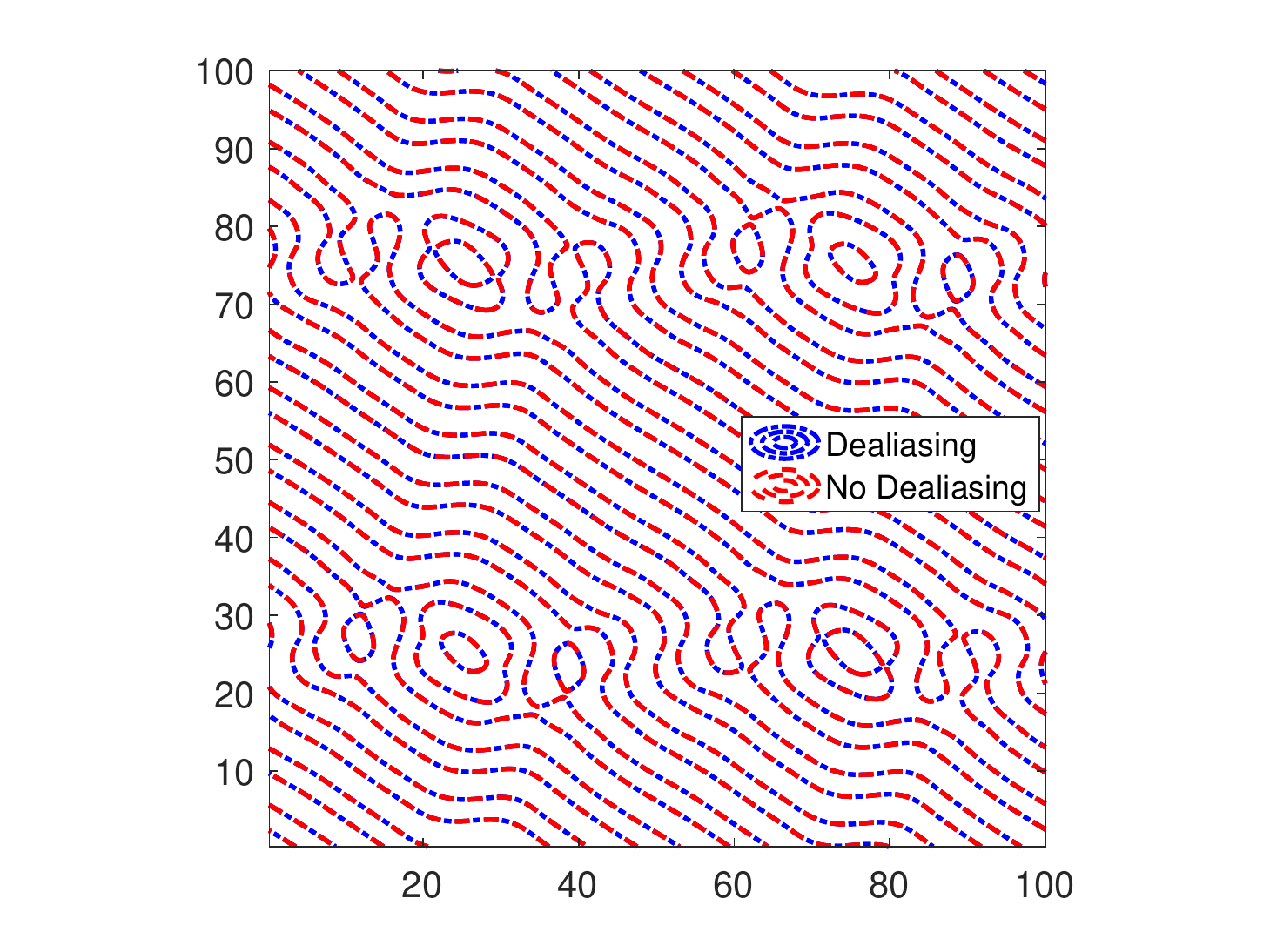}}
{\includegraphics[width=7.5cm]{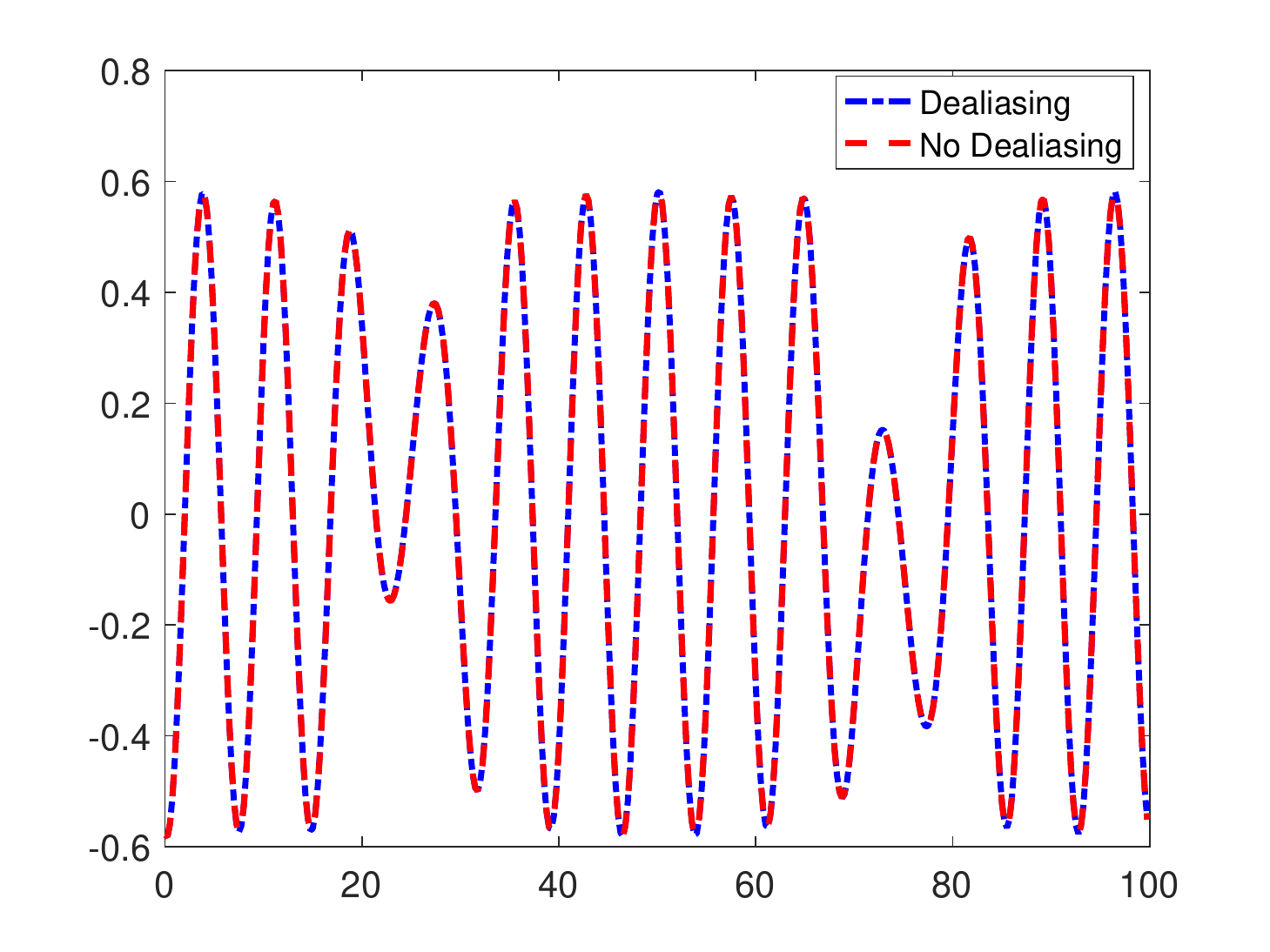}}
 % \centerline{ (b) $N=400$}
\caption{Example \ref{Exp6.3.0}.
Left: contour lines of  $u$ with the value of $-0.1$;
right: cut lines of the numerical solutions along $x=2\pi$.
Top: $N=200$; bottom: $N=400$.}
\label{fig6.7}
\end{figure}

\begin{figure}
\begin{minipage}{0.48\linewidth}
  \centerline{\includegraphics[width=7.5cm]{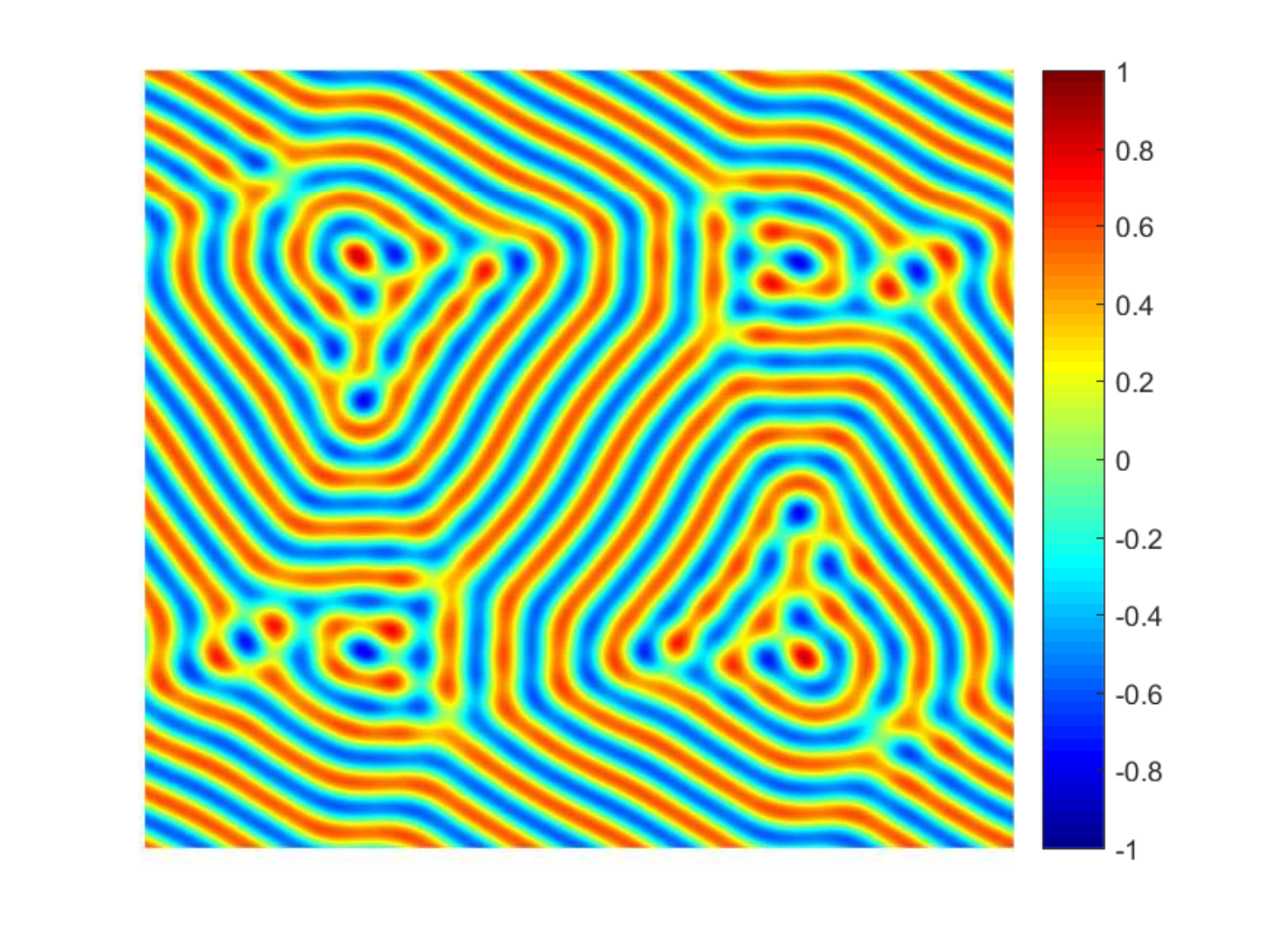}}
 % \centerline{ $N=200$}
\end{minipage}
\hfill
\begin{minipage}{0.48\linewidth}
  \centerline{\includegraphics[width=7.5cm]{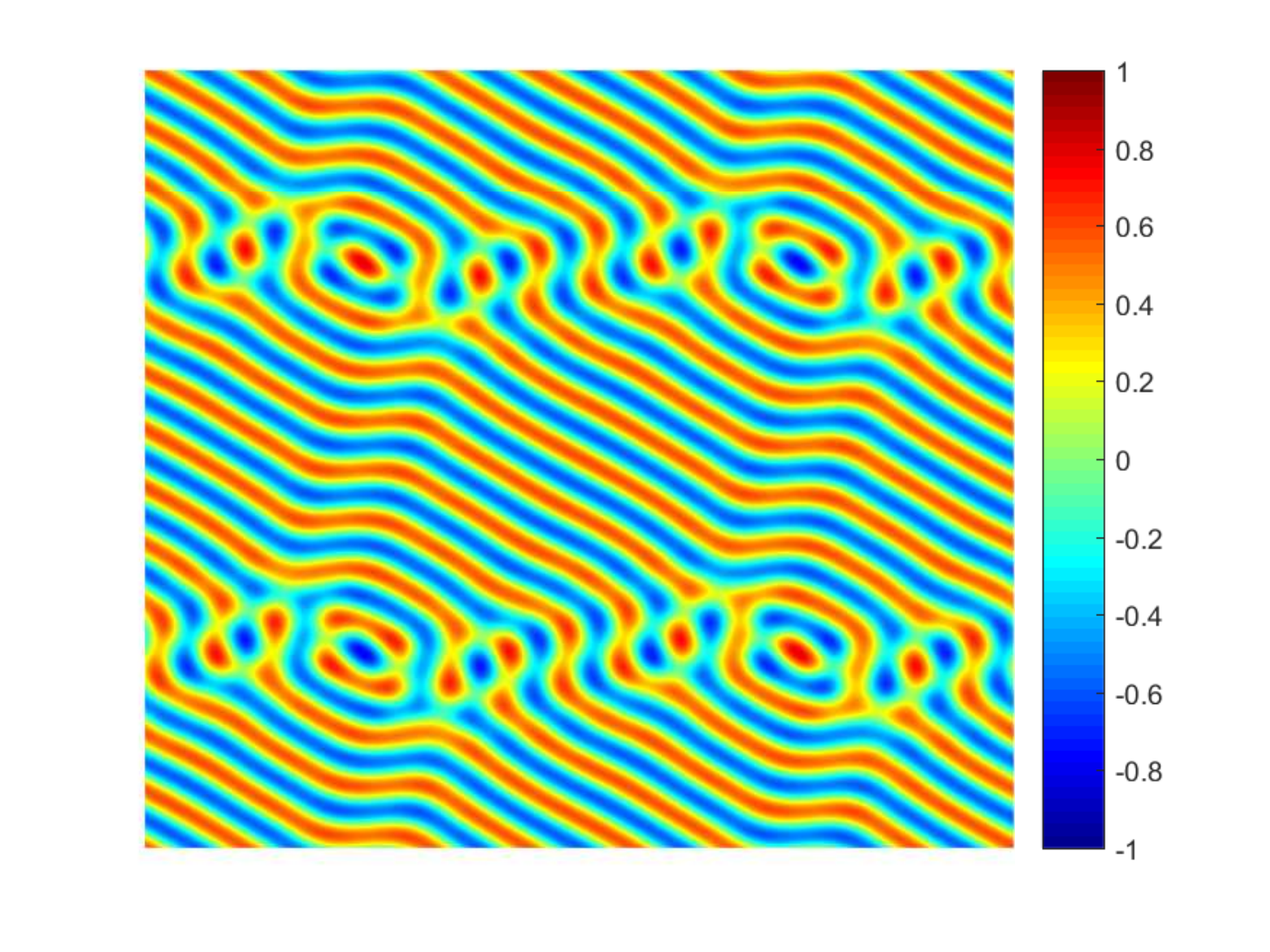}}
  %\centerline{ $N=400$}
\end{minipage}
\caption{Example \ref{Exp6.3.0}.
Snapshots of the numerical solutions at $t = 1000$ derived by {\tt SAV-M(3)} with the de-aliasing. Left: $N=200$; right: $N=400$.}
\label{fig6.8}
\end{figure}

\begin{figure}
\centering
{\includegraphics[width=7.5cm]{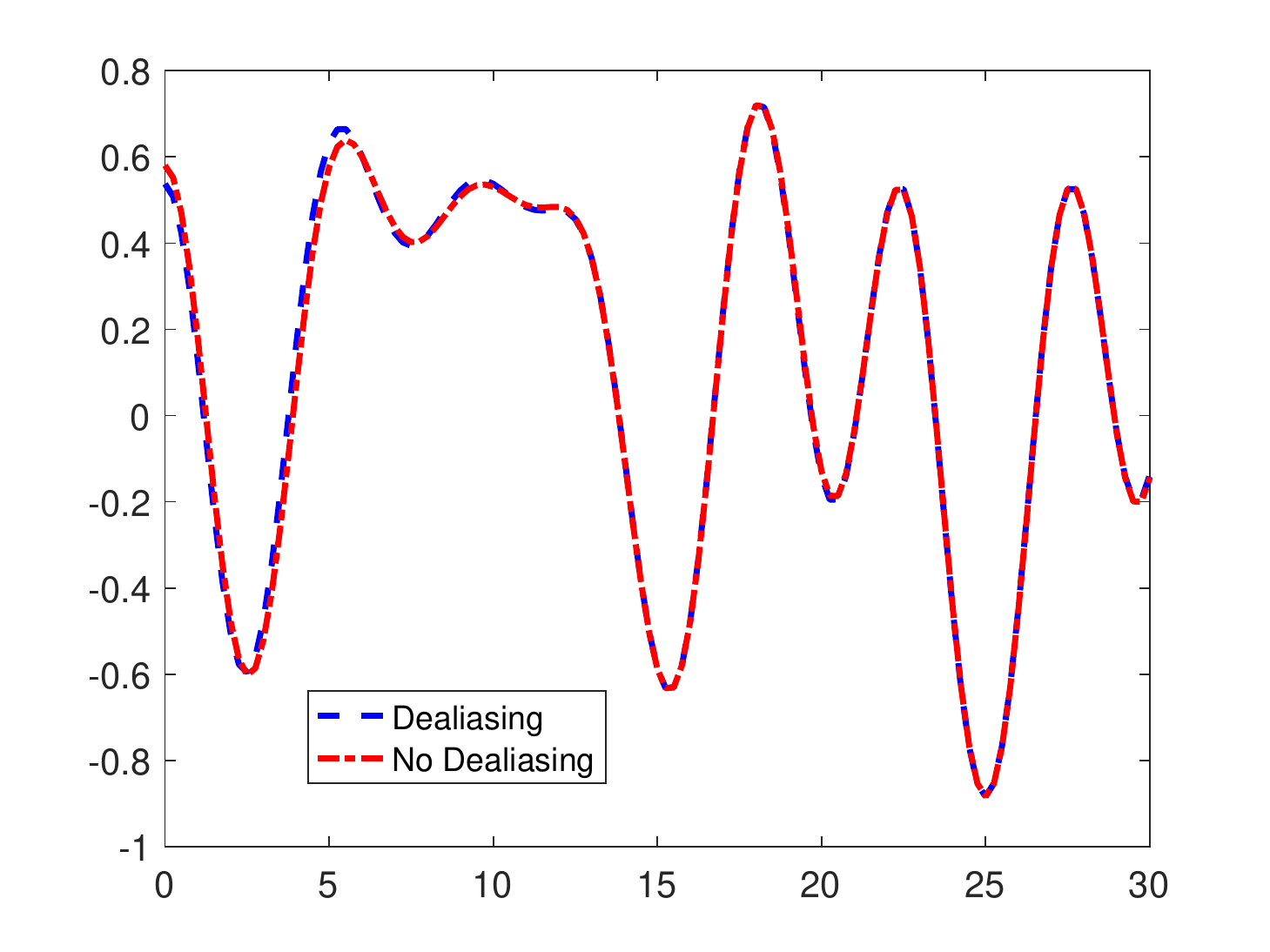}}
{\includegraphics[width=7.5cm]{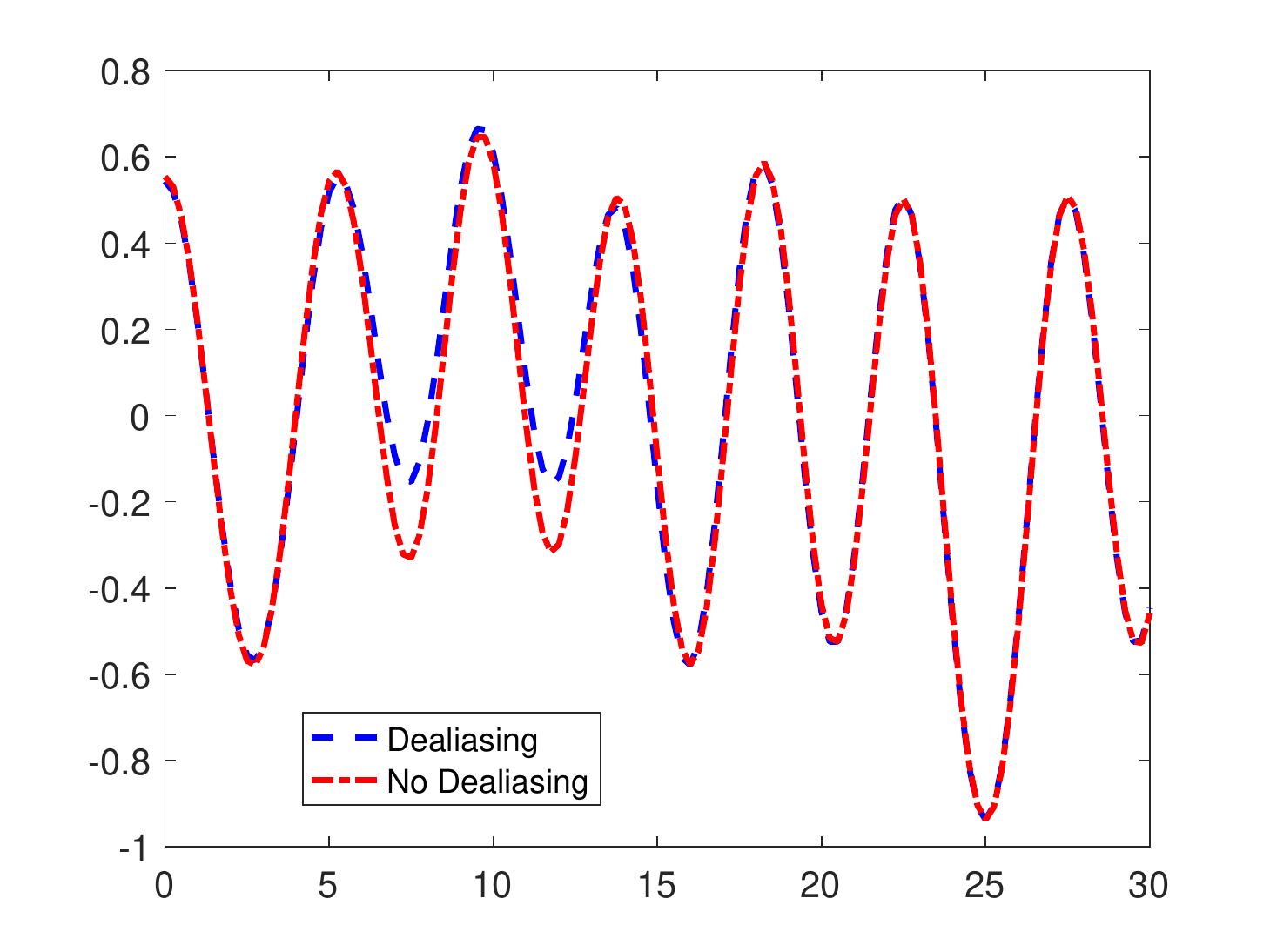}}
\caption{Example \ref{Exp6.3.0}.
Cut lines of the numerical solutions along $y=x$, $x\in[0,30]$, at $t = 545$ (Left) and $t = 670$ (Right).}
\label{fig6.9}
\end{figure}

\begin{example} \label{Exp6.3.1}
It  simulates the polycrystal growth in a supercool liquid
and investigates the modified- and original-energy stabilities of
{\tt SAV-M(1)}$\sim${\tt SAV-M(4)} and {\tt G-SAV-M(1)}$\sim${\tt G-SAV-M(4)}.
For this purpose, the domain $\Omega = (0,400)\times(0,400)$ is partitioned with $N = 400$, the parameter $\epsilon = 0.25$, and the initial value is taken as (see e.g. \cite{LiuZ21})
\begin{align*}
u(x,y,0) =
\begin{cases}
\phi_0  \!+\! B\!\left[\cos\!\left(\frac{\sqrt{6}\vartheta}{6}(y\!-\!x)\right) \cos\!\left(\! \frac{\sqrt{2}\vartheta}{2}(x\!+\!y)\right)\!\! -\! \frac{1}{2} \cos\!\left(\!\frac{\sqrt{6}\vartheta}{3}(y\!-\!x)\right)\! \right]\!,~~(x,y)\!\in\!\Omega_1,  \\[1 \jot]
\phi_0  \!+\! B\!\left[\cos\!\left(\frac{\sqrt{6}\vartheta}{6}(x\!+\!y)\right) \cos\!\left(\! \frac{\sqrt{2}\vartheta}{2}(y\!-\!x)\right)\!\! -\! \frac{1}{2} \cos\!\left(\!\frac{\sqrt{6}\vartheta}{3}(x\!+\!y)\right)\! \right]\!,~~(x,y)\!\in\!\Omega_2,  \\
\phi_0  \!+\! B\!\left[\cos\!\left(\frac{\vartheta}{\sqrt{3}}x\right) \cos\!\left(\! \vartheta y\right)\!\! -\! \frac{1}{2} \cos\!\left(\!\frac{2\vartheta}{\sqrt{3}}x\right)\! \right]\!,~~(x,y)\!\in\!\Omega_3, \\[1 \jot]
\phi_0,~~(x,y)\!\in\!\Omega \backslash (\Omega_1\cup \Omega_2 \cup \Omega_3),
\end{cases}
\end{align*}
where $\phi_0 =0.285$, $ B = 0.446$, $\vartheta = 0.66$, $\Omega_1 = [130,170]\times[130,170]$, $\Omega_2 = [230,270]\times[130,170]$, and $\Omega_3 = [180,220]\times[230,270]$.

Figure \ref{fig5.3.1} presents the discrete total modified-energy curves of {\tt SAV-M(1)}$\sim${\tt SAV-M(4)} and {\tt G-SAV-M(1)}$\sim${\tt G-SAV-M(4)}. They are  monotonically decreasing, and consistent with the theoretical results.
Figure \ref{fig5.3.2} shows the discrete total original-energy curves of {\tt SAV-M(1)}$\sim${\tt SAV-M(4)} and {\tt G-SAV-M(1)}$\sim${\tt G-SAV-M(4)} for \eqref{6.3.1}.
It is shown  that those schemes can preserve the original-energy decay if a suitable time stepsize is chosen.
Figure \ref{fig5.3.3} gives the numerical solution at $t = 2400$ derived by {\tt G-SAV-M(4)} with $\tau = 15$ and $12$. One can find the numerical solution derived by {\tt G-SAV-M(4)} with $\tau = 12$ is similar to that in \cite{LiuZ21,TanZQ22}, but when  $\tau = 15$, an the solution is inaccurate  and the original-energy is not monotonically decreasing.
Remark \ref{remk6.5} will discuss the time stepsize constraints of {\tt SAV-M(1)}$\sim${\tt SAV-M(4)} and {\tt G-SAV-M(1)}$\sim${\tt G-SAV-M(4)}  for the phase field crystal model \eqref{6.3.1}.
 %Table \ref{tab:6.2} further shows the CPU times (in seconds) of  {\tt SAV-M(1)}$\sim${\tt SAV-M(3)} and {\tt G-SAV-M(1)}$\sim${\tt G-SAV-M(3)} with $\tau = 2$ for the phase field crystal model \eqref{6.3.1}, and one can deduce that {\tt G-SAV-M(1)}$\sim${\tt G-SAV-M(3)} have some advantages in computational efficiency.
\end{example}

\begin{figure}
\centering
{\includegraphics[width=7.5cm]{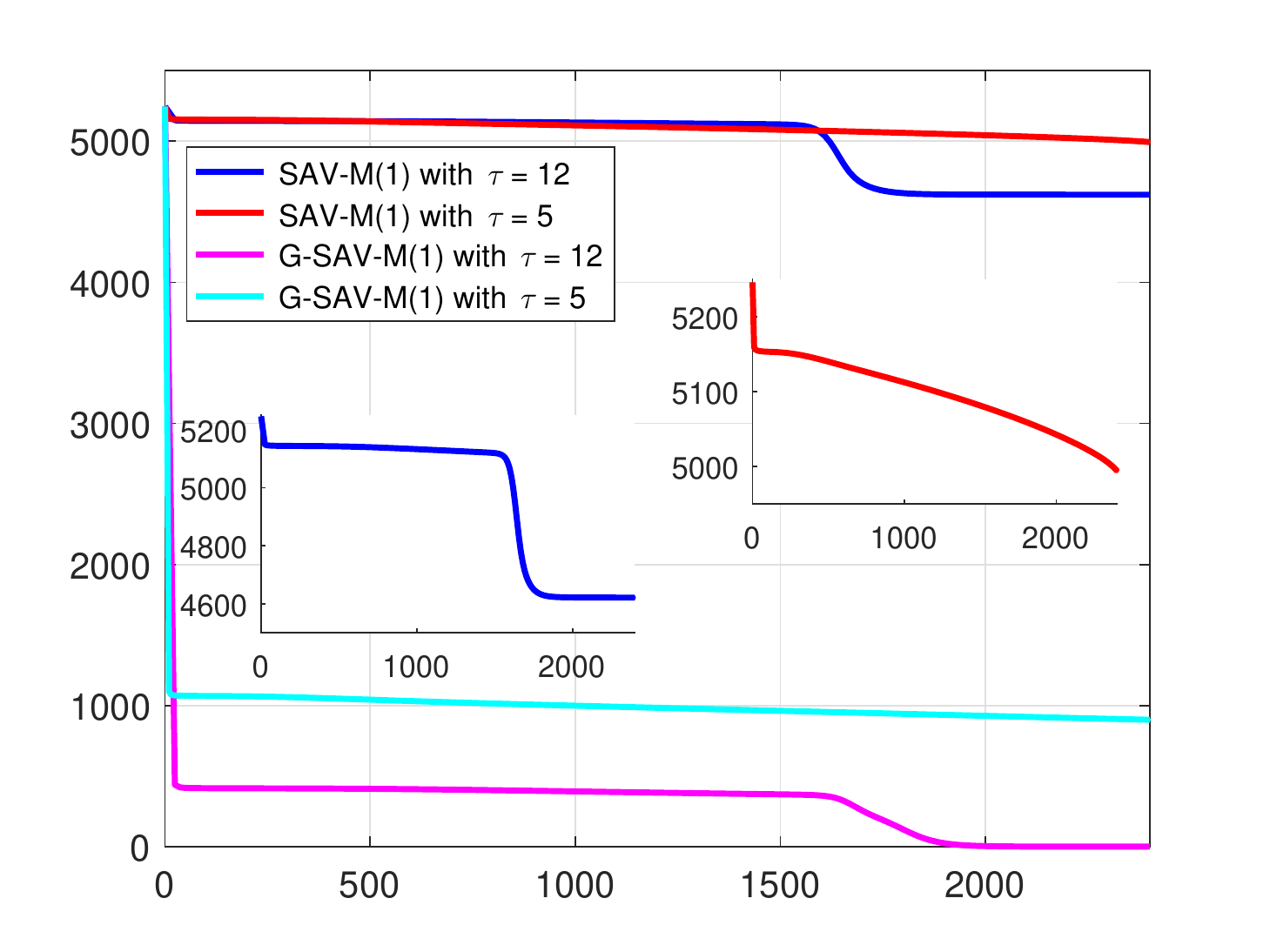} }
{\includegraphics[width=7.5cm]{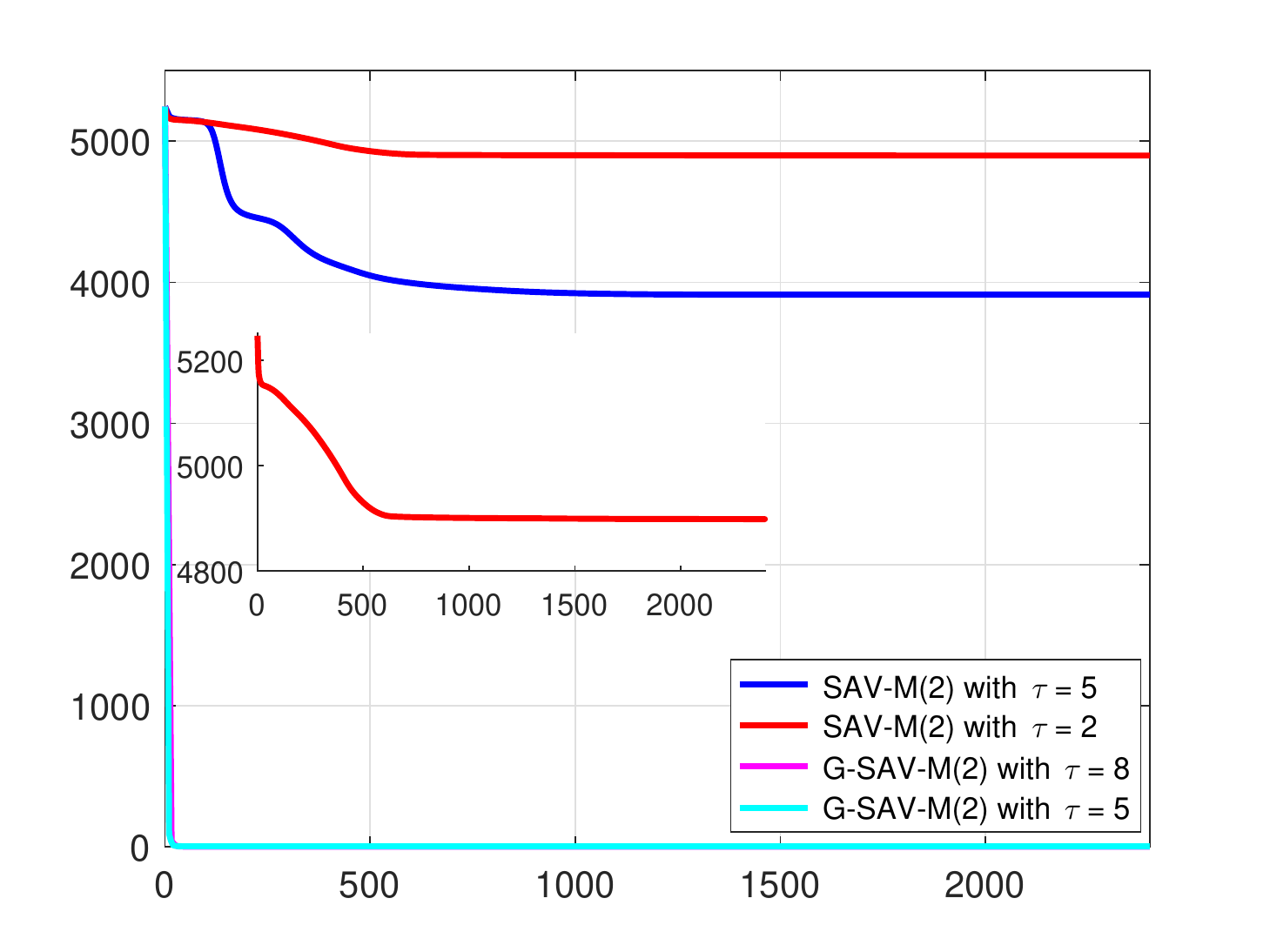} }

{\includegraphics[width=7.5cm]{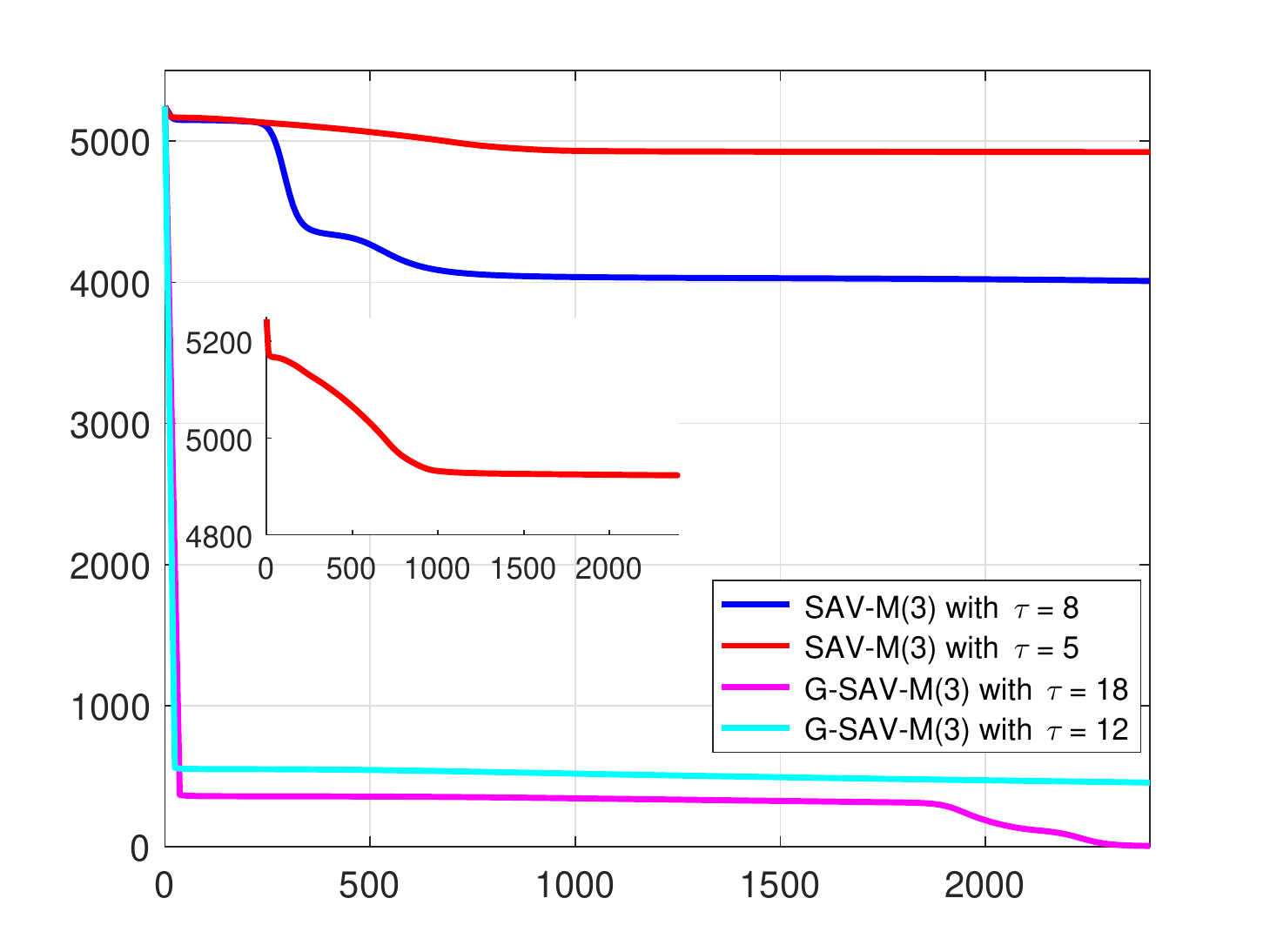} }
{\includegraphics[width=7.5cm]{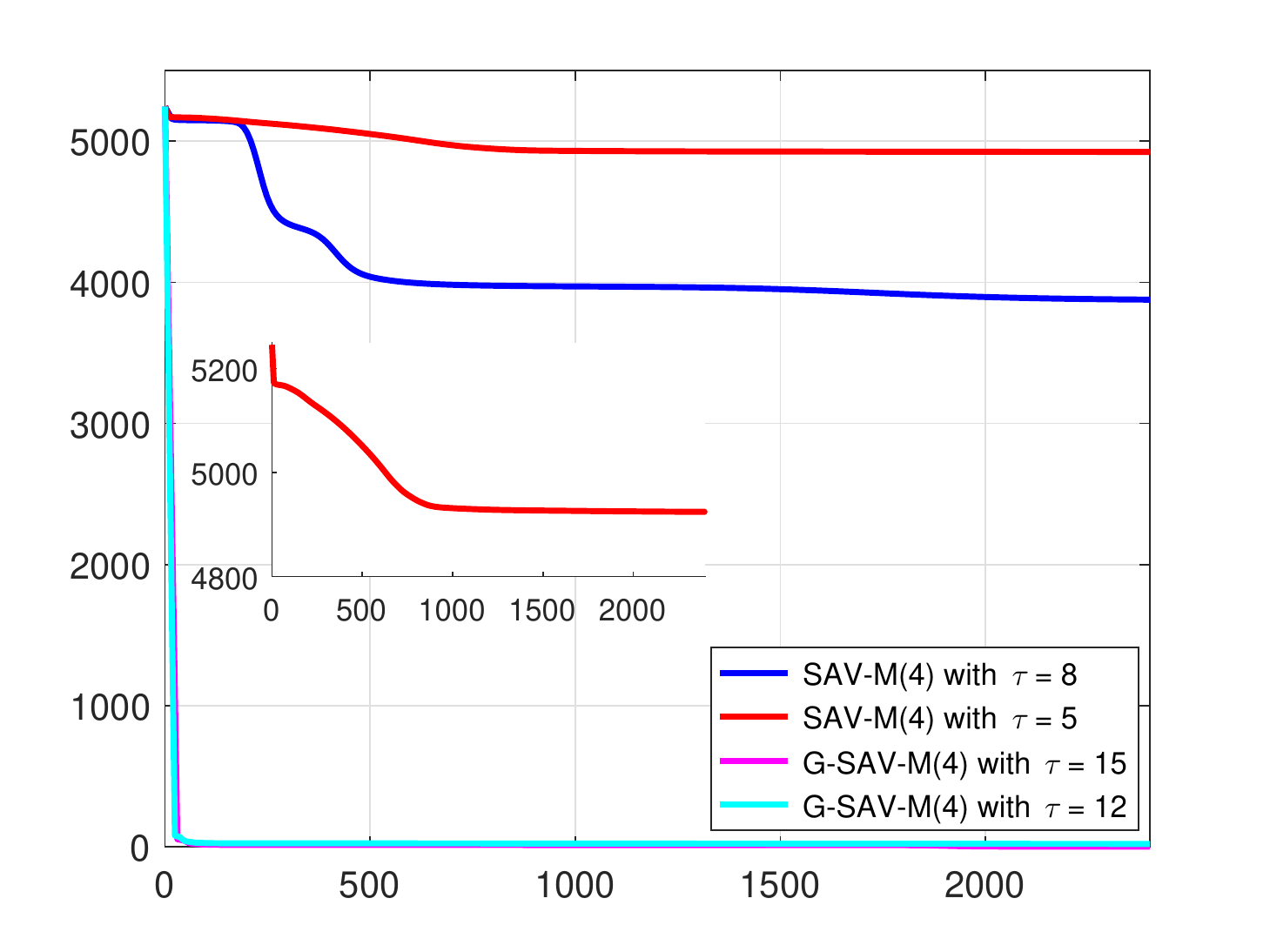} }
\caption{Example \ref{Exp6.3.1}. The discrete total modified-energy curves of {\tt SAV-M(1)}$\sim${\tt SAV-M(4)} and {\tt G-SAV-M(1)}$\sim${\tt G-SAV-M(4)} for the phase field crystal model \eqref{6.3.1}.}
\label{fig5.3.1}
\end{figure}

\begin{figure}
\centering
{\includegraphics[width=7.5cm]{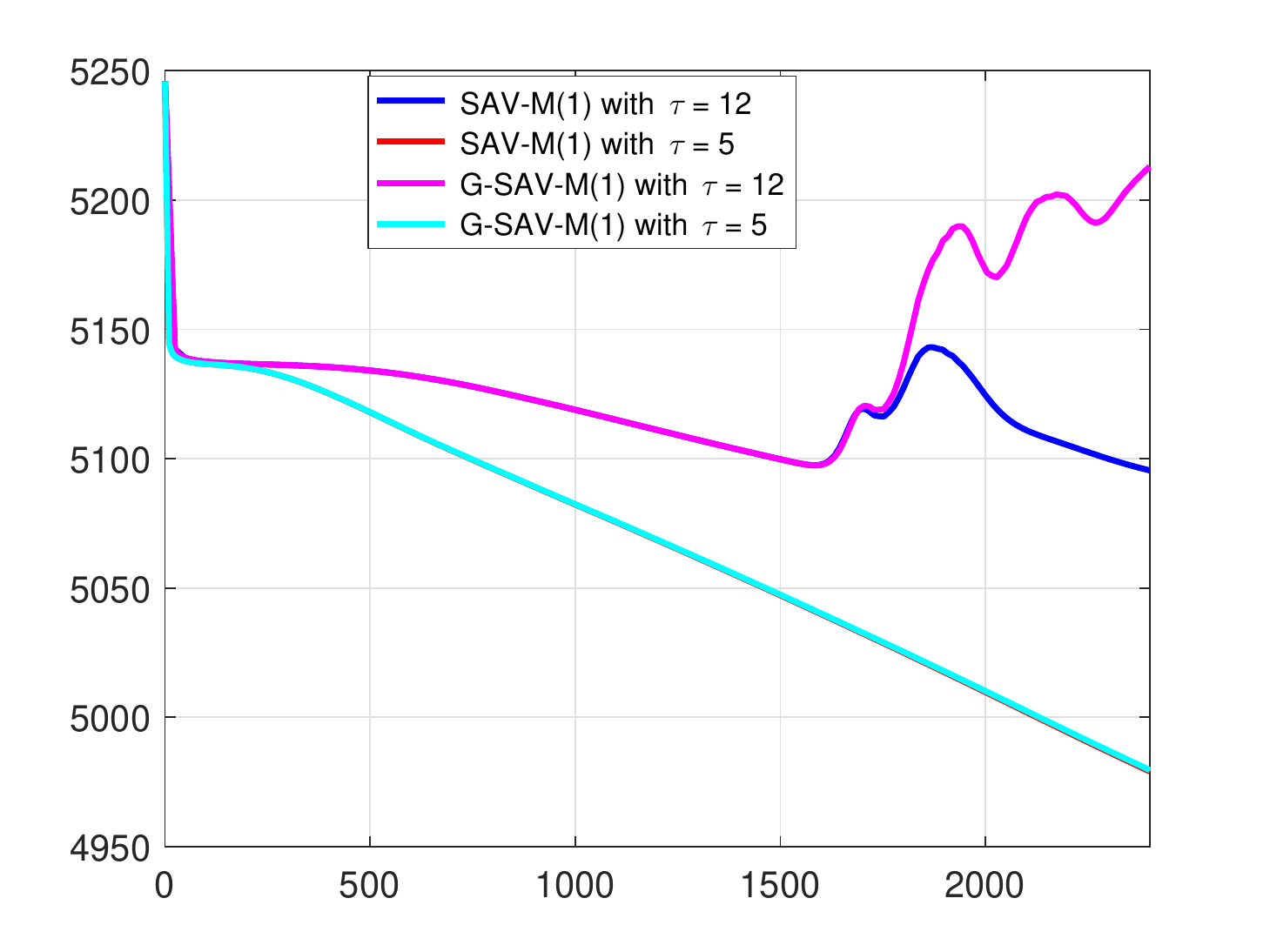} }
{\includegraphics[width=7.5cm]{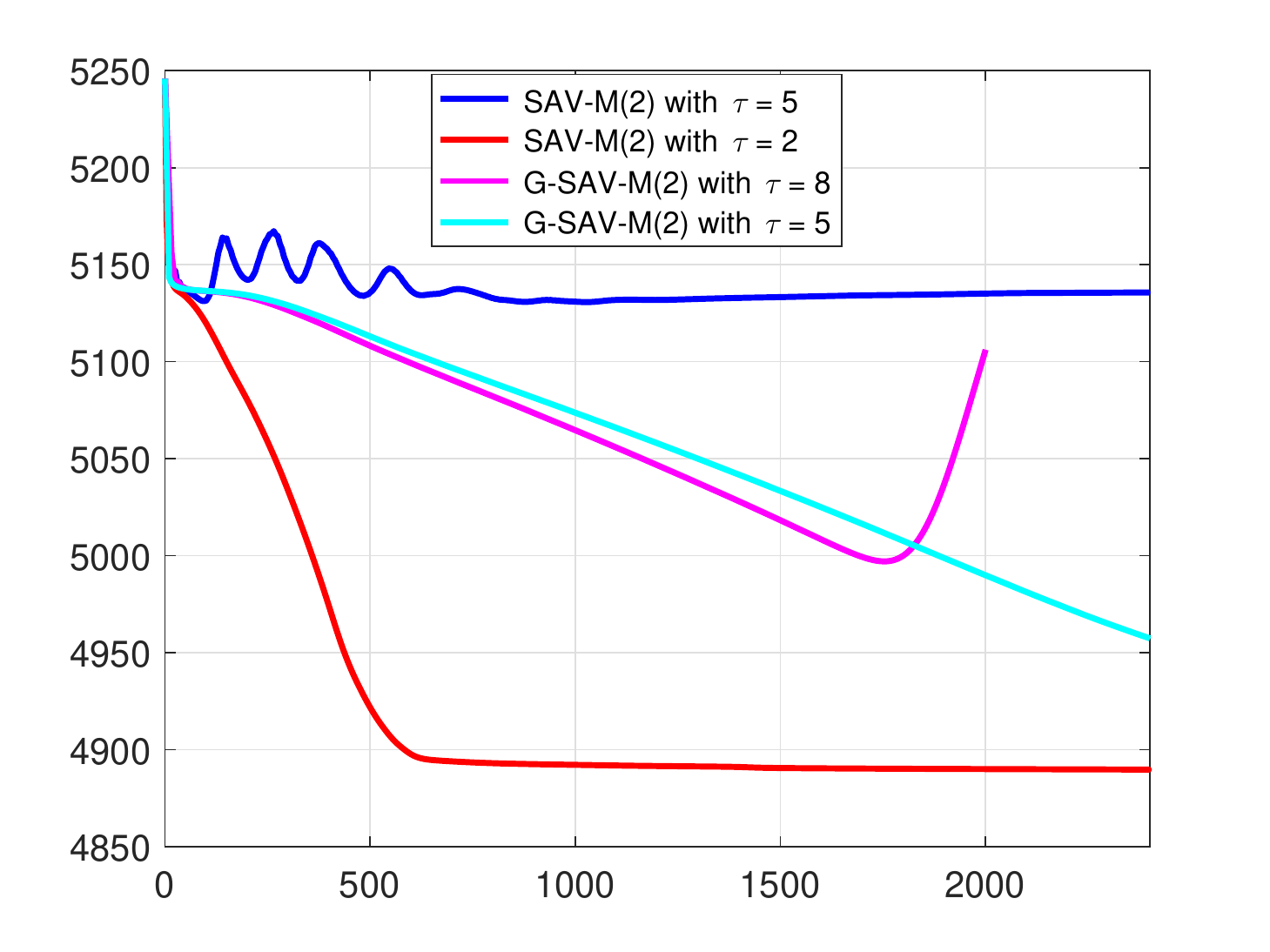} }

{\includegraphics[width=7.5cm]{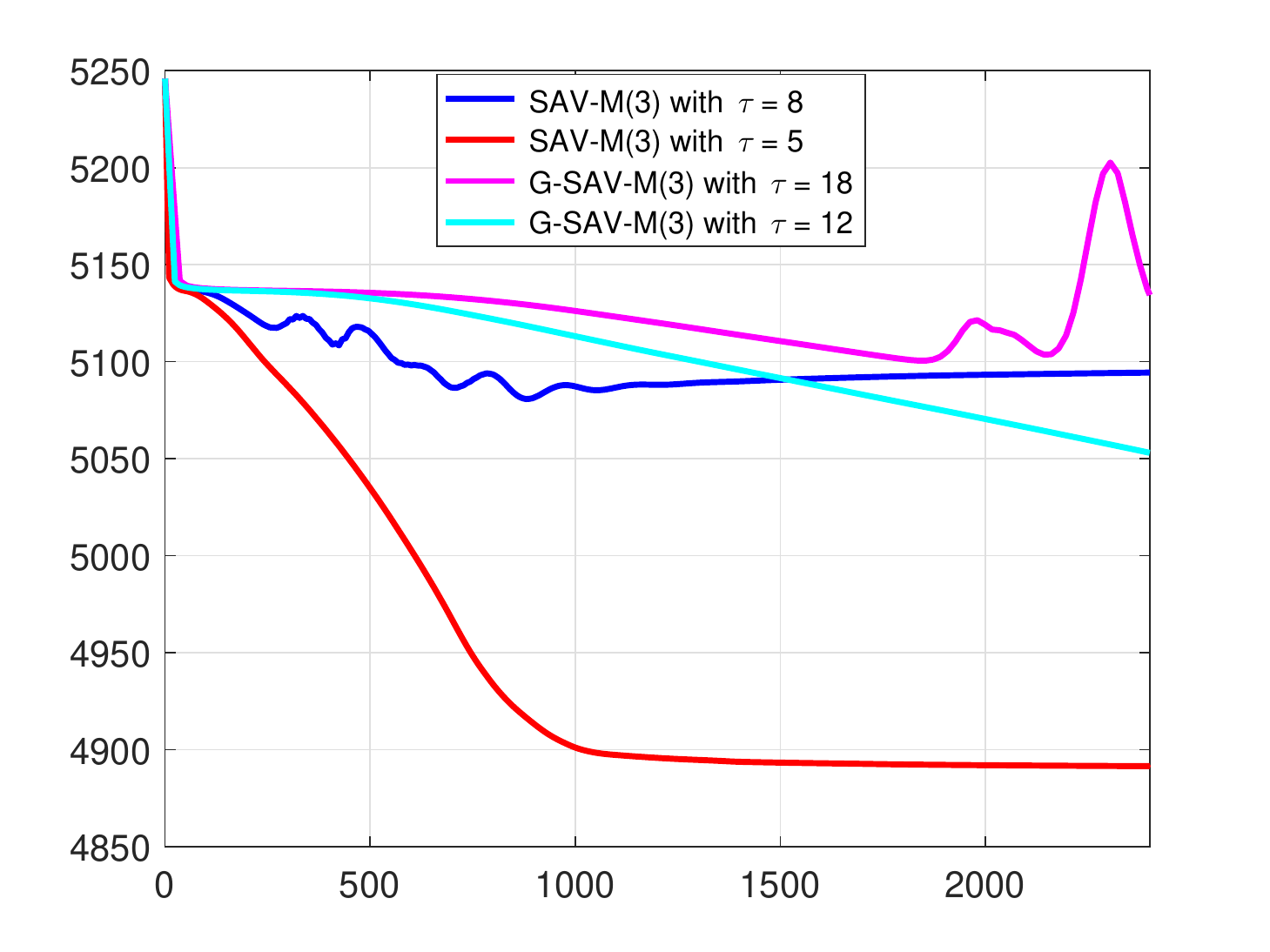} }
{\includegraphics[width=7.5cm]{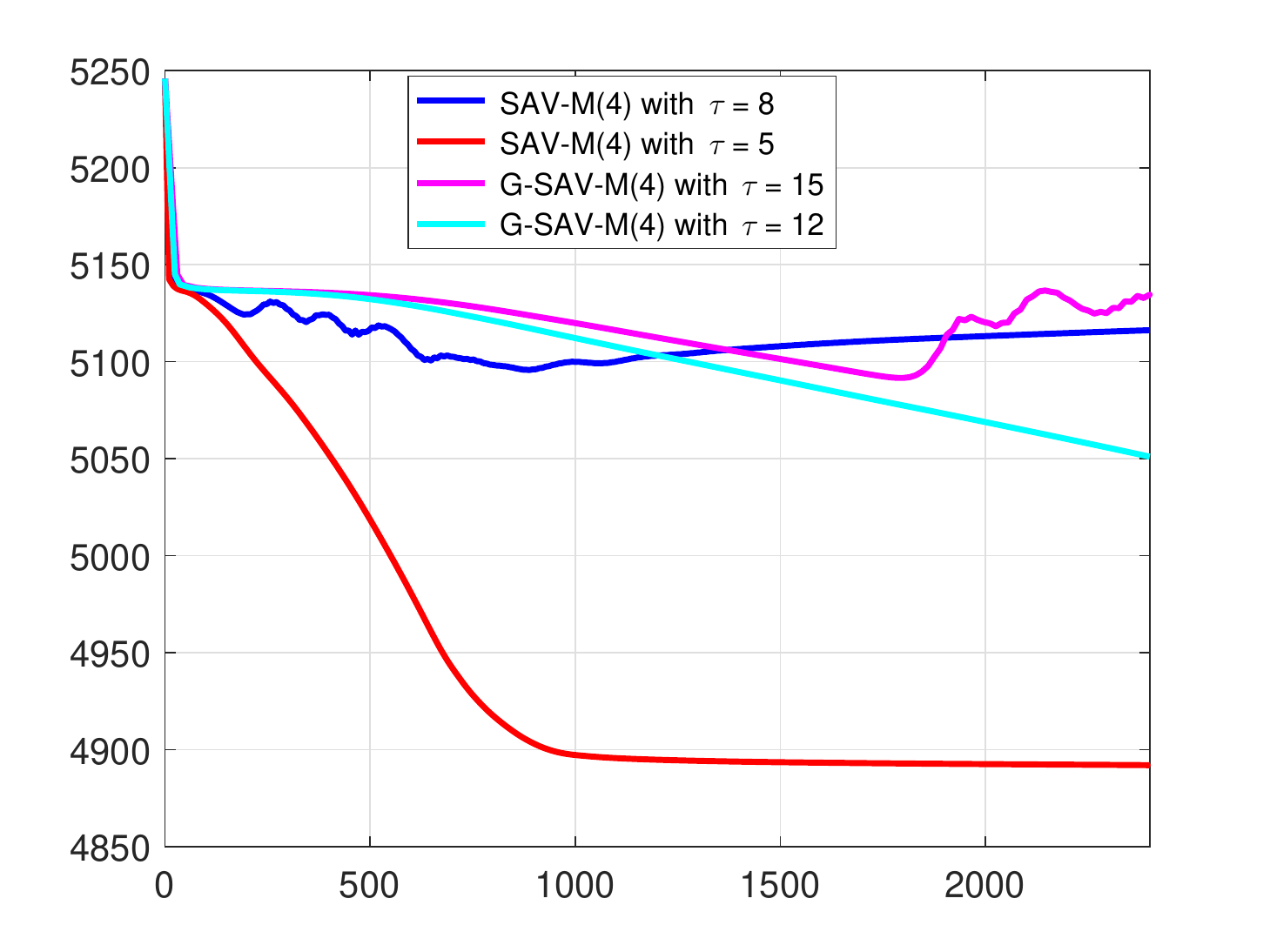} }
\caption{Same as Figure \ref{fig5.3.1}, except for the original-energy.}
%Example \ref{Exp6.3.1}. The discrete total original-energy (y-axis) curves of {\tt SAV-M(1)}$\sim${\tt SAV-M(4)} and {\tt G-SAV-M(1)}$\sim${\tt G-SAV-M(4)} for the phase field crystal model \eqref{6.3.1}.}
\label{fig5.3.2}
\end{figure}

\begin{figure}
\begin{minipage}{0.48\linewidth}
  \centerline{\includegraphics[width=7cm,height=5cm]{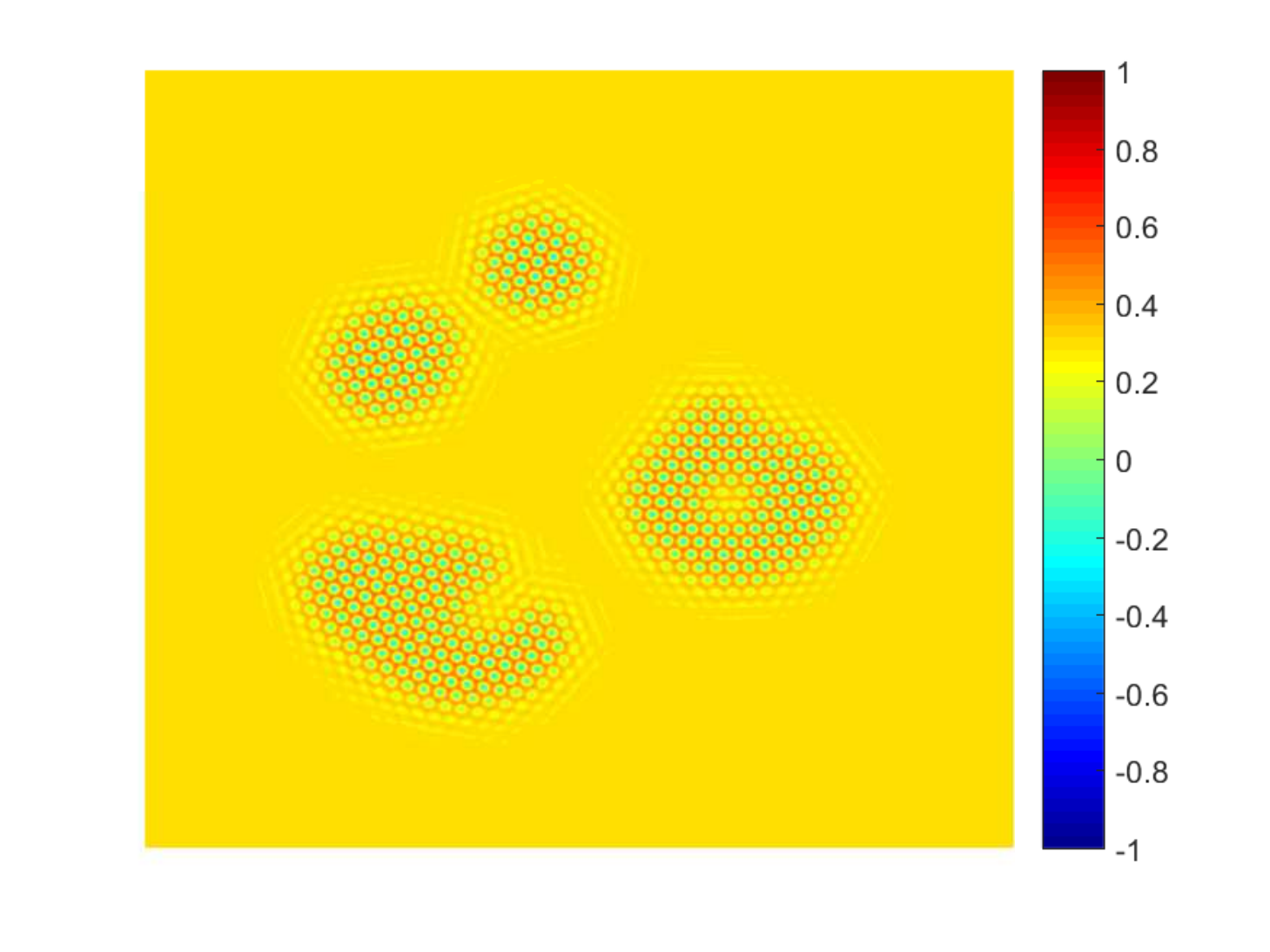}}
 % \centerline{ (a) $\tau = 15$}
\end{minipage}
\hfill
\begin{minipage}{0.48\linewidth}
  \centerline{\includegraphics[width=7cm,height=5cm]{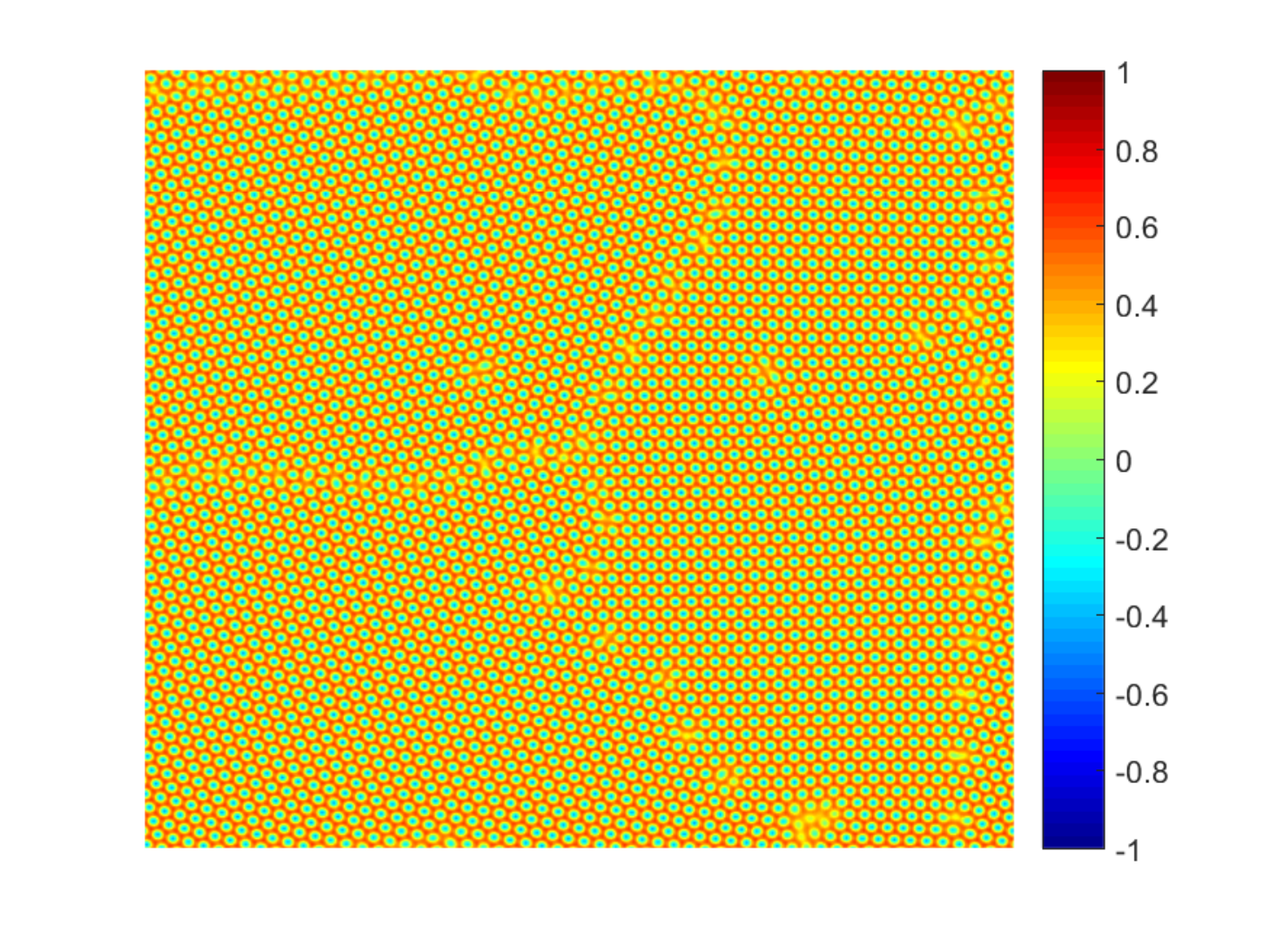}}
 % \centerline{ (b) $\tau = 12$}
\end{minipage}
\caption{Example \ref{Exp6.3.1}. Numerical solutions at $t=2400$  derived by {\tt G-SAV-M(4)} with $\tau = 15$ (Left) and $12$ (Right), respectively.}
\label{fig5.3.3}
\end{figure}

\begin{figure}
\begin{minipage}{0.48\linewidth}
  \centerline{\includegraphics[width=7cm,height=5cm]{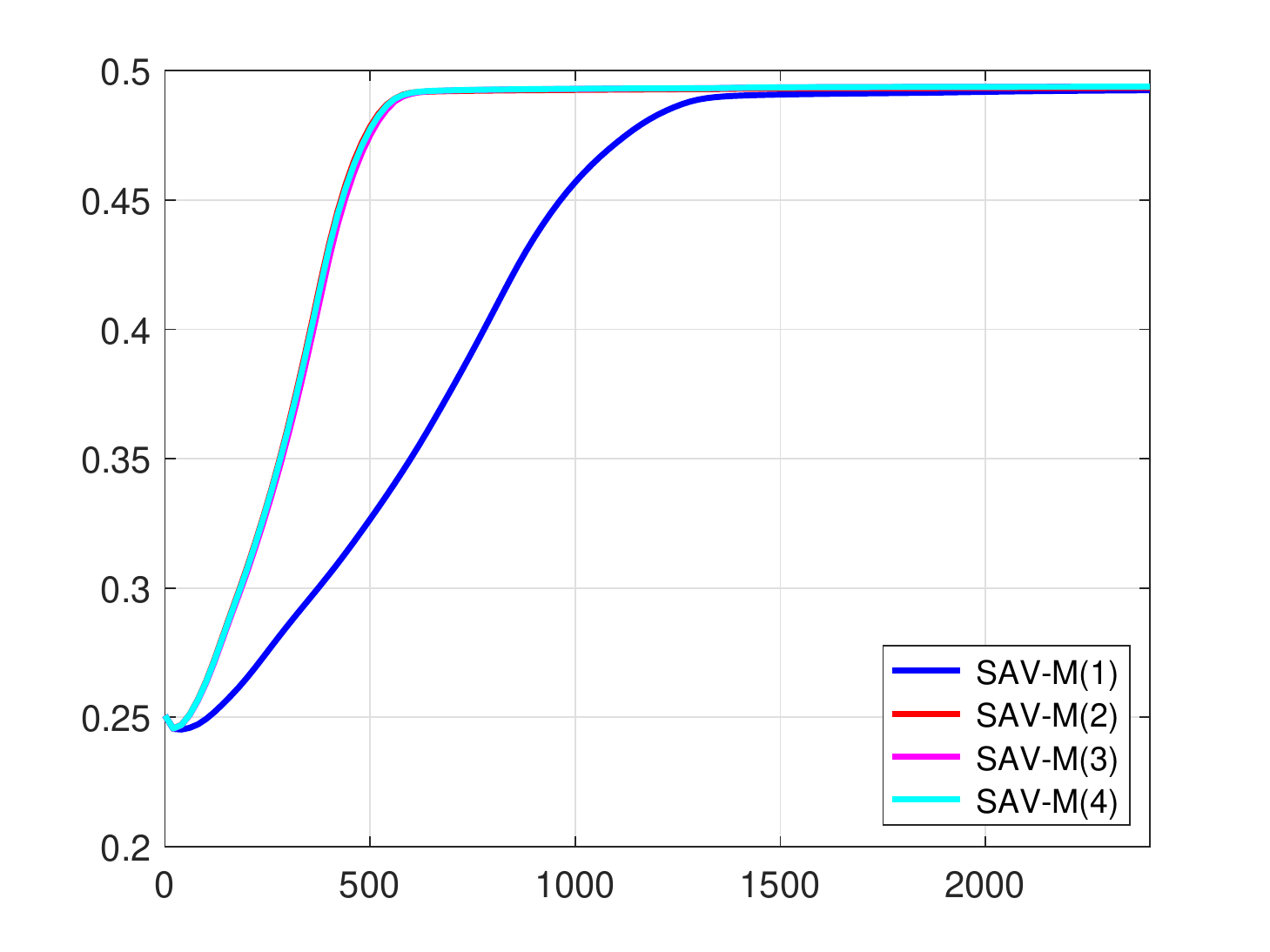}}
\end{minipage}
\hfill
\begin{minipage}{0.48\linewidth}
  \centerline{\includegraphics[width=7cm,height=5cm]{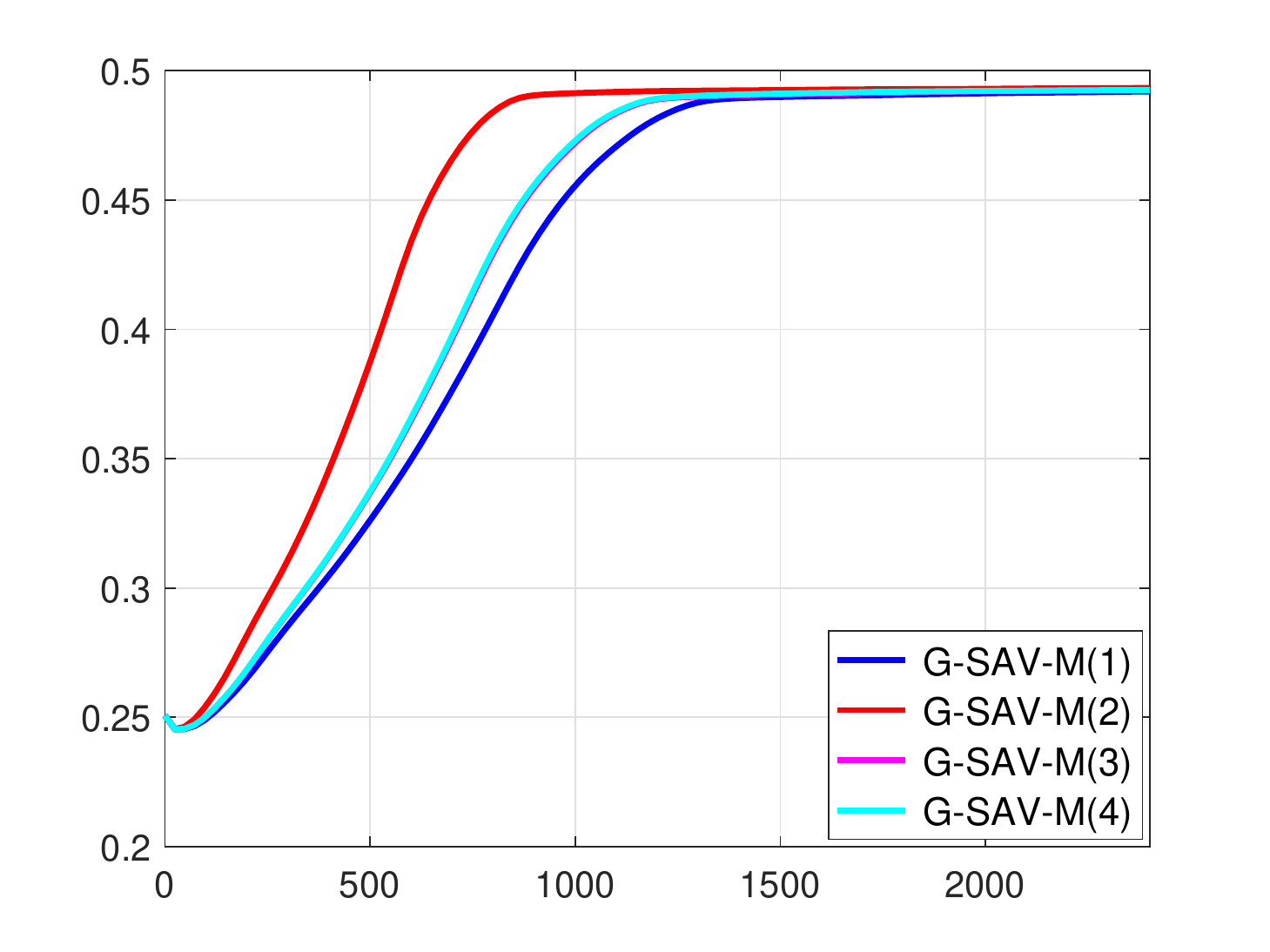}}
\end{minipage}
\caption{Example \ref{Exp6.3.1}.   $\psi^n$ (y-axis) derived by {\tt SAV-M(1)}$\sim${\tt SAV-M(4)} (Left) and {\tt G-SAV-M(1)}$\sim${\tt G-SAV-M(4)} with $\tau = 1$.}
\label{fig5.3.4}
\end{figure}

\begin{remark}   \label{remk6.5}
This remark discusses the time stepsize constraints of {\tt SAV-M(1)}$\sim${\tt SAV-M(4)} and {\tt G-SAV-M(1)}$\sim${\tt G-SAV-M(4)} for the phase field crystal model \eqref{6.3.1}.

Applying the Fourier pseudo-spectral method to \eqref{6.3.1} yields the   ODE system
\begin{align}   \label{6.3.2}
\frac{d \hat{u}_{k,l}}{d t } = - \frac{\pi^3\left(k^2 \!+\! l^2 \right)^3}{200^3}\hat{u}_{k,l} -  \frac{\pi(k^2\!+\!l^2)}{200}\left[ \hat{w}_{k,l} + \frac{3}{8}\hat{u}_{k,l} - \frac{\pi(k^2\!+\!l^2)}{100}\hat{u}_{k,l} \right],
\end{align}
where $\{\hat{w}_{k,l}, (k,l) \in \widehat{\mathbb{S}}_N\}$ are the discrete Fourier coefficients of the cubic term $u^3$ and given by \eqref{6.1.5}.
%For Example \ref{Exp6.3.1},
Similarly, % to the discussions in Remarks \ref{remk6.1} and \ref{remk6.3},
\eqref{6.3.2} can be viewed as the test equation \eqref{8.3.1} with
\[ \xi = - \frac{\pi^3\left(k^2 + l^2 \right)^3}{200^3},~~~\zeta = -  \frac{\pi(k^2\!+\!l^2)}{200}\!\left[ \frac{3}{N^2} \!\sum\limits_{(i,j)\in\mathbb{S}_N} \!|u_{i,j}|^2 \!+\! \frac{3}{8} \!-\! \frac{\pi(k^2\!+\!l^2)}{100} \right]\!, ~~~(k,l) \in \widehat{\mathbb{S}}_N. \]
 For Example \ref{Exp6.3.1},
Figure \ref{fig5.3.4} gives the curves of  $\psi^n = \frac{3}{N^2} \sum\limits_{(i,j)\in\mathbb{S}_N} |u_{i,j}|^2$ derived by {\tt SAV-M(1)}$\sim${\tt SAV-M(4)} and {\tt G-SAV-M(1)}$\sim${\tt G-SAV-M(4)} with $\tau = 1$. It is shown that  $\psi^n \le 0.5$ so that $\zeta \ge -  \frac{\pi(k^2+l^2)}{200}\!\left[  \frac{7}{8} \!-\! \frac{\pi(k^2+l^2)}{100} \right]$.
In the following, one may take $\zeta \approx -  \frac{\pi(k^2+l^2)}{200}\!\left[  \frac{7}{8} \!-\! \frac{\pi(k^2+l^2)}{100} \right]$ and use \ref{Appx3} to discuss the time stepsize constraints for {\tt SAV-M(1)}$\sim${\tt SAV-M(4)} and {\tt G-SAV-M(1)}$\sim${\tt G-SAV-M(4)}.
Specifically, when $(\alpha_0,\beta_0,\beta_2) = (0,0,1)$,
 it requires $$\tau < \left\{ \frac{2}{\max(\xi-3\zeta)}: \zeta < \frac{1}{3}\xi\right\},$$
  and a simple calculation shows that when $k=3$ and $l=2$, $\max\left\{\xi-3\zeta\right\} = 0.2788$,  which gives $\tau \lesssim 7.21$;
when $(\alpha_0,\beta_0,\beta_2) = (-1/3,5/12,3/4)$,
 $$ \tau < \left\{ \frac{4}{3\max(\xi-2\zeta)}: \zeta < \frac{1}{2}\xi\right\},$$
  which is combined with $\max\left\{\xi-2\zeta\right\} = 0.2964$ for $k=2$ and $l=3$ to get  $\tau \lesssim 7.32$;
when $(\alpha_0,\beta_0,\beta_2) = (1/3,0,2/3)$,
$$ \tau < \left\{ \frac{4}{\max(\xi-3\zeta)}: \zeta < \frac{1}{3}\xi\right\}, $$
so that $\tau \lesssim 14.42$;
and when $(\alpha_0,\beta_0,\beta_2) = (1/3,-1/6,1/2)$,
$$\tau < -\frac{4}{3\min(\zeta)},$$
which yields  $\tau \lesssim 13.99$ since $\min(\zeta) = 0.1524$ for $k=3$ and $l=2$.
Note that those time stepsize estimates for  {\tt SAV-M(1)}$\sim${\tt SAV-M(4)} and {\tt G-SAV-M(1)}$\sim${\tt G-SAV-M(4)} are sufficient.
Compared to the numerical results shown in Figure \ref{fig5.3.2},
one needs to take slightly smaller time stepsizes to {ensure} the original-energy decay when {\tt SAV-M(1)}$\sim${\tt SAV-M(4)} are applied to \eqref{6.3.1}, but the above time stepsize estimates are almost consistent with those in numerical experiments for {\tt G-SAV-M(1)}$\sim${\tt G-SAV-M(4)}.
 %although the time discretization with $(\alpha_0,\beta_0,\beta_2) = (1/3,-1/6,1/2)$ is not algebraically stable, the time stepsize constrain is relax so that {\tt SAV-M(4)} and {\tt G-SAV-M(4)} are efficient for the phase field crystal model \eqref{6.3.1}.

\end{remark}

\section{Conclusion}  \label{sec:7}

This paper continued to study linear and unconditionally modified-energy stable numerical schemes (abbreviated as SAV-GL) for the gradient flows.
Those schemes were built on the   SAV  technique and the general linear time discretizations (GLTD) as well as the extrapolation for the nonlinear term,  and two linear systems with the same constant coefficient  were solved at each time step.
Different from \cite{TanZQ22},  the GLTDs with three parameters  discussed here  were not necessarily algebraically stable.
Some algebraic identities were first derived by  using the method of undetermined coefficients
and then   used to establish the  modified-energy  inequalities for the unconditional  modified-energy stability of the semi-discrete-in-time SAV-GL schemes.
It was worth emphasizing that those algebraic identities or energy inequalities are not necessarily unique  for some  choices of  three parameters in the GLTDs.
%
%; and
%when the three parameters satisfy appropriate conditions,  some identities were derived  so that the semi-discrete-in-time SAV schemes were shown to be unconditionally modified-energy stable.
%Those theoretical results more general and applicable for sone time discretizations, which were not algebraically stable.
%The G-SAV schemes were mainly determined by the corresponding semi-implicit schemes, and thus only one linear system was solved  so that their computational cost was slightly less than that of the SAV schemes; and  those G-SAV schemes were also  shown to be unconditionally modified-energy stable and could preserve the positivity of the introduced SAVs.
In order to demonstrate numerically the energy stability of our SAV-GL schemes,  the Fourier pseudo-spectral spatial discretization was employed for the gradient flow models with periodic boundary conditions. The effect of the aliasing error and the
de-aliasing by zero-padding provided in \ref{sec:5} on the numerical results were investigated.

Numerical experiments  were conducted on the Allen-Cahn, the Cahn-Hilliard, and the phase field crystal models, and well demonstrated   the unconditional modified-energy stability of {\tt SAV-M(1)}$\sim$ {\tt SAV-M(4)}  in comparison to another SAV-GL schemes (abbreviated as {\tt G-SAV-M(1)}$\sim$ {\tt G-SAV-M(4)}) built on
the generalized SAV
%and
and
the  effectiveness of the de-aliasing by zero-padding.
Numerical results also  showed that a suitable time stepsize were required for the SAV-GL schemes to ensure the original-energy decay.
With the help of discussing the stability regions for the semi-implicit SAV-GL schemes applied to the test equation in \ref{Appx3},
the time stepsizes for {\tt SAV-M(1)}$\sim$ {\tt SAV-M(4)} %and {\tt G-SAV-M(1)}$\sim$ {\tt G-SAV-M(4)}
were estimated
for the Allen-Cahn, the Cahn-Hilliard, and the phase field crystal models. Our computations showed that  those time stepsize constraints could ensure  the original-energy decay essentially.
%Compared with the numerical results on the Allen-Cahn and Cahn-Hilliard models, respectively, it could be found that those constraints were sufficient and  slightly larger time stepsizes could be used for {\tt SAV-M(1)}$\sim$ {\tt SAV-M(4)} and {\tt G-SAV-M(1)}$\sim$ {\tt G-SAV-M(4)} to preserve the original-energy stability. For the phase field crystal model, one could observe those constraints were almost consistent with the numerical results of {\tt G-SAV-M(1)}$\sim$ {\tt G-SAV-M(4)}, whereas one needed ro take slightly smaller time stepsizes for {\tt SAV-M(1)}$\sim$ {\tt SAV-M(4)} to preserve the original-energy stability. Besides,  when time discretizations are not algebraically stable, such as {\tt SAV-M(4)} and {\tt G-SAV-M(4)}, the derived SAV and G-SAV schemes  suffered from much severe time stepsize constraints for the Cahn-Hilliard model, but those constraints were comparable when applied to the Allen-Cahn and phase field crystal models.

\begin{appendix}

\section{Proof of Lemma \ref{lemma2.1}}  \label{Appx1}

This appendix   proves Lemma \ref{lemma2.1} by
{discussing}
the $A$-stability conditions of the fully implicit time discretizations based on \eqref{3.1.3}-\eqref{3.1.4}.

Applying  \eqref{3.1.3}-\eqref{3.1.4} to the test equation
\begin{align*}
\frac{d u}{dt}=u'(t)  = \xi u(t),~~ t\in(0,T],~~ u(0) = u_0,
\end{align*}
yields
\begin{align}   \label{8.2}
\frac{1}{1\!-\!\alpha_0}u_{n+1} - \frac{1\!+\!\alpha_0}{1\!-\!\alpha_0} u_n + \frac{\alpha_0}{1\!-\!\alpha_0} u_{n-1} = \bar{\xi}\left[ \frac{\beta_2}{1\!-\!\alpha_0} u_{n+1} + \frac{\beta_1}{1\!-\!\alpha_0} u_{n} + \frac{\beta_0}{1\!-\!\alpha_0} u_{n-1} \right],
\end{align}
where $\bar{\xi} = \xi \tau$,  $\xi$ is a complex number,
and  the symbol $u_{n}$ have been temporarily used to replace the previous approximate solution $u^n$
for convenience.
Substituting $u_j = x^j$ into \eqref{8.2} and dividing by $x^{n-1}$ give the characteristic equation
\begin{align}   \label{8.3}
\rho(x) - \bar{\xi} \sigma(x) = 0,
\end{align}
with %the polynomials $\rho(x)$ and $\sigma(x)$ are given by
\[ \rho(x) =  \frac{1}{1\!-\!\alpha_0}x^2 - \frac{1\!+\!\alpha_0}{1\!-\!\alpha_0} x + \frac{\alpha_0}{1\!-\!\alpha_0},~~~ \sigma(x) = \frac{\beta_2}{1\!-\!\alpha_0} x^2 + \frac{\beta_1}{1\!-\!\alpha_0} x + \frac{\beta_0}{1\!-\!\alpha_0}. \]
According to {\cite[Def. 1.1]{Hairer}},  the scheme \eqref{8.2} is $A$-stable iff for any $\bar{\xi} \in \mathbb{C}^-$, all solutions of  \eqref{8.3} are smaller or equal to one in modulus, and the multiple solutions are strictly smaller than one.
%The former condition is more important and needs to be given priority consideration, although a weak instability may occur, when the last condition is omitted.
%Referring to \cite{Calvo88},
It is known that  all roots of the polynomial  $c_2 x^2 + c_1 x + c_0$ are smaller or equal to one in modulus iff $|c_0| \le |c_2|$ and $|c_1| \le |c_0+c_2|$, see e.g. \cite{Calvo88}.
Thus,  when
\begin{align}  \label{8.4}
\left| \alpha_0 - \beta_0 \bar{\xi}\right| \le \left| 1 - \beta_2\bar{\xi}\right|,~~~ \left|1+\alpha_0 -\beta_1 \bar{\xi} \right| \le \left|1+\alpha_0- \left(\beta_0\!+\!\beta_2\right) \bar{\xi} \right|,
\end{align}
  for any $ \bar{\xi}\in\mathbb{C}^-$,
all roots of the characteristic polynomial
\[   \mathbb{P}(x):= \rho(x) - \bar{\xi} \sigma(x) =  \left(\frac{1}{1\!-\!\alpha_0} - \frac{\beta_2}{1\!-\!\alpha_0}\bar{\xi} \right)x^2 - \left(\frac{1\!+\!\alpha_0}{1\!-\!\alpha_0} - \frac{\beta_1}{1\!-\!\alpha_0} \bar{\xi}\right)x + \frac{\alpha_0}{1\!-\!\alpha_0}-\frac{\beta_0}{1\!-\!\alpha_0} \bar{\xi}, \]
are smaller or equal to one in modulus so that  the scheme \eqref{8.2} is $A$-stable. Let $\bar{\xi} = a + \imath b$ with $\imath = \sqrt{-1}$, $a\le0$ and $b\in \mathbb{R}$. The first inequality in \eqref{8.4} is equivalent to
\begin{align*}
2a(\alpha_0\beta_0\!-\!\beta_2)\!+\! \left(\beta_2^2\!-\!\beta_0^2\right)\left( a^2 \!+\! b^2\right) \!+\!1\!-\!\alpha_0^2 \ge 0,~~~\forall~a\le 0,~b\in\mathbb{R}.
\end{align*}
A direct check shows that the parameters $\alpha_0, \beta_0$ and $\beta_2$ should satisfy
\begin{align*}
-1\le \alpha_0 < 1,~~~\alpha_0\beta_0 \le \beta_2,~~~|\beta_0| \le |\beta_2|,
\end{align*}
which further gives
\begin{align}  \label{8.5}
-1\le \alpha_0 < 1,~~~\beta_2>0,~~~|\beta_0| \le \beta_2.
\end{align}
On the other hand, the second inequality in \eqref{8.4} is equivalent to
\begin{align*}
\left(2\beta_0\!+\!2\beta_2\!+\!\alpha_0\!-\!1\right)\left(1\!-\!\alpha_0\right)(a^2 \!+ \!b^2) \!+\! 2(1\!+\!\alpha_0)\left(1\!-\!\alpha_0\!-\!2\beta_0\!-\!2\beta_2\right) a \!\ge \! 0,~~~\forall~ a\le 0,~b\in\mathbb{R},
\end{align*}
which yields
\begin{align}  \label{8.6} \left(2\beta_0\!+\!2\beta_2\!+\!\alpha_0\!-\!1\right)\left(1\!-\!\alpha_0\right) \ge 0,~~~ (1\!+\!\alpha_0)\left(1\!-\!\alpha_0\!-\!2\beta_0\!-\!2\beta_2\right) \le 0.
\end{align}
Combining  \eqref{8.5} with \eqref{8.6} yields that
the scheme \eqref{8.2} is $A$-stable
 when the parameters $\alpha_0, \beta_0$ and $\beta_2$ satisfy
\begin{align}  \label{8.10}
-1\le \alpha_0<1,~~~\beta_2>0,~~~|\beta_0| \le \beta_2,~~~1-\alpha_0-2\beta_0-2\beta_2 \le 0.
\end{align}
Some special cases are discussed as follows.

$\bullet$  When $\alpha_0 = \beta_0 = 0$,  \eqref{8.2} reduces to a one-step scheme with parameter $\beta_2$, i.e,
\begin{align}  \label{8.7}
u_{n+1} -  u_n =  \beta_2 \bar{\xi} u_{n+1} + (1\!-\!\beta_2) \bar{\xi}u_{n},
\end{align}
which is second-order accurate only for $\beta_2 = \frac{1}{2}$. A direct check shows that the condition \eqref{8.10} becomes $\beta_2\ge \frac{1}{2}$, under which \eqref{8.7} is $A$-stable.

$\bullet$ When $\beta_2 = \frac{1}{2}(1+\alpha_0)+\beta_0$,
and $\alpha_0$ and $\beta_0$ are not zero simultaneously, \eqref{8.2} reduces to a class of two-step and second-order schemes, i.e.,
\begin{align}   \label{8.8}
\frac{1}{1\!-\!\alpha_0}u_{n+1} \!- \!\frac{1\!+\!\alpha_0}{1\!-\!\alpha_0} u_n \!+ \! \frac{\alpha_0}{1\!-\!\alpha_0} u_{n-1} \!= \!\bar{\xi}\left[ \frac{1\!+\!\alpha_0\!+\!2\beta_0}{2(1\!-\!\alpha_0)} u_{n+1} \!+ \! \frac{1\!-\!3\alpha_0\!-\!4\beta_0}{2(1\!-\!\alpha_0)} u_{n} \!+ \! \frac{\beta_0}{1\!-\!\alpha_0} u_{n-1} \!\right]\!.
\end{align}
It can be  seen that \eqref{8.10} is simplified as $-1\le \alpha_0<1$ and $2\beta_0+\alpha_0\ge 0$, under which  the scheme \eqref{8.8} is $A-$stable.
Moreover, if taking $\alpha_0 = \frac{\lambda-1}{\lambda+1},$
$\beta_0 = \frac{1-\lambda+\delta}{2(1+\lambda)}, \beta_2 = \frac{1+\lambda+\delta}{2(1+\lambda)}$, then \eqref{8.8} is rewritten  into
\begin{align}   \label{8.9}
\frac{1+\lambda}{2}u_{n+1} - \lambda u_n + \frac{\lambda-1}{2} u_{n-1} = \bar{\xi}\left[ \frac{1+\lambda+\delta}{4} u_{n+1} + \frac{1-\delta}{2} u_{n} + \frac{1-\lambda+\delta}{4} u_{n-1} \right],  \end{align}
so that it  is $A-$stable for any $\lambda\ge 0$, $\delta \ge 0$.

$\bullet$ When $\beta_2 \neq \frac{1}{2}(1+\alpha_0)+\beta_0$, and $\alpha_0$ and $\beta_0$ are not zero simultaneously,    \eqref{8.2} is two-step but only first-order accurate. In this case,   the condition \eqref{8.10} can not be simplified.

When the scheme \eqref{8.2} is $A$-stable, it may not be algebraically stable.  For example, the scheme \eqref{8.9} with $\lambda \ge 0$ and $\delta > 0$ is shown to be  algebraically stable with the positive definite matrix
\[ \vec{G} =   \frac{1}{4}\!\left(\! \begin{array}{*{4}{c}}
(1+\lambda)^2+\delta & 1-\delta-\lambda^2 \\ 1-\delta-\lambda^2 & (\lambda-1)^2 + \delta \end{array} \!\right),    \]
see e.g. \cite{Dahlquist75B}, but it is not algebraically stable when $\lambda \ge 0$ and $\delta = 0$.
In fact, if \eqref{8.9} with $\lambda \ge 0$ and $\delta = 0$ is algebraically stable, then  Theorem 3.2 in \cite{Dahlquist75B} shows that corresponding matrix $\bar{\vec{G}}$  should satisfy
\[  (1,1)\bar{\vec{G}} = \frac{1}{2}\left(1\!+\!\lambda, 1\!-\!\lambda\right),~~~(1,0)\bar{\vec{G}} = \frac{1}{4} \left((1\!+\!\lambda)^2, 1\!-\!\lambda^2 \right),  \]
which uniquely gives
\[ \bar{\vec{G}} =   \frac{1}{4}\!\left(\! \begin{array}{*{4}{c}}
(1+\lambda)^2 & 1-\lambda^2 \\ 1-\lambda^2 & (\lambda-1)^2 \end{array} \!\right).  \]
Obviously,   $\bar{\vec{G}}$ is not positive definite so that  \eqref{8.9} with $\lambda \ge 0$ and $\delta = 0$ is not algebraically stable.
Thus, when $\beta_2 = \frac{1}{2}(1+\alpha_0)+\beta_0$, and $\alpha_0$ and $\beta_0$ are not zero simultaneously, \eqref{8.2} is algebraically stable for any $-1\le \alpha_0 < 1$ and $2\beta_0+\alpha_0 >0$, but is not  algebraically stable for $-1\le \alpha_0 < 1$ and $2\beta_0+\alpha_0 =0$.
%%%%
%Since the polynomials $\rho(x)$ and $\sigma(x)$ of the scheme \eqref{8.2} with  $-1\le \alpha_0 < 1$ and $2\beta_0+\alpha_0 =0$ have a common factor $x-1$, that conclusion is not contrary to {\cite[Thm. 6.7]{Hairer}}. {\color{red} (Note that $G$-stability  or algebraic stability  is equivalent to $A$-stability if the polynomials $\rho$ and $\sigma$ have no common factors, see {\cite[Thm. 6.7]{Hairer}})}
 \hfill\qed

\section{Proof of Lemma \ref{lem3.1}}  \label{Appx2}
\setcounter{figure}{0}

This appendix proves Lemma \ref{lem3.1}, which   plays an important role for the modified-energy stability of the SAV-GL scheme \eqref{3.1.6}, whose time discretization is not necessarily algebraically stable.

The establishment of the identities  \eqref{3.1.7}-\eqref{3.1.8*}
in Lemma \ref{lem3.1}
 is motivated by  the identities in \cite{ShenJ18b,YangZ19},
and may be completed by using the method of undetermined
coefficients.
%\begin{align}  \label{8.2.1}
%4\left( \frac{3}{2}\chi^{n+1} - 2 \chi^n + \frac{1}{2}\chi^{n-1}\right) \chi^{n+1}  =\;&  \left(\chi^{n+1}\right)^2 -\left(\chi^{n}\right)^2 +  (2\chi^{n+1}-\chi^n)^2  -(2\chi^{n}-\chi^{n-1})^2
%\nonumber \\
%& +  \left( \chi^{n+1} -2 \chi^n + \chi^{n-1}\right)^2,
%\end{align}
%and
%\begin{align}  \label{8.2.2}
%& \left[ \left(\theta+\frac{1}{2}\right)\chi^{n+1} - 2\theta \chi^n + \left(\theta-\frac{1}{2}\right)\chi^{n-1}\right] \chi^{n+\theta} \nonumber \\
%= \;&\frac{1}{2}\left(\frac{3}{2} -\theta\right)\left[ \left(\chi^{n+1}\right)^2 -\left(\chi^{n}\right)^2\right] + \frac{1}{2}\left( \theta-\frac{1}{2}\right)\left[ (2\chi^{n+1}-\chi^n)^2  - (2\chi^{n}-\chi^{n-1})^2 \right]  \nonumber \\
%&    + \theta\left( \theta-\frac{1}{2}\right)\left( \frac{3}{2} - \theta\right) \left( \chi^{n+1} -2 \chi^n + \chi^{n-1}\right)^2,
%\end{align}
%which were introduced to derive the  energy stability of SAV-BDF2 \cite{ShenJ18b} and a family of second-order schemes \cite{YangZ19},
%where $ \chi^{n+\theta} = \left[ \theta\left(\frac{5}{2}-\theta\right)-\frac{1}{2}\right] \chi^{n+1} + 2(1-\theta)^2\chi^n + \left(\theta-\frac{1}{2}\right)(1-\theta) \chi^{n-1}$, and $\theta$ is a real-valued parameter satisfying $\frac{1}{2}\le \theta \le \frac{3}{2}$.
%
%
Suppose the parameters $\alpha_0, \beta_0$, and $\beta_2$ in \eqref{3.1.3}-\eqref{3.1.5} satisfy the condition \eqref{8.10}. In order to derive the modified-energy stability of our SAV-GL scheme \eqref{3.1.6}, we expect the following  identity
\begin{align}  \label{8.2.3}
 \big(\frac{1}{1\!-\!\alpha_0}\chi^{n+1} -& \frac{1\!+\!\alpha_0}{1\!-\!\alpha_0} \chi^n + \frac{\alpha_0}{1\!-\!\alpha_0}\chi^{n-1}\big) \big(\frac{\beta_2}{1\!-\!\alpha_0}\chi^{n+1} +\frac{\beta_1}{1\!-\!\alpha_0} \chi^n + \frac{\beta_0}{1\!-\!\alpha_0}\chi^{n-1}\big)    \nonumber \\
= \;& a \left[ \left(\chi^{n+1}\right)^2 -\left(\chi^{n}\right)^2\right]  + b \left[ \left(\chi^{n}\right)^2 -\left(\chi^{n-1}\right)^2\right] + d \left[ \chi^{n+1}\chi^{n} -\chi^{n}\chi^{n-1} \right]  \nonumber \\
& + \left( c_1\chi^{n+1} + c_2 \chi^n + c_3 \chi^{n-1}\right)^2,
\end{align}
where $a,b,d$ and $c_i, i=1,2,3$ are six undetermined real coefficients.  Expanding the term at the left hand side of \eqref{8.2.3}  and then comparing each coefficient with that at the right hand side
%Multiplying out and comparing the coefficients of the each terms in \eqref{8.2.3}
yield
\begin{align}  \label{8.2.4}
\begin{aligned}
& a  + c_1^2 = \frac{\beta_2}{(1-\alpha_0)^2},~~~b-a+c_2^2 = -\frac{(1+\alpha_0)\beta_1}{(1-\alpha_0)^2},  \\[1 \jot]
& c_3^2 - b = \frac{\alpha_0\beta_0}{(1-\alpha_0)^2}, ~~~
2c_1c_2 + d = \!\frac{\beta_1}{(1-\alpha_0)^2}- \frac{(1+\alpha_0)\beta_2}{(1-\alpha_0)^2},    \\[1 \jot]
& 2c_2c_3 -d =  \frac{\alpha_0\beta_1}{(1-\alpha_0)^2}- \frac{(1+\alpha_0)\beta_0}{(1-\alpha_0)^2},~~~2c_1c_3 =  \frac{\beta_0}{(1-\alpha_0)^2}+ \frac{\alpha_0\beta_2}{(1-\alpha_0)^2}.
\end{aligned}
\end{align}
Adding all six equations gives $ (c_1+c_2+c_3)^2 = 0$, which implies
\begin{align}   \label{8.2.5}
c_1+c_2+c_3 = 0.
\end{align}
The fourth and fifth equations in \eqref{8.2.4} may gives
\[  2c_2(c_1+c_3) \!=\! \frac{(1+\alpha_0)(\beta_1-\beta_0-\beta_2)}{(1-\alpha_0)^2}, \]
which is combined with \eqref{8.2.5} to give $c_2 \!=\! \pm \frac{\sqrt{2(1+\alpha_0)(\beta_0+\beta_2-\beta_1)}}{2(1-\alpha_0)}$. Note that $(1+\alpha_0)(\beta_0+\beta_2-\beta_1) = (1+\alpha_0)(2\beta_0+2\beta_2+\alpha_0-1) \ge 0$ when the condition \eqref{8.10} holds.  If substituting $c_2$ into \eqref{8.2.5} and combining it with the sixth equation in \eqref{8.2.4},
then it is obvious that $c_1$ and $c_3$ are two solutions of $x^2+c_2 x+\frac{\beta_0+\alpha_0\beta_2}{2(1-\alpha_0)^2} = 0$,
so that
\[  c_1 = -\frac{c_2}{2} + \sqrt{\frac{c_2^2}{4}-\frac{\beta_0\!+\!\alpha_0\beta_2}{2(1\!-\!\alpha_0)^2}} ~~~~
c_3 = -\frac{c_2}{2} - \sqrt{\frac{c_2^2}{4}-\frac{\beta_0\!+\!\alpha_0\beta_2}{2(1\!-\!\alpha_0)^2}},  \]
or
\[ c_1 = -\frac{c_2}{2} - \sqrt{\frac{c_2^2}{4}-\frac{\beta_0\!+\!\alpha_0\beta_2}{2(1\!-\!\alpha_0)^2}},~~~~
c_3 = -\frac{c_2}{2} + \sqrt{\frac{c_2^2}{4}-\frac{\beta_0\!+\!\alpha_0\beta_2}{2(1\!-\!\alpha_0)^2}}.  \]
We expect that the term $\frac{c_2^2}{4}-\frac{\beta_0+\alpha_0\beta_2}{2(1-\alpha)^2}$ is non-negative so that both $c_1$ and $c_3$  are real, and will  discuss
that in three cases below. If $\frac{c_2^2}{4}-\frac{\beta_0+\alpha_0\beta_2}{2(1-\alpha)^2}$ is non-negative, then inserting $c_1$ and $c_3$ into  the first, third and fifth equations in \eqref{8.2.4} yields
\[ a = \frac{2\beta_2\!+\!\beta_0 \!+\!\alpha_0\beta_2}{2(1\!-\!\alpha_0)^2} - \frac{c_2^2}{2}+ c_2 \sqrt{\frac{c_2^2}{4}-\frac{\beta_0\!+\!\alpha_0\beta_2}{2(1\!-\!\alpha_0)^2}}, \]
\[ b = -\frac{\alpha_0\beta_0\!+\!\beta_0 \!+\!\alpha_0\beta_2}{2(1\!-\!\alpha_0)^2} + \frac{c_2^2}{2} + c_2 \sqrt{\frac{c_2^2}{4}-\frac{\beta_0\!+\!\alpha_0\beta_2}{2(1\!-\!\alpha_0)^2}}, \]
\[ d = -\frac{\alpha_0\beta_1\!-\!(1\!+\!\alpha_0)\beta_0}{(1\!-\!\alpha_0)^2} - c_2^2-2c_2\sqrt{\frac{c_2^2}{4} -\frac{\beta_0\!+\!\alpha_0\beta_2}{2(1\!-\!\alpha_0)^2}}, \]
or
\[ a = \frac{2\beta_2\!+\!\beta_0\!+\!\alpha_0\beta_2}{2(1\!-\!\alpha_0)^2} - \frac{c_2^2}{2} - c_2 \sqrt{\frac{c_2^2}{4}-\frac{\beta_0\!+\!\alpha_0\beta_2}{2(1\!-\!\alpha_0)^2}}, \]
\[ b = -\frac{\alpha_0\beta_0\!+\!\beta_0 \!+\!\alpha_0\beta_2}{2(1\!-\!\alpha_0)^2} + \frac{c_2^2}{2} - c_2 \sqrt{\frac{c_2^2}{4}-\frac{\beta_0\!+\!\alpha_0\beta_2}{2(1\!-\!\alpha_0)^2}}, \]
\[ d = -\frac{\alpha_0\beta_1\!-\!(1\!+\!\alpha_0)\beta_0}{(1\!-\!\alpha_0)^2} - c_2^2+2c_2\sqrt{\frac{c_2^2}{4} -\frac{\beta_0\!+\!\alpha_0\beta_2}{2(1\!-\!\alpha_0)^2}}. \]
Those undetermined coefficients  can give the final identity \eqref{8.2.3}, which may be not unique.

Let us discuss when $\frac{c_2^2}{4}-\frac{\beta_0+\alpha_0\beta_2}{2(1-\alpha)^2}$ is non-negative. % More specifically, three  cases are discussed below.

\noindent
$\bullet$ When $\alpha_0 = \beta_0 = 0$,  the condition \eqref{8.10} reduces to $\beta_2 \ge \frac{1}{2}$ so that   $\frac{c_2^2}{4}-\frac{\beta_0+\alpha_0\beta_2}{2(1-\alpha)^2} = \beta_2-\frac{1}{2}$ is non-negative and six undetermined coefficients reduce to
\[ c_2 = \pm\sqrt{\beta_2-\frac{1}{2}},~~ c_1 = 0,~~c_3 = \mp\sqrt{\beta_2-\frac{1}{2}},~~ a = \beta_2,~~ b = \beta_2-\frac{1}{2},~~d = 1 - 2\beta_2,   \]
or
\[ c_2 = \pm\sqrt{\beta_2-\frac{1}{2}},~~ c_1 = \mp\sqrt{\beta_2-\frac{1}{2}},~~c_3 = 0,~~ a = \frac{1}{2},~~ b = 0,~~d = 0.   \]
Therefore, when $\alpha_0 = \beta_0 = 0$ and $\beta_2\ge \frac{1}{2}$, the identity \eqref{8.2.3} becomes
\begin{align}   \label{8.2.6}
& \left(\chi^{n+1} - \chi^n \right) \left(\beta_2\chi^{n+1} + (1\!-\!\beta_2) \chi^n \right) =  \beta_2 \left[ \left(\chi^{n+1}\right)^2 -\left(\chi^{n}\right)^2\right] + \left(\beta_2-\frac{1}{2} \right) \left[ \left(\chi^{n}\right)^2 -\left(\chi^{n-1}\right)^2\right] \nonumber \\[1 \jot]
&\hspace{2cm} + \left( 1 - 2\beta_2 \right) \left[ \chi^{n+1}\chi^{n} -\chi^{n}\chi^{n-1} \right] + \left( \beta_2\!-\!\frac{1}{2}\right)\left( \chi^{n} - \chi^{n-1} \right)^2,
\end{align}
or
\begin{align}   \label{8.2.7}
\left(\chi^{n+1} - \chi^n \right) \left(\beta_2\chi^{n+1} + (1\!-\!\beta_2) \chi^n \right) =  \frac{1}{2} \left[ \left(\chi^{n+1}\right)^2 -\left(\chi^{n}\right)^2\right] + \left( \beta_2\!-\!\frac{1}{2}\right)\left( \chi^{n+1} - \chi^n \right)^2.
\end{align}
%which are different except a special  case $\beta_2 = \frac{1}{2}$.
Both of them are equivalent to each other, and can be used to study the modified-energy stability of the SAV-GL scheme \eqref{3.1.6} with different energy inequalities by ignoring the last positive terms in \eqref{8.2.6} and \eqref{8.2.7}. %It can be deduced the identity \eqref{8.2.7} is more simple and reasonable, since the extra term $\chi^{n-1}$ is introduced  on the right side of \eqref{8.2.6}.

\noindent
$\bullet$ When $\beta_2 = \frac{1+\alpha_0}{2}+\beta_0$, $\alpha_0$ and $\beta_0$ are not zero simultaneously, it can be checked  $ \frac{c_2^2}{4}-\frac{\beta_0+\alpha_0\beta_2}{2(1-\alpha)^2} = 0$ and the condition \eqref{8.10} reduces to $-1\le\alpha<1$, $2\beta_0+\alpha_0\ge 0$ so that six undetermined coefficients reduce to
\[ c_2  \!=\! \pm \frac{\sqrt{(1+\alpha_0)(2\beta_0+\alpha_0)}}{1-\alpha_0},~~ c_1 = c_3 = -\frac{c_2}{2},~~a = \frac{2+\alpha_0-\alpha_0^2 +  2\beta_0(1-\alpha_0) }{4(1-\alpha_0)^2},   \]
\[ b = \frac{\alpha_0+\alpha_0^2 +  2\beta_0(1-\alpha_0) }{4(1-\alpha_0)^2}, ~~ d = \frac{(\alpha_0-1)(2\beta_0+\alpha_0-1)-(\alpha_0+1)}{2(1-\alpha_0)^2}, \]
which  uniquely determine the identity
\begin{align}  \label{8.2.8}
& \left(\frac{1}{1\!-\!\alpha_0}\chi^{n+1} - \frac{1\!+\!\alpha_0}{1\!-\!\alpha_0} \chi^n + \frac{\alpha_0}{1\!-\!\alpha_0}\chi^{n-1}\right) \left(\frac{\beta_2}{1\!-\!\alpha_0}\chi^{n+1} +\frac{\beta_1}{1\!-\!\alpha_0} \chi^n + \frac{\beta_0}{1\!-\!\alpha_0}\chi^{n-1}\right)    \nonumber \\
= \;& \frac{2\!+\!\alpha_0\!-\!\alpha_0^2 \!+  \!2\beta_0(1\!-\!\alpha_0) }{4(1\!-\!\alpha_0)^2} \!\left[ \left(\chi^{n+1}\right)^2 \!-\!\left(\chi^{n}\right)^2\right] \!\! +\! \frac{\alpha_0\!+\!\alpha_0^2 \!+  \!2\beta_0(1\!-\!\alpha_0) }{4(1\!-\!\alpha_0)^2} \left[ \left(\chi^{n}\right)^2 \!-\!\left(\chi^{n-1}\right)^2\right] \nonumber \\
& +\! \frac{(\alpha_0\!-\!1)(2\beta_0\!+\!\alpha_0\!-\!1) \!-\!(\alpha_0\!+\!1)} {2(1\!-\!\alpha_0)^2} \!\left[ \chi^{n+1}\chi^{n} \!-\!\chi^{n}\chi^{n-1} \right]\!\!   \nonumber \\
& + \! \frac{(1\!+\!\alpha_0)(2\beta_0\!+\!\alpha_0)}{4(1\!-\!\alpha_0)^2} \!\left( \chi^{n+1} \!-\! 2 \chi^n \!+ \!\chi^{n-1}\right)^2.
\end{align}

\noindent
$\bullet$ When $\beta_2 \neq \frac{1+\alpha_0}{2}+\beta_0$ and $\alpha_0$ and $\beta_0$ are not zero simultaneously,
the condition \eqref{8.10} can not guarantee $\frac{c_2^2}{4}-\frac{\beta_0+\alpha_0\beta_2}{2(1-\alpha_0)^2}$ to be non-negative.
For this reason, %to derive the identity \eqref{8.2.3},
we add a parameter constraint
\begin{align}   \label{8.2.9}
\frac{c_2^2}{4}-\frac{\beta_0\!+\!\alpha_0\beta_2}{2(1\!-\!\alpha_0)^2} = \frac{(1\!+\!\alpha_0)(2\beta_0\!+\!2\beta_2\!+\!\alpha_0\!-\!1) \!- \!4\beta_0 \!- \! 4\alpha_0\beta_2}{8(1\!-\!\alpha_0)^2} \ge 0,
\end{align}
which implies $\beta_2 \ge \frac{1+\alpha_0}{2} + \beta_0$.
Figure \ref{figC.0} (a) shows the region of the parameters $\alpha_0, \beta_0$ and $\beta_2$ satisfying  \eqref{8.10} and \eqref{8.2.9}.
Specifically, when $\beta_0 = 0$,  the conditions \eqref{8.10}, \eqref{8.2.9} and $\beta_2 \neq \frac{1+\alpha_0}{2}+\beta_0$ reduce to
\[\beta_2>0,~~~\beta_2\ge \frac{1-\alpha_0}{2},~~~\beta_2 > \frac{1+\alpha_0}{2},  \]
and the region of $\alpha_0$ and $\beta_2$ satisfying the above inequalities is shown in Figure \ref{figC.0} (b).
As an example, one chooses $\beta_0 = 0, \alpha_0 = \frac{1}{2}$ and $\beta_2 =1$, which locates in the region of green color. In that case, the values of six undetermined coefficients  are
\[ c_1 = \sqrt{2},~~c_2 = -\frac{3\sqrt{2}}{2},~~c_3 = \frac{\sqrt{2}}{2},~~a = 2,~~~b = \frac{1}{2},~~~d = -2,  \]
or
\[ c_1 = \frac{\sqrt{2}}{2},~~c_2 = -\frac{3\sqrt{2}}{2},~~c_3 = \sqrt{2},~~a = \frac{7}{2},~~~b = 2,~~~d = -5, \]
which can determine the following identities
\begin{align*}
& \left(2\chi^{n+1} - 3 \chi^n + \chi^{n-1}\right) \left(2\chi^{n+1} - \chi^n \right) = 2 \left[ \left(\chi^{n+1}\right)^2 -\left(\chi^{n}\right)^2\right]  \\
+ \;&   \frac{1}{2} \left[ \left(\chi^{n}\right)^2 -\left(\chi^{n-1}\right)^2\right] -2 \left[ \chi^{n+1}\chi^{n} -\chi^{n}\chi^{n-1} \right]
+ \left( \sqrt{2}\chi^{n+1} -\frac{3\sqrt{2}}{2}  \chi^n + \frac{\sqrt{2}}{2} \chi^{n-1}\right)^2,
\end{align*}
and
\begin{align*}
& \left(2\chi^{n+1} - 3 \chi^n + \chi^{n-1}\right) \left(2\chi^{n+1} -  \chi^n \right) = \frac{7}{2} \left[ \left(\chi^{n+1}\right)^2 -\left(\chi^{n}\right)^2\right]    \\
+\;&  2 \left[ \left(\chi^{n}\right)^2 -\left(\chi^{n-1}\right)^2\right] -5 \left[ \chi^{n+1}\chi^{n} -\chi^{n}\chi^{n-1} \right]
+ \left( \frac{\sqrt{2}}{2}\chi^{n+1} -\frac{3\sqrt{2}}{2} \chi^n + \sqrt{2} \chi^{n-1}\right)^2.
\end{align*}
However, when taking $\beta_0 = 0$,  $\alpha_0 = \frac{1}{2}$ and $\beta_2 = \frac{2}{3}$,  the condition \eqref{8.10} holds but \eqref{8.2.9} does not hold, so that one can not obtain six undetermined real coefficients in \eqref{8.2.3}.
In summary, when $\beta_2 \neq \frac{1+\alpha_0}{2}+\beta_0$ and $\alpha_0$ and $\beta_0$ are not zero simultaneously, under the conditions \eqref{8.10} and \eqref{8.2.9},
the identity \eqref{8.2.8}  can be derived
as follows
\begin{align}   \label{8.2.10}
& \left(\frac{1}{1\!-\!\alpha_0}\chi^{n+1} - \frac{1\!+\!\alpha_0}{1\!-\!\alpha_0} \chi^n + \frac{\alpha_0}{1\!-\!\alpha_0}\chi^{n-1}\right) \left(\frac{\beta_2}{1\!-\!\alpha_0}\chi^{n+1} +\frac{\beta_1}{1\!-\!\alpha_0} \chi^n + \frac{\beta_0}{1\!-\!\alpha_0}\chi^{n-1}\right)    \nonumber \\
= \;& \!\!\left[\frac{1\!-\!\alpha_0^2 \!+  \!2\beta_2\!-\!2\alpha_0\beta_0}{4(1\!-\!\alpha_0)^2} \!+\! c\tilde{c}\right]\!\left[ \left(\chi^{n+1}\right)^2 \!-\!\left(\chi^{n}\right)^2\right] \!\! +\!\! \left[\frac{2\beta_2\!+\!\alpha_0^2\!-\!1}{4(1\!-\!\alpha_0)^2} \!+ \!c\tilde{c}\right] \!\left[ \left(\chi^{n}\right)^2 \!-\!\left(\chi^{n-1}\right)^2\right] \nonumber \\
& \!\!+\! \!\left[\frac{1}{2}\!+\!\frac{\alpha_0\beta_0\!-\!\beta_2}{(1\!-\!\alpha_0)^2} \!-\!2c \tilde{c}\!\right] \!\left[ \chi^{n+1}\chi^{n} \!-\!\chi^{n}\chi^{n-1} \right]\!\!   + \!\!\left[\!\left(\!c\!-\!\frac{\tilde{c}}{2}\right) \!\chi^{n+1} \!+\!\tilde{c} \chi^n \!-\! \left(\!c\!+\!\frac{\tilde{c}}{2}\right) \!\chi^{n-1}\right]^2\!,
\end{align}
or
\begin{align}   \label{8.2.11}
& \left(\frac{1}{1\!-\!\alpha_0}\chi^{n+1} - \frac{1\!+\!\alpha_0}{1\!-\!\alpha_0} \chi^n + \frac{\alpha_0}{1\!-\!\alpha_0}\chi^{n-1}\right) \left(\frac{\beta_2}{1\!-\!\alpha_0}\chi^{n+1} +\frac{\beta_1}{1\!-\!\alpha_0} \chi^n + \frac{\beta_0}{1\!-\!\alpha_0}\chi^{n-1}\right)    \nonumber \\
= \;& \!\!\left[\frac{1\!-\!\alpha_0^2 \!+  \!2\beta_2\!-\!2\alpha_0\beta_0}{4(1\!-\!\alpha_0)^2} \!-\! c\tilde{c}\right]\!\left[ \left(\chi^{n+1}\right)^2 \!-\!\left(\chi^{n}\right)^2\right] \!\! +\!\! \left[\frac{2\beta_2\!+\!\alpha_0^2\!-\!1}{4(1\!-\!\alpha_0)^2} \!- \!c\tilde{c}\right] \!\left[ \left(\chi^{n}\right)^2 \!-\!\left(\chi^{n-1}\right)^2\right] \nonumber \\
& +\! \!\left[\frac{1}{2}\!+\!\frac{\alpha_0\beta_0\!-\!\beta_2}{(1\!-\!\alpha_0)^2} \!+\!2c\tilde{c}\!\right] \!\left[ \chi^{n+1}\chi^{n} \!-\!\chi^{n}\chi^{n-1} \right]\!\!   + \!\!\left[\!-\!\!\left(\!c\!+\!\frac{\tilde{c}}{2}\!\right) \!\chi^{n+1} \!+\!\tilde{c} \chi^n \!+\!\! \left(\!c\!-\!\frac{\tilde{c}}{2}\!\right) \!\chi^{n-1}\!\right]^2\!,
\end{align}
where
\[ c = \sqrt{\frac{2\beta_2\!-\!2\beta_0\!-\!\alpha_0\!-\!1}{8(1-\alpha_0)}},~~~\tilde{c} = - \frac{\sqrt{2(1\!+\!\alpha_0)(2\beta_0\!+\!2\beta_2\!+\!\alpha_0\!-\!1)}} {2(1\!-\!\alpha_0)}.    \]
 \hfill\qed

\begin{figure}
\begin{minipage}{0.48\linewidth}
  \centerline{\includegraphics[width=7cm,height=5cm]{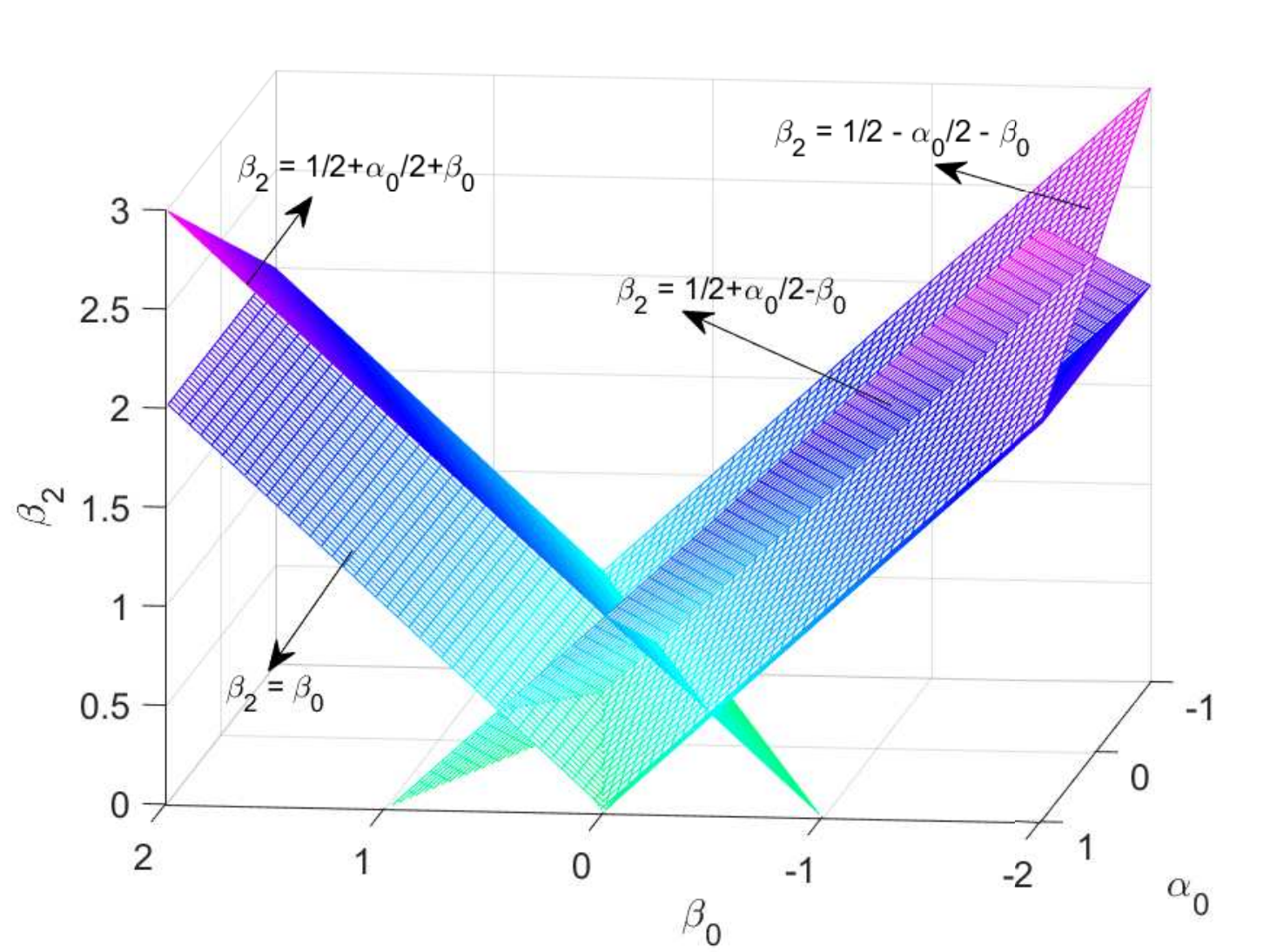}}
  \centerline{\scriptsize (a) Conditions  \eqref{8.10} and \eqref{8.2.9}.}
\end{minipage}
\hfill
\begin{minipage}{0.48\linewidth}
  \centerline{\includegraphics[width=7cm,height=5cm]{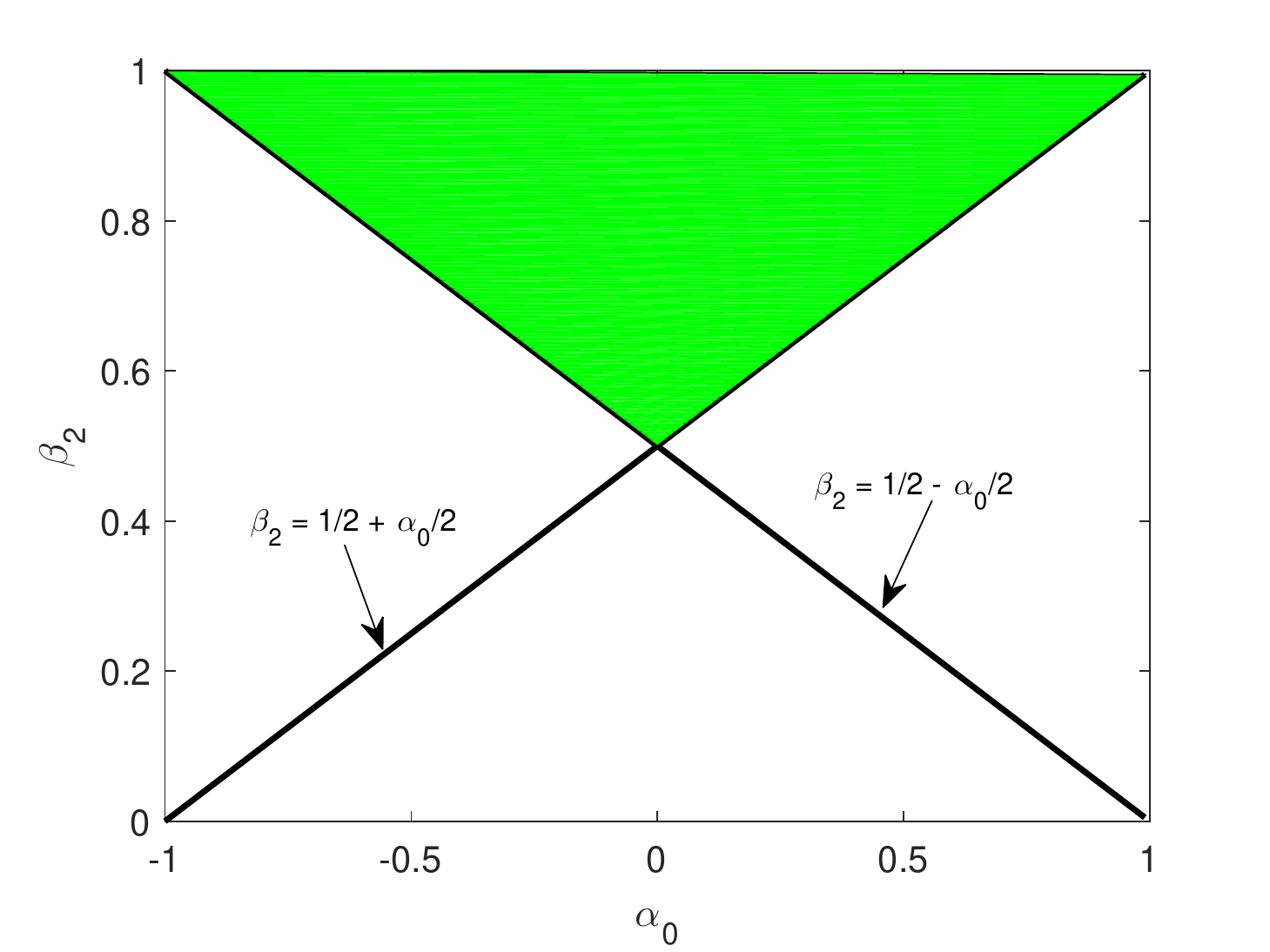}}
  \centerline{\scriptsize (b) Conditions  \eqref{8.10}, \eqref{8.2.9},   and $\beta_0 = 0$.}
\end{minipage}
\caption{The region of the parameters $\alpha_0, \beta_0$, and $\beta_2$. }
\label{figC.0}
\end{figure}

\section{De-aliasing in FFT by zero-padding}  \label{sec:5}

This appendix introduces the de-aliasing by zero-padding
for the cubic term
when  the Fourier pseudo-spectral method is used for the spatial discretization of the semi-discrete-in-time SAV-GL scheme \eqref{3.1.6}
in  our numerical experiments on the Allen-Cahn, the Cahn-Hilliard and the phase field crystal models in Section \ref{sec:6}.

%In our computations, the codes are written in MATLAB  and call  both  {\tt fft} and  {\tt ifft} functions directly for the discrete Fourier and  inverse  Fourier transforms so that the implementation of the fully discrete SAV and G-SAV schemes with the Fourier pseudo-spectral spatial discretization is very simple and efficient. However, when solving the Allen-Cahn, the Cahn-Hilliard and the phase field crystal models in Section \ref{sec:6}, one needs to  apply the FFT to deal with the cubic term and then the aliasing errors always appear (see e.g. \cite{CHQZ1988,Tadmor87}).

Let $N$ be an even integer and $\widehat{\vec{u}} = (\widehat{u}_{k,l})_{N\times N}$ be the discrete Fourier coefficients  of $\vec{u}=(u_{i,j})_{N\times N}$, and  define $\vec{w} = (w_{i,j})_{N\times N} $ with $w_{i,j} = u_{i,j}^3$.  Suppose $\widehat{\vec{w}} = (\widehat{w}_{k,l})_{N\times N}$ is the discrete Fourier coefficients  of $\vec{w}$, then a simple calculation shows that
\begin{align}  \label{5.1}
& \widehat{w}_{k,l} = \frac{1}{N^4}\sum_{(m,n),(p,q)\in\widehat{\mathbb{S}}_N}  \widehat{u}_{m,n}\widehat{u}_{p,q} \widehat{u}_{k-m-p,l-n-q}  \nonumber  \\
=\; & \frac{1}{N^4}\underset{(k-m-p,l-n-q)\in\widehat{\mathbb{S}}_N} {\sum_{(m,n),(p,q)\in\widehat{\mathbb{S}}_N}}  \widehat{u}_{m,n} \widehat{u}_{p,q} \widehat{u}_{k-m-p,l-n-q} + \frac{1}{N^4}\underset{(k-m-p,l-n-q)\notin\widehat{\mathbb{S}}_N} {\sum_{(m,n),(p,q)\in\widehat{\mathbb{S}}_N}}  \widehat{u}_{m,n} \widehat{u}_{p,q} \widehat{u}_{k-m-p,l-n-q}.
\end{align}
{The second summation} on the right hand side of \eqref{5.1} is called {\em the aliasing error}, and it can be observed that the modes with wave number $k-m-p > \frac{N}{2}$ or $l-n-q > \frac{N}{2}$ are aliased to those with $k-m-p-N$ or $l-n-q-N$, while the modes with wave number
$k-m-p < -\frac{N}{2}+1$ or $l-n-q < -\frac{N}{2}+1$ are aliased to those with $k-m-p+N$ or $l-n-q+N$.

The importance of eliminating the aliasing errors, called {\em de-aliasing}, has been studied by Orszag \cite{Orszag71}.
%An approach for de-aliasing is so-called , and its
Here, we consider the zero-padding, see e.g. \cite[\S3.4.2]{CHQZ1988},
whose main idea is to use the discrete inverse Fourier transform for
$\breve{\vec{u}} = (\breve{u}_{k,l})_{K\times K}$ instead of $\widehat{\vec{u}} = (\widehat{u}_{m,n})_{N\times N}$, where $K>N$ is an undetermined number,
%$\breve{u}_{k,l}, (k,l) \in \widehat{\mathbb{S}}_K$
and $\breve{\vec{u}}$ is defined %via the Fourier coefficients $\widehat{u}_{m,n}, (m,n) \in \widehat{\mathbb{S}}_N$
by zero padding  as follows
\begin{align*}
\breve{u}_{k,l} =
\begin{cases}
\widehat{u}_{k,l}, ~~~ (k,l) \in \widehat{\mathbb{S}}_N, \\
0,~~~~~~ \mbox{otherwise}.
\end{cases}
\end{align*}
If letting $\widetilde{\vec{u}} = (\widetilde{u}_{i,j})_{K\times K}$ be   the inverse Fourier transform of $\breve{\vec{u}}$,  defining $\widetilde{\vec{w}} = (\widetilde{w}_{i,j})_{K\times K} $ with $\widetilde{w}_{i,j} = \widetilde{u}_{i,j}^3$,
and computing the discrete Fourier coefficients of $\widetilde{\vec{w}}$  by
\begin{align}  \label{5.2}
\breve{w}_{k,l} := \frac{1}{K^4}\!\!\underset{(k-m-p,l-n-q)\in\widehat{\mathbb{S}}_K} {\sum_{(m,n),(p,q)\in\widehat{\mathbb{S}}_K}} \!\!\! \breve{u}_{m,n} \breve{u}_{p,q} \breve{u}_{k-m-p,l-n-q} + \frac{1}{K^4}\!\!\underset{(k-m-p,l-n-q)\notin\widehat{\mathbb{S}}_K} {\sum_{(m,n),(p,q)\in\widehat{\mathbb{S}}_K}}  \!\!\!\breve{u}_{m,n} \breve{u}_{p,q} \breve{u}_{k-m-p,l-n-q},
\end{align}
then one can choose the smallest $K>N$ such that the second summation on the right-hand side of \eqref{5.2} vanishes for $(k,l) \in \widehat{\mathbb{S}}_N$, and then the de-aliased discrete Fourier coefficients of $\vec{w}=(u_{i,j}^3)$ are derived by
\[  \widehat{w}_{k,l}^{\text{\tiny De}} = \left(\frac{K}{N}\right)^4 \breve{w}_{k,l}, ~~~ (k,l)\in \widehat{\mathbb{S}}_N. \]
It can be observed that  the de-aliased coefficients $\widehat{w}_{k,l}^{\text{\tiny De}},~(k,l)\in \widehat{\mathbb{S}}_N$ is equivalent to the first summation on the right hand side of \eqref{5.1}.

The remaining issue is how to determine $K$. In order to make the second summation on the right-hand side of \eqref{5.2} to be zero, one needs $\breve{u}_{m,n} \breve{u}_{p,q} \breve{u}_{k-m-p,l-n-q} = 0$ for any $(m,n),(p,q)\in \widehat{\mathbb{S}}_{K}$ and $(k-m-p,l-n-q)\notin\widehat{\mathbb{S}}_K$.
Let $\widehat{\mathbb{S}}_{KN}=\{(k,l) \in\mathbb{Z}^2 | (k,l)\in \widehat{\mathbb{S}}_{K} ~\mbox{and}~(k,l)\notin\widehat{\mathbb{S}}_{N}\}$.
It is obvious that $\breve{u}_{m,n} \breve{u}_{p,q} \breve{u}_{k-m-p,l-n-q} = 0$ for $(m,n)\in \widehat{\mathbb{S}}_{KN}$ or $(p,q)\in \widehat{\mathbb{S}}_{KN}$. Hence, one only needs to consider the indexes $(m,n)\in \widehat{\mathbb{S}}_{N}$ and $(p,q)\in \widehat{\mathbb{S}}_{N}$.
In that case, the modes $\breve{u}_{m,n} $ and $ \breve{u}_{p,q}$ usually are not zero so that it requires $\breve{u}_{k-m-p,l-n-q} = 0$ for $(k-m-p,l-n-q)\notin\widehat{\mathbb{S}}_K$.
Consequently, when the wave number $k-m-p>\frac{K}{2}$ or $l-n-q>\frac{K}{2}$, one needs $k-m-p-K<-\frac{N}{2} + 1$ and $l-n-q-K<-\frac{N}{2} + 1$, since the modes with $k-m-p>\frac{K}{2}$ or $l-n-q>\frac{K}{2}$ are aliased to those with $k-m-p-K$ or $l-n-q-K$.
The largest possible value of $k-m-p$ and $l-n-q$ is $\frac{3}{2}N-2$, and thus the inequality $\frac{3}{2}N-2-K<-\frac{N}{2} +1$ gives $K>2N -3$.
In a similar way, when the wave number $k-m-p< -\frac{K}{2}+1$ or $l-n-q< -\frac{K}{2}+1$, it requires $k-m-p+K>\frac{N}{2}$ and $l-n-q+K>\frac{N}{2}$ such that the modes with those wave numbers are zero. Since the smallest possible value of $k-m-p$ and $l-n-q$ is $-\frac{3}{2}N+1$,  one can deduce $K>2N-1$.
In summary, one can take $K = 2N$ in actual applications, and
the de-aliased discrete Fourier coefficients of ${\vec{w}=(u_{i,j}^3)}$ with  zero padding are computed as follows:
\begin{description}
\item[(1)] For given 2D vector $\vec{u}=(u_{i,j})_{N\times N}$, compute the discrete Fourier coefficients $\widehat{\vec{u}}$ by the FFT;
\item[(2)] Extend $\widehat{\vec{u}}$ to $\breve{\vec{u}}$ by zero padding with $K=2N$,
 perform the inverse Fourier transform  of $\breve{\vec{u}}$ to derive $\widetilde{\vec{u}}$ by the inverse FFT, and then compute $\widetilde{\vec{w}}=(\widetilde{u}_{i,j}^3)_{K\times K}$;
\item[(3)] Compute the discrete Fourier coefficients $\breve{\vec{w}}$ of $\widetilde{\vec{w}}$ by the FFT,  then multiply a scaling factor $\left(\frac{K}{N}\right)^4$ and drop the extra wave numbers to  obtain $\widehat{\vec{w}}^{\text{\tiny De}}$, the de-aliased discrete Fourier coefficients of $\vec{w}=(u_{i,j}^3)$.
\end{description}

Several numerical examples in Section \ref{sec:6} will be given to demonstrate the effectiveness  of the above de-aliasing procedure.
Moreover, such  de-aliasing by zero-padding  can be easily extended to a general polynomial nonlinear term $u^p$, $p\ge 3$, by setting $K = \frac{p+1}{2}N$ and the scaling factor in step (3) as $\left(\frac{K}{N}\right)^{2(p-1)}$, where  $2$ in the exponent is the spatial dimension.

\section{Estimating the time stepsize
for the SAV-GL scheme}%{Stability region of the semi-implicit time discretizations}
\label{Appx3}\setcounter{figure}{0}

This appendix estimates the time stepsize
of the SAV-GL scheme \eqref{3.1.6} with the Fourier pseudo-spectral spatial discretization
%semi-implicit time discretizations
with the help of the following test equation
\begin{align}   \label{8.3.1}
u'(t)  = \xi u(t)  + \zeta u(t),
\end{align}
where $\xi<0$, $|\zeta|\le |\xi|$.
%, and two terms at the right hand side of \eqref{8.3.1} correspond to the stiff and non-stiff terms, respectively.
Applying \eqref{3.1.3}-\eqref{3.1.5} to \eqref{8.3.1} yields the semi-implicit scheme
\begin{align}   \label{8.3.2}
& \frac{1}{1\!-\!\alpha_0} u_{n+1} - \frac{1\!+\!\alpha_0}{1\!-\!\alpha_0} u_n + \frac{\alpha_0}{1\!-\!\alpha_0} u_{n-1} = \bar{\xi}\left[ \frac{\beta_2}{1\!-\!\alpha_0} u_{n+1} +\frac{\beta_1}{1\!-\!\alpha_0} u_{n} + \frac{\beta_0}{1\!-\!\alpha_0} u_{n-1} \right] \nonumber \\
& \hspace{2cm}+ \bar{\zeta} \left[ \frac{1\!-\!\alpha_0\!+\!\beta_2\!-\!\beta_0}{1\!-\!\alpha_0} u_n - \frac{\beta_2\!-\!\beta_0}{1\!-\!\alpha_0} u_{n-1}\right],
\end{align}
where $\bar{\xi} = \xi \tau$, $\bar{\zeta} = \zeta \tau$, the parameters $\alpha_0, \beta_0$ and $\beta_2$ are assumed to satisfy \eqref{8.10}.
It is known that  \eqref{8.3.2} is stable iff all roots of the characteristic polynomial defined by
\begin{align*}
\mathbb{Q}(x) \!=\! \!\left[\frac{1}{1\!-\!\alpha_0} \!- \! \frac{\beta_2}{1\!-\!\alpha_0}\bar{\xi} \right]\!x^2 \!-\!  \!\left[\frac{1\!+\!\alpha_0}{1\!-\!\alpha_0} \!+\! \frac{\beta_1}{1\!-\!\alpha_0} \bar{\xi} \!+\!  \!\frac{1\!-\!\alpha_0\!+\!\beta_2\!-\!\beta_0}{1\!-\!\alpha_0}\! \bar{\zeta}\right]\!x \!+  \!\frac{\alpha_0}{1\!-\!\alpha_0} \!- \! \frac{\beta_0}{1\!-\!\alpha_0} \bar{\xi} \!+\! \frac{\beta_2\!-\!\beta_0}{1\!-\!\alpha_0} \bar{\zeta}.
\end{align*}
are smaller or equal to one in modulus.
%\[  \mathcal{S} = \left\{ (\bar{\xi}, \bar{\zeta}) \in \mathbb{R}^2: \mathbb{Q}(x) = 0 \Rightarrow |x| \le 1 \right\}.   \]
%Similar to  \ref{Appx1},
In order to make sure the roots of $\mathbb{Q}(x)$ are smaller or equal to one in modulus, one requires
\begin{align*}  %\label{8.3.3}
\begin{aligned}
& \left| \alpha_0 - \beta_0 \bar{\xi} + \left(\beta_2-\beta_0\right) \bar{\zeta} \right| \le \left| 1 -  \beta_2\bar{\xi} \right|, \\
& \left|1+\alpha_0 + \beta_1 \bar{\xi} +  \left(1-\alpha_0+\beta_2-\beta_0\right) \bar{\zeta} \right| \le  \left| 1+\alpha_0 - \left(\beta_0+\beta_2\right) \bar{\xi} +  \left(\beta_2-\beta_0\right) \bar{\zeta} \right|,
\end{aligned}
\end{align*}
%A simple calculation shows that \eqref{8.3.3} can be simplified as
which is equivalent to
\begin{align}   \label{8.3.4}
\begin{aligned}
& 1\!+\!\alpha_0\!-\!\left( \beta_0\!+\!\beta_2\right)\bar{\xi} \!+ \!\left( \beta_2\!-\!\beta_0\right)\bar{\zeta} \ge 0,~~1\!-\!\alpha_0\!+\!\left( \beta_0\!-\!\beta_2\right)\bar{\xi} \!-\! \left( \beta_2\!-\!\beta_0\right)\bar{\zeta} \ge 0, \\[1 \jot]
& 2(1\!+\!\alpha_0)\!+\!\left( \beta_1\!-\!\beta_0\!-\!\beta_2\right)\bar{\xi} \!+ \!\left( 1\!-\!\alpha_0\!+\!2\beta_2\!-\!2\beta_0\right)\bar{\zeta} \!\ge \! 0,~\!-\!\left( \beta_1\!+\!\beta_0\!+\!\beta_2\right)\bar{\xi} \!- \!\left( 1\!-\!\alpha_0\right)\bar{\zeta} \!\ge\! 0.
\end{aligned}
\end{align}
Thus, %due to \eqref{8.3.4},
 %$(\bar{\xi}, \bar{\zeta})  \in \mathcal{S}$, and
 the boundary of the  stability region  of    \eqref{8.3.2}  can be represented by the curves $1\!+\!\alpha_0\!-\!\left( \beta_0\!+\!\beta_2\right)\bar{\xi} \!+ \!\left( \beta_2\!-\!\beta_0\right)\bar{\zeta} = 0$, $1\!-\!\alpha_0\!+\!\left( \beta_0\!-\!\beta_2\right)\bar{\xi} \!-\! \left( \beta_2\!-\!\beta_0\right)\bar{\zeta} = 0$, $2(1\!+\!\alpha_0)\!+\!\left( \beta_1\!-\!\beta_0\!-\!\beta_2\right)\bar{\xi} \!+ \!\left( 1\!-\!\alpha_0\!+\!2\beta_2\!-\!2\beta_0\right)\bar{\zeta} = 0$ and $-\!\left( \beta_1\!+\!\beta_0\!+\!\beta_2\right)\bar{\xi} \!- \!\left( 1\!-\!\alpha_0\right)\bar{\zeta} = 0$.

Next,  we discuss two special cases.

\noindent
$\bullet$ When $\alpha_0 = \beta_0 = 0$ and $\beta_2\ge \frac{1}{2}$, the condition \eqref{8.3.4} reduces to
\begin{align}   \label{8.3.5}
1\!-\!\beta_2\bar{\xi} \!+ \! \beta_2\bar{\zeta} \ge 0,~~~1\!-\!\beta_2\bar{\xi} \!-\!\beta_2 \bar{\zeta} \ge 0,~~~2\!+\!\left( 1\!-\!2\beta_2\right)\bar{\xi} \!+ \!\left( 1\!+\!2\beta_2\right)\bar{\zeta} \!\ge \! 0,~~~\bar{\xi} +\bar{\zeta} \!\le\! 0.
\end{align}
Since $\xi<0$ and $|\zeta|\le |\xi|$,  the latter two inequalities
imply the first two inequalities in \eqref{8.3.5}, so that the boundary of the  stability regions of  \eqref{8.3.2}  is determined by the curves $2\!+\!\left( 1\!-\!2\beta_2\right)\bar{\xi} \!+ \!\left( 1\!+\!2\beta_2\right)\bar{\zeta} = 0$ and $\bar{\xi} + \bar{\zeta} = 0$. Figure \ref{figC.1}  gives the stability
regions   of  \eqref{8.3.2} with $(\alpha_0,\beta_0,\beta_2) = (0,0,1)$ and $(0,0,2)$.  One can deduce  that
 the scheme \eqref{8.3.2} is unconditionally stable when $\xi< 0$ and $\frac{2\beta_2-1}{2\beta_2+1}\xi \le \zeta < |\xi|$,
and  is stable under the time stepsize condition $\tau < \frac{2}{(2\beta_2-1) \xi - (2\beta_2+1)\zeta}$ when $\xi< 0$ and $\zeta < \frac{2\beta_2-1}{2\beta_2+1}\xi$.

\begin{figure}
\begin{minipage}{0.48\linewidth}
  \centerline{\includegraphics[width=7cm,height=5cm]{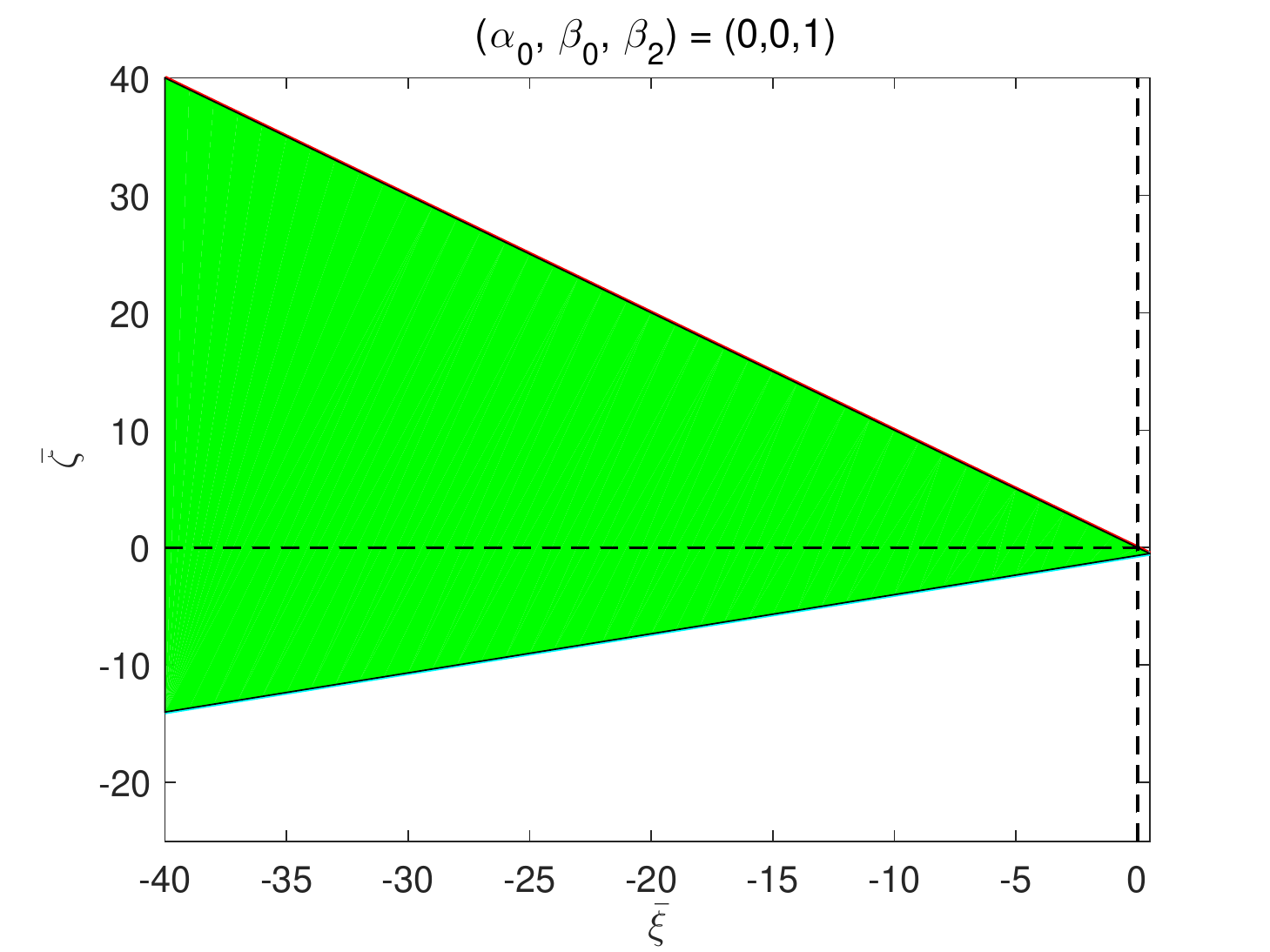}}
\end{minipage}
\hfill
\begin{minipage}{0.48\linewidth}
  \centerline{\includegraphics[width=7cm,height=5cm]{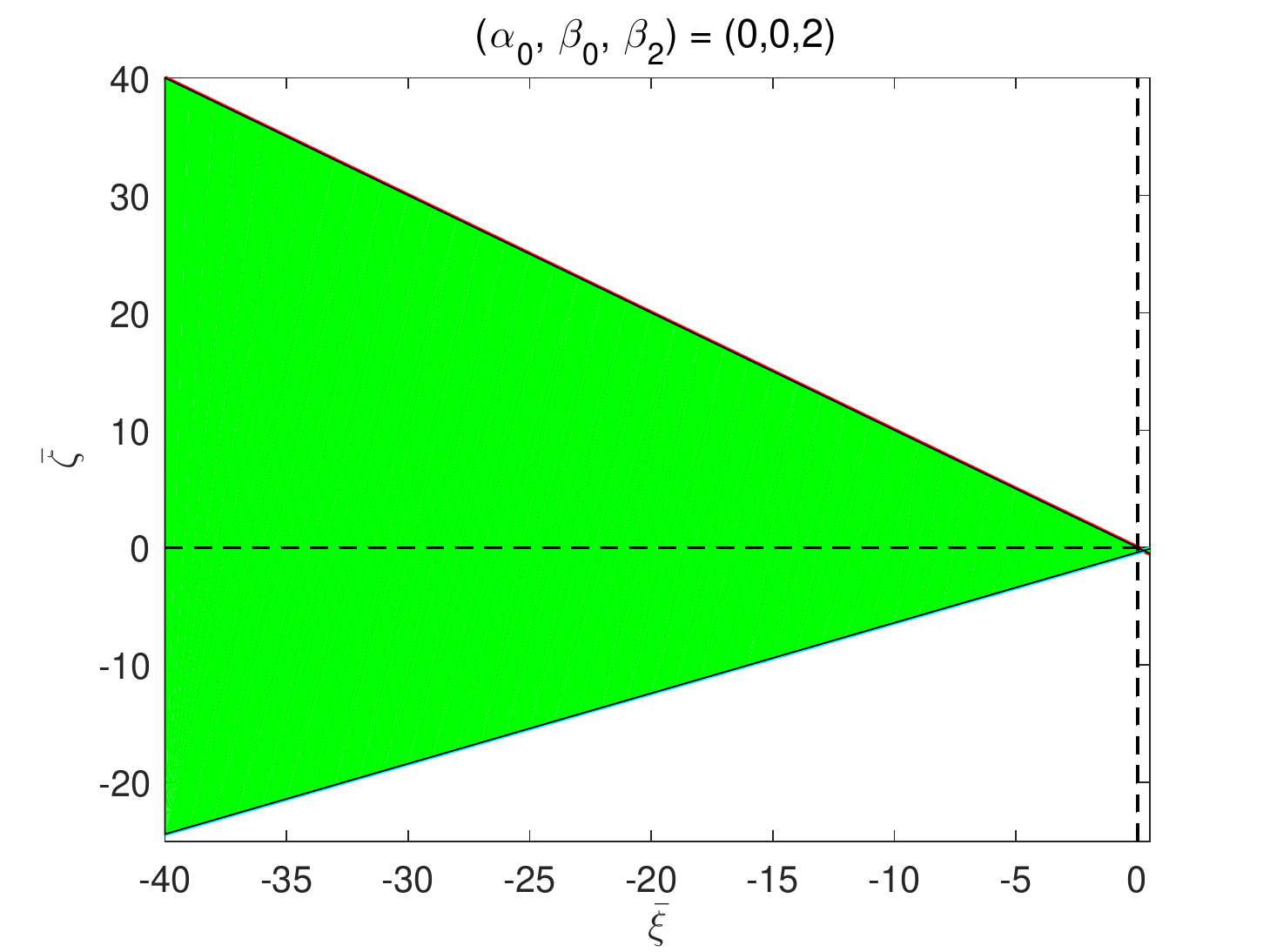}}
\end{minipage}
\caption{Stability regions (in green) of \eqref{8.3.2} for $(\alpha_0,\beta_0,\beta_2) = (0,0,1)$ and $(0,0,2)$. }
\label{figC.1}
\end{figure}

\noindent
$\bullet$ When $\beta_2 = \frac{1+\alpha_0}{2}+\beta_0$,  $\alpha_0$ and $\beta_0$ are not zero  simultaneously, and  $-1\le \alpha_0<1$ and $2\beta_2+\alpha_0\ge 0$, the condition \eqref{8.3.4} reduces to
\begin{align}  \label{8.3.6}
\begin{aligned}
& 2(1\!+\!\alpha_0) - (4\beta_0\!+\!\alpha_0\!+\!1) \bar{\xi} + (1\!+\!\alpha_0) \bar{\zeta} \ge 0,~~~ 2(1\!-\!\alpha_0)-(1\!+\!\alpha_0) \bar{\xi} - (1\!+\!\alpha_0)\bar{\zeta} \ge 0, \\[1 \jot]
& 1+\alpha_0 - (2\beta_0\!+\!\alpha_0) \bar{\xi} + \bar{\zeta}  \ge 0,~~~ \bar{\xi} + \bar{\zeta} \le  0.
\end{aligned}
\end{align}
Since $\xi<0$ and $|\zeta|\le |\xi|$, a direct check shows that the boundary of the stability region  of \eqref{8.3.2}  can be represented only by the curves $1+\alpha_0 - (2\beta_0\!+\!\alpha_0) \bar{\xi} + \bar{\zeta} = 0$ and $\bar{\xi} + \bar{\zeta} = 0$.
Figure \ref{figC.2} gives the stability
regions   of \eqref{8.3.2} with $(\alpha_0,\beta_0,\beta_2) = (-1/3,5/12,3/4), (1/3,0,1), (-1/3,1/6,1/2)$ and $(1/3,-1/6,1/2)$, from which one can see that the  stability region  of \eqref{8.3.2} with $2\beta_0+\alpha_0\neq 0$ is much larger than that with $2\beta_0+\alpha_0 = 0$, so that the scheme \eqref{8.3.2} with $2\beta_0+\alpha_0 \neq 0$ possesses better stability properties.
More specifically, for \eqref{8.3.2} with $2\beta_0+\alpha_0 = 0$, the upper and lower boundaries of the stability region are determined by the curves $\bar{\zeta} = - \bar{\xi}$ and $\bar{\zeta} = -(1+\alpha_0)$, respectively. Therefore,  \eqref{8.3.2} with $2\beta_0+\alpha_0  = 0$ is unconditionally stable when  $\xi< 0$ and $0 \le \zeta < |\xi|$, and  is stable under the time stepsize condition $\tau < -\frac{1+\alpha_0}{\zeta}$ when  $\xi< 0$ and $ \zeta < 0$. %, which becomes much severer with the decrease of $\zeta$.
For \eqref{8.3.2} with $2\beta_0+\alpha_0 \neq 0$, the upper and lower boundaries of the stability region are the curves $\bar{\zeta} = -\bar{\xi}$ and $\bar{\zeta} = (2\beta_0\!+\!\alpha_0) \bar{\xi} - (1\!+\!\alpha_0)$, respectively. Therefore,  \eqref{8.3.2} with $2\beta_0+\alpha_0 \neq 0$ is unconditionally stable when $\xi< 0$ and $(2\beta_0\!+\!\alpha_0)\xi \le \zeta < |\xi|$, and
is stable under the condition $\tau < \frac{1+\alpha_0}{(2\beta_0+\alpha_0) \xi - \zeta}$ when $\xi< 0$ and $\zeta < (2\beta_0\!+\!\alpha_0)\xi$.
  %Numerical results on the scalar test equation \eqref{8.3.1} can validate the those conclusions.

\begin{figure}
\begin{minipage}{0.48\linewidth}
  \centerline{\includegraphics[width=7cm,height=5cm]{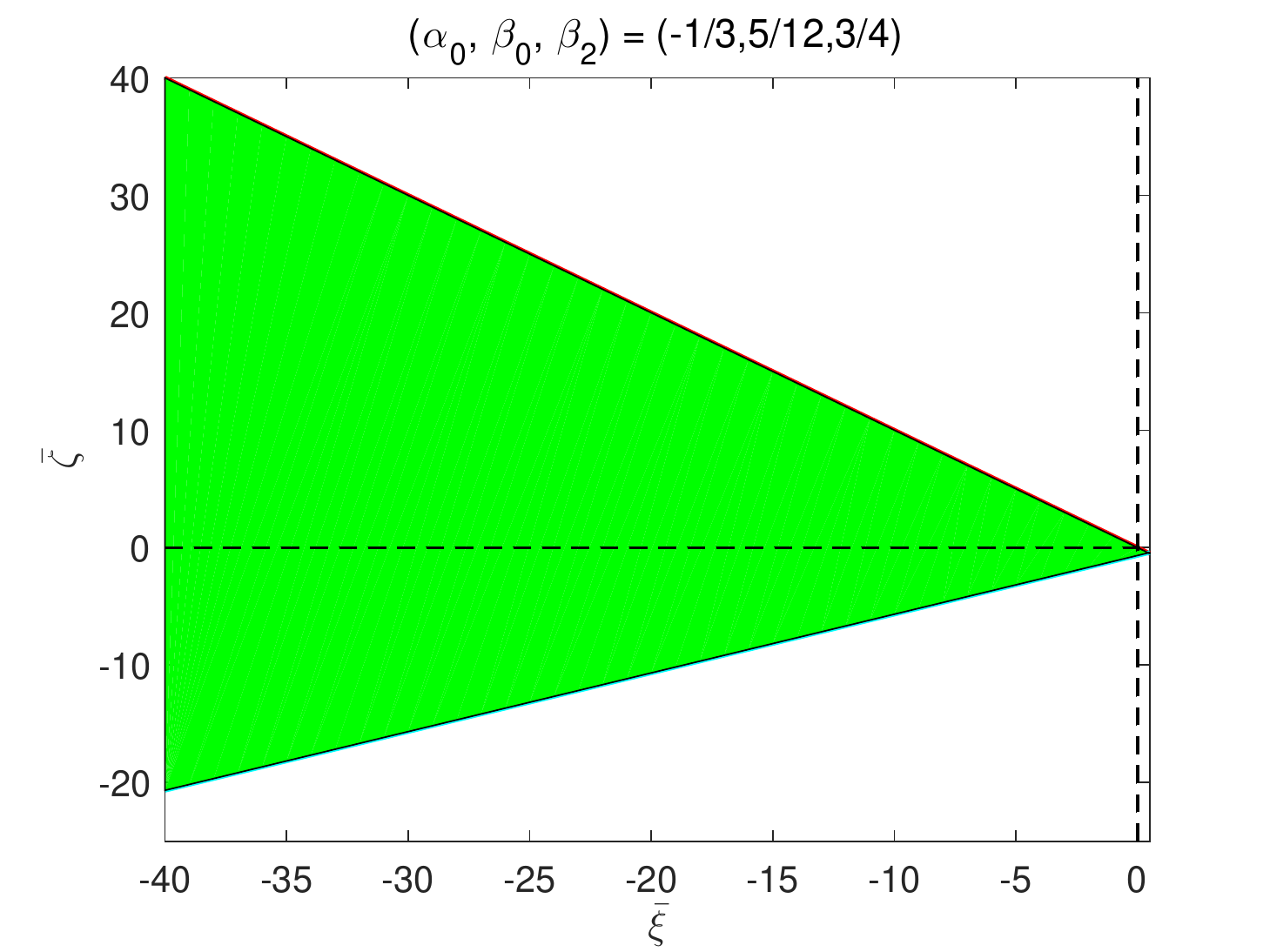}}
\end{minipage}
\hfill
\begin{minipage}{0.48\linewidth}
  \centerline{\includegraphics[width=7cm,height=5cm]{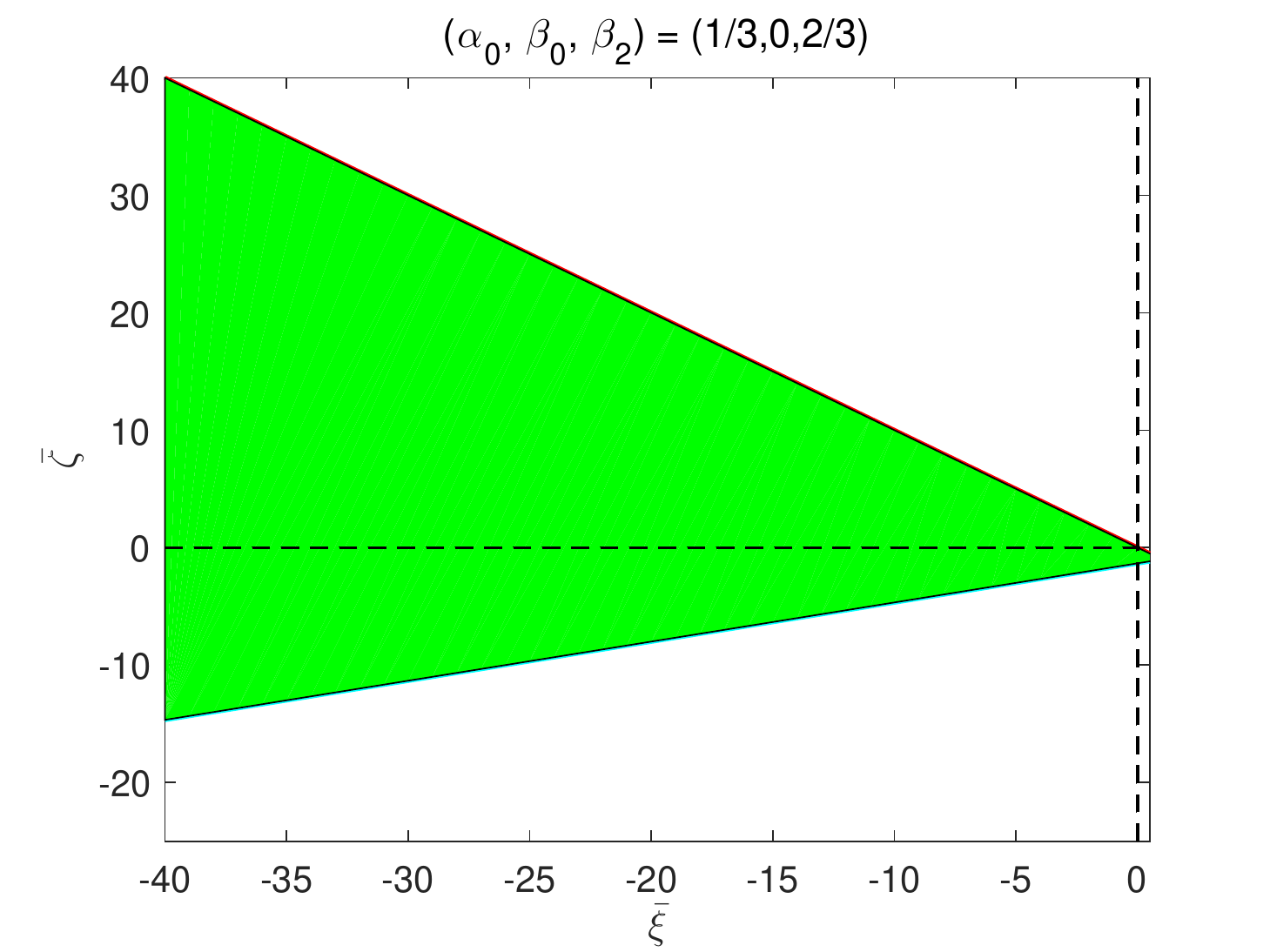}}
\end{minipage}
\\
\begin{minipage}{0.48\linewidth}
  \centerline{\includegraphics[width=7cm,height=5cm]{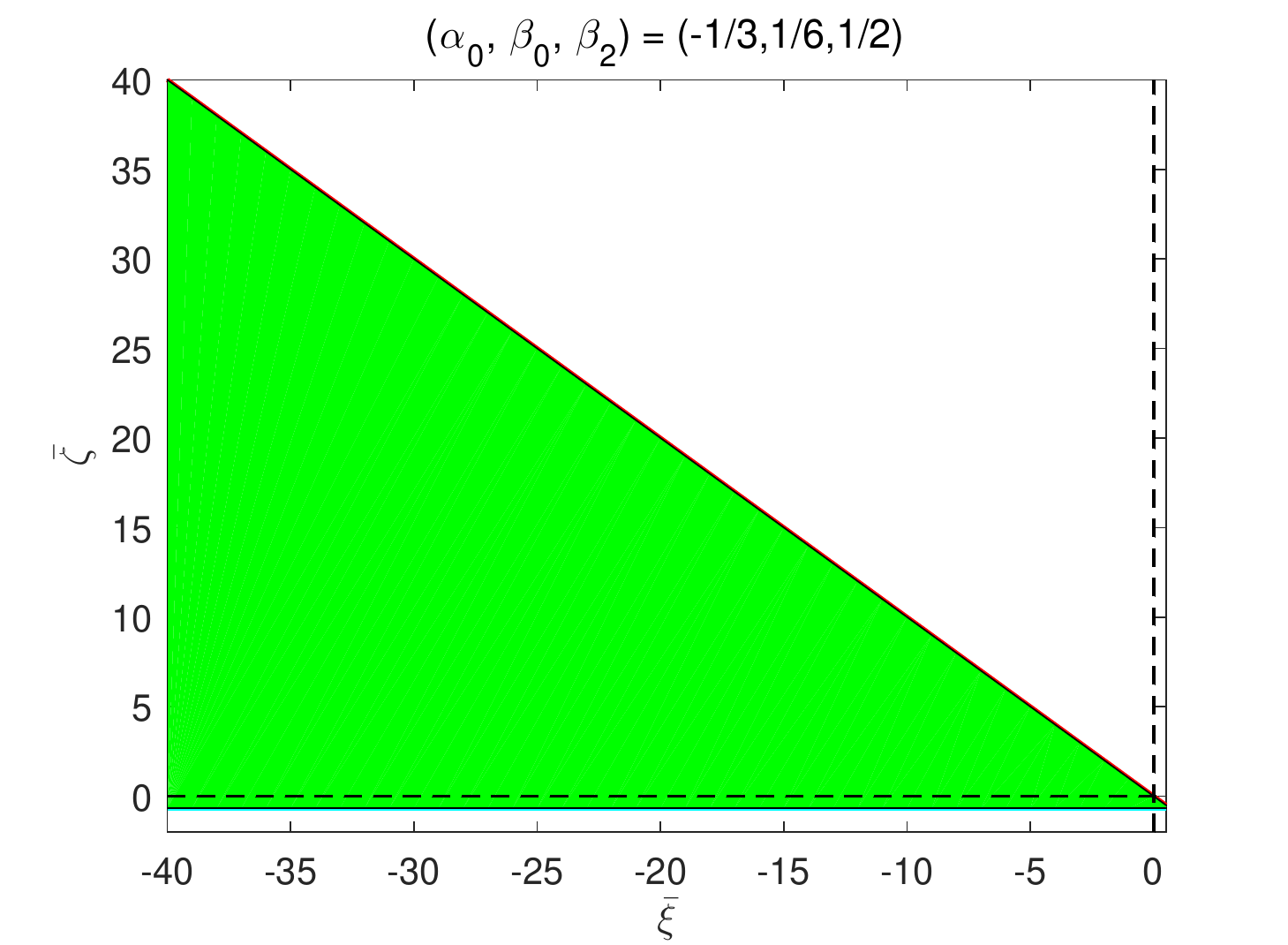}}
\end{minipage}
\hfill
\begin{minipage}{0.48\linewidth}
  \centerline{\includegraphics[width=7cm,height=5cm]{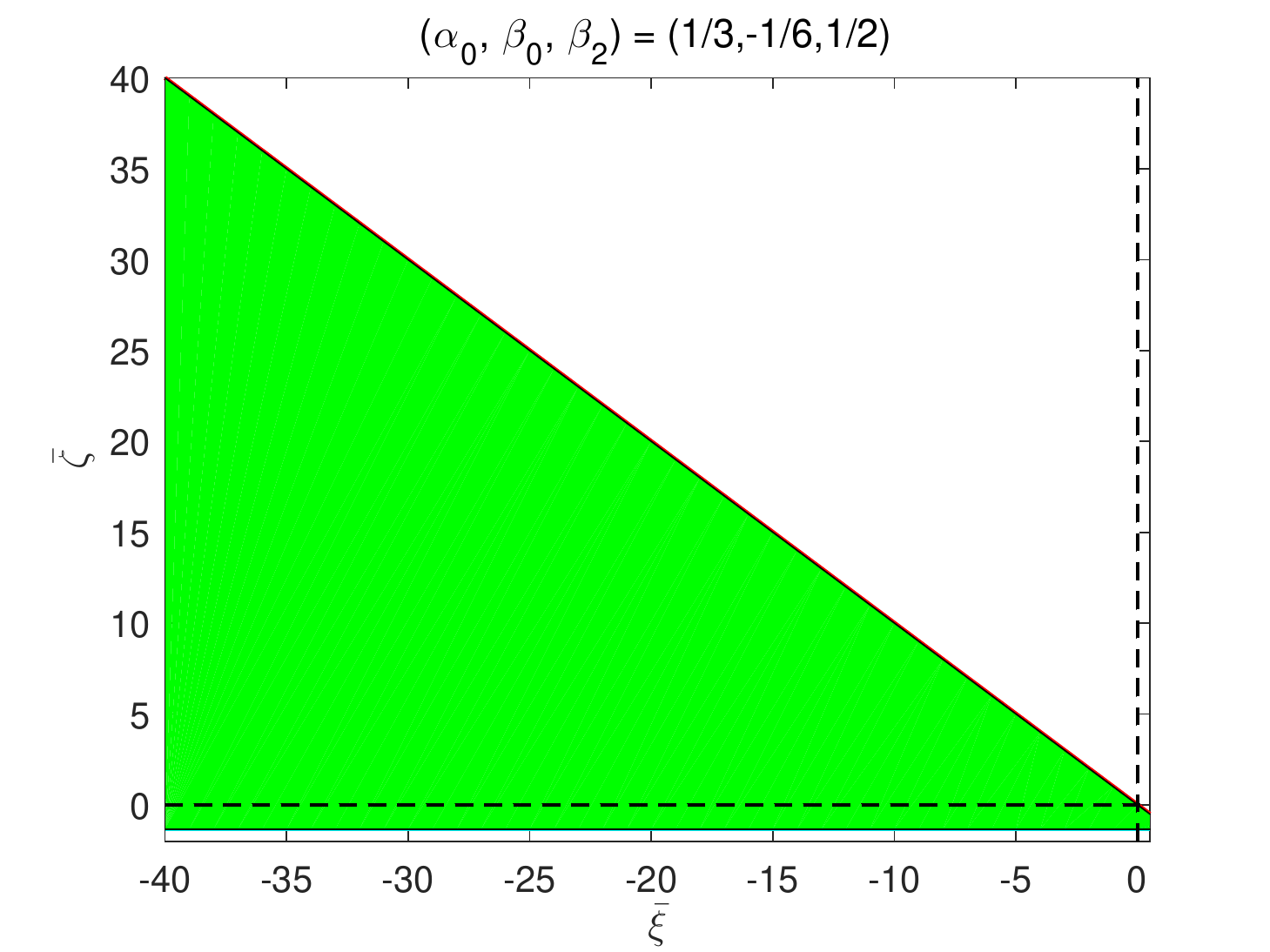}}
\end{minipage}
\caption{Stability regions of the scheme \eqref{8.3.2} with $(\alpha_0,\beta_0,\beta_2) = (-1/3,5/12,3/4)$, $(1/3,0,1)$, $(-1/3,1/6,1/2)$ and $(1/3,-1/6,1/2)$. }
\label{figC.2}
\end{figure}

\end{appendix}

\end{document}